\UseRawInputEncoding
\documentclass[12pt]{article}
\usepackage[T1]{fontenc}
\usepackage{lmodern}
\usepackage[margin=1in]{geometry}
\usepackage{amsmath,amssymb,amsthm,mathtools}
\usepackage{microtype,indentfirst}
\usepackage{tikz}
\usepackage{pgfplots}
\pgfplotsset{compat=newest}
\definecolor{revision}{RGB}{0,0,0}
\definecolor{editorial}{RGB}{0,0,0}
\usepackage[section]{placeins}
\definecolor{annBlue}{RGB}{0,114,178}
\definecolor{annOrange}{RGB}{230,159,0}
\DeclareSymbolFont{uprightintegrals}{U}{euex}{m}{n}
\DeclareMathSymbol{\intop}{\mathop}{uprightintegrals}{"52}
\DeclareMathSymbol{\ointop}{\mathop}{uprightintegrals}{"48}
\usepackage[colorlinks=false,pdfborder={0 0 0},pdfborderstyle={}]{hyperref}
\hypersetup{pdftitle={Quantitative Polya inequalities for annular sectors},
pdfauthor={Guowei Dai and Yingxin Sun},
pdfkeywords={Polya conjecture, annular sectors, Dirichlet and Neumann eigenvalues, quantitative spectral inequalities, continuous angular order}}
\DeclareRobustCommand{\revblue}[1]{#1}
\numberwithin{equation}{section}
\newtheorem{theorem}{Theorem}[section]
\newtheorem{lemma}[theorem]{Lemma}
\newtheorem{proposition}[theorem]{Proposition}
\newtheorem{corollary}[theorem]{Corollary}
\theoremstyle{remark}

\newcommand{\R}{\mathbb R}
\newcommand{\dd}{\,\mathrm{d}}
\newcommand{\paren}[1]{\left(#1\right)}
\newcommand{\bracket}[1]{\left[#1\right]}
\newcommand{\set}[1]{\left\{#1\right\}}
\newcommand{\abs}[1]{\left|#1\right|}
\newcommand{\floor}[1]{\left\lfloor#1\right\rfloor}
\newcommand{\pos}[1]{\left(#1\right)_{+}}
\title{\revblue{Quantitative P\'olya inequalities for annular sectors}\thanks{Research
supported by the National Natural Science Foundation of China (Grant No.~12371110).}}
\author{Guowei Dai\thanks{School of Mathematical Sciences, Dalian
University of Technology, Dalian 116024, P.R. China.
Email: \href{mailto:daiguowei@dlut.edu.cn}{daiguowei@dlut.edu.cn}.}
\and Yingxin Sun\thanks{School of Mathematical Sciences, Dalian
University of Technology, Dalian 116024, P.R. China.
Email: \href{mailto:sunyingxin2023@mail.dlut.edu.cn}{sunyingxin2023@mail.dlut.edu.cn}.}}
\date{}
\begin{document}
\maketitle
{\begin{abstract}
{\color{editorial}We prove explicit positive corrections to the Dirichlet and Neumann
P\'olya inequalities at every positive eigenvalue index of every
annular sector. The coefficients depend only on the radii and are
uniform in the aperture. Real-order angular integration preserves
the exact area term, while directional rounding estimates and radial
trials control all frequencies. Optimized bounds attain the sharp
circular-arc coefficient for the continuous count. We identify the
full sector perimeter asymptotic and a low-index obstruction to
using its coefficient throughout the spectrum. The strict Neumann
inequality also passes to products with bounded Lipschitz domains.}

\end{abstract}}
{\noindent\textbf{Keywords.}
P\'olya's conjecture; Dirichlet and Neumann eigenvalues;
Annular sectors; Quantitative spectral bounds; Bessel phases.}

\medskip
\noindent\textbf{2020 Mathematics Subject Classification.}
Primary 35P15; Secondary 35P20, 33C10.

\section{Introduction and main results}\label{sec:introduction}
{\color{editorial}Dirichlet and Neumann Laplacian eigenvalues model membrane and
acoustic frequencies~\cite{CourantHilbert}. Annular sectors provide
a separable geometry in which radial width, angular aperture, and
boundary conditions enter the estimates explicitly.}

{For a bounded connected Lipschitz domain $\Omega\subset\R^m$, write
its eigenvalues, with multiplicities, as}
{\[
 0<\lambda_1\paren\Omega\le\lambda_2\paren\Omega\le\cdots,
 \qquad
 0=\mu_0\paren\Omega<\mu_1\paren\Omega\le\mu_2\paren\Omega\le\cdots.
\]}
{If $\omega_m$ is the volume of the unit ball in $\R^m$, Weyl's
law~\cite{Weyl,Davies} identifies the leading scale}
{\[
 W_k\paren\Omega=4\pi^2
 \paren{\frac{k}{\omega_m\abs\Omega}}^{2/m},\qquad
 \frac{\lambda_k\paren\Omega}{W_k\paren\Omega},\,
 \frac{\mu_k\paren\Omega}{W_k\paren\Omega}\longrightarrow1
 \quad\paren{k\to+\infty}.
\]}
P\'olya's conjecture asks for the individual inequalities
{\[
\mu_k\paren\Omega\le W_k\paren\Omega\le\lambda_k\paren\Omega,
 \qquad k\ge1.
\]}
{\color{editorial}The conjecture requires the exact Weyl coefficient at every index.
In the plane, $W_k=4\pi k/\abs\Omega$. We seek explicit positive
corrections to both comparisons that remain valid throughout the
spectrum.}

{P\'olya proved the inequalities for planar tiling domains by
geometric replication and spectral bracketing~\cite{Polya1961};
Kellner removed the additional restriction in the Neumann tiling
argument~\cite{Kellner}. The sharp averaged bounds of Berezin,
Li--Yau, and Kr\"oger control spectral sums on general
domains~\cite{Berezin,LiYau,Kroger}. Laptev's product
principle~\cite{Laptev}, the non-tiling families of Freitas and
Salavessa~\cite{FS}, and the thin-product results of He and
Wang~\cite{HW} identify further settings where the individual
Weyl coefficient can be recovered. See~\cite{Henrot} for a broader
account.}

Separation of variables is especially effective for circular
geometries. Filonov, Levitin, Polterovich, and Sher proved both
inequalities for disks and circular sectors of arbitrary aperture,
and the Dirichlet inequality for balls in all
dimensions~\cite{Balls}. They also proved the Dirichlet inequality
for every circular annulus~\cite{DirichletAnnuli}. The remaining
Neumann ball cases are treated in~\cite{NeumannBalls,LiNeumann}.
Song and Zou obtained Neumann estimates for annuli and annular
sectors~\cite{SZ}. In our earlier work, we proved the strict Neumann
inequality for every complete circular annulus~\cite[Theorem~1.1]{DaiSunAnnuli}.
We also obtained positive corrections of square-root order, with an
explicit bound for $k\ge1604$ and a universal, non-explicit coefficient
for all indices~\cite[Corollary~6.2 and Theorem~6.9]{DaiSunAnnuli}.
Congruent subdivision gives strict Neumann inequalities for
sector apertures $2\pi/q$ with integer
$q\ge2$~\cite[Corollary~1.3]{DaiSunAnnuli}; sufficiently fine
angular subdivisions give further families~\cite[Section~3.1]{FS}.
In a sector, however, the angular orders $j\pi/\beta$ have
multiplicity one and arbitrary spacing. The integer orders and
double multiplicities of a complete annulus do not control all
these lattices by themselves.

{\color{editorial}Tang and Wang proved the strict Dirichlet P\'olya inequality for
every bounded simply connected planar
domain~\cite[Theorem~1.2]{TangWangDirichlet}, including all sectors
considered here and the retained slit at aperture $2\pi$. Their
Remark~1.4 also gives a positive square-root correction for smooth
simply connected domains, with a domain-dependent constant not
specified by a geometric formula. Our Dirichlet contribution is
quantitative: positive coefficients given explicitly by the two
radii, uniform in aperture and valid at every index, also in the
presence of corners and the slit. The Neumann estimates provide
the corresponding explicit all-index subtraction for arbitrary
aperture. Optimized versions additionally give explicit errors at
the circular-arc scale. The radial phase formulas and Bessel-phase
bounds are taken from~\cite{DaiSunAnnuli,DirichletAnnuli,Phase};
their use is identified in the proofs.}

{\color{editorial}A rough polar-metric comparison loses the area coefficient, while
order-one errors in the radial count can consume the boundary gain.
We integrate over real angular order before sampling the sector
lattice, so the action integral gives the area term exactly.
Pairing opposite half-periods of the rounding error against a
decreasing weight yields the sharp one-sided constants $1/3$ and
$\sqrt2/6$. Radial comparisons and optimized trials cover the
turning region and low frequencies, which a two-term asymptotic
law~\cite{Ivrii} alone does not control.}

{The proof is entirely analytic. The endpoint constants used
to join the frequency ranges are proved in
Appendix~\ref{sec:elementary-estimates}. The figures and decimal
approximations illustrate the formulas. The validity of the
refined bounds, including those defined by an extremum or a
supremum, is established by the analytic arguments; the closed
all-index conclusions require no numerical optimization.}

{For $0<r<R$ and $0<\beta<2\pi$, define}
\[
 S_{r,R,\beta}=\set{\paren{s\cos\theta,s\sin\theta}:
 r<s<R,\quad0<\theta<\beta},\qquad
 \abs{S_{r,R,\beta}}=\frac\beta2\paren{R^2-r^2}.
\]
{At $\beta=2\pi$, the two radial faces are retained as a slit.
The Dirichlet and Neumann forms are defined on the polar rectangle
$\paren{r,R}\times\paren{0,2\pi}$ with their respective boundary
conditions and with no identification of angular traces. This
endpoint has the sector angular spectrum. Write}
{\[
 N_S^{\mathrm D}\paren{E}=\#\set{k\ge1:\lambda_k\paren{S}\le E},
 \qquad N_S^{<}\paren{E}=\#\set{k\ge0:\mu_k\paren{S}<E}.
\]}
{The strict Neumann cutoff is essential when passing from a
counting estimate to a strict individual bound.}

{To explain the scale of the corrections, recall the planar
\emph{two-term Weyl law}.  For a fixed domain of area $A$ and
boundary length $P$, its form is
\begin{equation}\label{eq:intro-two-term-weyl}
 \begin{aligned}
 N^{\mathrm D}\paren E
   &=\underbrace{\frac A{4\pi}E}_{\text{area term}}
     -\underbrace{\frac P{4\pi}\sqrt E}_{\text{boundary term}}
     +o\paren{\sqrt E},\\
 N^{\mathrm N,<}\paren E
   &=\underbrace{\frac A{4\pi}E}_{\text{area term}}
     +\underbrace{\frac P{4\pi}\sqrt E}_{\text{boundary term}}
     +o\paren{\sqrt E},\qquad E\longrightarrow+\infty.
 \end{aligned}
\end{equation}
The first term measures the interior; the second measures the
boundary and has opposite signs for the two boundary conditions.
This is the classical Weyl boundary correction, rather than a new
formula of the present paper.  Ivrii proved the two-term law for
smooth boundaries under the hypothesis that periodic billiard
trajectories have measure zero; see~\cite{Ivrii} and
\cite[Section~1.1, formulas~(1.1.2) and~(1.1.4)]{IvriiSurvey}.
We do not assume that this general theorem applies to every
Lipschitz domain or directly to the slit endpoint.  For the annular
sectors here, Theorem~\ref{thm:Bq-full-perimeter} proves
\eqref{eq:intro-two-term-weyl} directly, including the strict and
inclusive counting conventions.}

{For $S=S_{r,R,\beta}$ the boundary has two circular arcs and two
radial sides, with total lengths
\[
 P_{\rm arc}=\beta\paren{R+r},\qquad
 P_{\rm side}=2\paren{R-r},\qquad P=P_{\rm arc}+P_{\rm side}.
\]
At $\beta=2\pi$ the radial sides are the two retained faces of the
slit and are both counted in this boundary length.  Since
$A=\beta\paren{R-r}\paren{R+r}/2$, we obtain
\begin{equation}\label{eq:intro-arc-coefficient}
 \frac{P_{\rm arc}}A=\frac2{R-r},\qquad
 \frac{P_{\rm side}}A=\frac4{\beta\paren{R+r}},\qquad
 \frac PA=\frac2{R-r}+\frac4{\beta\paren{R+r}}.
\end{equation}
The phrase \emph{full circular-arc coefficient} will mean the
first of these ratios: it includes both arcs.  It differs from
the full perimeter coefficient $P/A$, which includes the radial
sides as well.  In the counting function the corresponding
coefficients are $P_{\rm arc}/\paren{4\pi}$ and $P/\paren{4\pi}$;
after inversion to individual eigenvalues they become
$P_{\rm arc}/A$ and $P/A$.  Figure~\ref{fig:intro-boundary-parts}
shows this geometric decomposition.}

\begin{figure}[htbp]
\centering
\begin{tikzpicture}[scale=1.1,font=\small]
 \fill[annBlue,opacity=.06] (1,0)--(3,0)
   arc (0:100:3)
   --({cos(100)},{sin(100)})
   arc (100:0:1)--cycle;
 \draw[annBlue,very thick] (3,0)
   arc (0:100:3);
 \draw[annBlue,very thick] (1,0)
   arc (0:100:1);
 \draw[annOrange,very thick] (1,0)--(3,0);
 \draw[annOrange,very thick] ({cos(100)},{sin(100)})
   --({3*cos(100)},{3*sin(100)});
 \draw[gray,dashed] (0,0)--(1,0);
 \draw[gray,dashed] (0,0)--({cos(100)},{sin(100)});
 \draw[->] (.55,0) arc (0:100:.55);
 \node at (.39,.45) {$\beta$};
 \node[below] at (2,0) {$R-r$};
 \node[annBlue] at (.9,1.95) {outer arc: $\beta R$};
 \node[annBlue] at (1.63,.8) {inner arc: $\beta r$};
 \node[anchor=west] at (4,.95)
   {\begin{tabular}{@{}l@{}}$P_{\rm arc}=\beta\paren{R+r}$\\[5pt]
    $P_{\rm side}=2\paren{R-r}$\end{tabular}};
\end{tikzpicture}
\caption{Boundary lengths in an annular sector (schematic).
Blue marks the two circular arcs; orange marks the two radial
sides.  The optimized continuous angular counts recover the full
blue contribution.  The additional orange contribution is
identified by the discrete angular calculation in
Section~\ref{sec:boundary-optimality}.}
\label{fig:intro-boundary-parts}
\end{figure}

{The motivation for our estimates is now precise.  The boundary
term suggests a correction proportional to $\sqrt E$, but the
unspecified $o\paren{\sqrt E}$ in the two-term law does not give
its sign or size at a prescribed energy.  In particular it does
not prove a P\'olya inequality at every index.  Our main theorems
replace this asymptotic information by explicit positive
corrections valid at every $E>0$.  The stronger estimates below
then recover the circular-arc scale with explicit losses.}

{Put $\delta=\paren{R-r}/r$ and define}
{\[
a_{\mathrm D}=\frac{4r c_\delta}{R^2-r^2},\qquad
 a_{\mathrm N}=\frac{4r b_\delta^{\mathrm c}}{R^2-r^2},
\]}
{where the explicit positive numbers $c_\delta$ and
$b_\delta^{\mathrm c}$ are given in \eqref{eq:Dq-c} and
\eqref{eq:Nq-uniform-b}. They require no spectral data.}

{\begin{theorem}[Dirichlet correction]\label{thm:dirichlet}
Let $0<r<R$ and $0<\beta\le2\pi$. For every $E>0$,
\[
 N_{S_{r,R,\beta}}^{\mathrm D}\paren{E}
 \le\frac{\abs{S_{r,R,\beta}}}{4\pi}
       \pos{E-a_{\mathrm D}\sqrt E}.
\]
For every $k\ge1$, with $W_k=4\pi k/\abs{S_{r,R,\beta}}$,
\[
 \lambda_k\paren{S_{r,R,\beta}}
 \ge\paren{\sqrt{W_k+a_{\mathrm D}^2/4}+a_{\mathrm D}/2}^2
 =W_k+a_{\mathrm D}\sqrt{W_k+a_{\mathrm D}^2/4}
       +a_{\mathrm D}^2/2>W_k.
\]
\end{theorem}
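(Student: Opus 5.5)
Separation of variables reduces the count to a sum of one-dimensional radial counts. With $\nu_j=j\pi/\beta$, $j\ge1$, the Dirichlet spectrum of $S_{r,R,\beta}$ consists of the Dirichlet eigenvalues of the radial Bessel operators
\[
 -u''-u'/s+\nu_j^2u/s^2 \quad\text{on } (r,R),
\]
with $u(r)=u(R)=0$. This also holds at $\beta=2\pi$, where the slit keeps the sine angular basis. Hence
\[
 N^{\mathrm D}_S(E)=\sum_{j\ge1}n(\nu_j,E),
\]
where $n(\nu,E)$ is the number of radial Dirichlet eigenvalues $\le E$ for real order $\nu$. The plan has three steps, carried out in order.

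\emph{Step 1: a phase bound for the radial count.} I would bound $n(\nu,E)$ by a phase or action function. Using the Bessel-phase bounds cited from \cite{DaiSunAnnuli,DirichletAnnuli,Phase}, the target is
\[
 n(\nu,E)\le \frac1\pi\Phi(\nu,E)-\frac14-\gamma(\nu,E),
\]
where
\[
 \Phi(\nu,E)=\int_r^R\bigl(E-\nu^2/s^2\bigr)_+^{1/2}\,\mathrm ds .
\]
Two mechanisms give the negative terms. The Dirichlet endpoint at $R$ alone already costs a quarter period. When $\nu<r\sqrt E$, the inner endpoint lies in the oscillatory region and gives a further Maslov-type loss; this loss is what yields a $\sqrt E$ gain in the total. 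This is the step where I would extract the constant $c_\delta$: the inner-arc loss is comparable to $r$ times the range of orders $\nu<r\sqrt E$, and the comparison constant depends only on $\delta=(R-r)/r$. I would take this radial comparison from the cited phase estimates rather than reprove it, so the dependence on $\delta$ enters through them.

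\emph{Step 2: summing over the angular lattice.} I would sum over $j$ using the paper's device: integrate over real order first and then sample. The function $\nu\mapsto\Phi(\nu,E)$ is decreasing and concave-like. The key identity is
\[
 \frac\beta\pi\int_0^\infty\Phi(\nu,E)\,\mathrm d\nu=\frac{\beta(R^2-r^2)}{8}E=\frac{|S|}{4\pi}E,
\]
which follows from Fubini, since $\int_0^{s\sqrt E}\sqrt{E-\nu^2/s^2}\,\mathrm d\nu=\pi sE/4$. I would then control $\sum_{j\ge1}f(j\pi/\beta)$ against $\frac\beta\pi\int_0^\infty f$, for the decreasing function $f=\frac1\pi\Phi-\frac14-\gamma$ restricted to where it is positive. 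Pairing the half-periods of the rounding error $\{x\}-1/2$ against the decreasing weight gives one-sided bounds with constants like $1/3$ that are independent of $\beta$. The sampling error at the left endpoint works in our favour for a decreasing function, since the sum is at most the integral.

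\emph{Step 3: closing the estimate.} The loss $\gamma$, integrated over $\nu\in(0,r\sqrt E)$, produces a deficit of the form $\frac{\beta}{\pi}\,c\,r\sqrt E$. This equals $\frac{|S|}{4\pi}a_{\mathrm D}\sqrt E$ with $a_{\mathrm D}=4rc_\delta/(R^2-r^2)$, provided it dominates the rounding error. Finally, the individual bound follows by inverting the counting bound. Set $E=\lambda_k$. Since $N^{\mathrm D}(\lambda_k)\ge k$, we get
\[
 k\le\frac{|S|}{4\pi}\bigl(\lambda_k-a_{\mathrm D}\sqrt{\lambda_k}\bigr),
\]
that is, $\lambda_k-a_{\mathrm D}\sqrt{\lambda_k}\ge W_k$. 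Solving this quadratic in $\sqrt{\lambda_k}$ gives the stated formula.

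\emph{Main obstacle.} I expect the main difficulty to be uniformity at low frequency and for small aperture. When $E$ is small, or $\beta$ is small so that few orders $\nu_j$ are active, the order-one errors from rounding and from the turning-point region can swallow the gain. There the positive part in the bound must be justified by the trivial estimate $N=0$ for $E$ below the first eigenvalue. I would handle this with a separate radial trial or lower bound for $\lambda_1$ (by monotonicity, $\lambda_1(S)\ge\pi^2/(R-r)^2$) and choose $c_\delta$ small enough that the two frequency ranges join, using the endpoint constants from Appendix~\ref{sec:elementary-estimates}.
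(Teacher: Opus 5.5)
Your architecture (real-order integration, the action identity, monotone angular sampling, quadratic inversion) is the paper's, but two steps fail as written.

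\textbf{Step~1 cannot hold pointwise.} Normalize $r=1$, write $t=\sqrt E$, $f_t=\Phi/\pi$ and $p=\pi/\beta$. The count is $n^{\mathrm D}_v(t^2)=\lfloor(\theta_v(Rt)-\theta_v(t))/\pi\rfloor$. This is an integer, and at each radial threshold in the inner order region it equals that phase, i.e.\ $f_t+o(1)$. So no bound $n^{\mathrm D}_v\le f_t-\frac14-\gamma$ with $\gamma\ge0$ can hold pointwise; the phase bounds give only $n^{\mathrm D}_v\le\lfloor f_t+\frac14\rfloor$.

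The quarter-period gain is a \emph{mean}. It appears only after writing $\lfloor f_t+\frac14\rfloor=f_t-\frac14-\psi(f_t+\frac14)$ and bounding $\int_0^{Rt}\psi(f_t(v)+\frac14)\,dv$ by Lemma~\ref{lem:saw}. That lemma uses the curvature of $f_t$ in the order variable: the universal constant $\sqrt2/6$ on $[0,t)$, where $-f_t''\ge\delta/(\pi Rt)$, and initial phase $\frac34$ after reflecting $(t,Rt)$.

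So the half-period pairing belongs to this order integral, not to the angular lattice sum. The lattice needs only $p\sum_{j\ge1}n^{\mathrm D}_{jp}(t^2)\le\int_0^\infty n^{\mathrm D}_v(t^2)\,dv$, applied to the actual decreasing count (right endpoints), and this is what gives uniformity in $\beta$. Summing $f_t-\frac14-\gamma$ over the lattice bounds nothing, since that function does not dominate the count.

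The gain the paper uses here is the term $Rt/2$ from the uniform $-\frac14$ mean on $[0,Rt]$. An extra inner-arc quarter would require the sharper core enclosure of Section~\ref{sec:dirichlet-quantitative}, whose error blows up at the turning order $v=t$. (Also, your action identity should integrate $\Phi/\pi$, not $\Phi$.)

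\textbf{The low frequencies are not covered, and this is the decisive gap.} The integrated phase estimate gives $B_{\mathrm D}(t)\ge\frac{R}{2\delta}(x-\sqrt{S_\delta x})$ with $x=\delta t$. This is nonpositive for $x\le S_\delta$, and $S_\delta>\pi$ for widths near $\delta_*\approx0.34$. The sector count vanishes only for $x<\pi$. At $\beta=2\pi$ the first mode has order $\frac12$ and zero transformed potential, so $\lambda_1=\pi^2/(R-r)^2$ exactly, i.e.\ $x=\pi$, and the count is already positive there.

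None of your remedies fills $[\pi,S_\delta]$:
\begin{itemize}
\item a radial trial function bounds eigenvalues from above, i.e.\ counts from below, which is the wrong direction for Dirichlet;
\item $\lambda_1\ge\pi^2/(R-r)^2$ only gives $N=0$ for $x<\pi$;
\item shrinking $c_\delta$ is not allowed, since the theorem fixes it by \eqref{eq:Dq-c}, and it cannot help where the only lower bound for the deficit is nonpositive.
\end{itemize}
Through the right-endpoint transfer you need $B_{\mathrm D}(t)\ge c_\delta t$ on all of $x\ge\pi$. That requires an upper bound for the radial counts near the first threshold.

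The paper gets this from the signed-potential comparison \eqref{eq:Dq-signed}, obtained via $z=\sqrt s\,u$. It confines the $n$th radial row to orders $v\le P_n(\delta,x)/\delta$. For $\pi\le x\le X_\delta=\sqrt{4\pi^2-\delta^2/4}$ only the first row is active. Completing the square gives $Ax^2-2P_1\ge q_\delta$, with $A=(R+1)/4$ and $q_\delta>0$ for $\delta\le\frac52$. Hence $B_{\mathrm D}(t)/t\ge q_\delta/x$ on that range. Matching this with the phase bound at the switching point $z_\delta$ produces $c_\delta$. For $\delta>\frac52$ one has $S_\delta<\pi$, and the phase bound alone suffices.

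The appendix constants you invoke belong to the Neumann cover, not to this step.
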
}

{\begin{theorem}[Neumann correction]\label{thm:main}
Let $0<r<R$ and $0<\beta\le2\pi$. For every $E>0$,
\[
 N_{S_{r,R,\beta}}^{<}\paren{E}
 >\frac{\abs{S_{r,R,\beta}}}{4\pi}
       \paren{E+a_{\mathrm N}\sqrt E}.
\]
For every $k\ge1$, with $W_k=4\pi k/\abs{S_{r,R,\beta}}$,
\[
 \mu_k\paren{S_{r,R,\beta}}
 <\paren{\sqrt{W_k+a_{\mathrm N}^2/4}-a_{\mathrm N}/2}^2
 =W_k-a_{\mathrm N}
       \paren{\sqrt{W_k+a_{\mathrm N}^2/4}-a_{\mathrm N}/2}<W_k.
\]
\end{theorem}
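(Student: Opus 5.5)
We will prove Theorem~\ref{thm:main} by establishing the counting bound first. The individual eigenvalue bound will then follow by inversion. Separation of variables in polar coordinates shows that the Neumann spectrum of $S=S_{r,R,\beta}$ is the union, over $j\ge0$, of the radial Neumann spectra of
\[
 -u''-\tfrac1s u'+\tfrac{\nu^2}{s^2}u \quad\text{on } (r,R),\qquad \nu=\nu_j=j\pi/\beta,
\]
with Neumann conditions at $s=r$ and $s=R$. This also holds at $\beta=2\pi$, where there is no angular identification. Write $n(\nu,E)$ for the number of radial Neumann eigenvalues $<E$ at real order $\nu\ge0$. Then $N^<_S(E)=\sum_{j\ge0}n(j\pi/\beta,E)$.

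The key steps, in order, are as follows.

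\textbf{Step 1 (radial lower bound).} Using the Bessel-phase description of $n(\nu,E)$ from \cite{DaiSunAnnuli,Phase}, we bound $n(\nu,E)$ from below by a continuous phase function
\[
 \frac1\pi\Phi(\nu,E)+\text{(Neumann boundary gain)}.
\]
Here $\Phi(\nu,E)=\int_r^R\bigl(E-\nu^2/s^2\bigr)_+^{1/2}\dd s$ is the WKB action. The Neumann gain at each endpoint is a positive fraction, essentially $1/2$ per arc endpoint in the oscillatory regime. It must be kept uniformly positive whenever $E>\nu^2/R^2$, and treated separately in the turning region.

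\textbf{Step 2 (real-order integration).} We compare the sum over $j$ with the integral $\frac\beta\pi\int_0^\infty n(\nu,E)\dd\nu$. The action integral gives exactly
\[
 \frac\beta\pi\int_0^\infty\frac1\pi\Phi\dd\nu=\frac{\abs S}{4\pi}E,
\]
which is the area term. Since $n$ is nonincreasing in $\nu$ and the term $j=0$ is included, the left Riemann sum dominates the integral. The rounding error is paired over half-periods against the decreasing weight, giving one-sided constants. The Neumann side uses the favorable direction, so no loss is incurred from the sides.

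\textbf{Step 3 (collecting the gain).} The arc gain integrated over $\nu\in(0,R\sqrt E)$ produces a term of size $c\,\beta(R+r)\sqrt E$. The radius-dependent factor is where $b^{\mathrm c}_\delta$ enters, since only part of the inner-arc gain survives for $\nu$ between $r\sqrt E$ and $R\sqrt E$. This yields a bound of the form $\frac{\abs S}{4\pi}a_{\mathrm N}\sqrt E$, with $a_{\mathrm N}=4rb^{\mathrm c}_\delta/(R^2-r^2)$, uniform in $\beta$. Strictness comes from the eigenvalue $\mu_0=0$, counted through $j=0$ with $n\ge1$ for every $E>0$, or from the strict inequality in the phase bound.

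\textbf{Step 4 (low frequencies).} For small $E$, the phase bounds are not effective. There we use explicit radial trial functions, namely constants and $\cos$-type test functions in $s$, together with the Appendix endpoint constants, to check the inequality directly. This joins the ranges.

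\textbf{Step 5 (inversion).} We apply the count at $E=\mu_k$, where $N^<(\mu_k)\le k$ because $\mu_0,\dots,\mu_{k-1}$ are the only eigenvalues that can lie below $\mu_k$. This gives
\[
 k>\frac{\abs S}{4\pi}\bigl(\mu_k+a_{\mathrm N}\sqrt{\mu_k}\bigr).
\]
Solving the quadratic in $\sqrt{\mu_k}$ yields the stated bound $\mu_k<\bigl(\sqrt{W_k+a_{\mathrm N}^2/4}-a_{\mathrm N}/2\bigr)^2$. The stated identity is then algebra.

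\textbf{Main obstacle.} The hardest part is Step 1 near the turning point $\nu\approx R\sqrt E$ and at $\nu\approx r\sqrt E$. There a uniform explicit lower bound on the Neumann phase, with a positive additive constant, is delicate. I would first try the Bessel-phase monotonicity bounds from \cite{DaiSunAnnuli}. When these fail, I would fall back on a variational radial trial comparison, accepting the smaller constant $b^{\mathrm c}_\delta$.
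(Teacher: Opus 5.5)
Your outline follows the paper's architecture: left-endpoint angular transfer to the real-order integral $M$, the exact action identity for the area term, one-sided rounding, and trials at low frequency. Your inversion at $E=\mu_k$ in Step~5 is correct. The gap is that the actual content of the theorem is not proved. That content is the bound $M(t)>Dt^2/4+b_\delta^{\mathrm c}t$ for \emph{every} $t>0$ and every width, after scaling to radii $1$ and $1+\delta$. The remedies you propose for the hard regions would also fail.

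Near $\nu\approx r\sqrt E$ (normalized $v=t$) at high frequency, the radial count is of order $t$ and must be matched up to a bounded error. The phase count does this through the clipped cap $c(v)$; a fallback to trial functions cannot. The two turning orders are also not symmetric difficulties. At $v=Rt$ nothing fails: the action is stationary there. After the reflection $y=Rt-v$, the phase $-f_t(Rt-y)+1/4$ is concave, with zero initial slope, curvature at least $1/(\pi t\sqrt D)$, and initial phase $1/4$. Lemma~\ref{lem:saw} then applies directly, with loss $\frac{\sqrt2}{3}\sqrt{\pi t\sqrt D}$, and the mean gain $1/4$ is kept. The real obstruction is at $v=t$. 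There the action changes convexity and the phase error $E_v(t)$ blows up like $(t^2-v^2)^{-3/2}$. So the core must stop at $ut$ with $u<1$, and the band $[ut,t]$ is paid for by losing its mean gain. Also, the Neumann gain is $1/4$ per hard endpoint, not $1/2$: $n_v-f_t$ has mean $1/2$ for $v<t$ and $1/4$ for $t<v<Rt$, which gives the coefficient $(R+1)/2$ in $M$.

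Your sentence ``this joins the ranges'' is where the real work lies, and your trials cannot do it uniformly in $\delta$. The constant trial gives only $M(t)\ge2a_\delta t$. This beats $Dt^2/4$ only for $x=\delta t<8a_\delta/(2+\delta)$, a threshold tending to $0$ as $\delta\to\infty$. The phase envelope, however, is positive only when $x$ exceeds a fixed constant, asymptotically $8\pi/9$. The reason is that the reflected outer loss divided by $t$, namely $\frac{\sqrt2}{3}\sqrt{\pi\delta\sqrt D/x}$, must be smaller than the outer mean gain $\delta/2$.

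Constants and cosines in $s$ spread their mass over the annulus. Their angular factor $\int s^{-1}u^2\dd s/\int su^2\dd s$ therefore exceeds $R^{-2}$ by a factor of order $\log R$. The paper instead uses $u=s^v$, which gives $\lambda_{v,0}<2v(v+1)/R^2$ and coverage up to $x=p(\delta)\to\sqrt{24}$. It also needs the refined surplus $\widehat V_R>v_*$ to keep a positive margin $e_\delta$ at that endpoint.

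Even at $\delta=5/2$, the constant trial stops at $x_0=15/4$, where the envelope with the generic core constant $1/3$ is still negative. That window closes only after retaining the initial phase $\set{x/\pi-3\delta/(8\pi x)}$ and using $C_+<1/10$ from Lemma~\ref{lem:all-phase-rounding}. The widths $5/2\le\delta\le10$ need the separate bridge of Lemma~\ref{lem:analytic-bridge}.

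Finally, $b_\delta^{\mathrm c}$ in \eqref{eq:Nq-uniform-b} is the smallest margin over this cover: $2a_\delta-15(\delta+2)/16$ and $1/250$ for $\delta\le5/2$, and $e_\delta$ and $7/400$ otherwise. It is not a surviving fraction of the arc gain. The mechanism of your Step~3 yields only the high-frequency limit $(R+1)/2$ of the normalized surplus. At low frequency that limit is swamped by the $O(\sqrt t)$ rounding losses and the cutoff loss.
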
}

{The two main theorems give simple closed coefficients
$a_{\mathrm D},a_{\mathrm N}>0$ that are uniform in $\beta$.
Section~\ref{sec:dirichlet-quantitative} constructs $c_\delta$
and proves Theorem~\ref{thm:dirichlet}; its full-arc estimate and
discrete radial-row inverse further strengthen that theorem.
For Neumann conditions, Section~\ref{sec:neumann} first proves
the strict continuous area comparison.
Section~\ref{sec:neumann-quantitative} retains its positive
margins, optimizes the trial functions and the turning cutoff,
and extracts $b_\delta^{\mathrm c}$ to prove
Theorem~\ref{thm:main}.  Its envelope inverse
\eqref{eq:Nq-envelope-inverse} and the Dirichlet inverse
\eqref{eq:Dq-sigma-eigen} give stronger, frequency-dependent
corrections, with leading term
$2\sqrt{W_k}/\paren{R-r}$.}

{These improvements are possible because the proof controls more
than the sign required by P\'olya's conjecture.  Integration of
the radial action already gives the exact area term, so the
remaining task is to estimate a surplus or deficit without
changing that coefficient. At the eigenvalue frequencies, the
main theorems use a single positive lower bound for this margin
divided by frequency.
The later formulas retain the margin's dependence on frequency,
use the individual radial rows instead of their integral when
advantageous, and reduce the explicit phase losses by optimizing
the trial and cutoff parameters.  Each refinement retains
information discarded in the simpler bound.  The additional
boundary-scale gain is justified by estimates for the remaining
errors, rather than by an assumption that an asymptotic expansion
has the correct sign at every energy.}

{Section~\ref{sec:boundary-optimality} has a different logical
role: it supplies an asymptotic benchmark for these estimates.
It proves that the circular-arc coefficient is sharp for the
continuous angular counts, and explains how sampling the angular
orders contributes the radial sides in the full sector law.
Its low-index example shows why this larger perimeter
coefficient cannot simply replace the explicit all-index
constants.  Thus the two-term law explains the target scale,
while the finite-frequency estimates prove the main theorems;
the latter do not follow by discarding an asymptotic remainder.
Optimality of a cutoff or trial below is understood within its
stated family, and is not a claim that the closed coefficients
$a_{\mathrm D}$ and $a_{\mathrm N}$ are globally best possible.
Finally, Corollary~\ref{cor:product} transfers the strict
Neumann inequality to products with bounded connected Lipschitz
factors.}

{Section~\ref{sec:reduction} develops the real-order reduction and
rounding estimates. Section~\ref{sec:dirichlet-quantitative} proves
the Dirichlet bounds. Sections~\ref{sec:neumann} and
\ref{sec:neumann-quantitative} establish and optimize the Neumann
count. Section~\ref{sec:boundary-optimality} identifies the boundary
coefficients and their scope, and Section~\ref{sec:products}
proves the product consequence.
\textcolor{editorial}{Lemma~\ref{lem:endpoint-estimates} in
Appendix~\ref{sec:elementary-estimates} collects the endpoint bounds
used to make the Neumann parameter ranges overlap.}}

\section{Preliminaries and continuous angular order}\label{sec:reduction}
\subsection{The radial family and angular transfer}
{Scaling all lengths by $r^{-1}$ multiplies the eigenvalues by $r^2$
and the area by $r^{-2}$. Their product, and hence the desired
counting inequality, is invariant. Normalize}
{\[
 r=1,\qquad R=1+\delta,\qquad\delta>0,
 \qquad D=R^2-1=\delta\paren{2+\delta}.
\]}
{The frequency $t$ is the square root of the energy, and $x=\delta t$
measures radial oscillation on the width scale.}

{For each real angular order $v\ge0$, consider the form}
\[
 a_v\bracket{u}
 =\int_1^R\paren{s\abs{u'}^2+\frac{v^2}{s}\abs{u}^2}\dd s
\]
{in $L^2\paren{\paren{1,R},s\dd s}$, with domain
$H^1\paren{1,R}$. Its eigenvalues $\lambda_{v,n}$, $n\ge0$,
solve}
{\[
 -u''-\frac1s u'+\frac{v^2}{s^2}u=\lambda u,
 \qquad u'\paren{1}=u'\paren{R}=0.
\]}
{This is the radial family of~\cite[Section~2.1]{DaiSunAnnuli},
with the order allowed to vary continuously. Define}
{\[
 n_v\paren{t^2}=\#\set{n\ge0:\lambda_{v,n}\le t^2},
 \qquad
 M\paren{t}=2\int_0^{+\infty}n_v\paren{t^2}\dd v.
\]}
{For Dirichlet conditions, use the same form $a_v$ on
$H_0^1(1,R)$ and write its eigenvalues as
$\lambda^{\mathrm D}_{v,n}$, $n\ge1$. Define
\[
 n_v^{\mathrm D}(t^2)
   =\#\{n\ge1:\lambda^{\mathrm D}_{v,n}\le t^2\},\qquad
 M_{\mathrm D}(t)=2\int_0^{+\infty}n_v^{\mathrm D}(t^2)\dd v.
\]
The notation without a superscript continues to denote the
Neumann family. Both families decrease in count as the angular
order increases; their different angular sampling conventions
will give the two comparisons below.}

{The factor two is convenient for the action identity. Our principal
radial estimate is}
\begin{proposition}\label{prop:continuous}
For every $\delta>0$ and $t>0$,
\[
 M\paren{t}>\frac D4t^2.
\]
\end{proposition}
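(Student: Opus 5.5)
The plan is to exchange the order of counting and integration, and to compare each radial count with the classical action. The exact area term is then produced by the $v$-integral, and the Neumann rounding supplies the strict surplus. First I will record the semiclassical identity that the target reproduces. For fixed $t$ and $v\ge0$, set
\[
\Phi_v(t)=\int_1^R\pos{t^2-\frac{v^2}{s^2}}^{1/2}\dd s .
\]
Fubini and the elementary integral $\int_0^{ts}(t^2-v^2/s^2)^{1/2}\dd v=\pi t^2 s/4$ give
\[
2\int_0^{+\infty}\frac{\Phi_v(t)}{\pi}\dd v=\frac2\pi\int_1^R\frac{\pi t^2s}{4}\dd s=\frac D4t^2 .
\]
So Proposition~\ref{prop:continuous} follows once I show $n_v(t^2)\ge \Phi_v(t)/\pi$ for all $v$, with strict inequality on a set of $v$ of positive measure. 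This is the ``real-order angular integration preserves the exact area term'' mechanism described in the introduction.

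Second, I will prove the pointwise comparison by a Prüfer (phase) argument. The Liouville substitution $s=e^y$, $y\in(0,\log R)$, turns the radial equation into $-u_{yy}+v^2u=\lambda e^{2y}u$ with $u_y=0$ at both ends. The weight $e^{2y}$ is increasing. Since $\lambda_{v,n}$ is increasing in $\lambda$ at fixed $n$, the count equals one plus the number of interior zeros of the solution at energy $t^2$ with $u_y(0)=0$, up to the endpoint convention. I will use the modified Prüfer angle with amplitude $k(y)=(t^2e^{2y}-v^2)^{1/2}$ on the classically allowed part. There
\[
\theta'=k+\frac{k'}{2k}\sin2\theta .
\]
On the forbidden part the solution is convex and has at most one zero. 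The Neumann condition gives
\[
n_v(t^2)\ge\floor{\frac{\theta(\log R)-\theta(0)}{\pi}}+1 ,
\]
and $\int k\dd y=\Phi_v(t)$ after changing back to $s$. The decisive point is $\floor{x}+1>x$: the Neumann count gains a full unit over the rounded phase. This unit must absorb the phase error $\int\frac{k'}{2k}\sin2\theta$. Because $k$ is monotone increasing in $y$ on the allowed region, this error is one-signed after pairing consecutive half-periods of $\sin2\theta$ against the decreasing weight $k'/k$. Its total size is controlled by the explicit Bessel-phase bounds of \cite{DaiSunAnnuli,Phase} that the introduction says are imported.

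Third, I will treat the regimes separately. For $v<t$ (the whole interval $[1,R]$ is allowed) and for $v>tR$ (where $n_v\ge1>0=\Phi_v$) the comparison is direct, and the latter range already gives strictness. The delicate range is the turning region $t\le v\le tR$, where the turning point $s=v/t$ lies inside $[1,R]$. There the phase error near the turning point is of order one and is not obviously beaten by the rounding surplus. I expect this to be the main obstacle.

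For that region I will first try a radial trial-function (min--max) lower bound for $n_v$ near the turning point instead of the Prüfer estimate. If a pointwise inequality still fails on a small set of $v$, I will fall back to an integrated comparison: the deficit on the turning range is paid for by the surplus $\int(n_v-\Phi_v/\pi)\dd v$ from $v>tR$ and from the rounding on $v<t$. Low frequencies $t\lesssim1/\delta$ need a separate check, where $n_v\ge1$ for $v\le tR$ already gives $M(t)\ge 2tR>Dt^2/4$ in a suitable range. The endpoint constants needed to make these ranges overlap are those of Lemma~\ref{lem:endpoint-estimates}.
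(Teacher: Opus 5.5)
\textbf{The pointwise reduction fails.} The inequality $n_v(t^2)\ge\Phi_v(t)/\pi=f_t(v)$ is false for \emph{every} $t>0$, so it cannot be repaired on a small exceptional set. Every nonzero $u$ satisfies $\int_1^Rs^{-1}\abs{u}^2\dd s>R^{-2}\int_1^Rs\abs{u}^2\dd s$, so $\lambda_{v,0}>v^2/R^2$ strictly for $v>0$. Hence $\lambda_{Rt,0}>t^2$, and by continuity in $v$ (Lemma~\ref{lem:order-continuity}) $n_v(t^2)=0$ on an interval $(v_0,Rt]$. But $f_t(v)=A_v(Rt)/\pi>0$ there.

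The same fact refutes two further claims in your plan:
\begin{itemize}
\item $n_v=0$, not $n_v\ge1$, for $v\ge Rt$, so that range supplies no strictness at all.
\item $n_v\ge1$ does not hold on all of $[0,tR]$, so $M(t)\ge2tR$ is false. The trials only certify the ground row on $[0,a_\delta t]$ (constant trial), or up to $v_*$ with $2v_*(v_*+1)=R^2t^2$ (the trial $s^v$).
\end{itemize}
At high frequency the failure occupies a positive proportion of orders. On $t<v<Rt$ the count is $\lfloor f_t(v)-\tfrac14+o(1)\rfloor+1$, which lies below $f_t(v)$ whenever $\{f_t(v)\}$ is below about $1/4$.

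You have also misplaced the delicate region. On $t<v<Rt$ the shift is a clean $-\tfrac14$. For $v<t$, by contrast, the available lower bound is $f_t(v)-E_v(t)/\pi$, which degenerates as $v\uparrow t$. That is where a cutoff $u<1$ and the clipped cap $c(v)$ are needed.

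\textbf{The missing idea is the mean of the rounding.} The whole proof therefore rests on your ``fallback'', and its key ingredient is absent. Using only $\lfloor y\rfloor+1>y$ discards exactly the information that is needed. It leaves net losses: $-\tfrac14$ on the outer band, the phase error on the core, and a cap near $v=t$. The integral of $n_v-f_t$ then comes out negative.

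The surplus comes from the \emph{mean} of the rounding. Write $\lfloor y\rfloor+1=y+\tfrac12-\psi(y)$. This gains $\tfrac12$ per unit order on a core $[0,ut]$ and $\tfrac14$ on $[t,Rt]$, a total of $ut+\tfrac\delta2t$ after the factor two. You must then bound $\int\psi(q(v))\dd v$ by one-sided directional rounding estimates for concave phases with $q'(0)=0$ (Lemmas~\ref{lem:saw} and~\ref{lem:all-phase-rounding}). The outer band must first be reflected about $v=Rt$, because the action is convex there. This costs only $O(\sqrt t)$, and so yields positivity for $x=\delta t$ large.

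\textbf{The overlap for every width is still missing.} The remaining hard part is making this high-frequency envelope meet the low-frequency trial range for \emph{every} width. The constant-trial range $x\le8a_\delta/(2+\delta)$ tends to $0$ as $\delta\to\infty$. This is why the paper introduces the power trial $s^v$, retains the initial phase through $C_+(a_{\mathrm N})$ on an intermediate window, and proves the separate bridge, Lemma~\ref{lem:analytic-bridge}. Citing Lemma~\ref{lem:endpoint-estimates} does not supply this overlap. Its constants certify the specific envelopes $G_\delta$ and $\mathcal H_{\delta,u}$, which your plan never constructs.
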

{It is proved in Section~\ref{sec:neumann}. The next lemma supplies
the monotonicity and continuity needed for its strict angular transfer.}

{\begin{lemma}\label{lem:order-continuity}
For $v\ge w\ge0$ and every $n\ge0$,
\[
 \frac{v^2-w^2}{R^2}
 \le\lambda_{v,n}-\lambda_{w,n}\le v^2-w^2.
\]
In particular, $n_v\paren{t^2}$ decreases with $v$, vanishes for
$v>Rt$, and $M$ is continuous on $\paren{0,+\infty}$. The same order bounds,
monotonicity, support bound, and continuity hold for
$\lambda^{\mathrm D}_{v,n}$, $n\ge1$, $n_v^{\mathrm D}$,
and $M_{\mathrm D}$.
\end{lemma}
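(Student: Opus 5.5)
The order bounds follow from the min--max principle applied to the family of forms $a_v$, which share a common form domain and a common Hilbert space. For $v\ge w\ge0$ and any $u$ in the domain,
\[
 a_v[u]-a_w[u]=\paren{v^2-w^2}\int_1^R\frac{\abs u^2}{s}\dd s .
\]
Since $1\le s\le R$ on the interval, $s^{-1}\le1= s\cdot s^{-2}\cdot s\le s$ and $s^{-1}\ge s/R^2$. Hence
\[
 \frac{v^2-w^2}{R^2}\int_1^R\abs u^2 s\dd s
 \le a_v[u]-a_w[u]\le\paren{v^2-w^2}\int_1^R\abs u^2 s\dd s .
\]
Thus $a_w+\frac{v^2-w^2}{R^2}\le a_v\le a_w+\paren{v^2-w^2}$ as quadratic forms relative to the weighted $L^2(s\dd s)$ norm. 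The Courant--Fischer characterization of the $(n+1)$-st eigenvalue, taken over $(n+1)$-dimensional subspaces of $H^1(1,R)$, is monotone under form inequalities and shifts by constants. It therefore gives both inequalities at once for every $n\ge0$. The same computation on $H_0^1(1,R)$ gives the Dirichlet statement for $n\ge1$.

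Monotonicity of $n_v(t^2)$ in $v$ is immediate from $\lambda_{v,n}\ge\lambda_{w,n}$. For the support bound, the lower order bound with $w=0$ gives $\lambda_{v,n}\ge\lambda_{0,n}+v^2/R^2\ge v^2/R^2$, using $\lambda_{0,n}\ge0$ in the Neumann case (and $>0$ in the Dirichlet case). So $v>Rt$ forces $\lambda_{v,n}>t^2$ for all $n$, hence $n_v(t^2)=0$. In particular the integral defining $M(t)$ runs over $[0,Rt]$ and is finite, because $n_v(t^2)\le n_0(t^2)<\infty$ by discreteness of the spectrum of a regular Sturm--Liouville problem.

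Continuity of $M$ is the only step needing care, since $n_v(t^2)$ is a step function in $t$. Write
\[
 M(t)=2\sum_{n}\abs{\set{v\ge0:\lambda_{v,n}\le t^2}} .
\]
By the order bounds, each map $v\mapsto\lambda_{v,n}$ is continuous and strictly increasing, with increments at least $(v^2-w^2)/R^2>0$. So $\set{v:\lambda_{v,n}\le t^2}$ is an interval $[0,V_n(t)]$ (or empty), where $V_n$ is the continuous inverse function, clipped at $0$. Consequently, for $t$ in a compact subset of $(0,\infty)$, only finitely many $n$ contribute (those with $\lambda_{0,n}\le t^2$ on the compact set), and each term $V_n(t)$ is continuous in $t$. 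The same argument applies verbatim to $M_{\mathrm D}$. The main obstacle is this continuity step, specifically the strict monotonicity in $v$ that makes the level sets have measure continuous in $t$, and that strict monotonicity is exactly what the lower order bound provides.
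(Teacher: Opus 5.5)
Your proof is correct. The order bounds, the monotonicity of $n_v(t^2)$ and the support bound are obtained as in the paper: min--max applied to $a_v[u]-a_w[u]=(v^2-w^2)\int_1^R s^{-1}|u|^2\dd s$, together with $R^{-2}\le s^{-2}\le 1$. Your chain $s^{-1}\le 1=s\cdot s^{-2}\cdot s\le s$ is oddly written, but it amounts to the correct $s^{-1}\le s$.

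The genuine difference is in the continuity step. The paper fixes $t_*$ and notes that for each relevant $n$ the equation $\lambda_{v,n}=t_*^2$ has at most one solution, so $n_v(t_j^2)\to n_v(t_*^2)$ for almost every $v$. It then concludes by dominated convergence, using the explicit row bound $\lambda_{0,n}\ge(\pi n/\delta)^2/R$ and the common support $[0,RB]$. You instead use Tonelli to write $M(t)$ as a finite sum of row lengths, each continuous as the clipped inverse of a continuous, strictly increasing function. Your finiteness of rows comes qualitatively from discreteness, via $n_v(t^2)\le n_0(t^2)$.

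Your route gives the slightly finer fact that each row length is continuous and nondecreasing in $t$. It is also the same row decomposition the paper uses later for $M_{\mathrm D}$ in the derivation of \eqref{eq:Dq-rows}. The paper's route avoids inverse functions and supplies an explicit domination, although only qualitative finiteness is needed for continuity.
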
}
\begin{proof}
{For every nonzero $u\in H^1\paren{1,R}$, the difference between
the two Rayleigh quotients is}
{\[
 \frac{a_v\bracket{u}-a_w\bracket{u}}
      {\int_1^R s\abs{u}^2\dd s}
 =\paren{v^2-w^2}
   \frac{\int_1^R s^{-1}\abs{u}^2\dd s}
        {\int_1^R s\abs{u}^2\dd s}.
\]}
{Since $R^{-2}\le s^{-2}\le1$, the last ratio lies in
$\bracket{R^{-2},1}$. Applying the min--max principle
to every finite-dimensional trial space gives both eigenvalue
bounds~\cite[Chapter~4]{Davies}. Taking $w=0$ and using
$\lambda_{0,n}\ge0$ gives $\lambda_{v,n}\ge v^2/R^2$.
Thus $n_v\paren{t^2}=0$ when $v>Rt$.}

{We also need a uniform bound on the number of radial eigenvalues.
The inequalities $1\le s\le R$ give}
{\[
 \frac{a_0\bracket{u}}{\int_1^R s\abs{u}^2\dd s}
 \ge\frac1R\frac{\int_1^R\abs{u'}^2\dd s}
                         {\int_1^R\abs{u}^2\dd s},
 \qquad
 \lambda_{0,n}\ge\frac1R\paren{\frac{\pi n}{\delta}}^2.
\]}
{Consequently, for $0<t\le B$ only
$n\le\delta B\sqrt R/\pi$ can contribute and the order support
is contained in $\bracket{0,RB}$.}

{Fix $t_*>0$. Each function $v\mapsto\lambda_{v,n}$ is continuous
by the two-sided bound and strictly increasing on
$\bracket{0,+\infty}$. Therefore the equation
$\lambda_{v,n}=t_*^2$ has at most one solution for each relevant
$n$, and these exceptional orders form a finite set. If
$t_j\to t_*$, then}
{\[
 n_v\paren{t_j^2}\longrightarrow n_v\paren{t_*^2}
 \quad\text{for almost every }v.
\]}
{The common support and radial-index bound just obtained allow
dominated convergence in the definition of $M$. This proves continuity.
It also shows why including a radial eigenvalue at its energy
threshold does not introduce a discontinuity after order integration.}
{For Dirichlet conditions, restrict the form domain to
$H_0^1(1,R)$ in the same min--max argument. The zero-order
radial comparison is now
\[
 \lambda^{\mathrm D}_{0,n}\ge\frac1R
       \left(\frac{\pi n}{\delta}\right)^2,\qquad n\ge1,
\]
so there is again a uniform finite number of contributing rows
on bounded frequency intervals. The same dominated-convergence
argument proves continuity of $M_{\mathrm D}$.}
\end{proof}

{\color{revision}
\begin{proposition}[Angular transfer and the strict energy cutoff]
\label{prop:angular-transfer}
For normalized radii $1<R$ and $0<\beta\le2\pi$, let
$S=S_{1,R,\beta}$ and $p=\pi/\beta$.
For every $E>0$,
\[
 N_S^{<}\paren{E}\ge\frac{M\paren{\sqrt E}}{2p}.
\]
For the inclusive Dirichlet count,
\[
 N_S^{\mathrm D}\paren E\le\frac{M_{\mathrm D}(\sqrt E)}{2p}.
\]
The two comparisons are illustrated in Figure~\ref{fig:sampling}.
Consequently, the Neumann estimate in
Proposition~\ref{prop:continuous} implies
\[
 N_S^{<}\paren{E}>\frac{\abs S}{4\pi}E,
 \qquad \mu_k\paren S<\frac{4\pi k}{\abs S}\quad(k\ge1).
\]
More generally, any lower bound $M\paren t>Dt^2/4+tB\paren t$
transfers to
\[
 N_S^{<}\paren E>\frac{\abs S}{4\pi}E
              +\frac{\sqrt E}{2p}B\paren{\sqrt E}.
\]
\end{proposition}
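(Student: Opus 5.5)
The plan is to diagonalize the sector Laplacian by separation of variables and then compare the resulting lattice sum over angular orders with the order integral defining $M$ and $M_{\mathrm D}$. In polar coordinates the Neumann form on $S=S_{1,R,\beta}$ is $\int_0^\beta\int_1^R\paren{s\abs{\partial_s u}^2+s^{-1}\abs{\partial_\theta u}^2}\dd s\dd\theta$ on $L^2\paren{s\dd s\dd\theta}$. At $\beta=2\pi$ this is, by the stated convention, the form on the polar rectangle with no identification of angular traces.

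Expanding in the complete angular Neumann basis $\cos\paren{jp\theta}$, $j\ge0$, on $\paren{0,\beta}$ reduces the form to the orthogonal sum $\bigoplus_{j\ge0}a_{jp}$. The standard tensor-product/min--max argument then shows that the Neumann spectrum of $S$ is exactly $\set{\lambda_{jp,n}}_{j,n\ge0}$, with multiplicities. For the Dirichlet problem one uses $\sin\paren{jp\theta}$, $j\ge1$, together with the radial family $\lambda^{\mathrm D}_{jp,n}$, $n\ge1$. Hence
\[
 N_S^<\paren E=\sum_{j\ge0}n^<_{jp}\paren E,\qquad
 N_S^{\mathrm D}\paren E=\sum_{j\ge1}n^{\mathrm D}_{jp}\paren E,
\]
where $n^<_v\paren E=\#\set{n\ge0:\lambda_{v,n}<E}$.

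By Lemma~\ref{lem:order-continuity}, $v\mapsto n^<_v\paren E$ and $v\mapsto n^{\mathrm D}_v\paren E$ are nonincreasing and vanish for $v>R\sqrt E$. For a nonincreasing function $f$, the left sum gives $p\sum_{j\ge0}f\paren{jp}\ge\int_0^\infty f\dd v$, and the right sum gives $p\sum_{j\ge1}f\paren{jp}\le\int_0^\infty f\dd v$. This yields $N_S^{\mathrm D}\paren E\le M_{\mathrm D}\paren{\sqrt E}/\paren{2p}$ at once.

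For the Neumann case one more step is needed. The strict count $n^<_v$ differs from the inclusive count $n_v\paren E$ only at orders with $\lambda_{v,n}=E$. Each $\lambda_{v,n}$ is strictly increasing in $v$, and only finitely many $n$ are relevant (the radial-index bound from the proof of Lemma~\ref{lem:order-continuity}). So this exceptional set is finite, the two integrals agree, and $N_S^<\paren E\ge M\paren{\sqrt E}/\paren{2p}$. This strict-versus-inclusive bookkeeping is the point needing most care, since it is exactly what later produces a strict eigenvalue inequality.

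For the consequences, $\abs S=\beta D/2$ and $p=\pi/\beta$ give $\abs S E/\paren{4\pi}=DE/\paren{8p}$. Hence Proposition~\ref{prop:continuous} gives
\[
 N_S^<\paren E\ge\frac{M\paren{\sqrt E}}{2p}>\frac{D E}{8p}=\frac{\abs S}{4\pi}E .
\]
In the same way, a bound $M\paren t>Dt^2/4+tB\paren t$ transfers to the stated form with the extra term $\sqrt E\,B\paren{\sqrt E}/\paren{2p}$.

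For the individual bound, take $E=\mu_k\paren S$ with $k\ge1$. Eigenvalues strictly below $\mu_k$ are among $\mu_0,\dots,\mu_{k-1}$, so $N_S^<\paren{\mu_k}\le k$. Therefore $k>\abs S\mu_k/\paren{4\pi}$, i.e. $\mu_k<4\pi k/\abs S$.

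The main obstacle I expect is justifying completeness of the separated basis and the identification of the spectrum. This is routine for $\beta<2\pi$. At the slit endpoint one must check that the unidentified polar-rectangle form really has the angular basis $\cos\paren{j\theta/2}$ (respectively $\sin$). I would handle both cases uniformly via the form decomposition on $L^2\paren{1,R;s\dd s}\otimes L^2\paren{0,\beta}$.
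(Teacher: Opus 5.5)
Your proposal is correct and follows the paper's argument: separation of variables into the angular rows $\cos(jp\theta)$, $j\ge0$, and $\sin(jp\theta)$, $j\ge1$, followed by the left- and right-endpoint rectangle comparisons for the monotone radial counts supplied by Lemma~\ref{lem:order-continuity}. The one difference is the strict Neumann cutoff. The paper takes $E_j\uparrow E$ and uses the continuity of $M$, whereas you apply the rectangle comparison directly to the strict radial count and note that it agrees with $n_v(E)$ except on a finite set of orders; since this is the same finite-exceptional-set fact the paper uses to prove continuity of $M$, your route is a slightly shorter version of the same proof (and evaluating at $E=\mu_k$ rather than $E=4\pi k/\abs S$ for the individual bound is equally valid).
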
}
\begin{proof}
{\color{revision}Write $N_S^{\le}\paren E$ for the Neumann count including
eigenvalues equal to $E$. The angular Neumann eigenfunctions are
$\cos\paren{jp\theta}$, $j\ge0$, each with multiplicity one.
Separation of variables gives}
{\[
 N_S^{\le}\paren{t^2}=\sum_{j=0}^{+\infty}n_{jp}\paren{t^2}.
\]}
{For $jp\le v\le\paren{j+1}p$, monotonicity implies
$n_v\paren{t^2}\le n_{jp}\paren{t^2}$. Integrating on each
interval and summing gives the left-endpoint comparison}
{\[
 pN_S^{\le}\paren{t^2}\ge\int_0^{+\infty}n_v\paren{t^2}\dd v
 =\frac12M\paren{t}.
\]}
{For Dirichlet conditions the angular eigenfunctions are
$\sin(jp\theta)$, $j\ge1$. On $[(j-1)p,jp]$,
monotonicity instead gives
$n_v^{\mathrm D}(t^2)\ge n_{jp}^{\mathrm D}(t^2)$, and hence
\[
 pN_S^{\mathrm D}(t^2)
 =p\sum_{j\ge1}n_{jp}^{\mathrm D}(t^2)
 \le\int_0^{+\infty}n_v^{\mathrm D}(t^2)\dd v
 =\frac12M_{\mathrm D}(t).
\]
This proves the Dirichlet transfer with its inclusive cutoff.
It will yield quantitative bounds once the continuous deficit
has been estimated in Section~\ref{sec:dirichlet-quantitative}.}
{The Neumann rectangle comparison above uses an inclusive energy cutoff. To obtain
strict eigenvalue inequalities, fix $E>0$ and let $E_j\uparrow E$
with $0<E_j<E$. The sector spectrum is discrete, so
$N_S^{\le}\paren{E_j}\to N_S^{<}\paren{E}$.
{\color{revision}Passing to the limit using the continuity of $M$ from
Lemma~\ref{lem:order-continuity} gives the first inequality below;
Proposition~\ref{prop:continuous}, proved in Section~\ref{sec:neumann},
gives the strict second inequality:}}
{\[
 pN_S^{<}\paren{E}
 \ge\frac12M\paren{\sqrt E}
 >\frac D8 E.
\]}
{Since $D/\paren{8p}=\abs{S}/\paren{4\pi}$, this gives the qualitative
strict counting inequality. Taking $E=4\pi k/\abs{S}$ gives
$N_S^{<}\paren{E}>k$: at least $k+1$ eigenvalues, starting with
$\mu_0$, lie strictly below $E$, and hence $\mu_k\paren{S}<E$.
Conversely, the strict eigenvalue inequalities imply the strict
counting bound by choosing $k=\floor{\abs{S} E/\paren{4\pi}}$
when $k\ge1$, and by the constant mode when $k=0$.
{\color{revision}For physical radii $r<rR$, the general remainder transfers as
\[
 N_{S_{r,rR,\beta}}^{<}\paren E
 >\frac{\abs{S_{r,rR,\beta}}}{4\pi}E
   +\frac{r\sqrt E}{2p}B\paren{r\sqrt E}.
\]
This is the form used in Section~\ref{sec:neumann-quantitative}.}}
\end{proof}

{\begin{figure}[htbp]
\centering
\begin{tikzpicture}[x=.95cm,y=.42cm,font=\small]
 \begin{scope}
  \node[font=\small\bfseries] at (2.1,7.2)
   {(a) Neumann: left endpoints};
  \foreach \j/\h in {0/5,1/4,2/3,3/1}{
   \fill[annBlue,opacity=.18] (\j,0) rectangle ({\j+1},\h);
   \draw[annBlue] (\j,0) rectangle ({\j+1},\h);
  }
  \fill[gray,opacity=.18] (0,0)--(0,5)--(.6,5)--(.6,4)
   --(1.7,4)--(1.7,3)--(2.4,3)--(2.4,2)--(2.8,2)
   --(2.8,1)--(3.3,1)--(3.3,0)--cycle;
  \draw[black,very thick] (0,5)--(.6,5)--(.6,4)--(1.7,4)
   --(1.7,3)--(2.4,3)--(2.4,2)--(2.8,2)--(2.8,1)
   --(3.3,1)--(3.3,0)--(4.25,0);
  \draw[->] (0,0)--(4.6,0) node[right] {$v$};
  \draw[->] (0,0)--(0,5.8) node[left] {$n_v(t^2)$};
  \foreach \j/\lab in {0/0,1/p,2/{2p},3/{3p},4/{4p}}{
   \draw (\j,0)--(\j,-.13) node[below] {$\lab$};
  }
  \node[anchor=north] at (2.1,-1.05)
   {$\displaystyle pN_S^{\le}(t^2)\ge\frac12M(t)$};
 \end{scope}
 \begin{scope}[xshift=7cm]
  \node[font=\small\bfseries] at (2.1,7.2)
   {(b) Dirichlet: right endpoints};
  \fill[gray,opacity=.12] (0,0)--(0,5)--(.6,5)--(.6,4)
   --(1.7,4)--(1.7,3)--(2.4,3)--(2.4,2)--(2.8,2)
   --(2.8,1)--(3.3,1)--(3.3,0)--cycle;
  \foreach \j/\h in {1/4,2/3,3/1}{
   \fill[annOrange,opacity=.28] ({\j-1},0) rectangle (\j,\h);
   \draw[annOrange] ({\j-1},0) rectangle (\j,\h);
  }
  \draw[black,very thick] (0,5)--(.6,5)--(.6,4)--(1.7,4)
   --(1.7,3)--(2.4,3)--(2.4,2)--(2.8,2)--(2.8,1)
   --(3.3,1)--(3.3,0)--(4.25,0);
  \draw[->] (0,0)--(4.6,0) node[right] {$v$};
  \draw[->] (0,0)--(0,5.8) node[left] {$n_v^{\mathrm D}(t^2)$};
  \foreach \j/\lab in {0/0,1/p,2/{2p},3/{3p},4/{4p}}{
   \draw (\j,0)--(\j,-.13) node[below] {$\lab$};
  }
  \node[anchor=north] at (2.1,-1.05)
   {$\displaystyle pN_S^{\mathrm D}(t^2)\le\frac12M_{\mathrm D}(t)$};
 \end{scope}
\end{tikzpicture}
\caption{Angular transfer for both boundary conditions.
(a) The Neumann left-endpoint rectangles cover the region under
the radial count; angular rows start at $j=0$.
(b) The Dirichlet right-endpoint rectangles lie inside that
region; angular rows start at $j=1$.
The staircase areas are $M(t)/2=\int_0^{+\infty}n_v(t^2)\dd v$
and $M_{\mathrm D}(t)/2=\int_0^{+\infty}n_v^{\mathrm D}(t^2)\dd v$.
For Neumann conditions the strict energy cutoff follows by
continuity in this integral. Theorems~\ref{thm:dirichlet}
and~\ref{thm:main} use these comparisons together with the
continuous deficit and surplus proved later. The staircases
are schematic, rather than an assertion that the two actual
radial counts coincide.}
\label{fig:sampling}
\label{fig:Dq-angular-transfer}
\end{figure}}

\subsection{The action and directional rounding}\label{sec:action}
{For $x>v\ge0$, let}
\[
 A_v\paren{x}=\sqrt{x^2-v^2}-v\arccos\paren{v/x},
\]
{with $A_v\paren{v}=0$ by continuity. The radial action is}
\[
 f_t\paren{v}=
 \begin{cases}
  \paren{A_v\paren{Rt}-A_v\paren{t}}/\pi,&0\le v\le t,\\
  A_v\paren{Rt}/\pi,&t\le v\le Rt,\\
  0,&v\ge Rt.
 \end{cases}
\]
{Equivalently, writing $\pos{z}=\max\set{z,0}$,}
{\[
 f_t\paren{v}=\frac1\pi\int_1^R\sqrt{\pos{t^2-v^2/s^2}}\dd s.
\]}
{This representation makes the exact area term transparent.
The calculation is the real-order action identity
of~\cite[Lemma~2.5]{DaiSunAnnuli}; explicitly, Tonelli's theorem
and $v=st\eta$ give}
{\[
\begin{aligned}
 2\int_0^{Rt}f_t\paren{v}\dd v
 &=\frac2\pi\int_1^R\int_0^{st}
                  \sqrt{t^2-v^2/s^2}\dd v\dd s\\
 &=\frac{2t^2}\pi\int_1^R s\dd s
                    \int_0^1\sqrt{1-\eta^2}\dd\eta
 =\frac{2t^2}\pi\frac{R^2-1}{2}\frac\pi4.
\end{aligned}
\]}
{Thus}
\begin{equation}\label{eq:action}
 2\int_0^{Rt}f_t\paren{v}\dd v=\frac D4t^2.
\end{equation}
{No metric comparison is made, so no loss in the Weyl coefficient occurs.}

{Differentiating $A_v$ with respect to its order gives}
{\[
 \partial_v A_v\paren{z}=-\arccos\paren{v/z},
 \qquad
 \partial_v^2 A_v\paren{z}=\frac1{\sqrt{z^2-v^2}}.
\]}
{For $0\le v<t$, the inverse-cosine terms cancel at $v=0$, and}
{\[
 f_t'\paren{0}=0,\qquad
 -f_t''\paren{v}
 =\frac1\pi\paren{\frac1{\sqrt{t^2-v^2}}
              -\frac1{\sqrt{R^2t^2-v^2}}}
 \ge\frac{\delta}{\pi Rt}.
\]}
{Indeed, the difference of reciprocal square roots increases with
$v$, since its derivative for $v>0$ is
$v\bracket{\paren{t^2-v^2}^{-3/2}-\paren{R^2t^2-v^2}^{-3/2}}>0$;
its value at zero is $\delta/\paren{Rt}$. On $t<v<Rt$, however,}
{\[
 f_t''\paren{v}=\frac1{\pi\sqrt{R^2t^2-v^2}}>0.
\]}
{The turning point $s=v/t$ then lies inside the radial interval.
This change of curvature is why the outer order region is reflected
before applying a concave-phase rounding estimate.}

Write $\set{y}=y-\floor{y}$ and $\psi\paren{y}=\set{y}-1/2$.
At integers, $\psi\paren{n}=-1/2$ preserves the counting convention
without affecting the integrals. The absolute-value bound
in~\cite[Lemma~2.8]{DaiSunAnnuli} is $1/\paren{2\sqrt\kappa}$.
The upper bound $1/\paren{3\sqrt\kappa}$ for $C^2$ phases
is proved in~\cite[Lemma~4.4]{DaiSunAnnuli}. The lemma below also gives
the sharp lower bound and the initial-phase refinements under the stated
absolute-continuity hypothesis.

{\begin{lemma}[Sharp one-sided rounding bounds]\label{lem:saw}
Let $q\in C^1\paren{\bracket{0,T}}$ have absolutely continuous
derivative. Suppose that $q'\paren{0}=0$ and
$q''\le-\kappa<0$ almost everywhere. Then
\begin{equation}\label{eq:one-sided-rounding}
 -\frac{\sqrt2}{6\sqrt\kappa}
 \le\int_0^T\psi\paren{q\paren{v}}\dd v
 \le\frac1{3\sqrt\kappa}.
\end{equation}
Both constants are sharp. More precisely, with
$a=\set{q\paren{0}}$, the following phase-dependent bounds hold:
\begin{align}
 \int_0^T\psi\paren{q\paren{v}}\dd v
 &\ge-\sqrt{\frac{2a}{\kappa}}
          \paren{\frac12-\frac{2a}{3}},
 &&0\le a\le\frac12,\label{eq:rounding-phase-lower}\\
 \int_0^T\psi\paren{q\paren{v}}\dd v
 &\le\frac{2\sqrt2}{3\sqrt\kappa}
          \paren{a-\frac12}^{3/2},
 &&\frac12\le a<1.\label{eq:rounding-phase-upper}
\end{align}
Consequently, the bounds relevant to the initial phases $1/4$ and
$3/4$ are
\[
a=\frac14:\quad
 \int_0^T\psi\paren{q}\dd v\ge-\frac{\sqrt2}{6\sqrt\kappa},
 \qquad
 a=\frac34:\quad
 \int_0^T\psi\paren{q}\dd v\le\frac{\sqrt2}{12\sqrt\kappa}.
\]
\end{lemma}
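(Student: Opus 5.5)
The plan is to change variables to the phase $y=q(v)$ and reduce everything to a weighted integral of the sawtooth against a monotone weight. Since $q'(0)=0$ and $q''\le-\kappa$ a.e., we have $q'(v)\le-\kappa v<0$ for $v>0$, so $q$ is a strictly decreasing bijection of $[0,T]$ onto $[q(T),q(0)]$. Writing $v=q^{-1}(y)$ and $g(y)=1/\abs{q'(q^{-1}(y))}$, we get
\[
 \int_0^T\psi\paren{q(v)}\dd v=\int_{q(T)}^{q(0)}\psi(y)\,g(y)\dd y .
\]
The weight $g$ is increasing in $y$, because $\abs{q'}$ decreases as $v\downarrow0$. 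It also has an explicit integrable singularity at the top. Indeed, $h(y)=q'(q^{-1}(y))^2$ satisfies $\dd h/\dd y=2q''\le-2\kappa$ a.e., by the chain rule and the absolute continuity of $q'$. Since $h(q(0))=0$, this gives $h(y)\ge2\kappa\paren{q(0)-y}$, and hence
\[
 g(y)\le\paren{2\kappa\paren{q(0)-y}}^{-1/2}.
\]

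I would then split $[q(T),q(0)]$ at integers. Put $n=\floor{q(0)}$ and $a=\set{q(0)}$.

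\emph{Complete and bottom periods.} For an integer $m$ and $c\in[m-1,m)$, let $\Psi(y)=\int_y^m\psi=(m-y)\paren{1-(m-y)}/2\ge0$. An Abel (Stieltjes) integration by parts gives
\[
 \int_c^m\psi g=g(c)\Psi(c)+\int_c^m\Psi\dd g\ge0,
\]
because $\Psi\ge0$ and $g$ is increasing. So every complete period below $n$, and the partial bottom period, contribute nonnegatively.

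\emph{Lower bound.} Only the top piece $[n,n+a]$ matters. For $a\le1/2$, $\psi<0$ there, so the bound on $g$ gives
\[
 \int_n^{n+a}\psi g\ge-\int_0^a\frac{1/2-a+w}{\sqrt{2\kappa w}}\dd w
 =-\sqrt{\frac{2a}\kappa}\paren{\frac12-\frac{2a}3},
\]
which is \eqref{eq:rounding-phase-lower}. For $a>1/2$, the same Abel argument applied from the top shows that the positive half $[n+1/2,n+a]$ at least offsets part of the negative half. I would reduce this case to the value at $a=1/2$, or more simply to the maximum of the previous expression. Maximizing $\sqrt{2a}\paren{1/2-2a/3}$ over $a$ gives $a=1/4$ and the value $\sqrt2/6$, which is the left inequality of \eqref{eq:one-sided-rounding}.

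\emph{Upper bound.} Here the signs are reversed. The cumulative sawtooth must be taken from the top, with the negative half-period $[m,m+1/2]$ paired against the positive half $[m+1/2,m+1]$ lying just above it. Since $g$ increases, the positive half carries the larger weight, so these pairs are no longer automatically signed. I would bound the total by the top positive excess $\int_{n+1/2}^{n+a}\psi g\le\int_0^{a-1/2}w\,(2\kappa w)^{-1/2}\dd w$. To this I would add a telescoping bound on the remaining pairs, comparing the weight differences with the singular majorant $\paren{2\kappa(q(0)-y)}^{-1/2}$. This should give \eqref{eq:rounding-phase-upper} with the stated constant $2\sqrt2/3$, and maximizing over $a$ together with the analogous $a<1/2$ case should yield $1/(3\sqrt\kappa)$, consistent with~\cite[Lemma~4.4]{DaiSunAnnuli}. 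This pairing of opposite half-periods against the decreasing-in-depth weight is the step I expect to be the main obstacle. The weight is unbounded near the top, so the telescoping must be done against the explicit majorant rather than against $g$ itself. I would first try to bound the whole series by comparing it with the same series computed for the model phase.

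\emph{Sharpness.} I would use the quadratic model $q(v)=q(0)-\kappa v^2/2$, for which $g$ equals the majorant exactly. Choosing $\set{q(0)}=1/4$, or the extremal phase for the upper bound, and taking $T$ so that $q(T)$ ends at an integer where the lower contributions vanish, shows that the constants are attained. The special values at $a=1/4$ and $a=3/4$ follow by substitution into \eqref{eq:rounding-phase-lower} and \eqref{eq:rounding-phase-upper}.
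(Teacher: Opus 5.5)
Your change of variables to $y=q(v)$, the majorant $g(y)\le\paren{2\kappa(q(0)-y)}^{-1/2}$, and the lower-bound half are sound and agree with the paper. Your integer blocks $[m-1,m]$ with $\Psi\ge0$ are exactly the paper's positive--negative pairs, read in the depth variable $q(0)-y$. For $a>\tfrac12$ it suffices to discard $[n+\tfrac12,n+a]$ and use $g(y)\le\paren{2\kappa(n+\tfrac12-y)}^{-1/2}$ on $[n,n+\tfrac12]$, which gives $-1/(6\sqrt\kappa)$.

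The upper bound has a genuine gap. With your grouping ($[m,m+\tfrac12]$ against $[m+\tfrac12,m+1]$), the lower blocks are precisely the integer periods you have just shown to be \emph{nonnegative}. An upper bound would then have to show that they are all absorbed by the single negative half $[n,n+\tfrac12]$, and the proposal gives no mechanism for this. A telescoping ``against the majorant'' is not available either: the pointwise bound $g(y)\le\paren{2\kappa(q(0)-y)}^{-1/2}$ says nothing about the increments of $g$. Your top-excess estimate is also wrong. On $[n+\tfrac12,n+a]$ one has $\psi(y)=y-n-\tfrac12$, and the majorant is singular at $y=n+a$, where $\psi$ is largest. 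With $l=a-\tfrac12$ the correct comparison is
\[
 \int_{n+1/2}^{n+a}\psi g\dd y\le\int_0^l\frac{l-w}{\sqrt{2\kappa w}}\dd w
 =\frac{2\sqrt2}{3\sqrt\kappa}\,l^{3/2}.
\]
Your integral $\int_0^l w(2\kappa w)^{-1/2}\dd w$ is half of this, and it is exceeded by the quadratic model stopped at $q(T)=n+\tfrac12$. So the top excess alone already carries the full constant in \eqref{eq:rounding-phase-upper}, and what must be shown is that everything below it is $\le0$.

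The missing idea is to group the half-periods the other way, into blocks $[m-\tfrac12,m+\tfrac12]$ centred at integers. In such a block the positive half lies below the negative one and therefore carries the smaller weight. Your own Abel identity then works with
\[
 \Phi(y)=\int_y^{m+1/2}\psi=
 \begin{cases}
 -\tfrac12\paren{y-m+\tfrac12}^2,&m-\tfrac12\le y\le m,\\
 -\tfrac12\paren{m+\tfrac12-y}^2,&m\le y\le m+\tfrac12.
 \end{cases}
\]
It gives $\int_c^{m+1/2}\psi g\dd y=g(c)\Phi(c)+\int_c^{m+1/2}\Phi\dd g\le0$ for every $c$ in the block. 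This covers complete blocks and the bottom truncation, and it is the paper's negative--positive pairing \eqref{eq:rounding-negative-positive}.

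For $a\ge\tfrac12$ only the top piece remains, which proves \eqref{eq:rounding-phase-upper}. For $a<\tfrac12$, discard the negative top piece $[n,n+a]$ and keep $[n-\tfrac12,n]$. There $g(y)\le\paren{2\kappa(n-y)}^{-1/2}$ gives $\int_0^{1/2}(\tfrac12-u)(2\kappa u)^{-1/2}\dd u=1/(3\sqrt\kappa)$. Finally, the constant $1/3$ is attained at $a=0$ with $T=1/\sqrt\kappa$, so $q(T)$ stops at a half-integer, not at an integer.
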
}

\begin{proof}
{The case $T=0$ is immediate, so assume $T>0$.
Put $s\left(v\right)=-q'\left(v\right)$ and $y\left(v\right)=q\left(0\right)-q\left(v\right)$. Absolute continuity gives
$s\left(v\right)\ge\kappa v$, since $s\left(0\right)=0$ and $s'\ge\kappa$ almost
everywhere. Hence $y'=s$ and $y$ is strictly increasing away from
zero. Applying the chain rule for absolutely continuous functions,}
{\[
 s\left(v\right)^2=2\int_0^v s\left(z\right)s'\left(z\right)\dd z
 \ge2\kappa\int_0^v s\left(z\right)\dd z=2\kappa y\left(v\right).
\]}
{Set $Y=y\left(T\right)$, and let $v=v\left(y\right)$ be the inverse map for $0<y\le Y$.
The phase integral becomes}
{\begin{equation}\label{eq:rounding-phase-change}
 \begin{aligned}
 I:=\int_0^T\psi\paren{q\left(v\right)}\dd v
   &=\int_0^Y g_a\left(y\right)w\left(y\right)\dd y,\\
 g_a\left(y\right)&=\psi\left(a-y\right),\qquad w\left(y\right)=\frac1{s\left(v\left(y\right)\right)}.
 \end{aligned}
\end{equation}}
{The weight is positive and nonincreasing, and}
{\begin{equation}\label{eq:rounding-weight}
 0<w\left(y\right)\le\frac1{\sqrt{2\kappa y}},\qquad y>0.
\end{equation}}
{One may first change variables on $\bracket{\varepsilon,T}$ and
then let $\varepsilon\downarrow0$: the majorant in
\eqref{eq:rounding-weight} is integrable at zero. Since $q$ is
strictly decreasing on $\paren{0,T}$, each integer level is attained
at most once. Thus the values of $\psi$ at the jumps have no effect
on the integral, although they determine the filled points in
Figure~\ref{fig:quadratic-sawtooth}.}

{We record both pairing signs, including every possible truncation.
If a negative half-period begins at $y=b$, then, up to its isolated
jump value,}
{\[
 g_a\left(b+t\right)=
 \begin{cases}-t,&0<t<1/2,\\1-t,&1/2<t<1.\end{cases}
\]}
{Its negative value $-z$ at $b+z$ is followed by the positive value
$z$ at $b+1-z$. The complete pair therefore has integral}
{\begin{equation}\label{eq:rounding-negative-positive}
 \int_0^{1/2}z\bracket{w\left(b+1-z\right)-w\left(b+z\right)}\dd z\le0.
\end{equation}}
{For a truncation of length $0\le h\le1/2$ the integrand is
nonpositive. If $1/2<h\le1$, the integral over the prefix of
length $h$ is exactly}
{\[
 \int_{1-h}^{1/2}z\bracket{w\left(b+1-z\right)-w\left(b+z\right)}\dd z
 -\int_0^{1-h}z\,w\left(b+z\right)\dd z\le0.
\]}
{The first term matches every included positive value to an earlier
negative value; the second consists of the unmatched negative values.}

{For a positive half-period followed by a negative one, the values
$z$ and $-z$ occur at $b+1/2-z$ and $b+1/2+z$, respectively.
The complete pair has integral}
{\begin{equation}\label{eq:rounding-positive-negative}
 \int_0^{1/2}z
 \bracket{w\left(b+1/2-z\right)-w\left(b+1/2+z\right)}\dd z\ge0.
\end{equation}}
{A prefix of length at most $1/2$ is nonnegative. For a longer prefix,
put $d=h-1/2\in\bracket{0,1/2}$. Its integral equals}
{\[
 \int_0^d z\bracket{w\left(b+1/2-z\right)-w\left(b+1/2+z\right)}\dd z
 +\int_d^{1/2}z\,w\left(b+1/2-z\right)\dd z\ge0.
\]}
{Here every included negative value has its earlier positive match;
only positive values can remain unmatched. These identities also
hold when $b=0$, by the integrability established above. Any tail
made of successive pairs is a finite collection of complete pairs
and at most one truncated pair, so its sign is the same. The two
matching arrangements are illustrated in
Figure~\ref{fig:rounding-pairing}.}

{\begin{figure}[htbp]
\centering
\begin{tikzpicture}[x=5.5cm,y=1.55cm,font=\small]
\begin{scope}[xshift=0.0cm]
\node at (0.5,2.12) {Negative then positive};
\draw[->,gray] (-0.05,0) -- (1.10,0) node[right] {$y$};
\draw[->,gray] (0,-0.65) -- (0,0.69) node[above] {$\psi$};
\draw[gray!35,densely dotted] (0,0.5) -- (1,0.5);
\draw[gray!35,densely dotted] (0,-0.5) -- (1,-0.5);
\node[anchor=east] at (-0.02,0.5) {$\frac12$};
\node[anchor=east] at (-0.02,-0.5) {$-\frac12$};
\node[anchor=north east] at (0,-0.64) {$b$};
\node[anchor=north] at (0.5,-0.64) {$b+\frac12$};
\node[anchor=north] at (1,-0.64) {$b+1$};
\draw[->,gray] (0,1.10) -- (0,1.90);
\draw[gray] (0,1.10) -- (1.03,1.10);
\draw[annOrange,thick] plot coordinates {(0.0000000,1.7500000) (0.0250000,1.7341463) (0.0500000,1.7190476) (0.0750000,1.7046512) (0.1000000,1.6909091) (0.1250000,1.6777778) (0.1500000,1.6652174) (0.1750000,1.6531915) (0.2000000,1.6416667) (0.2250000,1.6306122) (0.2500000,1.6200000) (0.2750000,1.6098039) (0.3000000,1.6000000) (0.3250000,1.5905660) (0.3500000,1.5814815) (0.3750000,1.5727273) (0.4000000,1.5642857) (0.4250000,1.5561404) (0.4500000,1.5482759) (0.4750000,1.5406780) (0.5000000,1.5333333) (0.5250000,1.5262295) (0.5500000,1.5193548) (0.5750000,1.5126984) (0.6000000,1.5062500) (0.6250000,1.5000000) (0.6500000,1.4939394) (0.6750000,1.4880597) (0.7000000,1.4823529) (0.7250000,1.4768116) (0.7500000,1.4714286) (0.7750000,1.4661972) (0.8000000,1.4611111) (0.8250000,1.4561644) (0.8500000,1.4513514) (0.8750000,1.4466667) (0.9000000,1.4421053) (0.9250000,1.4376623) (0.9500000,1.4333333) (0.9750000,1.4291139) (1.0000000,1.4250000)};
\node[anchor=west] at (0.75,1.70) {$w\left(y\right)$};
\draw[annBlue,thick] (0,0) -- (0.5,-0.5);
\draw[annBlue,thick] (0.5,0.5) -- (1,0);
\draw[gray,densely dotted] (0.5,-0.5) -- (0.5,0.5);
\fill[annBlue] (0.5,-0.5) circle (1.5pt);
\draw[annBlue,fill=white] (0.5,0.5) circle (1.7pt);
\draw[gray,densely dashed] (0.3,-0.3) -- (0.3,1.6000000);
\fill[annOrange] (0.3,1.6000000) circle (1.5pt);
\fill[annBlue] (0.3,-0.3) circle (1.5pt);
\draw[gray,densely dashed] (0.7,0.3) -- (0.7,1.4823529);
\fill[annOrange] (0.7,1.4823529) circle (1.5pt);
\fill[annBlue] (0.7,0.3) circle (1.5pt);
\node[anchor=east] at (0.27499999999999997,-0.3) {$-z$};
\node[anchor=west] at (0.725,0.3) {$z$};
\draw[<->,gray] (0.3,1.97) -- (0.7,1.97) node[midway,below] {$w_L\ge w_R$};
\end{scope}
\begin{scope}[xshift=7.5cm]
\node at (0.5,2.12) {Positive then negative};
\draw[->,gray] (-0.05,0) -- (1.10,0) node[right] {$y$};
\draw[->,gray] (0,-0.65) -- (0,0.69) node[above] {$\psi$};
\draw[gray!35,densely dotted] (0,0.5) -- (1,0.5);
\draw[gray!35,densely dotted] (0,-0.5) -- (1,-0.5);
\node[anchor=east] at (-0.02,0.5) {$\frac12$};
\node[anchor=east] at (-0.02,-0.5) {$-\frac12$};
\node[anchor=north east] at (0,-0.64) {$b$};
\node[anchor=north] at (0.5,-0.64) {$b+\frac12$};
\node[anchor=north] at (1,-0.64) {$b+1$};
\draw[->,gray] (0,1.10) -- (0,1.90);
\draw[gray] (0,1.10) -- (1.03,1.10);
\draw[annOrange,thick] plot coordinates {(0.0000000,1.7500000) (0.0250000,1.7341463) (0.0500000,1.7190476) (0.0750000,1.7046512) (0.1000000,1.6909091) (0.1250000,1.6777778) (0.1500000,1.6652174) (0.1750000,1.6531915) (0.2000000,1.6416667) (0.2250000,1.6306122) (0.2500000,1.6200000) (0.2750000,1.6098039) (0.3000000,1.6000000) (0.3250000,1.5905660) (0.3500000,1.5814815) (0.3750000,1.5727273) (0.4000000,1.5642857) (0.4250000,1.5561404) (0.4500000,1.5482759) (0.4750000,1.5406780) (0.5000000,1.5333333) (0.5250000,1.5262295) (0.5500000,1.5193548) (0.5750000,1.5126984) (0.6000000,1.5062500) (0.6250000,1.5000000) (0.6500000,1.4939394) (0.6750000,1.4880597) (0.7000000,1.4823529) (0.7250000,1.4768116) (0.7500000,1.4714286) (0.7750000,1.4661972) (0.8000000,1.4611111) (0.8250000,1.4561644) (0.8500000,1.4513514) (0.8750000,1.4466667) (0.9000000,1.4421053) (0.9250000,1.4376623) (0.9500000,1.4333333) (0.9750000,1.4291139) (1.0000000,1.4250000)};
\node[anchor=west] at (0.75,1.70) {$w\left(y\right)$};
\draw[annBlue,thick] (0,0.5) -- (1,-0.5);
\fill[annBlue] (0,-0.5) circle (1.5pt);
\draw[annBlue,fill=white] (0,0.5) circle (1.7pt);
\fill[annBlue] (1,-0.5) circle (1.5pt);
\draw[gray,densely dashed] (0.2,0.3) -- (0.2,1.6416667);
\fill[annOrange] (0.2,1.6416667) circle (1.5pt);
\fill[annBlue] (0.2,0.3) circle (1.5pt);
\draw[gray,densely dashed] (0.8,-0.3) -- (0.8,1.4611111);
\fill[annOrange] (0.8,1.4611111) circle (1.5pt);
\fill[annBlue] (0.8,-0.3) circle (1.5pt);
\node[anchor=east] at (0.17500000000000002,0.3) {$z$};
\node[anchor=west] at (0.8250000000000001,-0.3) {$-z$};
\draw[<->,gray] (0.2,1.97) -- (0.8,1.97) node[midway,below] {$w_L\ge w_R$};
\end{scope}
\end{tikzpicture}
\caption{Equal magnitudes are paired at an earlier and a later phase.
The upper curves depict a nonincreasing weight. In the left panel the
earlier value is $-z$, so the pair contributes
$z\left(w_R-w_L\right)\le0$; in the right panel it is $z$, so the pair contributes
$z\left(w_L-w_R\right)\ge0$. The explicit truncated-pair formulas show why
stopping inside either half-period preserves these signs.}
\label{fig:rounding-pairing}
\end{figure}}

{We first prove the upper bound. If $0\le a\le1/2$, the initial
negative interval is $0<y<a$. If $Y\le a$, then $I\le0$.
Otherwise discard that interval and retain the first positive
half-period, which lies in $a<y<a+1/2$. Its contribution, including
any truncation, is at most}
{\[
 \int_0^{1/2}\frac{1/2-z}{\sqrt{2\kappa\left(a+z\right)}}\dd z
 \le\int_0^{1/2}\frac{1/2-z}{\sqrt{2\kappa z}}\dd z
 =\frac1{3\sqrt\kappa}.
\]}
{If $Y>a+1/2$, the remaining tail starts with a negative half-period
and contributes at most zero by
\eqref{eq:rounding-negative-positive} and its truncated form.
The last integral is evaluated by the antiderivative
$2A\sqrt z-\left(2/3\right)z^{3/2}$ of $\left(A-z\right)/\sqrt z$, with $A=1/2$.}

{If $1/2\le a<1$, put $l=a-1/2$. The initial positive values are
$g_a\left(y\right)=l-y$ for $0<y<l$. A stop within this interval retains only
a subset of its positive contribution; a later stop adds a
nonpositive tail of negative--positive pairs. Consequently}
{\[
 I\le\frac1{\sqrt{2\kappa}}
 \bracket{2l\sqrt y-\frac23 y^{3/2}}_0^l
 =\frac{2\sqrt2}{3\sqrt\kappa}l^{3/2}.
\]}
{This proves~\eqref{eq:rounding-phase-upper}. Since $l<1/2$, its
right-hand side is at most $1/\paren{3\sqrt\kappa}$, completing
the universal upper bound. The argument includes $a=1/2$, when
$l=0$ and the whole integral is a nonpositive paired tail.}

{For the lower bound with $0\le a\le1/2$, the initial negative
values are $g_a\left(y\right)=-\left(1/2-a+y\right)$ on $0<y<a$. A stop in this interval
has loss at most that of the whole interval. After $y=a$, the tail
starts with a positive half-period and is nonnegative by
\eqref{eq:rounding-positive-negative}, including its final truncation.
Using the antiderivative $2A\sqrt y+\left(2/3\right)y^{3/2}$ gives}
{\begin{align*}
 -I&\le\frac1{\sqrt{2\kappa}}
       \int_0^a\frac{1/2-a+y}{\sqrt y}\dd y\\
 &=\frac1{\sqrt{2\kappa}}
       \bracket{2\paren{1/2-a}\sqrt y+\frac23y^{3/2}}_0^a
 =\sqrt{\frac{2a}{\kappa}}\paren{\frac12-\frac{2a}{3}}.
\end{align*}}
{This proves~\eqref{eq:rounding-phase-lower}, also at $a=0$ by an
empty initial interval. For $a>0$, differentiation gives}
{\[
 \frac{\mathrm d}{\mathrm d a}
 \bracket{\sqrt{2a}\paren{\frac12-\frac{2a}{3}}}
 =\frac{\sqrt2}{\sqrt a}\paren{\frac14-a}.
\]}
{The derivative changes sign at $a=1/4$, giving the
maximum $\sqrt2/6$ on $\bracket{0,1/2}$.}

{It remains to bound the loss when $1/2<a<1$. Put $l=a-1/2$ again.
The initial positive value $z$ occurs at $y=l-z$ for $0<z<l$;
its negative match $-z$ occurs at $y=l+z$. If $Y\le l$, then
$I\ge0$. For $l<Y\le a$, write $d=Y-l$. When $d\le l$ the
integral is}
{\[
 \int_0^d z\bracket{w\left(l-z\right)-w\left(l+z\right)}\dd z
 +\int_d^l z\,w\left(l-z\right)\dd z\ge0.
\]}
{When $l<d\le1/2$, it is}
{\begin{equation}\label{eq:rounding-initial-positive-pair}
 \int_0^l z\bracket{w\left(l-z\right)-w\left(l+z\right)}\dd z
 -\int_l^d z\,w\left(l+z\right)\dd z.
\end{equation}}
{Thus only the negative magnitudes $l<z<d$ can cause a loss. If
$Y>a$, the initial expression is the same with $d=1/2$, and the
tail after the first integer crossing at $y=a$ is a nonnegative
collection of positive--negative pairs. In every case,}
{\[
 I\ge-\int_l^{1/2}\frac{z}{\sqrt{2\kappa\left(l+z\right)}}\dd z
 \ge-\int_0^{1/2}\sqrt{\frac z{2\kappa}}\dd z
 =-\frac1{6\sqrt\kappa}
 >-\frac{\sqrt2}{6\sqrt\kappa}.
\]}
{The comparison uses $l+z\ge z$ and enlarges the integration
interval; the antiderivative of $\sqrt z$ is $\left(2/3\right)z^{3/2}$.
This completes both sides of~\eqref{eq:one-sided-rounding}.}

{For sharpness, take an integer $m$ and the quadratic
$q\left(v\right)=m+a-\kappa v^2/2$. Then $w\left(y\right)=1/\sqrt{2\kappa y}$,
so the curvature majorant is an equality. With $a=0$ and
$T=1/\sqrt\kappa$, the integrand equals
$1/2-\kappa v^2/2$ for $0<v<T$, and}
{\[
 \int_0^T\psi\paren{q\left(v\right)}\dd v
 =\bracket{\frac v2-\frac{\kappa v^3}{6}}_0^{1/\sqrt\kappa}
 =\frac1{3\sqrt\kappa}.
\]}
{With $a=1/4$ and $T=1/\sqrt{2\kappa}$, it equals
$-1/4-\kappa v^2/2$ except at the endpoint, and}
{\[
 \int_0^T\psi\paren{q\left(v\right)}\dd v
 =\bracket{-\frac v4-\frac{\kappa v^3}{6}}_0^{1/\sqrt{2\kappa}}
 =-\frac{\sqrt2}{6\sqrt\kappa}.
\]}
{For every $0<a\le1/2$, stopping the same quadratic at
$T=\sqrt{2a/\kappa}$ attains
\eqref{eq:rounding-phase-lower}. For every $1/2<a<1$, stopping
at $T=\sqrt{2\left(a-1/2\right)/\kappa}$ attains
\eqref{eq:rounding-phase-upper}. The zero endpoint constants are
limits as $T\downarrow0$. The exact examples at $a=0$, $1/4$,
and $3/4$ with $\kappa=1$ are shown in
Figure~\ref{fig:quadratic-sawtooth}.}
\end{proof}

{\begin{figure}[htbp]
\centering
\begin{tikzpicture}[x=1.65cm,y=2.25cm,font=\small]
\begin{scope}[xshift=0.00cm]
\fill[annBlue!12] (0,0) -- (0.0000000,0.5000000) -- (0.0222222,0.4997531) -- (0.0444444,0.4990123) -- (0.0666667,0.4977778) -- (0.0888889,0.4960494) -- (0.1111111,0.4938272) -- (0.1333333,0.4911111) -- (0.1555556,0.4879012) -- (0.1777778,0.4841975) -- (0.2000000,0.4800000) -- (0.2222222,0.4753086) -- (0.2444444,0.4701235) -- (0.2666667,0.4644444) -- (0.2888889,0.4582716) -- (0.3111111,0.4516049) -- (0.3333333,0.4444444) -- (0.3555556,0.4367901) -- (0.3777778,0.4286420) -- (0.4000000,0.4200000) -- (0.4222222,0.4108642) -- (0.4444444,0.4012346) -- (0.4666667,0.3911111) -- (0.4888889,0.3804938) -- (0.5111111,0.3693827) -- (0.5333333,0.3577778) -- (0.5555556,0.3456790) -- (0.5777778,0.3330864) -- (0.6000000,0.3200000) -- (0.6222222,0.3064198) -- (0.6444444,0.2923457) -- (0.6666667,0.2777778) -- (0.6888889,0.2627160) -- (0.7111111,0.2471605) -- (0.7333333,0.2311111) -- (0.7555556,0.2145679) -- (0.7777778,0.1975309) -- (0.8000000,0.1800000) -- (0.8222222,0.1619753) -- (0.8444444,0.1434568) -- (0.8666667,0.1244444) -- (0.8888889,0.1049383) -- (0.9111111,0.0849383) -- (0.9333333,0.0644444) -- (0.9555556,0.0434568) -- (0.9777778,0.0219753) -- (1.0000000,0.0000000) -- (1.0000000,0) -- cycle;
\draw[->,gray] (-0.08,0) -- (2.38,0) node[right] {$v$};
\draw[->,gray] (0,-0.63) -- (0,0.68) node[above] {$\psi$};
\draw[gray!35,densely dotted] (0,0.5) -- (2.25,0.5);
\draw[gray!35,densely dotted] (0,-0.5) -- (2.25,-0.5);
\node[anchor=east] at (-0.04,0.5) {$\frac12$};
\node[anchor=east] at (-0.04,-0.5) {$-\frac12$};
\node[anchor=north east] at (-0.02,-0.03) {$0$};
\draw (1,0.025) -- (1,-0.025) node[below] {$1$};
\draw (2,0.025) -- (2,-0.025) node[below] {$2$};
\node at (1.10,0.89) {$a=0$};
\draw[gray,dashed] (1.0000000,-0.57) -- (1.0000000,0.58);
\node[anchor=south] at (1.0000000,0.59) {$T$};
\draw[annBlue,thick] plot coordinates {(0.0000000,0.5000000) (0.0372161,0.4993075) (0.0744323,0.4972299) (0.1116484,0.4937673) (0.1488646,0.4889197) (0.1860807,0.4826870) (0.2232969,0.4750693) (0.2605130,0.4660665) (0.2977292,0.4556787) (0.3349453,0.4439058) (0.3721615,0.4307479) (0.4093776,0.4162050) (0.4465938,0.4002770) (0.4838099,0.3829640) (0.5210260,0.3642659) (0.5582422,0.3441828) (0.5954583,0.3227147) (0.6326745,0.2998615) (0.6698906,0.2756233) (0.7071068,0.2500000) (0.7443229,0.2229917) (0.7815391,0.1945983) (0.8187552,0.1648199) (0.8559714,0.1336565) (0.8931875,0.1011080) (0.9304037,0.0671745) (0.9676198,0.0318560) (1.0048360,-0.0048476) (1.0420521,-0.0429363) (1.0792682,-0.0824100) (1.1164844,-0.1232687) (1.1537005,-0.1655125) (1.1909167,-0.2091413) (1.2281328,-0.2541551) (1.2653490,-0.3005540) (1.3025651,-0.3483380) (1.3397813,-0.3975069) (1.3769974,-0.4480609) (1.4142136,-0.5000000)};
\draw[annBlue,thick] plot coordinates {(1.4142136,0.5000000) (1.4296290,0.4780805) (1.4450444,0.4559233) (1.4604599,0.4335285) (1.4758753,0.4108961) (1.4912907,0.3880260) (1.5067062,0.3649183) (1.5221216,0.3415729) (1.5375370,0.3179900) (1.5529525,0.2941693) (1.5683679,0.2701111) (1.5837833,0.2458152) (1.5991988,0.2212817) (1.6146142,0.1965105) (1.6300296,0.1715017) (1.6454451,0.1462553) (1.6608605,0.1207712) (1.6762759,0.0950495) (1.6916913,0.0690902) (1.7071068,0.0428932) (1.7225222,0.0164586) (1.7379376,-0.0102136) (1.7533531,-0.0371235) (1.7687685,-0.0642710) (1.7841839,-0.0916562) (1.7995994,-0.1192790) (1.8150148,-0.1471394) (1.8304302,-0.1752374) (1.8458457,-0.2035731) (1.8612611,-0.2321465) (1.8766765,-0.2609574) (1.8920920,-0.2900060) (1.9075074,-0.3192922) (1.9229228,-0.3488161) (1.9383383,-0.3785776) (1.9537537,-0.4085768) (1.9691691,-0.4388135) (1.9845846,-0.4692880) (2.0000000,-0.5000000)};
\draw[annBlue,thick] plot coordinates {(2.0000000,0.5000000) (2.0065789,0.4868205) (2.0131579,0.4735976) (2.0197368,0.4603315) (2.0263158,0.4470222) (2.0328947,0.4336695) (2.0394737,0.4202735) (2.0460526,0.4068343) (2.0526316,0.3933518) (2.0592105,0.3798260) (2.0657895,0.3662569) (2.0723684,0.3526446) (2.0789474,0.3389889) (2.0855263,0.3252900) (2.0921053,0.3115478) (2.0986842,0.2977623) (2.1052632,0.2839335) (2.1118421,0.2700615) (2.1184211,0.2561461) (2.1250000,0.2421875) (2.1315789,0.2281856) (2.1381579,0.2141404) (2.1447368,0.2000519) (2.1513158,0.1859202) (2.1578947,0.1717452) (2.1644737,0.1575268) (2.1710526,0.1432652) (2.1776316,0.1289604) (2.1842105,0.1146122) (2.1907895,0.1002207) (2.1973684,0.0857860) (2.2039474,0.0713080) (2.2105263,0.0567867) (2.2171053,0.0422221) (2.2236842,0.0276143) (2.2302632,0.0129631) (2.2368421,-0.0017313) (2.2434211,-0.0164690) (2.2500000,-0.0312500)};
\fill[annBlue] (0,-0.5000000) circle (1.5pt);
\draw[annBlue,fill=white] (0,0.5) circle (1.7pt);
\draw[gray,densely dotted] (1.4142136,-0.5) -- (1.4142136,0.5);
\fill[annBlue] (1.4142136,-0.5) circle (1.5pt);
\draw[annBlue,fill=white] (1.4142136,0.5) circle (1.7pt);
\draw[gray,densely dotted] (2.0000000,-0.5) -- (2.0000000,0.5);
\fill[annBlue] (2.0000000,-0.5) circle (1.5pt);
\draw[annBlue,fill=white] (2.0000000,0.5) circle (1.7pt);
\end{scope}
\begin{scope}[xshift=5.15cm]
\fill[annOrange!12] (0,0) -- (0.0000000,-0.2500000) -- (0.0157135,-0.2501235) -- (0.0314270,-0.2504938) -- (0.0471405,-0.2511111) -- (0.0628539,-0.2519753) -- (0.0785674,-0.2530864) -- (0.0942809,-0.2544444) -- (0.1099944,-0.2560494) -- (0.1257079,-0.2579012) -- (0.1414214,-0.2600000) -- (0.1571348,-0.2623457) -- (0.1728483,-0.2649383) -- (0.1885618,-0.2677778) -- (0.2042753,-0.2708642) -- (0.2199888,-0.2741975) -- (0.2357023,-0.2777778) -- (0.2514157,-0.2816049) -- (0.2671292,-0.2856790) -- (0.2828427,-0.2900000) -- (0.2985562,-0.2945679) -- (0.3142697,-0.2993827) -- (0.3299832,-0.3044444) -- (0.3456966,-0.3097531) -- (0.3614101,-0.3153086) -- (0.3771236,-0.3211111) -- (0.3928371,-0.3271605) -- (0.4085506,-0.3334568) -- (0.4242641,-0.3400000) -- (0.4399776,-0.3467901) -- (0.4556910,-0.3538272) -- (0.4714045,-0.3611111) -- (0.4871180,-0.3686420) -- (0.5028315,-0.3764198) -- (0.5185450,-0.3844444) -- (0.5342585,-0.3927160) -- (0.5499719,-0.4012346) -- (0.5656854,-0.4100000) -- (0.5813989,-0.4190123) -- (0.5971124,-0.4282716) -- (0.6128259,-0.4377778) -- (0.6285394,-0.4475309) -- (0.6442528,-0.4575309) -- (0.6599663,-0.4677778) -- (0.6756798,-0.4782716) -- (0.6913933,-0.4890123) -- (0.7071068,-0.5000000) -- (0.7071068,0) -- cycle;
\draw[->,gray] (-0.08,0) -- (2.38,0) node[right] {$v$};
\draw[->,gray] (0,-0.63) -- (0,0.68) node[above] {$\psi$};
\draw[gray!35,densely dotted] (0,0.5) -- (2.25,0.5);
\draw[gray!35,densely dotted] (0,-0.5) -- (2.25,-0.5);
\node[anchor=east] at (-0.04,0.5) {$\frac12$};
\node[anchor=east] at (-0.04,-0.5) {$-\frac12$};
\node[anchor=north east] at (-0.02,-0.03) {$0$};
\draw (1,0.025) -- (1,-0.025) node[below] {$1$};
\draw (2,0.025) -- (2,-0.025) node[below] {$2$};
\node at (1.10,0.89) {$a=\frac14$};
\draw[gray,dashed] (0.7071068,-0.57) -- (0.7071068,0.58);
\node[anchor=south] at (0.7071068,0.59) {$T$};
\draw[annBlue,thick] plot coordinates {(0.0000000,-0.2500000) (0.0186081,-0.2501731) (0.0372161,-0.2506925) (0.0558242,-0.2515582) (0.0744323,-0.2527701) (0.0930404,-0.2543283) (0.1116484,-0.2562327) (0.1302565,-0.2584834) (0.1488646,-0.2610803) (0.1674727,-0.2640235) (0.1860807,-0.2673130) (0.2046888,-0.2709488) (0.2232969,-0.2749307) (0.2419050,-0.2792590) (0.2605130,-0.2839335) (0.2791211,-0.2889543) (0.2977292,-0.2943213) (0.3163372,-0.3000346) (0.3349453,-0.3060942) (0.3535534,-0.3125000) (0.3721615,-0.3192521) (0.3907695,-0.3263504) (0.4093776,-0.3337950) (0.4279857,-0.3415859) (0.4465938,-0.3497230) (0.4652018,-0.3582064) (0.4838099,-0.3670360) (0.5024180,-0.3762119) (0.5210260,-0.3857341) (0.5396341,-0.3956025) (0.5582422,-0.4058172) (0.5768503,-0.4163781) (0.5954583,-0.4272853) (0.6140664,-0.4385388) (0.6326745,-0.4501385) (0.6512826,-0.4620845) (0.6698906,-0.4743767) (0.6884987,-0.4870152) (0.7071068,-0.5000000)};
\draw[annBlue,thick] plot coordinates {(0.7071068,0.5000000) (0.7301076,0.4834714) (0.7531085,0.4664138) (0.7761093,0.4488272) (0.7991102,0.4307115) (0.8221110,0.4120668) (0.8451118,0.3928930) (0.8681127,0.3731902) (0.8911135,0.3529583) (0.9141144,0.3321975) (0.9371152,0.3109075) (0.9601161,0.2890886) (0.9831169,0.2667406) (1.0061177,0.2438635) (1.0291186,0.2204575) (1.0521194,0.1965224) (1.0751203,0.1720582) (1.0981211,0.1470650) (1.1211220,0.1215428) (1.1441228,0.0954915) (1.1671236,0.0689112) (1.1901245,0.0418018) (1.2131253,0.0141635) (1.2361262,-0.0140040) (1.2591270,-0.0427004) (1.2821279,-0.0719259) (1.3051287,-0.1016805) (1.3281296,-0.1319641) (1.3511304,-0.1627767) (1.3741312,-0.1941183) (1.3971321,-0.2259890) (1.4201329,-0.2583888) (1.4431338,-0.2913175) (1.4661346,-0.3247754) (1.4891355,-0.3587622) (1.5121363,-0.3932781) (1.5351371,-0.4283230) (1.5581380,-0.4638970) (1.5811388,-0.5000000)};
\draw[annBlue,thick] plot coordinates {(1.5811388,0.5000000) (1.5953541,0.4774226) (1.6095694,0.4546431) (1.6237847,0.4316616) (1.6380000,0.4084779) (1.6522153,0.3850922) (1.6664306,0.3615044) (1.6806460,0.3377146) (1.6948613,0.3137227) (1.7090766,0.2895287) (1.7232919,0.2651326) (1.7375072,0.2405344) (1.7517225,0.2157342) (1.7659378,0.1907319) (1.7801531,0.1655275) (1.7943684,0.1401211) (1.8085837,0.1145125) (1.8227990,0.0887019) (1.8370143,0.0626893) (1.8512296,0.0364745) (1.8654449,0.0100577) (1.8796602,-0.0165612) (1.8938755,-0.0433822) (1.9080908,-0.0704052) (1.9223061,-0.0976304) (1.9365214,-0.1250576) (1.9507367,-0.1526869) (1.9649520,-0.1805182) (1.9791673,-0.2085516) (1.9933826,-0.2367871) (2.0075979,-0.2652247) (2.0218132,-0.2938644) (2.0360285,-0.3227061) (2.0502438,-0.3517499) (2.0644591,-0.3809958) (2.0786744,-0.4104437) (2.0928897,-0.4400937) (2.1071050,-0.4699458) (2.1213203,-0.5000000)};
\draw[annBlue,thick] plot coordinates {(2.1213203,0.5000000) (2.1247067,0.4928108) (2.1280930,0.4856102) (2.1314793,0.4783981) (2.1348656,0.4711745) (2.1382519,0.4639395) (2.1416382,0.4566929) (2.1450245,0.4494350) (2.1484108,0.4421655) (2.1517971,0.4348846) (2.1551834,0.4275922) (2.1585697,0.4202884) (2.1619560,0.4129731) (2.1653423,0.4056463) (2.1687286,0.3983080) (2.1721149,0.3909583) (2.1755013,0.3835972) (2.1788876,0.3762245) (2.1822739,0.3688404) (2.1856602,0.3614448) (2.1890465,0.3540378) (2.1924328,0.3466192) (2.1958191,0.3391893) (2.1992054,0.3317478) (2.2025917,0.3242949) (2.2059780,0.3168305) (2.2093643,0.3093547) (2.2127506,0.3018673) (2.2161369,0.2943685) (2.2195232,0.2868583) (2.2229095,0.2793366) (2.2262959,0.2718034) (2.2296822,0.2642587) (2.2330685,0.2567026) (2.2364548,0.2491350) (2.2398411,0.2415560) (2.2432274,0.2339654) (2.2466137,0.2263635) (2.2500000,0.2187500)};
\fill[annBlue] (0,-0.2500000) circle (1.5pt);
\draw[gray,densely dotted] (0.7071068,-0.5) -- (0.7071068,0.5);
\fill[annBlue] (0.7071068,-0.5) circle (1.5pt);
\draw[annBlue,fill=white] (0.7071068,0.5) circle (1.7pt);
\draw[gray,densely dotted] (1.5811388,-0.5) -- (1.5811388,0.5);
\fill[annBlue] (1.5811388,-0.5) circle (1.5pt);
\draw[annBlue,fill=white] (1.5811388,0.5) circle (1.7pt);
\draw[gray,densely dotted] (2.1213203,-0.5) -- (2.1213203,0.5);
\fill[annBlue] (2.1213203,-0.5) circle (1.5pt);
\draw[annBlue,fill=white] (2.1213203,0.5) circle (1.7pt);
\end{scope}
\begin{scope}[xshift=10.30cm]
\fill[annBlue!12] (0,0) -- (0.0000000,0.2500000) -- (0.0157135,0.2498765) -- (0.0314270,0.2495062) -- (0.0471405,0.2488889) -- (0.0628539,0.2480247) -- (0.0785674,0.2469136) -- (0.0942809,0.2455556) -- (0.1099944,0.2439506) -- (0.1257079,0.2420988) -- (0.1414214,0.2400000) -- (0.1571348,0.2376543) -- (0.1728483,0.2350617) -- (0.1885618,0.2322222) -- (0.2042753,0.2291358) -- (0.2199888,0.2258025) -- (0.2357023,0.2222222) -- (0.2514157,0.2183951) -- (0.2671292,0.2143210) -- (0.2828427,0.2100000) -- (0.2985562,0.2054321) -- (0.3142697,0.2006173) -- (0.3299832,0.1955556) -- (0.3456966,0.1902469) -- (0.3614101,0.1846914) -- (0.3771236,0.1788889) -- (0.3928371,0.1728395) -- (0.4085506,0.1665432) -- (0.4242641,0.1600000) -- (0.4399776,0.1532099) -- (0.4556910,0.1461728) -- (0.4714045,0.1388889) -- (0.4871180,0.1313580) -- (0.5028315,0.1235802) -- (0.5185450,0.1155556) -- (0.5342585,0.1072840) -- (0.5499719,0.0987654) -- (0.5656854,0.0900000) -- (0.5813989,0.0809877) -- (0.5971124,0.0717284) -- (0.6128259,0.0622222) -- (0.6285394,0.0524691) -- (0.6442528,0.0424691) -- (0.6599663,0.0322222) -- (0.6756798,0.0217284) -- (0.6913933,0.0109877) -- (0.7071068,-0.0000000) -- (0.7071068,0) -- cycle;
\draw[->,gray] (-0.08,0) -- (2.38,0) node[right] {$v$};
\draw[->,gray] (0,-0.63) -- (0,0.68) node[above] {$\psi$};
\draw[gray!35,densely dotted] (0,0.5) -- (2.25,0.5);
\draw[gray!35,densely dotted] (0,-0.5) -- (2.25,-0.5);
\node[anchor=east] at (-0.04,0.5) {$\frac12$};
\node[anchor=east] at (-0.04,-0.5) {$-\frac12$};
\node[anchor=north east] at (-0.02,-0.03) {$0$};
\draw (1,0.025) -- (1,-0.025) node[below] {$1$};
\draw (2,0.025) -- (2,-0.025) node[below] {$2$};
\node at (1.10,0.89) {$a=\frac34$};
\draw[gray,dashed] (0.7071068,-0.57) -- (0.7071068,0.58);
\node[anchor=south] at (0.7071068,0.59) {$T$};
\draw[annBlue,thick] plot coordinates {(0.0000000,0.2500000) (0.0322301,0.2494806) (0.0644603,0.2479224) (0.0966904,0.2453255) (0.1289205,0.2416898) (0.1611506,0.2370152) (0.1933808,0.2313019) (0.2256109,0.2245499) (0.2578410,0.2167590) (0.2900712,0.2079294) (0.3223013,0.1980609) (0.3545314,0.1871537) (0.3867615,0.1752078) (0.4189917,0.1622230) (0.4512218,0.1481994) (0.4834519,0.1331371) (0.5156821,0.1170360) (0.5479122,0.0998961) (0.5801423,0.0817175) (0.6123724,0.0625000) (0.6446026,0.0422438) (0.6768327,0.0209488) (0.7090628,-0.0013850) (0.7412929,-0.0247576) (0.7735231,-0.0491690) (0.8057532,-0.0746191) (0.8379833,-0.1011080) (0.8702135,-0.1286357) (0.9024436,-0.1572022) (0.9346737,-0.1868075) (0.9669038,-0.2174515) (0.9991340,-0.2491343) (1.0313641,-0.2818560) (1.0635942,-0.3156163) (1.0958244,-0.3504155) (1.1280545,-0.3862535) (1.1602846,-0.4231302) (1.1925147,-0.4610457) (1.2247449,-0.5000000)};
\draw[annBlue,thick] plot coordinates {(1.2247449,0.5000000) (1.2417471,0.4790321) (1.2587493,0.4577751) (1.2757515,0.4362291) (1.2927537,0.4143939) (1.3097559,0.3922697) (1.3267581,0.3698565) (1.3437603,0.3471541) (1.3607625,0.3241627) (1.3777647,0.3008822) (1.3947669,0.2773126) (1.4117691,0.2534540) (1.4287713,0.2293062) (1.4457735,0.2048694) (1.4627758,0.1801435) (1.4797780,0.1551286) (1.4967802,0.1298246) (1.5137824,0.1042315) (1.5307846,0.0783493) (1.5477868,0.0521780) (1.5647890,0.0257177) (1.5817912,-0.0010317) (1.5987934,-0.0280702) (1.6157956,-0.0553977) (1.6327978,-0.0830143) (1.6498000,-0.1109200) (1.6668022,-0.1391148) (1.6838044,-0.1675987) (1.7008066,-0.1963716) (1.7178088,-0.2254336) (1.7348110,-0.2547847) (1.7518133,-0.2844248) (1.7688155,-0.3143541) (1.7858177,-0.3445724) (1.8028199,-0.3750797) (1.8198221,-0.4058762) (1.8368243,-0.4369617) (1.8538265,-0.4683363) (1.8708287,-0.5000000)};
\draw[annBlue,thick] plot coordinates {(1.8708287,0.5000000) (1.8808069,0.4812827) (1.8907851,0.4624659) (1.9007633,0.4435495) (1.9107415,0.4245335) (1.9207197,0.4054180) (1.9306978,0.3862029) (1.9406760,0.3668883) (1.9506542,0.3474740) (1.9606324,0.3279602) (1.9706106,0.3083469) (1.9805888,0.2886340) (1.9905670,0.2688215) (2.0005452,0.2489095) (2.0105234,0.2288979) (2.0205016,0.2087867) (2.0304798,0.1885760) (2.0404580,0.1682657) (2.0504362,0.1478558) (2.0604143,0.1273464) (2.0703925,0.1067374) (2.0803707,0.0860288) (2.0903489,0.0652207) (2.1003271,0.0443130) (2.1103053,0.0233058) (2.1202835,0.0021989) (2.1302617,-0.0190074) (2.1402399,-0.0403134) (2.1502181,-0.0617189) (2.1601963,-0.0832240) (2.1701745,-0.1048286) (2.1801527,-0.1265328) (2.1901308,-0.1483366) (2.2001090,-0.1702399) (2.2100872,-0.1922428) (2.2200654,-0.2143452) (2.2300436,-0.2365473) (2.2400218,-0.2588488) (2.2500000,-0.2812500)};
\fill[annBlue] (0,0.2500000) circle (1.5pt);
\draw[gray,densely dotted] (1.2247449,-0.5) -- (1.2247449,0.5);
\fill[annBlue] (1.2247449,-0.5) circle (1.5pt);
\draw[annBlue,fill=white] (1.2247449,0.5) circle (1.7pt);
\draw[gray,densely dotted] (1.8708287,-0.5) -- (1.8708287,0.5);
\fill[annBlue] (1.8708287,-0.5) circle (1.5pt);
\draw[annBlue,fill=white] (1.8708287,0.5) circle (1.7pt);
\end{scope}
\end{tikzpicture}
\caption{The sawtooth along $q\left(v\right)=a-v^2/2$ for $a=0$, $1/4$, and
$3/4$. Filled points give $\psi\left(n\right)=-1/2$ at integer crossings; open
points give the right-hand limits $1/2$. The shaded initial intervals
end at $T=1$, $1/\sqrt2$, and $1/\sqrt2$, respectively, and give the
exact integrals $1/3$, $-\sqrt2/6$, and $\sqrt2/12$. The curves display
the jump convention and the equality examples used in the proof.}
\label{fig:quadratic-sawtooth}
\end{figure}}

{The lemma also improves the absolute-value estimate to
$\abs{\int_0^T\psi\paren{q}\dd v}\le1/\paren{3\sqrt\kappa}$.
The lower bound and the dependence on the initial phase
give the refinements needed for the two spectral counts. The next lemma
also retains the initial phase in the two cases not covered by
\eqref{eq:rounding-phase-lower} and
\eqref{eq:rounding-phase-upper}.}

% Keep the lemma opening and its first display on the same page.
\par
\ifdim\dimexpr\pagegoal-\pagetotal\relax<16\baselineskip
  \newpage
\fi
{\begin{lemma}[Bounds retaining every initial phase]\label{lem:all-phase-rounding}
Under the hypotheses of Lemma~\ref{lem:saw}, put
$a=\set{q\paren{0}}$. Define on $0\le a\le1$
\[
\begin{aligned}
 C_+\paren{a}&=
 \begin{cases}
 \displaystyle\int_0^{1/2-a}
       \frac{z}{\sqrt{2\paren{1/2+a-z}}}\dd z,
       &0\le a\le1/2,\\[4pt]
 \displaystyle\frac{2\sqrt2}{3}\paren{a-1/2}^{3/2},
       &1/2\le a\le1,
 \end{cases}\\
 C_-\paren{a}&=
 \begin{cases}
 \displaystyle\sqrt{2a}\paren{\frac12-\frac{2a}{3}},
       &0\le a\le1/2,\\[4pt]
 \displaystyle\int_{a-1/2}^{1/2}
       \frac{z}{\sqrt{2\paren{a-1/2+z}}}\dd z,
       &1/2\le a\le1.
 \end{cases}
 \end{aligned}
\]
Then
\[
-\frac{C_-\paren{a}}{\sqrt\kappa}
 \le\int_0^T\psi\paren{q\paren{v}}\dd v
 \le\frac{C_+\paren{a}}{\sqrt\kappa}.
\]
Both functions are nonnegative and continuous, with matching
endpoint values, and therefore extend continuously with period one.
Their maxima are $1/3$ and $\sqrt2/6$, respectively.
The integral branches have the equivalent expressions
\begin{align}
 C_+\paren{a}
 &=\frac{\sqrt2}{3}\,
   \frac{\paren{1/2-a}^2
     \paren{2\sqrt{1/2+a}+\sqrt{2a}}}
   {\paren{\sqrt{1/2+a}+\sqrt{2a}}^2},
 &&0\le a\le1/2,\label{eq:rounding-Cplus-rationalized}\\
 C_-\paren{a}
 &=\frac{\paren{1-a}\paren{1+2\sqrt{a\paren{2a-1}}}}
   {3\sqrt2\paren{\sqrt a+\sqrt{2a-1}}},
 &&1/2\le a\le1.\label{eq:rounding-Cminus-rationalized}
\end{align}
\end{lemma}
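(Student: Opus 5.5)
The plan is to reuse the change of variables \eqref{eq:rounding-phase-change} and the pairing arguments from the proof of Lemma~\ref{lem:saw}. Write $I=\int_0^Y g_a(y)w(y)\dd y$ with the nonincreasing weight $0<w(y)\le1/\sqrt{2\kappa y}$. Two of the four branches are already done. The branch $C_-(a)$ for $0\le a\le1/2$ is exactly \eqref{eq:rounding-phase-lower}, and $C_+(a)$ for $1/2\le a\le1$ is exactly \eqref{eq:rounding-phase-upper}. So only the two ``wrong-sign'' branches need new arguments: the upper bound for $0\le a\le1/2$, and the lower bound for $1/2\le a\le1$.

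\textbf{Upper bound for $0\le a\le1/2$.} The proof of Lemma~\ref{lem:saw} discarded the initial negative interval $0<y<a$; now I will keep it. On this interval $g_a(y)=-(1/2-a+y)$, so its values run from $-(1/2-a)$ down to $-1/2$. On the following positive half-period $a<y<a+1/2$, the value $z$ occurs at $y=a+1/2-z$. I pair each positive value $z\in(a,1/2)$, at $y=a+1/2-z$, with the earlier negative value $-z$, at $y=z-1/2+a$. That pair contributes $z\,[w(a+1/2-z)-w(a-1/2+z)]\le0$ by monotonicity, and a truncation only removes positive mass. The unmatched negative values in $(1/2,\,1/2+\ldots)$ are dropped, since they are nonpositive. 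This leaves the positive values $z\in(0,1/2-a)$, located at $y=1/2+a-z$, with weight at most $1/\sqrt{2\kappa(1/2+a-z)}$. The resulting bound is exactly $C_+(a)/\sqrt\kappa$. The tail after $y=a+1/2$ is a nonpositive chain of negative--positive pairs, by \eqref{eq:rounding-negative-positive}.

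\textbf{Lower bound for $1/2\le a\le1$.} This is essentially already in the proof of Lemma~\ref{lem:saw}, at \eqref{eq:rounding-initial-positive-pair}. There the loss was $\int_l^{1/2}z\,w(l+z)\dd z$ with $l=a-1/2$, and I simply do not discard the shift $l+z$. Using $w(l+z)\le1/\sqrt{2\kappa(l+z)}$ gives $C_-(a)/\sqrt\kappa$. The other cases, a stop before $y=l$ or a tail of positive--negative pairs, are nonnegative, exactly as shown before.

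\textbf{Regularity, maxima and closed forms.} For continuity and the matching endpoint values, I check the branches directly. At $a=1/2$, both branches of $C_+$ vanish, and the two branches of $C_-$ agree, with value $\sqrt2/6\cdot$ (check: $\int_0^{1/2}\sqrt{z/2}\,\dd z=1/6$ against $\sqrt1(1/2-1/3)=1/6$). At the endpoints, $C_+(0)=\int_0^{1/2}z/\sqrt{2(1/2-z)}\dd z=1/3=C_+(1)\cdot$? Here $C_+(1)=\frac{2\sqrt2}{3}(1/2)^{3/2}=1/3$, and $C_-(0)=0=C_-(1)$. So both functions extend with period one.

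The maxima follow from monotonicity of the branches. $C_+$ decreases on $[0,1/2]$, since the integrand decreases in $a$ and the interval shrinks, and it increases on $[1/2,1]$; hence its maximum is $1/3$. $C_-$ decreases on $[1/2,1]$ for the same reason, and on $[0,1/2]$ its maximum is $\sqrt2/6$, attained at $a=1/4$ as in Lemma~\ref{lem:saw}.

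The rationalized forms come from the antiderivative of $z/\sqrt{c-z}$ (respectively $z/\sqrt{c+z}$), namely $-2c\sqrt{c-z}+\frac23(c-z)^{3/2}$ up to sign. After evaluating, I multiply by conjugates: a difference such as $\sqrt{1/2+a}-\sqrt{2a}$ equals $(1/2-a)/(\sqrt{1/2+a}+\sqrt{2a})$, and this exposes the factor $(1/2-a)^2$ in \eqref{eq:rounding-Cplus-rationalized} and $(1-a)$ in \eqref{eq:rounding-Cminus-rationalized}.

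\textbf{Main obstacle.} The main difficulty is bookkeeping the truncations in the new upper-bound branch. A stop inside the initial negative interval, or partway through the pairing, has to be shown never to exceed the unmatched-positive bound. I will handle this with the same prefix formulas used for \eqref{eq:rounding-negative-positive}: in every prefix, each included positive value has an earlier negative match except those with $z<1/2-a$. The algebraic verification of the rationalized formulas is routine but tedious.
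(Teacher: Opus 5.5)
Your proposal is correct and follows the paper's proof. You read off two branches from Lemma~\ref{lem:saw}, pair the initial negative interval with the first positive half-period so that only the residual positive magnitudes $0<z<1/2-a$ survive, keep the shift $l+z$ in \eqref{eq:rounding-initial-positive-pair} for the other branch, and get the closed forms from the same antiderivatives and conjugate rationalization. Two slips: the matched magnitudes are $1/2-a<z<1/2$, not $a<z<1/2$, and in the untruncated case no negative value is left unmatched. Your remark that a stop inside the first positive half-period only discards positive mass is a slightly quicker way to handle truncations than the paper's case split on $r\ge l$ versus $r<l$.
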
}

\begin{proof}
{Keep $g_a$, $Y$, and the nonincreasing weight $w$ from
\eqref{eq:rounding-phase-change}. The lower bound when $a\le1/2$
and the upper bound when $a\ge1/2$ are exactly the initial
half-period estimates in Lemma~\ref{lem:saw}. We verify the two
remaining branches for every terminal phase $Y$.}

{For the upper bound with $0\le a\le1/2$, put $l=1/2-a$.
The initial negative interval $0<y<a$ consists of magnitudes
$l<z<1/2$, with $-z$ occurring at $y=z-l$. The first positive
interval is $a<y<a+1/2=1-l$, with $z$ occurring at
$y=1-l-z$. For $l\le z\le1/2$,}
{\[
 z-l\le1-l-z,\qquad
 z\bracket{w\left(1-l-z\right)-w\left(z-l\right)}\le0.
\]}
{These are precisely the negative and positive values that can be
matched. The residual positive magnitudes are $0<z<l$.
If $Y\le a$, the integral is nonpositive. If
$a<Y\le1-l$, set $r=1-l-Y\in\bracket{0,1/2}$; the included
positive magnitudes are $r<z<1/2$. When $r\ge l$, every included
positive value has a match, and the unmatched negative magnitudes
$l<z<r$ give}
{\[
 I=\int_r^{1/2}z\bracket{w\left(1-l-z\right)-w\left(z-l\right)}\dd z
   -\int_l^r z\,w\left(z-l\right)\dd z\le0.
\]}
{When $0\le r<l$, all negative values are matched, and}
{\[
 I=\int_l^{1/2}z\bracket{w\left(1-l-z\right)-w\left(z-l\right)}\dd z
   +\int_r^l z\,w\left(1-l-z\right)\dd z
 \le\int_0^l z\,w\left(1-l-z\right)\dd z.
\]}
{If $Y>1-l$, the same initial decomposition holds with $r=0$,
while the tail starts with a negative half-period and is
nonpositive by~\eqref{eq:rounding-negative-positive} and its
truncated version. Thus in every case,}
{\[
 I\le\int_0^l\frac{z}{\sqrt{2\kappa\left(1-l-z\right)}}\dd z
 =\frac{C_+\left(a\right)}{\sqrt\kappa}.
\]}
{The endpoint $a=0$ has no initial negative interval and $l=1/2$;
the endpoint $a=1/2$ has no residual positive interval and $l=0$.
The same identities include both endpoints.}

{For the lower bound with $1/2\le a<1$, put $l=a-1/2$.
Match $z$ at $y=l-z$ to $-z$ at $y=l+z$ for $0<z<l$.
The matched integral is nonnegative. A stop with $Y\le l$ is
nonnegative; a stop with $l<Y\le2l$ retains only negative values
that already have a positive match and leaves unmatched positive
values, so it is also nonnegative. For $2l<Y\le a$, formula
\eqref{eq:rounding-initial-positive-pair} with $d=Y-l$ leaves
only the residual negative magnitudes $l<z<d\le1/2$.
For $Y>a$, the same residual interval is complete, and the tail
of positive--negative pairs is nonnegative. It follows that}
{\[
 I\ge-\int_l^{1/2}\frac{z}{\sqrt{2\kappa\left(l+z\right)}}\dd z
 =-\frac{C_-\left(a\right)}{\sqrt\kappa}.
\]}
{When $a=1/2$ this is the whole initial negative half-period,
since $l=0$. The formulas at $a=1$ agree with those at $a=0$,
as checked below; the initial fractional part itself belongs to
$\bracket{0,1}$ with $1$ excluded.}

{We next evaluate the integral branches. For $0\le a\le1/2$,
write $l=1/2-a$, $B=1/2+a$, $p=\sqrt B$, and $r=\sqrt{2a}$.
The substitution $u=B-z$ gives}
{\begin{align*}
 C_+\left(a\right)
 &=\frac1{\sqrt2}\int_{B-l}^{B}\left(B-u\right)u^{-1/2}\dd u\\
 &=\frac1{\sqrt2}
   \bracket{2B\sqrt u-\frac23u^{3/2}}_{B-l}^{B}\\
 &=\frac{\sqrt2}{3}\paren{2p^3-3p^2r+r^3}
 =\frac{\sqrt2}{3}\left(p-r\right)^2\left(2p+r\right).
\end{align*}}
{Since $p^2-r^2=l$, replacing $p-r$ by $l/\left(p+r\right)$ proves
\eqref{eq:rounding-Cplus-rationalized}. This replacement also
avoids subtracting nearly equal square roots near $a=1/2$.}

{For $1/2\le a\le1$, write $l=a-1/2$, $p=\sqrt a$, and
$r=\sqrt{2a-1}$. Substitution $u=l+z$ yields}
{\begin{align*}
 C_-\left(a\right)
 &=\frac1{\sqrt2}\int_{2l}^{a}\left(u-l\right)u^{-1/2}\dd u\\
 &=\frac1{\sqrt2}
   \bracket{\frac23u^{3/2}-2l\sqrt u}_{2l}^{a}
 =\frac{\left(3-4a\right)p+r^3}{3\sqrt2}.
\end{align*}}
{The factorization follows by multiplying the numerator by $p+r$:}
{\[
 \bracket{\left(3-4a\right)p+r^3}\left(p+r\right)
 =\left(1-a\right)\paren{1+2pr},
\]}
{using $p^2=a$ and $r^2=2a-1$. Division by $p+r>0$ proves
\eqref{eq:rounding-Cminus-rationalized}, with no subtractive
cancellation near $a=1$.}

{All branches are nonnegative by their defining integrals or their
explicit positive factors. At the junction and endpoints they give}
{\[
 \begin{aligned}
 &C_+\left(1/2\right)=0,\qquad C_-\left(1/2\right)=1/6,\\
 &C_+\left(0\right)=C_+\left(1\right)=1/3,\qquad C_-\left(0\right)=C_-\left(1\right)=0.
 \end{aligned}
\]}
{The rationalized formulas have positive denominators throughout
their closed intervals, and hence are continuous there; in
particular they also establish continuity at the integrable
singularity of the original $C_+$ integral when $a=0$.
The other branches are continuous directly. Matching values at
$a=1/2$ and at $a=0,1$ therefore give the claimed continuous
periodic extensions.}

{For $C_+$ on $\bracket{0,1/2}$, increasing $a$ shrinks the
positive integration interval and increases the denominator at
every retained $z$, so this branch decreases from $1/3$ to zero.
On $\bracket{1/2,1}$ its power expression increases back to
$1/3$. For $C_-$ on $\bracket{0,1/2}$, the derivative computed
in Lemma~\ref{lem:saw} is positive before $1/4$, vanishes there,
and is negative afterwards; its maximum is $\sqrt2/6$.
On $\bracket{1/2,1}$, increasing $l=a-1/2$ both shrinks the
integration interval $\bracket{l,1/2}$ and enlarges its denominator.
This branch decreases from $1/6$ to zero. Thus}
{\[
 \max_{0\le a\le1} C_+\left(a\right)=\frac13,\qquad
 \max_{0\le a\le1} C_-\left(a\right)=\frac{\sqrt2}{6},
\]}
{as depicted in Figure~\ref{fig:rounding-phase-constants}.}
\end{proof}

{\begin{figure}[htbp]
\centering
\begin{tikzpicture}[x=11.7cm,y=11cm,font=\small]
\draw[->,gray] (-0.015,0) -- (1.055,0) node[right] {$a$};
\draw[->,gray] (0,-0.012) -- (0,0.365) node[above] {$C_\pm\left(a\right)$};
\draw[gray!35,densely dotted] (0,0.3333333) -- (1,0.3333333);
\draw[gray!35,densely dotted] (0,0.2357023) -- (0.25,0.2357023);
\draw[gray!35,densely dotted] (0.25,0) -- (0.25,0.2357023);
\draw[gray!35,densely dotted] (0.5,0) -- (0.5,0.1666667);
\node[anchor=east] at (-0.01,0.3333333) {$\frac13$};
\node[anchor=east] at (-0.01,0.2357023) {$\frac{\sqrt2}{6}$};
\node[anchor=east] at (-0.01,0.1666667) {$\frac16$};
\node[anchor=north east] at (0,-0.003) {$0$};
\node[anchor=north] at (0.25,-0.005) {$\frac14$};
\node[anchor=north] at (0.5,-0.005) {$\frac12$};
\node[anchor=north] at (1,-0.005) {$1$};
\draw[annBlue,thick] plot coordinates {(0.0000000,0.3333333) (0.0000454,0.3266441) (0.0001814,0.3200444) (0.0004082,0.3135330) (0.0007256,0.3071088) (0.0011338,0.3007706) (0.0016327,0.2945172) (0.0022222,0.2883477) (0.0029025,0.2822610) (0.0036735,0.2762560) (0.0045351,0.2703316) (0.0054875,0.2644870) (0.0065306,0.2587212) (0.0076644,0.2530331) (0.0088889,0.2474220) (0.0102041,0.2418869) (0.0116100,0.2364269) (0.0131066,0.2310411) (0.0146939,0.2257289) (0.0163719,0.2204893) (0.0181406,0.2153216) (0.0200000,0.2102250) (0.0219501,0.2051989) (0.0239909,0.2002424) (0.0261224,0.1953550) (0.0283447,0.1905359) (0.0306576,0.1857844) (0.0330612,0.1811001) (0.0355556,0.1764823) (0.0381406,0.1719303) (0.0408163,0.1674437) (0.0435828,0.1630219) (0.0464399,0.1586644) (0.0493878,0.1543707) (0.0524263,0.1501402) (0.0555556,0.1459727) (0.0587755,0.1418675) (0.0620862,0.1378244) (0.0654875,0.1338428) (0.0689796,0.1299225) (0.0725624,0.1260629) (0.0762358,0.1222639) (0.0800000,0.1185251) (0.0838549,0.1148461) (0.0878005,0.1112267) (0.0918367,0.1076665) (0.0959637,0.1041654) (0.1001814,0.1007231) (0.1044898,0.0973393) (0.1088889,0.0940138) (0.1133787,0.0907465) (0.1179592,0.0875371) (0.1226304,0.0843855) (0.1273923,0.0812915) (0.1322449,0.0782550) (0.1371882,0.0752759) (0.1422222,0.0723540) (0.1473469,0.0694893) (0.1525624,0.0666817) (0.1578685,0.0639310) (0.1632653,0.0612373) (0.1687528,0.0586004) (0.1743311,0.0560204) (0.1800000,0.0534972) (0.1857596,0.0510308) (0.1916100,0.0486213) (0.1975510,0.0462686) (0.2035828,0.0439727) (0.2097052,0.0417337) (0.2159184,0.0395516) (0.2222222,0.0374265) (0.2286168,0.0353584) (0.2351020,0.0333475) (0.2416780,0.0313937) (0.2483447,0.0294972) (0.2551020,0.0276582) (0.2619501,0.0258766) (0.2688889,0.0241526) (0.2759184,0.0224864) (0.2830385,0.0208780) (0.2902494,0.0193277) (0.2975510,0.0178356) (0.3049433,0.0164018) (0.3124263,0.0150265) (0.3200000,0.0137099) (0.3276644,0.0124522) (0.3354195,0.0112535) (0.3432653,0.0101141) (0.3512018,0.0090342) (0.3592290,0.0080139) (0.3673469,0.0070535) (0.3755556,0.0061532) (0.3838549,0.0053132) (0.3922449,0.0045339) (0.4007256,0.0038153) (0.4092971,0.0031578) (0.4179592,0.0025617) (0.4267120,0.0020271) (0.4355556,0.0015543) (0.4444898,0.0011437) (0.4535147,0.0007955) (0.4626304,0.0005099) (0.4718367,0.0002873) (0.4811338,0.0001279) (0.4905215,0.0000320) (0.5000000,0.0000000)};
\draw[annBlue,thick] plot coordinates {(0.5000000,0.0000000) (0.5055556,0.0003904) (0.5111111,0.0011042) (0.5166667,0.0020286) (0.5222222,0.0031232) (0.5277778,0.0043649) (0.5333333,0.0057378) (0.5388889,0.0072304) (0.5444444,0.0088338) (0.5500000,0.0105409) (0.5555556,0.0123457) (0.5611111,0.0142431) (0.5666667,0.0162288) (0.5722222,0.0182991) (0.5777778,0.0204506) (0.5833333,0.0226805) (0.5888889,0.0249859) (0.5944444,0.0273646) (0.6000000,0.0298142) (0.6055556,0.0323330) (0.6111111,0.0349189) (0.6166667,0.0375702) (0.6222222,0.0402855) (0.6277778,0.0430632) (0.6333333,0.0459020) (0.6388889,0.0488006) (0.6444444,0.0517577) (0.6500000,0.0547723) (0.6555556,0.0578432) (0.6611111,0.0609694) (0.6666667,0.0641500) (0.6722222,0.0673841) (0.6777778,0.0706708) (0.6833333,0.0740092) (0.6888889,0.0773986) (0.6944444,0.0808383) (0.7000000,0.0843274) (0.7055556,0.0878653) (0.7111111,0.0914514) (0.7166667,0.0950850) (0.7222222,0.0987654) (0.7277778,0.1024922) (0.7333333,0.1062647) (0.7388889,0.1100823) (0.7444444,0.1139447) (0.7500000,0.1178511) (0.7555556,0.1218012) (0.7611111,0.1257945) (0.7666667,0.1298305) (0.7722222,0.1339088) (0.7777778,0.1380289) (0.7833333,0.1421904) (0.7888889,0.1463929) (0.7944444,0.1506360) (0.8000000,0.1549193) (0.8055556,0.1592425) (0.8111111,0.1636052) (0.8166667,0.1680070) (0.8222222,0.1724475) (0.8277778,0.1769265) (0.8333333,0.1814437) (0.8388889,0.1859986) (0.8444444,0.1905911) (0.8500000,0.1952207) (0.8555556,0.1998872) (0.8611111,0.2045903) (0.8666667,0.2093297) (0.8722222,0.2141052) (0.8777778,0.2189164) (0.8833333,0.2237632) (0.8888889,0.2286452) (0.8944444,0.2335622) (0.9000000,0.2385139) (0.9055556,0.2435002) (0.9111111,0.2485207) (0.9166667,0.2535753) (0.9222222,0.2586636) (0.9277778,0.2637856) (0.9333333,0.2689409) (0.9388889,0.2741294) (0.9444444,0.2793508) (0.9500000,0.2846050) (0.9555556,0.2898917) (0.9611111,0.2952107) (0.9666667,0.3005619) (0.9722222,0.3059450) (0.9777778,0.3113599) (0.9833333,0.3168063) (0.9888889,0.3222842) (0.9944444,0.3277932) (1.0000000,0.3333333)};
\draw[annOrange,thick] plot coordinates {(0.0000000,0.0000000) (0.0000454,0.0047616) (0.0001814,0.0095215) (0.0004082,0.0142779) (0.0007256,0.0190292) (0.0011338,0.0237735) (0.0016327,0.0285092) (0.0022222,0.0332346) (0.0029025,0.0379478) (0.0036735,0.0426472) (0.0045351,0.0473311) (0.0054875,0.0519977) (0.0065306,0.0566453) (0.0076644,0.0612721) (0.0088889,0.0658765) (0.0102041,0.0704568) (0.0116100,0.0750110) (0.0131066,0.0795377) (0.0146939,0.0840350) (0.0163719,0.0885012) (0.0181406,0.0929345) (0.0200000,0.0973333) (0.0219501,0.1016959) (0.0239909,0.1060204) (0.0261224,0.1103052) (0.0283447,0.1145485) (0.0306576,0.1187486) (0.0330612,0.1229038) (0.0355556,0.1270123) (0.0381406,0.1310725) (0.0408163,0.1350826) (0.0435828,0.1390409) (0.0464399,0.1429455) (0.0493878,0.1467949) (0.0524263,0.1505873) (0.0555556,0.1543210) (0.0587755,0.1579942) (0.0620862,0.1616052) (0.0654875,0.1651522) (0.0689796,0.1686336) (0.0725624,0.1720477) (0.0762358,0.1753926) (0.0800000,0.1786667) (0.0838549,0.1818682) (0.0878005,0.1849954) (0.0918367,0.1880466) (0.0959637,0.1910201) (0.1001814,0.1939141) (0.1044898,0.1967269) (0.1088889,0.1994568) (0.1133787,0.2021020) (0.1179592,0.2046608) (0.1226304,0.2071316) (0.1273923,0.2095124) (0.1322449,0.2118017) (0.1371882,0.2139978) (0.1422222,0.2160988) (0.1473469,0.2181030) (0.1525624,0.2200088) (0.1578685,0.2218143) (0.1632653,0.2235180) (0.1687528,0.2251179) (0.1743311,0.2266125) (0.1800000,0.2280000) (0.1857596,0.2292786) (0.1916100,0.2304467) (0.1975510,0.2315024) (0.2035828,0.2324442) (0.2097052,0.2332701) (0.2159184,0.2339786) (0.2222222,0.2345679) (0.2286168,0.2350362) (0.2351020,0.2353819) (0.2416780,0.2356032) (0.2483447,0.2356984) (0.2551020,0.2356657) (0.2619501,0.2355034) (0.2688889,0.2352099) (0.2759184,0.2347833) (0.2830385,0.2342219) (0.2902494,0.2335241) (0.2975510,0.2326880) (0.3049433,0.2317121) (0.3124263,0.2305944) (0.3200000,0.2293333) (0.3276644,0.2279271) (0.3354195,0.2263741) (0.3432653,0.2246725) (0.3512018,0.2228206) (0.3592290,0.2208166) (0.3673469,0.2186589) (0.3755556,0.2163457) (0.3838549,0.2138752) (0.3922449,0.2112459) (0.4007256,0.2084558) (0.4092971,0.2055034) (0.4179592,0.2023868) (0.4267120,0.1991043) (0.4355556,0.1956543) (0.4444898,0.1920350) (0.4535147,0.1882446) (0.4626304,0.1842815) (0.4718367,0.1801438) (0.4811338,0.1758300) (0.4905215,0.1713382) (0.5000000,0.1666667)};
\draw[annOrange,thick] plot coordinates {(0.5000000,0.1666667) (0.5055556,0.1641419) (0.5111111,0.1617999) (0.5166667,0.1595613) (0.5222222,0.1573981) (0.5277778,0.1552942) (0.5333333,0.1532388) (0.5388889,0.1512242) (0.5444444,0.1492444) (0.5500000,0.1472947) (0.5555556,0.1453713) (0.5611111,0.1434710) (0.5666667,0.1415910) (0.5722222,0.1397290) (0.5777778,0.1378830) (0.5833333,0.1360512) (0.5888889,0.1342321) (0.5944444,0.1324242) (0.6000000,0.1306264) (0.6055556,0.1288374) (0.6111111,0.1270563) (0.6166667,0.1252822) (0.6222222,0.1235142) (0.6277778,0.1217516) (0.6333333,0.1199936) (0.6388889,0.1182397) (0.6444444,0.1164892) (0.6500000,0.1147415) (0.6555556,0.1129963) (0.6611111,0.1112529) (0.6666667,0.1095110) (0.6722222,0.1077701) (0.6777778,0.1060299) (0.6833333,0.1042900) (0.6888889,0.1025501) (0.6944444,0.1008099) (0.7000000,0.0990690) (0.7055556,0.0973273) (0.7111111,0.0955844) (0.7166667,0.0938401) (0.7222222,0.0920942) (0.7277778,0.0903465) (0.7333333,0.0885967) (0.7388889,0.0868447) (0.7444444,0.0850903) (0.7500000,0.0833333) (0.7555556,0.0815736) (0.7611111,0.0798110) (0.7666667,0.0780454) (0.7722222,0.0762766) (0.7777778,0.0745045) (0.7833333,0.0727290) (0.7888889,0.0709499) (0.7944444,0.0691673) (0.8000000,0.0673808) (0.8055556,0.0655905) (0.8111111,0.0637963) (0.8166667,0.0619980) (0.8222222,0.0601956) (0.8277778,0.0583890) (0.8333333,0.0565781) (0.8388889,0.0547629) (0.8444444,0.0529433) (0.8500000,0.0511192) (0.8555556,0.0492905) (0.8611111,0.0474572) (0.8666667,0.0456193) (0.8722222,0.0437766) (0.8777778,0.0419292) (0.8833333,0.0400769) (0.8888889,0.0382198) (0.8944444,0.0363577) (0.9000000,0.0344907) (0.9055556,0.0326187) (0.9111111,0.0307417) (0.9166667,0.0288596) (0.9222222,0.0269724) (0.9277778,0.0250801) (0.9333333,0.0231826) (0.9388889,0.0212799) (0.9444444,0.0193720) (0.9500000,0.0174588) (0.9555556,0.0155403) (0.9611111,0.0136166) (0.9666667,0.0116875) (0.9722222,0.0097531) (0.9777778,0.0078133) (0.9833333,0.0058681) (0.9888889,0.0039175) (0.9944444,0.0019615) (1.0000000,0.0000000)};
\fill[annBlue] (0,0.3333333) circle (1.8pt);
\fill[annBlue] (1,0.3333333) circle (1.8pt);
\fill[annBlue] (0.5,0) circle (1.8pt);
\fill[annOrange] (0,0) circle (1.8pt);
\fill[annOrange] (1,0) circle (1.8pt);
\fill[annOrange] (0.25,0.2357023) circle (1.8pt);
\fill[annOrange] (0.5,0.1666667) circle (1.8pt);
\node[annBlue,anchor=west] at (0.83,0.283) {$C_+\left(a\right)$};
\node[annOrange,anchor=west] at (0.64,0.125) {$C_-\left(a\right)$};
\end{tikzpicture}
\caption{The continuous phase bounds over one period. The upper
constant $C_+$ decreases to zero at $a=1/2$ and then increases to its
matching endpoint value $1/3$. The lower constant $C_-$ reaches
$\sqrt2/6$ at $a=1/4$, equals $1/6$ at $a=1/2$, and vanishes at both
endpoints. The extrema and endpoint identifications follow from
the formulas and monotonicity in the proof of
Lemma~\ref{lem:all-phase-rounding}.}
\label{fig:rounding-phase-constants}
\end{figure}}

{The universal constants and the two initial adverse half-period
branches are sharp by Lemma~\ref{lem:saw}. The matched residual
formulas supply bounds at every remaining initial phase.
In particular, $C_+$ decreases up to $1/2$ and increases thereafter.
Thus for $0\le a_0\le a_1\le1$,}
{\[
 \max_{a_0\le a\le a_1}C_+\paren{a}
 =\max\set{C_+\paren{a_0},C_+\paren{a_1}}.
\]}
{This endpoint control bounds an entire initial-phase interval even
when it crosses $1/2$, and is used in the frequency estimates below.}

\section{\revblue{Explicit Dirichlet gaps}}\label{sec:dirichlet-quantitative}
{\color{editorial}This section has two objectives. First, we prove
Theorem~\ref{thm:dirichlet} by finding a positive lower bound for
the continuous deficit at every frequency at which a sector
eigenvalue can occur. A finite-row estimate controls low
frequencies, and a phase estimate controls the tail; the switching
calculation in Subsection~3.2 makes the two ranges overlap.
Second, Subsections~3.3--3.4 retain more information from those
estimates. They strengthen the explicit lower bounds and recover
the full circular-arc coefficient at high frequency. These
improvements strengthen the main theorem, while the proof of its
all-index assertion does not depend on them.}

{Normalize the inner radius to one and put}
{\[
 R>1,\qquad \delta=R-1,\qquad D=R^2-1,\qquad
 p=\frac\pi\beta,\qquad x=\delta t,\qquad A=\frac{R+1}4.
\]}
{Use the Dirichlet radial family $\lambda^{\mathrm D}_{v,n}$,
$n\ge1$, and its counts $n_v^{\mathrm D}$ and $M_{\mathrm D}$
defined in Section~\ref{sec:reduction}. Its continuous deficit is}
{\[
 B_{\mathrm D}\paren{t}=\frac D4t^2-M_{\mathrm D}\paren{t}.
\]}
{The qualitative P\'olya comparison follows from
$B_{\mathrm D}(t)\ge0$ at the relevant frequencies. The main
theorem proves the stronger uniform bound
\[
 c_\delta\le\inf_{\delta t\ge\pi}
                   \frac{B_{\mathrm D}\paren t}{t}.
\]
Replacing a frequency-dependent deficit by $c_\delta t$ makes
the result simple and uniform, but discards the excess above that
lower envelope. The later improvements retain that excess and
the angular floor information that continuous integration
discards. In every case the leading area coefficient stays
unchanged.}
{The term $Dt^2/4$ is the area term in this continuous angular
count. The two circular arcs contribute the expected linear
deficit $(R+1)t/2$; right-endpoint sampling will transfer any
lower bound for $B_{\mathrm D}$ to the sector count. The radial
sides enter through that sampling, and are not part of the
continuous count. Section~\ref{sec:boundary-optimality} proves
these coefficient statements. We use them here to explain the
target of the refinements, rather than as assumptions in the
proof of Theorem~\ref{thm:dirichlet}.}

\subsection{\revblue{Finite radial rows and directional rounding}}
{The unitary substitution $z=\sqrt{s}\,u$ transforms the radial
operator, with Dirichlet endpoint conditions, into}
{\[ -z''+\paren{v^2-1/4}s^{-2}z. \]}  {Taking the minimum of this signed potential
and applying the min--max principle gives}
{\begin{equation}\label{eq:Dq-signed}
 \lambda^{\mathrm D}_{v,n}\ge
 \paren{\frac{\pi n}\delta}^2+
 \begin{cases}
  v^2-1/4,&0\le v\le1/2,\\
  \paren{v^2-1/4}/R^2,&v\ge1/2.
 \end{cases}
\end{equation}}
{The potential is strictly above its minimum in the interior when
$v\ne1/2$, so the comparison is strict at those orders.  At
$v=1/2$ the potential is zero and equality holds.}

{\color{revision}To obtain the row lengths, fix $n\ge1$ and put
$y=t^2-\paren{\pi n/\delta}^2$.  If $y<0$, the branch $v\ge1/2$
in \eqref{eq:Dq-signed} cannot contribute, while the other branch
requires $0\le v\le\sqrt{y+1/4}$ when $y+1/4\ge0$.
If $y\ge0$, all orders $0\le v\le1/2$ are permitted by the comparison,
and its second branch extends the permitted interval up to
$\sqrt{R^2y+1/4}$.  Thus the set of orders for which
$\lambda^{\mathrm D}_{v,n}\le t^2$ is contained in an interval
starting at zero whose length is at most $P_n\paren{\delta,x}/\delta$,
where $x=\delta t$ and}
{\[
 P_n\paren{\delta,x}=
 \begin{cases}
  \sqrt{\pos{x^2-\pi^2n^2+\delta^2/4}},&x<\pi n,\\
  \sqrt{R^2\paren{x^2-\pi^2n^2}+\delta^2/4},&x\ge\pi n.
 \end{cases}
\]}
{\color{revision}At $x=\pi n$ the two formulas agree and give
$P_n=\delta/2$.  Moreover, $P_n=0$ whenever
$\pi^2n^2\ge x^2+\delta^2/4$, so only finitely many rows contribute.
Integrating each row in the order variable, or equivalently applying
Tonelli's theorem to the nonnegative count, gives
\[
 M_{\mathrm D}\paren t
 =2\sum_{n\ge1}\bigl|\set{v\ge0:\lambda^{\mathrm D}_{v,n}\le t^2}\bigr|
 \le\frac2\delta\sum_{n\ge1}P_n\paren{\delta,x}.
\]
Since $Dt^2/4=Ax^2/\delta$, it follows that}
{\begin{equation}\label{eq:Dq-rows}
 B_{\mathrm D}\paren{t}\ge
 Q_\delta\paren{x}:={\frac{1}{\delta}}
 \paren{Ax^2-2\sum_{n\ge1}P_n\paren{\delta,x}}.
\end{equation}}

{\color{revision}Figure~\ref{fig:Dq-row} illustrates the two branches of this bound.}

{\color{revision}\begin{figure}[htbp]
\centering
\begin{tikzpicture}
\begin{axis}[width=.72\textwidth,height=5.1cm,xlabel={$x$},ylabel={$P_1\paren{1,x}$},xmin=3.03,xmax=3.38,ymin=0,ymax=2.6,xtick={3.05,3.141592654,3.3},xticklabels={$3.05$,$\pi$,$3.3$},ytick={0,.5,1,2},tick label style={font=\small}]
\addplot[annBlue,thick,] coordinates {(3.03,0) (3.03145833,0) (3.03291667,0) (3.034375,0) (3.03583333,0) (3.03729167,0) (3.03875,0) (3.04020833,0) (3.04166667,0) (3.043125,0) (3.04458333,0) (3.04604167,0) (3.0475,0) (3.04895833,0) (3.05041667,0) (3.051875,0) (3.05333333,0) (3.05479167,0) (3.05625,0) (3.05770833,0) (3.05916667,0) (3.060625,0) (3.06208333,0) (3.06354167,0) (3.065,0) (3.06645833,0) (3.06791667,0) (3.069375,0) (3.07083333,0) (3.07229167,0) (3.07375,0) (3.07520833,0) (3.07666667,0) (3.078125,0) (3.07958333,0) (3.08104167,0) (3.0825,0) (3.08395833,0) (3.08541667,0) (3.086875,0) (3.08833333,0) (3.08979167,0) (3.09125,0) (3.09270833,0) (3.09416667,0) (3.095625,0) (3.09708333,0) (3.09854167,0) (3.1,0) (3.10145833,0) (3.10154871,0) (3.10291667,0.0921272988) (3.104375,0.132437682) (3.10583333,0.1630837) (3.10729167,0.188830878) (3.10875,0.211476149) (3.11020833,0.231929894) (3.11166667,0.250729024) (3.113125,0.268221671) (3.11458333,0.2846488) (3.11604167,0.300185388) (3.1175,0.31496325) (3.11895833,0.329084615) (3.12041667,0.342630665) (3.121875,0.35566714) (3.12333333,0.368248164) (3.12479167,0.380418926) (3.12625,0.39221762) (3.12770833,0.403676872) (3.12916667,0.414824814) (3.130625,0.425685905) (3.13208333,0.436281567) (3.13354167,0.446630693) (3.135,0.45675004) (3.13645833,0.466654557) (3.13791667,0.476357645) (3.139375,0.485871371) (3.14083333,0.49520665) (3.14159265,0.5) (3.14229167,0.517271756) (3.14375,0.551578322) (3.14520833,0.583887177) (3.14666667,0.614513499) (3.148125,0.64369749) (3.14958333,0.671627196) (3.15104167,0.698453102) (3.1525,0.724297864) (3.15395833,0.749263017) (3.15541667,0.773433744) (3.156875,0.796882336) (3.15833333,0.819670771) (3.15979167,0.841852661) (3.16125,0.863474751) (3.16270833,0.884578093) (3.16416667,0.905198969) (3.165625,0.925369633) (3.16708333,0.945118911) (3.16854167,0.96447269) (3.17,0.983454318) (3.17145833,1.00208494) (3.17291667,1.0203838) (3.174375,1.03836841) (3.17583333,1.05605485) (3.17729167,1.07345784) (3.17875,1.09059096) (3.18020833,1.10746674) (3.18166667,1.12409675) (3.183125,1.14049176) (3.18458333,1.15666176) (3.18604167,1.17261605) (3.1875,1.18836333) (3.18895833,1.20391171) (3.19041667,1.21926881) (3.191875,1.23444176) (3.19333333,1.24943728) (3.19479167,1.26426167) (3.19625,1.27892089) (3.19770833,1.29342056) (3.19916667,1.30776597) (3.200625,1.32196216) (3.20208333,1.33601388) (3.20354167,1.34992564) (3.205,1.36370173) (3.20645833,1.37734621) (3.20791667,1.39086295) (3.209375,1.40425566) (3.21083333,1.41752784) (3.21229167,1.43068284) (3.21375,1.44372388) (3.21520833,1.45665401) (3.21666667,1.46947616) (3.218125,1.48219312) (3.21958333,1.4948076) (3.22104167,1.50732215) (3.2225,1.51973925) (3.22395833,1.53206127) (3.22541667,1.54429048) (3.226875,1.55642907) (3.22833333,1.56847915) (3.22979167,1.58044273) (3.23125,1.59232178) (3.23270833,1.60411816) (3.23416667,1.61583369) (3.235625,1.62747011) (3.23708333,1.63902911) (3.23854167,1.65051232) (3.24,1.6619213) (3.24145833,1.67325757) (3.24291667,1.68452261) (3.244375,1.69571783) (3.24583333,1.70684461) (3.24729167,1.71790427) (3.24875,1.7288981) (3.25020833,1.73982736) (3.25166667,1.75069325) (3.253125,1.76149694) (3.25458333,1.77223957) (3.25604167,1.78292225) (3.2575,1.79354604) (3.25895833,1.80411199) (3.26041667,1.81462111) (3.261875,1.82507437) (3.26333333,1.83547274) (3.26479167,1.84581714) (3.26625,1.85610847) (3.26770833,1.86634761) (3.26916667,1.87653542) (3.270625,1.88667272) (3.27208333,1.89676033) (3.27354167,1.90679904) (3.275,1.91678961) (3.27645833,1.92673279) (3.27791667,1.93662931) (3.279375,1.94647989) (3.28083333,1.95628521) (3.28229167,1.96604596) (3.28375,1.9757628) (3.28520833,1.98543637) (3.28666667,1.99506729) (3.288125,2.00465619) (3.28958333,2.01420367) (3.29104167,2.02371031) (3.2925,2.03317668) (3.29395833,2.04260334) (3.29541667,2.05199084) (3.296875,2.06133972) (3.29833333,2.0706505) (3.29979167,2.07992369) (3.30125,2.08915979) (3.30270833,2.0983593) (3.30416667,2.10752268) (3.305625,2.11665041) (3.30708333,2.12574295) (3.30854167,2.13480075) (3.31,2.14382425) (3.31145833,2.15281387) (3.31291667,2.16177005) (3.314375,2.1706932) (3.31583333,2.17958372) (3.31729167,2.188442) (3.31875,2.19726845) (3.32020833,2.20606344) (3.32166667,2.21482735) (3.323125,2.22356054) (3.32458333,2.23226337) (3.32604167,2.2409362) (3.3275,2.24957938) (3.32895833,2.25819325) (3.33041667,2.26677813) (3.331875,2.27533436) (3.33333333,2.28386226) (3.33479167,2.29236215) (3.33625,2.30083434) (3.33770833,2.30927912) (3.33916667,2.31769681) (3.340625,2.32608769) (3.34208333,2.33445206) (3.34354167,2.3427902) (3.345,2.35110238) (3.34645833,2.35938888) (3.34791667,2.36764998) (3.349375,2.37588593) (3.35083333,2.384097) (3.35229167,2.39228344) (3.35375,2.40044551) (3.35520833,2.40858345) (3.35666667,2.41669751) (3.358125,2.42478792) (3.35958333,2.43285493) (3.36104167,2.44089876) (3.3625,2.44891964) (3.36395833,2.4569178) (3.36541667,2.46489346) (3.366875,2.47284683) (3.36833333,2.48077814) (3.36979167,2.48868759) (3.37125,2.49657538) (3.37270833,2.50444173) (3.37416667,2.51228684) (3.375625,2.5201109) (3.37708333,2.52791411) (3.37854167,2.53569666) (3.38,2.54345875)};
\addplot[annBlue,dashed] coordinates {(3.141592654,0) (3.141592654,2.5)};\addplot[annBlue,dotted] coordinates {(3.03,.5) (3.38,.5)};\addplot[annBlue,only marks,mark=*] coordinates {(3.141592654,.5)};
\end{axis}
\end{tikzpicture}
\caption{\textcolor{revision}{A permitted-order row for $\delta=1$ and $n=1$. It starts at $x=\sqrt{\pi^2-1/4}$, uses its first branch below $\pi$, and joins its second branch at $P_1=1/2$. This is an upper bound on the actual order interval, not its exact spectral length.}}
\label{fig:Dq-row}
\end{figure}}
{Let $\theta_v$ be the ordinary Bessel phase normalized by
$\theta_v\paren{z}\to-\pi/2$ as $z\downarrow0$.  The real-order
phase count and its global action enclosure are}
{\[
n_v^{\mathrm D}\paren{t^2}=\floor{\gamma_t\paren{v}},\qquad
 \gamma_t\paren{v}={\frac{\theta_v\paren{Rt}-\theta_v\paren{t}}{\pi}}
 < f_t\paren{v}+\frac14,
 \qquad0\le v\le Rt,
\]}
{and the count vanishes for $v\ge Rt$.  These are the real-order
forms of \cite[Lemma~4.1 and Lemma~5.2, equations~(5.6) and~(5.8)]{DirichletAnnuli}.
In particular, the floor has no added constant: the first Dirichlet
crossing is at a phase difference of $\pi$.}

{The floor identity}
\[
 \floor{f_t\paren{v}+\frac14}
 =f_t\paren{v}-\frac14-\psi\paren{f_t\paren{v}+\frac14}
\]
{retains its negative mean.  On the entire inner order interval,}
{\[
 f_t'\paren{0}=0,\qquad
 -f_t''\paren{v}\ge\frac\delta{\pi Rt},\qquad0\le v<t.
\]}
{The lower directional rounding estimate therefore gives}
{\[
 2\int_0^t\psi\paren{f_t\paren{v}+\frac14}\dd v
 \ge-{\frac{\sqrt2}{3}}\sqrt{\frac{\pi Rt}\delta}.
\]}
{On the outer interval set}
\[
 q_{\mathrm{out}}\paren{y}=-f_t\paren{Rt-y}-\frac14,
 \qquad0\le y\le\delta t.
\]
{Then $q_{\mathrm{out}}'\paren{0}=0$, its initial fractional part is
$3/4$, and}
{\[
 -q_{\mathrm{out}}''\paren{y}
 ={\frac{1}{\pi\sqrt{R^2t^2-\paren{Rt-y}^2}}}
 \ge{\frac{1}{\pi t\sqrt D}}.
\]}
{Reflection changes the sign of the sawtooth almost everywhere.  The
upper rounding coefficient at initial fractional part $3/4$ is
$\sqrt2/12$, and hence}
{\[
 2\int_t^{Rt}\psi\paren{f_t\paren{v}+\frac14}\dd v
 \ge-{\frac{\sqrt2}{6}}\sqrt{\pi t\sqrt D}.
\]}
{The endpoint singularities are integrable.  The rounding estimates
thus apply to absolutely continuous derivatives.  Integrating the
action gives}
{\begin{equation}\label{eq:Dq-whole}
 B_{\mathrm D}\paren{t}\ge H_\delta\paren{x}
 :={\frac{R}{2\delta}}\paren{x-\sqrt{S_\delta x}},
\end{equation}}
{where}
{\begin{equation}\label{eq:Dq-S}
 S_\delta={\frac{2\pi}{9}}\paren{2\sqrt h+b\paren{h}}^2,
 \qquad h={\frac{1}{R}},\qquad
 b\paren{h}=\paren{\paren{1-h}^3\paren{1+h}}^{1/4}.
\end{equation}}
{The two relevant derivatives are
\[
 H_\delta'\paren{x}
 =\frac{R}{2\delta}\paren{1-\frac12\sqrt{S_\delta/x}},
 \qquad
 \frac{\mathrm d}{\mathrm dx}
 \paren{\frac{H_\delta\paren{x}}{x/\delta}}
 =\frac{R\sqrt{S_\delta}}{4x^{3/2}}>0.
\]
Thus $H_\delta$ is positive for $x>S_\delta$ and increasing for
$x>S_\delta/4$. Its deficit per unit frequency is increasing
throughout $x>0$. The distinction matters: the all-index argument
compares $B_{\mathrm D}(t)/t$, while the tail inverse below uses
monotonicity of $H_\delta$ itself.}

{The exact geometric threshold in \eqref{eq:Dq-S} has the optimal
universal upper bound furnished by this envelope.  Namely, let
$h_*\in\paren{0,1}$ be the unique root of}
{\begin{equation}\label{eq:Dq-hstar}
 h^2\paren{1+2h}^4=16\paren{1-h}\paren{1+h}^3.
\end{equation}}
Then
{\[
 S_\delta\le S_*:={\frac{2\pi}{9}}
 \paren{2\sqrt{h_*}+b\paren{h_*}}^2,
 \qquad h_*=0.7469694374\ldots,
 \qquad S_*=3.1932979003\ldots.
\]}
{Indeed, logarithmic differentiation gives
\[
 \frac{b'\paren h}{b\paren h}
 =-\frac{1+2h}{2\paren{1-h^2}},\qquad
 \frac{\mathrm d}{\mathrm dh}\paren{2\sqrt h+b\paren h}
 =\frac1{\sqrt h}
  -\frac{b\paren h\paren{1+2h}}{2\paren{1-h^2}}.
\]
Both sides of the critical-point equation are positive. Raising
that equation to the fourth power and cancelling positive factors
gives exactly \eqref{eq:Dq-hstar}. The ratio}
{\[ \frac{h^2\paren{1+2h}^4}{\paren{1-h}\paren{1+h}^3} \]}
{increases strictly from zero to infinity, since its logarithmic derivative is}
{\[
 {\frac{2}{h}}+{\frac{8}{1+2h}}+{\frac{1}{1-h}}-{\frac{3}{1+h}}>0.
\]}
{For completeness, the possibly competing terms satisfy
\[
 \frac8{1+2h}-\frac3{1+h}
 =\frac{5+2h}{\paren{1+2h}\paren{1+h}}>0.
\]
Thus the critical point is unique without recourse to a numerical
plot; numerical evaluation is used only for its displayed decimal
value.}
{\color{revision}Here it is the derivative of $2\sqrt h+b\paren h$ that is
positive near $0$ and negative near $1$; the logarithmic derivative
in the preceding display stays positive.  This distinction proves
the asserted unique maximum.  Since $h=1/\paren{1+\delta}$ reverses
orientation, $S_\delta$ increases up to
$\delta_*=h_*^{-1}-1=0.3387428587\ldots$ and decreases thereafter.
The endpoint limits are
\[
 \lim_{\delta\downarrow0}S_\delta=\frac{8\pi}{9},\qquad
 \lim_{\delta\to+\infty}S_\delta=\frac{2\pi}{9}.
\]
Figure~\ref{fig:Dq-threshold} displays both the maximum and the
full parameter range.  The geometric value $S_\delta$ is preferable
to $S_*$ whenever the radii are known.}

{\color{revision}\begin{figure}[htbp]
\centering
\begin{tikzpicture}
\begin{axis}[width=.46\textwidth,height=5.3cm,xlabel={$\delta$},ylabel={$S_\delta$},xmin=0,xmax=4,ymin=1.4,ymax=3.4,xtick={0,1,2,3,4},legend style={draw=none,font=\scriptsize},tick label style={font=\scriptsize},label style={font=\small}]
\addplot[annBlue,thick,] coordinates {(0,2.7925268) (0.00666666667,2.85133935) (0.0133333333,2.88522168) (0.02,2.91234176) (0.0266666667,2.93546089) (0.0333333333,2.95577461) (0.04,2.97394627) (0.0466666667,2.99039554) (0.0533333333,3.00541096) (0.06,3.0192026) (0.0666666667,3.03193011) (0.0733333333,3.04371878) (0.08,3.05466954) (0.0866666667,3.0648654) (0.0933333333,3.07437584) (0.1,3.08325986) (0.106666667,3.09156819) (0.113333333,3.0993449) (0.12,3.10662866) (0.126666667,3.11345366) (0.133333333,3.11985035) (0.14,3.12584601) (0.146666667,3.13146526) (0.153333333,3.13673037) (0.16,3.14166163) (0.166666667,3.14627758) (0.173333333,3.15059524) (0.18,3.15463024) (0.186666667,3.15839707) (0.193333333,3.16190911) (0.2,3.16517879) (0.206666667,3.1682177) (0.213333333,3.17103664) (0.22,3.17364571) (0.226666667,3.17605437) (0.233333333,3.1782715) (0.24,3.18030547) (0.246666667,3.18216411) (0.253333333,3.18385487) (0.26,3.18538473) (0.266666667,3.18676032) (0.273333333,3.18798791) (0.28,3.18907344) (0.286666667,3.19002256) (0.293333333,3.19084061) (0.3,3.19153271) (0.306666667,3.19210368) (0.313333333,3.19255816) (0.32,3.19290054) (0.326666667,3.19313502) (0.333333333,3.19326563) (0.338742859,3.1932979) (0.34,3.19329618) (0.346666667,3.19323035) (0.353333333,3.19307164) (0.36,3.19282341) (0.366666667,3.19248887) (0.373333333,3.19207112) (0.38,3.1915731) (0.386666667,3.19099765) (0.393333333,3.1903475) (0.4,3.18962527) (0.406666667,3.18883346) (0.413333333,3.1879745) (0.42,3.1870507) (0.426666667,3.18606432) (0.433333333,3.18501749) (0.44,3.1839123) (0.446666667,3.18275073) (0.453333333,3.18153472) (0.46,3.18026612) (0.466666667,3.17894671) (0.473333333,3.17757823) (0.48,3.17616234) (0.486666667,3.17470064) (0.493333333,3.17319469) (0.5,3.17164598) (0.506666667,3.17005596) (0.513333333,3.16842603) (0.52,3.16675755) (0.526666667,3.1650518) (0.533333333,3.16331007) (0.54,3.16153356) (0.546666667,3.15972347) (0.553333333,3.15788093) (0.56,3.15600706) (0.566666667,3.15410291) (0.573333333,3.15216954) (0.58,3.15020793) (0.586666667,3.14821907) (0.593333333,3.1462039) (0.6,3.14416332) (0.606666667,3.14209823) (0.613333333,3.14000947) (0.62,3.13789788) (0.626666667,3.13576427) (0.633333333,3.13360942) (0.64,3.13143407) (0.646666667,3.12923897) (0.653333333,3.12702483) (0.66,3.12479234) (0.666666667,3.12254216) (0.673333333,3.12027496) (0.68,3.11799136) (0.686666667,3.11569197) (0.693333333,3.11337738) (0.7,3.11104818) (0.706666667,3.10870493) (0.713333333,3.10634817) (0.72,3.10397843) (0.726666667,3.10159621) (0.733333333,3.09920203) (0.74,3.09679636) (0.746666667,3.09437968) (0.753333333,3.09195244) (0.76,3.08951509) (0.766666667,3.08706805) (0.773333333,3.08461175) (0.78,3.0821466) (0.786666667,3.07967298) (0.793333333,3.0771913) (0.8,3.07470191) (0.824806202,3.06537743) (0.849612403,3.05596878) (0.874418605,3.0464918) (0.899224806,3.03696074) (0.924031008,3.02738846) (0.948837209,3.01778657) (0.973643411,3.00816555) (0.998449612,2.99853484) (1.02325581,2.98890298) (1.04806202,2.97927767) (1.07286822,2.96966585) (1.09767442,2.96007378) (1.12248062,2.95050712) (1.14728682,2.94097094) (1.17209302,2.93146982) (1.19689922,2.92200787) (1.22170543,2.91258877) (1.24651163,2.90321583) (1.27131783,2.89389201) (1.29612403,2.88461994) (1.32093023,2.87540196) (1.34573643,2.86624015) (1.37054264,2.85713636) (1.39534884,2.8480922) (1.42015504,2.83910908) (1.44496124,2.83018824) (1.46976744,2.82133074) (1.49457364,2.81253748) (1.51937984,2.80380923) (1.54418605,2.79514664) (1.56899225,2.78655023) (1.59379845,2.7780204) (1.61860465,2.76955749) (1.64341085,2.76116171) (1.66821705,2.75283321) (1.69302326,2.74457206) (1.71782946,2.73637828) (1.74263566,2.72825179) (1.76744186,2.72019249) (1.79224806,2.71220021) (1.81705426,2.70427473) (1.84186047,2.69641581) (1.86666667,2.68862315) (1.89147287,2.68089643) (1.91627907,2.67323528) (1.94108527,2.66563932) (1.96589147,2.65810814) (1.99069767,2.6506413) (2.01550388,2.64323835) (2.04031008,2.63589881) (2.06511628,2.62862219) (2.08992248,2.62140799) (2.11472868,2.61425569) (2.13953488,2.60716477) (2.16434109,2.60013469) (2.18914729,2.59316489) (2.21395349,2.58625484) (2.23875969,2.57940397) (2.26356589,2.57261171) (2.28837209,2.56587751) (2.31317829,2.55920078) (2.3379845,2.55258097) (2.3627907,2.5460175) (2.3875969,2.53950978) (2.4124031,2.53305726) (2.4372093,2.52665934) (2.4620155,2.52031547) (2.48682171,2.51402508) (2.51162791,2.50778758) (2.53643411,2.50160242) (2.56124031,2.49546902) (2.58604651,2.48938684) (2.61085271,2.48335531) (2.63565891,2.47737388) (2.66046512,2.47144199) (2.68527132,2.46555911) (2.71007752,2.45972468) (2.73488372,2.45393818) (2.75968992,2.44819906) (2.78449612,2.44250681) (2.80930233,2.43686089) (2.83410853,2.43126078) (2.85891473,2.42570599) (2.88372093,2.42019598) (2.90852713,2.41473027) (2.93333333,2.40930836) (2.95813953,2.40392975) (2.98294574,2.39859395) (3.00775194,2.39330048) (3.03255814,2.38804888) (3.05736434,2.38283865) (3.08217054,2.37766935) (3.10697674,2.37254051) (3.13178295,2.36745167) (3.15658915,2.36240239) (3.18139535,2.35739222) (3.20620155,2.35242073) (3.23100775,2.34748747) (3.25581395,2.34259202) (3.28062016,2.33773397) (3.30542636,2.33291289) (3.33023256,2.32812836) (3.35503876,2.32337999) (3.37984496,2.31866737) (3.40465116,2.31399011) (3.42945736,2.30934781) (3.45426357,2.30474008) (3.47906977,2.30016655) (3.50387597,2.29562684) (3.52868217,2.29112057) (3.55348837,2.28664738) (3.57829457,2.28220691) (3.60310078,2.2777988) (3.62790698,2.2734227) (3.65271318,2.26907825) (3.67751938,2.26476512) (3.70232558,2.26048297) (3.72713178,2.25623145) (3.75193798,2.25201025) (3.77674419,2.24781903) (3.80155039,2.24365747) (3.82635659,2.23952526) (3.85116279,2.23542208) (3.87596899,2.23134763) (3.90077519,2.22730159) (3.9255814,2.22328367) (3.9503876,2.21929357) (3.9751938,2.215331) (4,2.21139566)};
\addplot[annBlue,dashed] coordinates {(0,3.141592653589793) (4,3.141592653589793)};\node[annBlue,anchor=south west,font=\scriptsize] at (axis cs:2.9,3.141592653589793) {$\pi$};\addplot[annBlue,only marks,mark=*] coordinates {(0.3387428586735777,3.1932979002651947)};
\end{axis}
\begin{axis}[at={(.51\textwidth,0)},anchor=south west,width=.46\textwidth,height=5.3cm,xlabel={$h=(1+\delta)^{-1}$},xmin=0,xmax=1,ymin=0,ymax=3.4,xtick={0,.5,1},xticklabels={$0$,$1/2$,$1$},tick label style={font=\scriptsize},label style={font=\small}]
\addplot[annBlue,thick,] coordinates {(0,0.698131701) (0.00416666667,0.886732434) (0.00833333333,0.969413805) (0.0125,1.03450217) (0.0166666667,1.09041486) (0.0208333333,1.1404191) (0.025,1.1861938) (0.0291666667,1.22873818) (0.0333333333,1.26870445) (0.0375,1.30654643) (0.0416666667,1.34259513) (0.0458333333,1.37710082) (0.05,1.41025827) (0.0541666667,1.44222261) (0.0583333333,1.47311981) (0.0625,1.50305385) (0.0666666667,1.53211175) (0.0708333333,1.56036723) (0.075,1.5878834) (0.0791666667,1.61471478) (0.0833333333,1.64090887) (0.0875,1.66650735) (0.0916666667,1.69154704) (0.0958333333,1.71606067) (0.1,1.74007747) (0.104166667,1.76362369) (0.108333333,1.78672301) (0.1125,1.80939688) (0.116666667,1.8316648) (0.120833333,1.85354455) (0.125,1.87505243) (0.129166667,1.89620339) (0.133333333,1.91701121) (0.1375,1.93748861) (0.141666667,1.95764735) (0.145833333,1.97749836) (0.15,1.9970518) (0.154166667,2.01631712) (0.158333333,2.03530318) (0.1625,2.05401823) (0.166666667,2.07247001) (0.170833333,2.09066579) (0.175,2.10861242) (0.179166667,2.12631631) (0.183333333,2.14378353) (0.1875,2.16101981) (0.191666667,2.17803056) (0.195833333,2.19482089) (0.2,2.21139566) (0.204166667,2.22775948) (0.208333333,2.24391671) (0.2125,2.2598715) (0.216666667,2.27562781) (0.220833333,2.2911894) (0.225,2.30655986) (0.229166667,2.32174261) (0.233333333,2.33674092) (0.2375,2.35155791) (0.241666667,2.36619657) (0.245833333,2.38065974) (0.25,2.39495017) (0.254166667,2.40907048) (0.258333333,2.42302318) (0.2625,2.43681068) (0.266666667,2.4504353) (0.270833333,2.46389925) (0.275,2.47720468) (0.279166667,2.49035363) (0.283333333,2.50334808) (0.2875,2.51618994) (0.291666667,2.52888103) (0.295833333,2.54142311) (0.3,2.55381788) (0.304166667,2.56606698) (0.308333333,2.57817199) (0.3125,2.59013443) (0.316666667,2.60195576) (0.320833333,2.61363742) (0.325,2.62518076) (0.329166667,2.6365871) (0.333333333,2.64785773) (0.3375,2.65899388) (0.341666667,2.66999674) (0.345833333,2.68086746) (0.35,2.69160716) (0.354166667,2.70221692) (0.358333333,2.71269776) (0.3625,2.72305071) (0.366666667,2.73327674) (0.370833333,2.74337679) (0.375,2.75335177) (0.379166667,2.76320256) (0.383333333,2.77293001) (0.3875,2.78253496) (0.391666667,2.79201819) (0.395833333,2.80138049) (0.4,2.8106226) (0.404166667,2.81974523) (0.408333333,2.82874909) (0.4125,2.83763486) (0.416666667,2.84640318) (0.420833333,2.85505468) (0.425,2.86358998) (0.429166667,2.87200967) (0.433333333,2.8803143) (0.4375,2.88850444) (0.441666667,2.8965806) (0.445833333,2.90454329) (0.45,2.91239302) (0.454166667,2.92013024) (0.458333333,2.92775542) (0.4625,2.935269) (0.466666667,2.94267138) (0.470833333,2.94996299) (0.475,2.9571442) (0.479166667,2.96421538) (0.483333333,2.9711769) (0.4875,2.97802908) (0.491666667,2.98477226) (0.495833333,2.99140673) (0.5,2.9979328) (0.504166667,3.00435074) (0.508333333,3.01066082) (0.5125,3.01686328) (0.516666667,3.02295835) (0.520833333,3.02894626) (0.525,3.03482721) (0.529166667,3.04060139) (0.533333333,3.04626898) (0.5375,3.05183014) (0.541666667,3.05728501) (0.545833333,3.06263373) (0.55,3.06787642) (0.554166667,3.07301318) (0.558333333,3.07804411) (0.5625,3.08296927) (0.566666667,3.08778875) (0.570833333,3.09250257) (0.575,3.09711078) (0.579166667,3.10161339) (0.583333333,3.10601042) (0.5875,3.11030184) (0.591666667,3.11448763) (0.595833333,3.11856776) (0.6,3.12254216) (0.604166667,3.12641077) (0.608333333,3.1301735) (0.6125,3.13383025) (0.616666667,3.1373809) (0.620833333,3.1408253) (0.625,3.14416332) (0.629166667,3.14739478) (0.633333333,3.1505195) (0.6375,3.15353726) (0.641666667,3.15644786) (0.645833333,3.15925104) (0.65,3.16194655) (0.654166667,3.16453411) (0.658333333,3.16701342) (0.6625,3.16938416) (0.666666667,3.17164598) (0.670833333,3.17379853) (0.675,3.17584142) (0.679166667,3.17777424) (0.683333333,3.17959656) (0.6875,3.18130792) (0.691666667,3.18290784) (0.695833333,3.18439581) (0.7,3.1857713) (0.704166667,3.18703373) (0.708333333,3.18818252) (0.7125,3.18921704) (0.716666667,3.19013664) (0.720833333,3.19094062) (0.725,3.19162825) (0.729166667,3.19219879) (0.733333333,3.19265143) (0.7375,3.19298534) (0.741666667,3.19319962) (0.745833333,3.19329338) (0.746969437,3.1932979) (0.75,3.19326563) (0.754166667,3.19311536) (0.758333333,3.1928415) (0.7625,3.19244295) (0.766666667,3.19191852) (0.770833333,3.19126699) (0.775,3.19048707) (0.779166667,3.18957738) (0.783333333,3.18853652) (0.7875,3.18736297) (0.791666667,3.18605516) (0.795833333,3.18461143) (0.8,3.18303003) (0.804166667,3.18130912) (0.808333333,3.17944675) (0.8125,3.17744088) (0.816666667,3.17528935) (0.820833333,3.17298986) (0.825,3.17054) (0.829166667,3.16793722) (0.833333333,3.16517879) (0.8375,3.16226185) (0.841666667,3.15918333) (0.845833333,3.15593999) (0.85,3.15252836) (0.854166667,3.14894476) (0.858333333,3.14518525) (0.8625,3.1412456) (0.866666667,3.1371213) (0.870833333,3.13280751) (0.875,3.12829901) (0.879166667,3.12359016) (0.883333333,3.11867491) (0.8875,3.11354666) (0.891666667,3.10819827) (0.895833333,3.10262197) (0.9,3.09680924) (0.904166667,3.09075077) (0.908333333,3.08443631) (0.9125,3.07785454) (0.916666667,3.07099288) (0.920833333,3.06383735) (0.925,3.05637223) (0.929166667,3.04857985) (0.933333333,3.04044012) (0.9375,3.03193011) (0.941666667,3.02302336) (0.945833333,3.01368911) (0.95,3.00389117) (0.954166667,2.99358645) (0.958333333,2.98272285) (0.9625,2.97123635) (0.966666667,2.95904654) (0.970833333,2.94604979) (0.975,2.93210822) (0.979166667,2.91703035) (0.983333333,2.90053465) (0.9875,2.88217145) (0.991666667,2.86112205) (0.995833333,2.83547699) (1,2.7925268)};

\end{axis}
\end{tikzpicture}
\caption{\textcolor{revision}{The threshold $S_\delta$ in \eqref{eq:Dq-S}. Left: its unique maximum $S_*\simeq3.19330$ occurs at $\delta_*\simeq0.33874$; the dashed line is $\pi$. Right: the variable $h=(1+\delta)^{-1}$ displays the full width range, with $h=0$ and $h=1$ denoting limiting values. The curves illustrate the analytic argument above; they are not used as a proof.}}
\label{fig:Dq-threshold}
\end{figure}}

\subsection{\revblue{An explicit positive gap at every index}}
{A positive tail estimate alone leaves a gap near the first radial
frequency. We close that gap by dividing the frequency axis at
$x=z$: below $z$ we use the first row, and above $z$ we use
\eqref{eq:Dq-whole}. The quantities below specify that division
using only the width $\delta$. In particular, the optimization is
an elementary matching of two explicit lower bounds, not a search
over eigenvalues.}
{For $0<\delta\le5/2$ define}
{\begin{align}
 q_\delta&=A\pi^2-{\frac{R^2}{A}}-{\frac{A\delta^2}{4R^2}},
 &X_\delta&=\sqrt{4\pi^2-\delta^2/4},\label{eq:Dq-qX}\\
 z_\delta^*&={\frac{1}{4}}
 \paren{\sqrt{S_\delta}+\sqrt{S_\delta+8q_\delta/R}}^2,
 &z_\delta&=\min\set{z_\delta^*,X_\delta}.
\notag\end{align}}
{Set}
{\begin{equation}\label{eq:Dq-c}
 c_\delta=
 \begin{cases}
  \displaystyle\max\set{{\frac{R}{2}}\paren{1-\sqrt{S_\delta/\pi}},
  \min\set{{\frac{q_\delta}{z_\delta}},
  {\frac{R}{2}}\paren{1-\sqrt{S_\delta/z_\delta}}}},
  &0<\delta\le5/2,\\[5pt]
  \displaystyle{\frac{R}{2}}\paren{1-\sqrt{S_\delta/\pi}},
  &\delta>5/2.
 \end{cases}
\end{equation}}

{For physical radii $r<R_0$, use $R=R_0/r$ in these constants.
Put}
{\[
 W_k={\frac{8\pi k}{\beta\paren{R_0^2-r^2}}},\qquad
 a_{r,R_0}={\frac{4c_\delta}{rD}}.
\]}
{\color{editorial}We now state the explicit all-index bounds of
Theorem~\ref{thm:dirichlet}, proved immediately after
\eqref{eq:Dq-all-count}. For every $k\ge1$,}

{\begin{equation}\label{eq:Dq-all-eigen}
 \lambda_k\paren{S_{r,R_0,\beta}}
 \ge\paren{\sqrt{W_k+{\frac{a_{r,R_0}^2}{4}}}
                   +{\frac{a_{r,R_0}}{2}}}^2
 =W_k+a_{r,R_0}\sqrt{W_k+{\frac{a_{r,R_0}^2}{4}}}
                   +{\frac{a_{r,R_0}^2}{2}}.
\end{equation}}
{\color{editorial}For every $E>0$, the counting bound is}
{\begin{equation}\label{eq:Dq-all-count}
 N^{\mathrm D}_{S_{r,R_0,\beta}}\paren{E}
 \le{\frac{\abs{S_{r,R_0,\beta}}}{4\pi}}
       \pos{E-a_{r,R_0}\sqrt E}.
\end{equation}}

\begin{proof}[\revblue{Proof of Theorem~\ref{thm:dirichlet}}]
{First, $q_\delta>0$ for $0<\delta\le5/2$.  Dividing by $A$ gives}
{\[
 {\frac{q_\delta}{A}}=\pi^2-
 \paren{{\frac{16\paren{1+\delta}^2}{\paren{2+\delta}^2}}
       +{\frac{\delta^2}{4\paren{1+\delta}^2}}}.
\]}
{Their derivatives are, respectively,
\[
 \frac{32\paren{1+\delta}}{\paren{2+\delta}^3}>0,
 \qquad
 \frac{\delta}{2\paren{1+\delta}^3}>0.
\]
Both terms in parentheses therefore increase with $\delta$, and their sum at
$5/2$ is
$784/81+25/196<\paren{157/50}^2<\pi^2$.
The simpler Cauchy--Schwarz estimate gives}
{\[
 S_\delta\le{\frac{2\pi}{9}}
 \paren{2\sqrt h+\sqrt{1-h}}^2\le{\frac{10\pi}{9}}<{\frac{7}{2}}
 <X_\delta.
\]}
{Moreover, $z_\delta^*>S_\delta$ and $X_\delta>S_\delta$,
so $z_\delta>S_\delta$. Both entries in the inner minimum in
the first branch of \eqref{eq:Dq-c} are positive. The other entry
in its outer maximum need not be positive; this causes no problem.
If $\delta\ge5/2$, then $h\le2/7$
and $b\paren{h}\le1$.  Since $\sqrt{2/7}<11/20$,}
{\[
 S_\delta<{\frac{2\pi}{9}}\paren{1+11/10}^2
 ={\frac{49\pi}{50}}<\pi,
\]}
{which proves positivity of the second branch.}

{At every sector Dirichlet eigenvalue, the order is $jp\ge1/2$.
Equation~\eqref{eq:Dq-signed} consequently gives
$\lambda_k\ge\pi^2/\delta^2$.  Thus at $t=\sqrt{\lambda_k}$
one has $x\ge\pi$.}

{Suppose first that $\delta\le5/2$.  The $n=2$ row can start only
when $x^2+\delta^2/4>4\pi^2$. Consequently, when
$\pi\le x\le X_\delta$ only the first radial row contributes to
\eqref{eq:Dq-rows}, apart from a zero-length second row at the last
endpoint. To make its minimum explicit, put
\[
 Y=\sqrt{R^2\paren{x^2-\pi^2}+\delta^2/4}.
\]
Then the precise square completion is
\[
\begin{aligned}
 Ax^2-2Y
 &=A\pi^2-\frac{A\delta^2}{4R^2}
          +\frac A{R^2}Y^2-2Y\\
 &=q_\delta+\frac A{R^2}
             \paren{Y-\frac{R^2}A}^2
 \ge q_\delta.
\end{aligned}
\]
It gives the following frequency-independent first-row minimum:}
{\[
 Ax^2-2\sqrt{R^2\paren{x^2-\pi^2}+\delta^2/4}
 \ge q_\delta.
\]}
{For $\pi\le x\le z_\delta$ it follows that
$B_{\mathrm D}\paren{t}/t\ge q_\delta/x\ge q_\delta/z_\delta$.
If $z_\delta<\pi$, this first interval is empty and the tail
bound already applies to the entire relevant range.
For $x\ge z_\delta$, \eqref{eq:Dq-whole} gives}
{\[
 {\frac{B_{\mathrm D}\paren{t}}{t}}
 \ge{\frac{R}{2}}\paren{1-\sqrt{S_\delta/x}}
 \ge{\frac{R}{2}}\paren{1-\sqrt{S_\delta/z_\delta}}.
\]}
{This proves the second entry in the maximum defining $c_\delta$.
The first entry follows directly from \eqref{eq:Dq-whole}, since
$x\ge\pi$.  Hence $B_{\mathrm D}\paren{t}\ge c_\delta t$ for
every $x\ge\pi$.  The switching point in
\eqref{eq:Dq-qX} maximizes the minimum of these two bounds
over $S_\delta<z\le X_\delta$: the first decreases with $z$,
the second increases, and they agree at $z=z_\delta^*$.
To see the maximization explicitly, write
\[
 g_1\paren z=\frac{q_\delta}z,
 \qquad g_2\paren z=\frac R2\paren{1-\sqrt{S_\delta/z}}.
\]
Their derivatives have opposite signs:
\[
 g_1'\paren z=-\frac{q_\delta}{z^2}<0,
 \qquad
 g_2'\paren z=\frac{R\sqrt{S_\delta}}{4z^{3/2}}>0.
\]
At their unique crossing, the substitution $y=\sqrt z$ gives
$y^2-\sqrt{S_\delta}\,y-2q_\delta/R=0$. Its positive
root gives $z=z_\delta^*$ in \eqref{eq:Dq-qX}.
The minimum of $g_1,g_2$ increases up to that crossing and
decreases afterwards. If the crossing lies beyond $X_\delta$,
the best admissible division is $z=X_\delta$; otherwise it is
the crossing itself. This proves the stated clipped choice
$z_\delta=\min\{z_\delta^*,X_\delta\}$.
This assertion concerns these two bounds with the completed
square first-row constant $q_\delta$, rather than the exact
spectral deficit. Figure~\ref{fig:Dq-switching} illustrates both
possibilities.}

{For $\delta>5/2$, the same conclusion follows directly from
\eqref{eq:Dq-whole}, since $x\ge\pi>S_\delta$.}

{The Dirichlet part of Proposition~\ref{prop:angular-transfer},
illustrated in Figure~\ref{fig:Dq-angular-transfer}(b), gives}
{\[
 \begin{aligned}
 pN^{\mathrm D}_S\paren{t^2}
 &=p\sum_{j\ge1}n_{jp}^{\mathrm D}\paren{t^2}\\
 &\le\int_0^{+\infty}n_v^{\mathrm D}\paren{t^2}\dd v
   ={\frac{1}{2}}M_{\mathrm D}\paren{t}
   =\frac D8t^2-\frac12B_{\mathrm D}\paren t.
 \end{aligned}
\]}
{This is the Dirichlet counterpart of the Neumann comparison in
Figure~\ref{fig:sampling}(a): right-endpoint sampling gives a lower
rectangle sum, whereas left-endpoint sampling gives an upper
rectangle sum. The positive deficit $B_{\mathrm D}$ estimated
above, rather than monotonicity alone, supplies the quantitative
improvement over the area term.}
{Taking $t^2=\lambda_k$ and using $N^{\mathrm D}_S\paren{\lambda_k}\ge k$
therefore yields, for normalized radii,}
{\[
 \lambda_k-W_k\ge{\frac{4}{D}}B_{\mathrm D}\paren{\sqrt{\lambda_k}}
 \ge{\frac{4c_\delta}{D}}\sqrt{\lambda_k}.
\]}
{For $x\ge\pi$ the same sampling argument yields
$N^{\mathrm D}_S\paren{t^2}\le D\paren{t^2-4c_\delta t/D}/\paren{8p}$.
For $x<\pi$ the sector count is zero, by
\eqref{eq:Dq-signed} and $jp\ge1/2$.  Taking the positive part
therefore gives \eqref{eq:Dq-all-count} at every energy after scaling.
Solving the quadratic inequality for $\sqrt{\lambda_k}$ and
scaling lengths by $r$ proves \eqref{eq:Dq-all-eigen}.}
\end{proof}
\begin{figure}[htbp]
\centering
\begin{tikzpicture}
\begin{axis}[width=.46\textwidth,height=5.2cm,
 xmin=3.08,xmax=6.55,ymin=0,ymax=.75,
 xlabel={$z$},ylabel={guaranteed deficit per unit frequency},
 title={$\delta=1$: endpoint division},
 tick label style={font=\scriptsize},
 label style={font=\scriptsize},title style={font=\small},
 legend style={draw=none,font=\scriptsize,at={(.98,.97)},anchor=north east}]
\addplot[annBlue,thick,domain=3.08:6.26325934,samples=90]
 {2.02199496748/x};
\addlegendentry{$g_1(z)$}
\addplot[annOrange,thick,domain=3.08:6.26325934,samples=90]
 {1-sqrt(2.99793280175/x)};
\addlegendentry{$g_2(z)$}
\addplot[black,dashed] coordinates {(6.26325934,0) (6.26325934,.72)};
\node[font=\scriptsize,anchor=north east] at
 (axis cs:6.26325934,.12) {$X_\delta=z_\delta<z_\delta^*$};
\addplot[black,only marks,mark=*] coordinates
 {(6.26325934,.308151905)};
\end{axis}
\begin{axis}[at={(.51\textwidth,0)},anchor=south west,
 width=.46\textwidth,height=5.2cm,
 xmin=2.66,xmax=6.3,ymin=0,ymax=.75,
 xlabel={$z$},title={$\delta=2$: interior division},
 tick label style={font=\scriptsize},
 label style={font=\scriptsize},title style={font=\small}]
\addplot[annBlue,thick,domain=2.66:6.20309742,samples=90]
 {.758493289978/x};
\addplot[annOrange,thick,domain=2.66:6.20309742,samples=90]
 {1.5*(1-sqrt(2.64785773370/x))};
\addplot[black,dashed] coordinates {(3.58791674,0) (3.58791674,.65)};
\node[font=\scriptsize,anchor=south west] at
 (axis cs:3.58791674,.61) {$z_\delta=z_\delta^*$};
\addplot[black,only marks,mark=*] coordinates
 {(3.58791674,.211402)};
\end{axis}
\end{tikzpicture}
\caption{Matching the first-row and tail bounds in Subsection~3.2.
The bound valid on both frequency ranges is the lower of the two
curves. For $\delta=1$ their crossing lies beyond the admissible
row range, so its right endpoint is used; for $\delta=2$ the
crossing is admissible. These plots illustrate the derivative and
quadratic calculations in the proof.}
\label{fig:Dq-switching}
\end{figure}

{The whole-inner envelope also gives an additional explicit tail correction}
{\begin{equation}\label{eq:Dq-tail}
 \lambda_k-W_k\ge
 {\frac{2R}{rD}}\sqrt{W_k}
 \paren{1-\sqrt{\frac{S_\delta}{\delta r\sqrt{W_k}}}},
 \qquad \delta r\sqrt{W_k}>S_\delta.
\end{equation}}
{Indeed, for physical radii right-endpoint sampling gives
\[
 \lambda_k-W_k\ge\frac4{r^2D}
 B_{\mathrm D}\paren{r\sqrt{\lambda_k}}.
\]
The function $H_\delta$ is increasing above its positive threshold and
$r\sqrt{\lambda_k}\ge r\sqrt{W_k}$.  One may take the larger
of \eqref{eq:Dq-all-eigen} and \eqref{eq:Dq-tail}.  The stated
cutoff is a formula in the radii and the aperture; it contains no
spectral data.}

\subsection{\revblue{The full circular-arc coefficient}}
{The main theorem is now proved. Its uniform coefficient is
positive, but the whole-inner phase estimate has only $Rt/2$ as
its linear deficit, rather than the circular-arc target
$(R+1)t/2$. The loss comes from using the phase shift $1/4$ on
the entire order range. On a core away from the turning order
$v=t$, a sharper phase estimate replaces that fixed shift by an
integrable error of order $t^{-1}$. The floor then retains its
full mean $-1/2$ on the core. Extending the core towards $t$
recovers the missing $t/2$, at the price of explicit error terms.

On a core $0\le v\le ut$, $0<u<1$, the sharper
ordinary Bessel phase enclosure
\cite[Theorem~1.4]{Phase} is}
{\[
 \max\set{A_v\paren{z}-\frac\pi4-\mathcal E_v\paren{z},-\frac\pi2}
 <\theta_v\paren{z}<A_v\paren{z}-\frac\pi4.
\]}
{\[
 \qquad
 \mathcal E_v\paren{z}={\frac{3z^2+2v^2}{24\paren{z^2-v^2}^{3/2}}},
 \qquad z>v.
\]}
{Consequently $\gamma_t\paren{v}<f_t\paren{v}+\eta_t\paren{v}$ on
the core, where}
{\[
 \eta_t\paren{v}={\frac{3+2z^2}{24\pi t\paren{1-z^2}^{3/2}}},
 \qquad z=v/t.
\]}
{Here the upper bound at $Rt$ and the lower bound at $t$ are
subtracted. More explicitly,
\[
 \theta_v\paren{Rt}-\theta_v\paren t
 <A_v\paren{Rt}-A_v\paren t+\mathcal E_v\paren t.
\]
The $-\pi/4$ terms cancel. This explains both the positive sign
of $\eta_t$ and why no fixed $1/4$ is needed on the core.}
{Its derivatives satisfy}
{\[
 \eta_t'\paren{v}={\frac{z\paren{13+2z^2}}{24\pi t^2\paren{1-z^2}^{5/2}}},\qquad
 \eta_t''\paren{v}={\frac{13+58z^2+4z^4}{24\pi t^3\paren{1-z^2}^{7/2}}}.
\]}
{In particular, the sign of the added phase error makes the curvature
condition necessary.  Put}
{\[
 b_R\paren{z}=1-\sqrt{\frac{1-z^2}{R^2-z^2}},\qquad
 \chi_R\paren{t,u}={\frac{13+58u^2+4u^4}{24t^2\paren{1-u^2}^3b_R\paren{u}}}.
\]}
{For $\chi_R\paren{t,u}<1$, the phase
$q_t=f_t+\eta_t$ obeys}
{\[
q_t'\paren{0}=0,\qquad
 -q_t''\paren{v}\ge\kappa_{t,u}
 :=\paren{1-\chi_R\paren{t,u}}{\frac{\delta}{\pi Rt}},
 \qquad0\le v\le ut.
\]}
{To verify this, the exact curvature of the action on $v=zt<t$
can be written as
\[
 -f_t''\paren{zt}
 =\frac{b_R\paren z}{\pi t\sqrt{1-z^2}}.
\]
Thus the exact ratio $\eta_t''/\paren{-f_t''}$ is
$\chi_R\paren{t,z}$. It increases with $z$.
Indeed, writing
$w=\sqrt{\paren{1-z^2}/\paren{R^2-z^2}}$ gives
$b_R'/b_R=z w\paren{1+w}/\paren{1-z^2}\le2z/\paren{1-z^2}$,
whereas the logarithmic derivative of the remaining denominator
factor contributes $6z/\paren{1-z^2}$ and the numerator increases.
The logarithmic derivative of the ratio, for $z>0$, is
\[
 \frac{116z+16z^3}{13+58z^2+4z^4}
 +\frac{6z}{1-z^2}-\frac{b_R'\paren z}{b_R\paren z}
 \ge \frac{116z+16z^3}{13+58z^2+4z^4}
       +\frac{4z}{1-z^2}>0.
\]
Consequently it suffices to test the endpoint $z=u$.
Finally
$-f_t''\ge\delta/\paren{\pi Rt}$.}

{The integral of the phase error is explicit:}
{\[
 J_{\mathrm D}\paren{u}:=\int_0^{ut}\eta_t\paren{v}\dd v
 ={\frac{5u/\sqrt{1-u^2}-2\arcsin u}{24\pi}}.
\]}
{This follows from the elementary decomposition
\[
 \frac{3+2z^2}{\paren{1-z^2}^{3/2}}
 =\frac5{\paren{1-z^2}^{3/2}}
   -\frac2{\sqrt{1-z^2}},
\]
using $(z/\sqrt{1-z^2})'=\paren{1-z^2}^{-3/2}$.
In particular,
\[
 J_{\mathrm D}'\paren u
 =\frac{3+2u^2}{24\pi\paren{1-u^2}^{3/2}}>0,
 \qquad
 J_{\mathrm D}\paren u\longrightarrow+\infty
 \quad\text{as }u\uparrow1.
\]
The core therefore cannot simply be set equal to the whole inner
interval at a fixed frequency.}
{On the core, $\floor{q_t}=q_t-1/2-\psi\paren{q_t}$.  Thus its
doubled contribution to the deficit is at least}
{\[
 ut-2J_{\mathrm D}\paren{u}
 -{\frac{2C_-\paren{a_{\mathrm D}\paren{t}}}{\sqrt{\kappa_{t,u}}}},
 \qquad
 a_{\mathrm D}\paren{t}
 =\frac{\delta t}\pi+\frac1{8\pi t}
       -\floor{\frac{\delta t}\pi+\frac1{8\pi t}}.
\]}
{The linear gain $ut$ in this display is twice the floor mean
$1/2$ integrated over an interval of length $ut$. The subtracted
terms are, respectively, the doubled phase error and the
one-sided rounding loss. The notation $a_{\mathrm D}(t)$ in this
subsection denotes a fractional phase; the coefficient
$a_{r,R_0}$ in \eqref{eq:Dq-all-eigen} is a different quantity.}

{The turning interval can retain its floor mean because its initial
slope is nonzero.  More generally, if $q$ is decreasing and concave
on $\bracket{a,b}$, with $-q'\paren{a}\ge\sigma>0$, then}
{\begin{equation}\label{eq:Dq-nonstationary}
 \int_a^b\psi\paren{q\paren{v}}\dd v
 \ge-{\frac{\set{q\paren{a}}\paren{1-\set{q\paren{a}}}}{2\sigma}}
 \ge-{\frac{1}{8\sigma}}.
\end{equation}}
{Here $\set{y}=y-\floor{y}$ denotes the fractional part.  For a proof,
the periodic primitive
$F\paren{y}=\set{y}^2/2-\set{y}/2$ has range $\bracket{-1/8,0}$.
After the increasing change of variable $y=q\paren{a}-q\paren{v}$,
the inverse derivative is nonincreasing and bounded by $1/\sigma$.
The primitive of the new integrand is
$F\paren{q\paren{a}}-F\paren{q\paren{a}-y}$, whose lower bound is
$F\paren{q\paren{a}}$.  Integration by parts against the
nonincreasing weight proves \eqref{eq:Dq-nonstationary}.}

{Apply this estimate to $f_t+1/4$ on $\bracket{ut,t}$.  Put}
{\[
 \sigma_R\paren{u}={\frac{\arccos\paren{u/R}-\arccos u}{\pi}}>0,
 \qquad
 b_{t,u}=f_t\paren{ut}+\frac14
           -\floor{f_t\paren{ut}+\frac14}.
\]}
{Indeed,
\[
 -\frac{\mathrm d}{\mathrm dv}
       \paren{f_t\paren v+\tfrac14}\Big|_{v=ut}
 =\sigma_R\paren u,
\]
and concavity makes this negative slope increase in magnitude
as $v$ runs from $ut$ to $t$. Its doubled deficit is at least
$\paren{1-u}t/2-b_{t,u}\paren{1-b_{t,u}}/\sigma_R\paren{u}$.
The outer reflected estimate is unchanged.  Therefore, whenever
$\chi_R\paren{t,u}<1$,}
{\begin{equation}\label{eq:Dq-core-envelope}
 \begin{split}
 B_{\mathrm D}\paren{t}\ge\mathcal C_\delta\paren{t,u}
 :={}&{\frac{R+u}{2}}t-2J_{\mathrm D}\paren{u}\\
 &-{\frac{2C_-\paren{a_{\mathrm D}\paren{t}}}{\sqrt{\kappa_{t,u}}}}
 -{\frac{b_{t,u}\paren{1-b_{t,u}}}{\sigma_R\paren{u}}}
 -{\frac{\sqrt2}{6}}\sqrt{\pi t\sqrt D}.
 \end{split}
\end{equation}}
{To check the linear term, add the three interval contributions:
\[
 \underbrace{ut}_{\text{core}}
 +\underbrace{\frac{1-u}{2}t}_{\text{turning interval}}
 +\underbrace{\frac{R-1}{2}t}_{\text{outer interval}}
 =\frac{R+u}{2}t.
\]
This bound retains both the initial phase of the core and that of
the turning interval. At a specified frequency, $u$ may be
optimized subject to $0<u<1$ and $\chi_R(t,u)<1$. The last
condition has an increasing left side, so its admissible range is
an interval starting at zero whenever it is nonempty. Neither
the phase-dependent objective nor its maximizer is assumed to
be monotone in $u$.}

{For a closed expression with the full circular-arc coefficient,
define}
{\begin{align*}
 \alpha_\delta&=\paren{\frac{25R}{4\delta}}^{1/3},
 &T_\delta&=\paren{\frac{4\alpha_\delta}3}^{3/2},\\
 L_\delta&=\frac{R+1}2,
 &C_\delta&=\frac23\sqrt{\frac{\pi R}\delta}
             +\frac{\sqrt2}6\sqrt{\pi\sqrt D},\\
 J_\delta&=\frac{\alpha_\delta}3
             +\frac5{12\pi\sqrt{\alpha_\delta}},
 &K_\delta&=\frac{\pi R}{2\delta}.
\end{align*}}
{The choice below makes the error terms explicit. The curvature
ratio has the upper bound
\[
 \chi_R\paren{t,u}
 \le\frac{25R}{8\delta\,t^2\paren{1-u^2}^3}.
\]
Thus choosing $1-u^2$ proportional to $t^{-2/3}$ keeps this
bound fixed as $t$ grows and lets $u\uparrow1$.
For $t\ge T_\delta$ use
$u=\sqrt{1-\alpha_\delta t^{-2/3}}\ge1/2$.
Since $b_R\paren{z}\ge\delta/R$ and
$13+58z^2+4z^4\le75$, one has
$\chi_R\paren{t,u}\le1/2$, since
\[
 t^2\paren{1-u^2}^3=\alpha_\delta^3
 =\frac{25R}{4\delta}.
\]
Also $t\ge T_\delta$ gives $1-u^2\le3/4$, hence $u\ge1/2$.
The rounding maxima and
\eqref{eq:Dq-core-envelope} consequently give}
{\begin{equation}\label{eq:Dq-arc-envelope}
 B_{\mathrm D}\paren{t}\ge
 F_\delta^{\mathrm D}\paren{t}:=L_\delta t-C_\delta\sqrt t
                         -J_\delta t^{1/3}-K_\delta.
\end{equation}}
{Indeed,}
{\[
 {\frac{1-u}{2}}t
 ={\frac{\alpha_\delta t^{1/3}}{2\paren{1+u}}}
 \le{\frac{\alpha_\delta}{3}}t^{1/3},\qquad
 2J_{\mathrm D}\paren{u}\le
 {\frac{5}{12\pi\sqrt{\alpha_\delta}}}t^{1/3}.
\]}
{For the stationary core rounding, use
$\max C_-=\sqrt2/6$ and
$\kappa_{t,u}\ge\delta/(2\pi Rt)$ to obtain
\[
 \frac{2C_-\paren{a_{\mathrm D}\paren t}}{\sqrt{\kappa_{t,u}}}
 \le\frac23\sqrt{\frac{\pi R}\delta}\sqrt t.
\]
Together with the unchanged outer rounding term, this is
$C_\delta\sqrt t$.
Furthermore,
$\sigma_R\paren{u}\ge\delta u/\paren{\pi R}\ge\delta/\paren{2\pi R}$,
so the middle rounding loss is at most $K_\delta$.}
{More precisely, the slope bound follows from
\[
 \sigma_R\paren u
 =\frac1\pi\int_{u/R}^u\frac{\dd s}{\sqrt{1-s^2}}
 \ge\frac{\delta u}{\pi R},
\]
and $b_{t,u}(1-b_{t,u})\le1/4$. The resulting three powers
$t^{1/2},t^{1/3},t^0$ in \eqref{eq:Dq-arc-envelope} all have
lower order than its linear term.}

{An explicit positive threshold is}
{\[
\mathcal T_\delta=\max\set{T_\delta,
  \paren{\frac{C_\delta+J_\delta+
  \sqrt{\paren{C_\delta+J_\delta}^2+4L_\delta K_\delta}}
                 {2L_\delta}}^2}.
\]}
{For $t>\mathcal T_\delta$, $F_\delta^{\mathrm D}\paren{t}>0$ and
$\paren{F_\delta^{\mathrm D}}'\paren{t}>0$.  Positivity follows from $t\ge T_\delta>1$
and $t^{1/3}\le\sqrt t$: with $y=\sqrt t$, it is enough to
have $L_\delta y^2-(C_\delta+J_\delta)y-K_\delta>0$.
The second entry in $\mathcal T_\delta$ is the square of its
positive root. If $F_\delta^{\mathrm D}\paren{t}>0$, then
$L_\delta>C_\delta/\sqrt t+J_\delta/t^{2/3}+K_\delta/t$,
and hence
\[
 \paren{F_\delta^{\mathrm D}}'\paren t
 =L_\delta-\frac{C_\delta}{2\sqrt t}
            -\frac{J_\delta}{3t^{2/3}}>0.
\]
The threshold is sufficient and explicit; it need not be the
smallest positive frequency allowed by the sharper core bound.
For interpreting the coefficient, it is also useful that
\[
 \frac{F_\delta^{\mathrm D}\paren t}{t}
 =L_\delta-C_\delta t^{-1/2}-J_\delta t^{-2/3}-K_\delta t^{-1}
 \nearrow L_\delta.
\]
Every error term on the right decreases to zero, so this last
monotonicity holds for all $t>0$ as an algebraic statement.
Figure~\ref{fig:Dq-core-errors} displays the curvature constraint
and this approach to the circular-arc coefficient.}

\begin{figure}[htbp]
\centering
\begin{tikzpicture}
\begin{axis}[width=.46\textwidth,height=5.2cm,
 xmin=0,xmax=.96,ymin=0,ymax=2,
 xlabel={$u$},ylabel={$\chi_2(20,u)$},
 title={Curvature constraint: $R=2$, $t=20$},
 tick label style={font=\scriptsize},
 label style={font=\scriptsize},title style={font=\small}]
\addplot[annBlue,thick,domain=0:.96,samples=160,restrict y to domain=0:2]
 {(13+58*x^2+4*x^4)/(9600*(1-x^2)^3*
 (1-sqrt((1-x^2)/(4-x^2))))};
\addplot[black,dashed] coordinates {(0,1) (.96,1)};
\node[font=\scriptsize,anchor=south west] at
 (axis cs:.05,1) {admissible below $1$};
\end{axis}
\begin{axis}[at={(.51\textwidth,0)},anchor=south west,
 width=.46\textwidth,height=5.2cm,
 xmin=5.44331054,xmax=100,ymin=-.1,ymax=1.62,
 xlabel={$t$},ylabel={$F_1^{\mathrm D}(t)/t$},
 title={Recovery of the coefficient: $\delta=1$},
 tick label style={font=\scriptsize},
 label style={font=\scriptsize},title style={font=\small}]
\addplot[annOrange,thick,domain=5.44331054:100,samples=120]
 {1.5-2.22090357182/sqrt(x)-.860658517265/x^(2/3)-pi/x};
\addplot[black,dashed] coordinates {(5.44331054,1.5) (100,1.5)};
\node[font=\scriptsize,anchor=north east] at
 (axis cs:98,1.5) {$L_1=3/2$};
\addplot[gray,dotted] coordinates {(7.85049210,-.1) (7.85049210,1.62)};
\node[font=\scriptsize,anchor=south west] at
 (axis cs:9,.05) {$\mathcal T_1\simeq7.85$};
\end{axis}
\end{tikzpicture}
\caption{The core constraint becomes more restrictive as $u$
approaches the turning order. After the explicit frequency-dependent
choice of $u$, the normalized deficit envelope increases towards
the circular-arc coefficient $L_\delta$. The plots illustrate the
analytic monotonicity calculations; the dotted line marks the
sufficient positivity threshold, not an exact zero.}
\label{fig:Dq-core-errors}
\end{figure}

{\begin{corollary}[An explicit gap with the full circular-arc coefficient]
\label{cor:Dq-arc-eigen}
Under the hypotheses of Theorem~\ref{thm:dirichlet}, whenever
$r\sqrt{W_k}>\mathcal T_\delta$,
\[
\lambda_k-W_k\ge{\frac{4}{r^2D}}
    F_\delta^{\mathrm D}\paren{r\sqrt{W_k}}>0.
\]
In particular the coefficient of $\sqrt{W_k}$ is
$2\paren{R_0+r}/\paren{R_0^2-r^2}=2/(R_0-r)$;
the displayed errors are
explicit multiples of $W_k^{1/4}$, $W_k^{1/6}$, and one.
\end{corollary}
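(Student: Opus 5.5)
The plan follows the same route as the tail estimate \eqref{eq:Dq-tail}, with the envelope $H_\delta$ replaced by $F_\delta^{\mathrm D}$ from \eqref{eq:Dq-arc-envelope}. First I work with normalized radii, $r=1$ and $R=R_0/r$. Here the Dirichlet part of Proposition~\ref{prop:angular-transfer} gives, at every $t>0$,
\[
 pN^{\mathrm D}_S\paren{t^2}\le\frac D8t^2-\frac12B_{\mathrm D}\paren t.
\]
Taking $t^2=\lambda_k$ and using $N^{\mathrm D}_S\paren{\lambda_k}\ge k$ together with $D/\paren{8p}=\abs S/\paren{4\pi}$ gives
\[
 \lambda_k-W_k\ge\frac4DB_{\mathrm D}\paren{\sqrt{\lambda_k}}.
\]
For physical radii, scaling lengths by $r$ turns this into
\[
 \lambda_k-W_k\ge\frac4{r^2D}B_{\mathrm D}\paren{r\sqrt{\lambda_k}},
\]
exactly as in the remark following \eqref{eq:Dq-tail}.

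Next I bound $B_{\mathrm D}\paren{r\sqrt{\lambda_k}}$ from below by $F_\delta^{\mathrm D}\paren{r\sqrt{W_k}}$. Theorem~\ref{thm:dirichlet} gives $\lambda_k>W_k$, so $r\sqrt{\lambda_k}\ge r\sqrt{W_k}>\mathcal T_\delta\ge T_\delta$. Thus \eqref{eq:Dq-arc-envelope} applies at $t=r\sqrt{\lambda_k}$: the choice $u=\sqrt{1-\alpha_\delta t^{-2/3}}$ is admissible there, since $\chi_R\le1/2$ and $u\ge1/2$. This yields $B_{\mathrm D}\paren{r\sqrt{\lambda_k}}\ge F_\delta^{\mathrm D}\paren{r\sqrt{\lambda_k}}$. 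It has already been shown that $F_\delta^{\mathrm D}$ is positive and strictly increasing on $\paren{\mathcal T_\delta,+\infty}$. Therefore
\[
 F_\delta^{\mathrm D}\paren{r\sqrt{\lambda_k}}\ge F_\delta^{\mathrm D}\paren{r\sqrt{W_k}}>0,
\]
and the displayed inequality follows. This monotonicity step is the only delicate point. One must use the monotonicity of $F_\delta^{\mathrm D}$ itself and not of $F_\delta^{\mathrm D}(t)/t$; the threshold $\mathcal T_\delta$ was constructed precisely to guarantee both positivity and a positive derivative.

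Finally I read off the coefficients. The leading term is
\[
 \frac4{r^2D}L_\delta r\sqrt{W_k}=\frac{2\paren{R+1}}{rD}\sqrt{W_k}.
\]
Since $D=R^2-1$ with $R=R_0/r$, this equals
\[
 \frac{2\paren{R_0+r}}{R_0^2-r^2}\sqrt{W_k}=\frac{2}{R_0-r}\sqrt{W_k}.
\]
The remaining terms, $-\frac4{r^2D}\paren{C_\delta\paren{r\sqrt{W_k}}^{1/2}+J_\delta\paren{r\sqrt{W_k}}^{1/3}+K_\delta}$, are explicit multiples of $W_k^{1/4}$, $W_k^{1/6}$, and $1$. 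I expect no genuine obstacle beyond verifying that the admissibility hypotheses of \eqref{eq:Dq-core-envelope} hold at the actual frequency $r\sqrt{\lambda_k}$ rather than at $r\sqrt{W_k}$. This holds because each hypothesis, namely $t\ge T_\delta$, $\chi_R\le1/2$ and $u\ge1/2$, is monotone in $t$ in the favourable direction.
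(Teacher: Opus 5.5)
Your proposal is correct and is essentially the paper's proof: right-endpoint sampling gives $\lambda_k-W_k\ge 4B_{\mathrm D}(r\sqrt{\lambda_k})/(r^2D)$, then \eqref{eq:Dq-arc-envelope} at $t=r\sqrt{\lambda_k}\ge T_\delta$, combined with the monotonicity and positivity of $F_\delta^{\mathrm D}$ on $(\mathcal T_\delta,+\infty)$ and $r\sqrt{\lambda_k}\ge r\sqrt{W_k}$, gives the claim, and your coefficient bookkeeping is right. One small imprecision: for the chosen $u$ the bound $\chi_R\le1/2$ is not ``monotone in $t$'' but holds at every $t\ge T_\delta$, because $t^2(1-u^2)^3=\alpha_\delta^3$ is constant, so the only admissibility condition is $t\ge T_\delta$.
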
}
\begin{proof}
{At $E=\lambda_k$, right-endpoint sampling gives
$\lambda_k-W_k\ge4B_{\mathrm D}\paren{r\sqrt{\lambda_k}}/\paren{r^2D}$.
Use \eqref{eq:Dq-arc-envelope}, the positive threshold, and
$r\sqrt{\lambda_k}\ge r\sqrt{W_k}$.}
\end{proof}

{The available explicit bounds may be combined without a loss.
For the whole inner interval, retention of its initial phase gives}
{\[
 \mathcal H_\delta\paren{t}
 ={\frac{Rt}{2}}-{2C_-\paren{a_{\mathrm{wh}}\paren{t}}}
                    \sqrt{\frac{\pi Rt}\delta}
             -{\frac{\sqrt2}{6}}\sqrt{\pi t\sqrt D},
 \qquad
 a_{\mathrm{wh}}\paren{t}=\frac{\delta t}\pi+\frac14
                  -\floor{\frac{\delta t}\pi+\frac14}.
\]}
{It dominates $H_\delta\paren{\delta t}$.  Thus, for $\delta t\ge\pi$,
one may use}
{\begin{equation}\label{eq:Dq-combined}
 \mathcal B_\delta^{\mathrm D}\paren{t}=
 \max\set{c_\delta t,Q_\delta\paren{\delta t},
      \mathcal H_\delta\paren{t},
      \sup_{\substack{0<u<1\\\chi_R\paren{t,u}<1}}
                       \mathcal C_\delta\paren{t,u}}
 \le B_{\mathrm D}\paren{t}.
\end{equation}}
{The supremum of an empty admissible set is omitted.  For
$t\ge T_\delta$ the explicit value $F_\delta^{\mathrm D}\paren{t}$ may
also be included.  This is the strongest lower bound furnished by
these displayed row, whole-inner, and core estimates.  The next subsection inverts their combined counting bound.}

\subsection{\revblue{Discrete radial-row bounds and eigenvalue inversion}}
{The closed gap in Theorem~\ref{thm:dirichlet} discards both the
locations of individual angular samples and the frequency
dependence of the sharper phase bounds. We retain those data
here and invert the best displayed counting bound. In each
row the continuous transfer replaces the integer number of
permitted angular indices by its real upper bound, losing a
fractional part between zero and one. Keeping the floor removes
that loss in the row comparison. Separately, using the actual
phase bounds rather than $c_\delta t$ removes the uniform
lower-envelope loss. Taking the smaller of the resulting upper
counts preserves both advantages. The leading area coefficient
is not enlarged. The resulting lower eigenvalue bound contains
the main theorem's bound and can improve it at low as well as
high indices.}
{For normalized inner radius one, retain
$R=1+\delta$, $D=R^2-1$, and $p=\pi/\beta\ge1/2$.
The signed-potential row lengths in \eqref{eq:Dq-rows} also give a
discrete upper bound.  Define}
{\begin{equation}\label{eq:Dq-discrete-rows}
 \mathcal R_{\delta,p}\paren{t}
 =\sum_{n\ge1}
       \floor{\frac{P_n\paren{\delta,\delta t}}{\delta p}}.
\end{equation}}
{The sum is finite: $P_n\paren{\delta,\delta t}=0$ whenever
$\pi^2n^2>\delta^2t^2+\delta^2/4$.
For a fixed row $n$, the comparison permits the angular indices
satisfying $jp\le P_n/\delta$, so their number is
$\lfloor P_n/(\delta p)\rfloor$. Equality is retained because
the Dirichlet count uses an inclusive energy cutoff.
Every actual mode is permitted by the comparison, although the
converse need not hold. Thus separation of variables
and \eqref{eq:Dq-signed} give}
{\[
N_S^{\mathrm D}\paren{t^2}\le\mathcal R_{\delta,p}\paren{t}.
\]}
{For $\delta t<\pi$, every summand in
\eqref{eq:Dq-discrete-rows} is zero.  Indeed, an active row then has
$P_n\paren{\delta,\delta t}<\delta/2\le\delta p$.}

{In \eqref{eq:Dq-combined}, include the additional entry
$F_\delta^{\mathrm D}\paren{t}$ whenever $t\ge T_\delta$.
Define the fully specified pointwise upper bound}
{\begin{equation}\label{eq:Dq-U}
 U_{\delta,p}\paren{t}=
 \begin{cases}
  0,&\delta t<\pi,\\[2pt]
  \displaystyle\min\set{\mathcal R_{\delta,p}\paren{t},
       {\frac{Dt^2/4-\mathcal B_\delta^{\mathrm D}\paren{t}}{2p}}},
       &\delta t\ge\pi.
 \end{cases}
\end{equation}}
{\color{editorial}The ingredients depend only on $\delta,p,t$, with the core
optimization specified in \eqref{eq:Dq-combined}. Both entries are
nonnegative upper bounds for the sector count: the row entry
retains the angular lattice, whereas the other entry comes from
continuous order integration and right-endpoint sampling.
Taking their minimum preserves an upper bound, so}

{\begin{equation}\label{eq:Dq-U-upper}
 0\le N_S^{\mathrm D}\paren{t^2}\le U_{\delta,p}\paren{t},
 \qquad t>0.
\end{equation}}

{\begin{theorem}[Inverting the refined Dirichlet counting bound]
\label{thm:Dq-bound-inverse}
For $k\ge1$, put
\begin{equation}\label{eq:Dq-sigma}
 \sigma_k\paren{\delta,p}
 =\sup\set{t>0:U_{\delta,p}\paren{t}<k}.
\end{equation}
This supremum is positive and finite.  For an annular sector with
physical radii $r<R_0$, set
$\delta=R_0/r-1$ and $p=\pi/\beta$.  Then
\begin{equation}\label{eq:Dq-sigma-eigen}
 \lambda_k\paren{S_{r,R_0,\beta}}
 \ge{\frac{\sigma_k\paren{\delta,p}^2}{r^2}}.
\end{equation}
In normalized variables, with $w_k=\sqrt{8pk/D}$, one also has
\begin{equation}\label{eq:Dq-sigma-explicit}
 \sigma_k\paren{\delta,p}\ge
 \max\set{
   \sqrt{w_k^2+\frac{4c_\delta^2}{D^2}}+\frac{2c_\delta}D,
   \sqrt{\frac{\pi^2}{\delta^2}+\frac{p^2-1/4}{R^2}}}.
\end{equation}
In particular, \eqref{eq:Dq-sigma-eigen} retains the positive
all-index correction \eqref{eq:Dq-all-eigen}.
An explicit upper bound for this supremum is
\begin{equation}\label{eq:Dq-sigma-search-endpoint}
 \sigma_k\paren{\delta,p}
 \le\sqrt{\frac{\pi^2}{\delta^2}+k^2p^2-\frac14}.
\end{equation}
For fixed $\delta,p$, its asymptotic correction is
\begin{equation}\label{eq:Dq-sigma-asymptotic}
 \sigma_k\paren{\delta,p}^2-w_k^2
 ={\frac{4L_\delta}{D}}w_k+o_{\delta,p}\paren{w_k},
 \qquad k\longrightarrow+\infty,
 \qquad L_\delta=\frac{R+1}2.
\end{equation}
\end{theorem}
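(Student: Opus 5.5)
The argument rests on \eqref{eq:Dq-U-upper}, $N_S^{\mathrm D}(t^2)\le U_{\delta,p}(t)$, together with the elementary behaviour of $U_{\delta,p}$.

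\textbf{Positivity and finiteness of $\sigma_k$.} For $\delta t<\pi$ we have $U_{\delta,p}(t)=0<k$, so $\sigma_k\ge\pi/\delta>0$. Finiteness comes from \eqref{eq:Dq-sigma-search-endpoint}. If $t^2>\pi^2/\delta^2+k^2p^2-1/4$, then the first row has $x=\delta t>\pi$, and
\[
 P_1/\delta=\sqrt{R^2\paren{t^2-\pi^2/\delta^2}+1/4}\ge\sqrt{t^2-\pi^2/\delta^2+1/4}>kp .
\]
Hence $\mathcal R_{\delta,p}(t)\ge k$. The second entry of the minimum must also be at least $k$. I plan to obtain this from the continuous count: $Dt^2/4-\mathcal B^{\mathrm D}_\delta(t)\ge M_{\mathrm D}(t)$, and $M_{\mathrm D}(t)\ge 2kp$ once the first radial row is permitted up to order $kp$, using the actual eigenvalue monotonicity. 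This step is delicate, because $\mathcal B^{\mathrm D}_\delta$ is only a lower bound for $B_{\mathrm D}$. Otherwise I would accept any finite bound as sufficient and prove \eqref{eq:Dq-sigma-search-endpoint} from the row entry. The cleanest version is this: if $U_{\delta,p}(t)<k$ then $N_S^{\mathrm D}(t^2)<k$. This holds for every $t$ below $\sqrt{\lambda_k}$ and fails at $t=\sqrt{\lambda_k}$; so for \eqref{eq:Dq-sigma-search-endpoint} I would bound $\lambda_k$ from above by the $\sin(kp\theta)$ trial mode with the first radial function. That gives $\lambda_k\le \pi^2/\delta^2+k^2p^2-1/4$ by the signed-potential upper comparison (using $s\ge1$), and then I relate the supremum to $\lambda_k$.

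\textbf{Inverse bound \eqref{eq:Dq-sigma-eigen}.} Work in normalized radii and let $t_k=\sqrt{\lambda_k}$. For every $t$ with $U_{\delta,p}(t)<k$, \eqref{eq:Dq-U-upper} gives $N_S^{\mathrm D}(t^2)<k$, hence $\lambda_k>t^2$. Taking the supremum gives $\lambda_k\ge\sigma_k^2$, and scaling by $r$ gives \eqref{eq:Dq-sigma-eigen}.

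\textbf{Explicit lower bound \eqref{eq:Dq-sigma-explicit}.} Take $t$ below the first quantity. The case $\delta t<\pi$ is trivial. Otherwise, since $\mathcal B^{\mathrm D}_\delta\ge c_\delta t$, we get $U\le (Dt^2/4-c_\delta t)/(2p)$, and this is $<k$ exactly when $t<\sqrt{w_k^2+4c_\delta^2/D^2}+2c_\delta/D$, by solving the quadratic. For the second quantity: for $t$ below it, only $n=1$ rows have $P_n/\delta<p$ on the relevant branch, so $\mathcal R_{\delta,p}(t)=0<k$. Here I must check that the $n\ge2$ rows vanish too; since $p\ge1/2$, $\pi^2/\delta^2$ dominates and the rows with $n\ge2$ are then inactive.

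\textbf{Asymptotics \eqref{eq:Dq-sigma-asymptotic}.} The lower bound uses $F^{\mathrm D}_\delta\le\mathcal B^{\mathrm D}_\delta$. Inverting $Dt^2/4-L_\delta t+O(\sqrt t)<2pk$ gives $\sigma_k^2\ge w_k^2+4L_\delta w_k/D-o(w_k)$. For the upper bound, combine $\sigma_k\le\sqrt{\lambda_k}$ with the sector two-term asymptotics of Section~\ref{sec:boundary-optimality}, which give $\lambda_k=w_k^2+(4/D)(L_\delta+\text{side term})w_k+o(w_k)$. The side term would spoil the match, so instead I use that $U$ is at most its continuous entry, $(Dt^2/4-\mathcal B^{\mathrm D}_\delta)/(2p)$. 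Any $t$ with this entry $\ge k$ bounds $\sigma_k$ from above only if $U$ stays $\ge k$ beyond it, which requires monotonicity. \textbf{The main obstacle} is exactly this: $U$ need not be monotone in $t$, so the supremum could jump past $t$ values where $U\ge k$. My plan is to show $U(t)\ge k$ for all $t\ge w_k+2L_\delta/D+o(1)$. I would do this by proving that the continuous entry is eventually increasing and, because $\mathcal B^{\mathrm D}_\delta\le B_{\mathrm D}=Dt^2/4-L_\delta t+o(t)$, satisfies $\ge k$ there. For the row entry, I would show $\mathcal R_{\delta,p}$ is nondecreasing in $t$, which holds since each $P_n$ is nondecreasing, and that it exceeds the continuous count asymptotically, via a lattice-point count of $\sum_n P_n/(\delta p)$ with $O(t)$-controlled errors. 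That lattice-point control is the hardest point, and I would attempt it with the same rounding lemma, Lemma~\ref{lem:all-phase-rounding}, applied in $n$.
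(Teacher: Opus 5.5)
Your proposal is correct and follows essentially the paper's proof. The ingredients are the same: $N_S^{\mathrm D}\le U_{\delta,p}$ gives \eqref{eq:Dq-sigma-eigen}; inverting $\mathcal B^{\mathrm D}_\delta\ge c_\delta t$ and the vanishing row floors give \eqref{eq:Dq-sigma-explicit}; $\sigma_k\le\sqrt{\lambda_k}$ together with the trial bound on the first $k$ angular ground modes (all $j\le k$, not only $\sin(kp\theta)$) gives \eqref{eq:Dq-sigma-search-endpoint}; and \eqref{eq:Dq-sigma-asymptotic} comes from squeezing the continuous entry between $F^{\mathrm D}_\delta$ and Proposition~\ref{prop:Dq-continuous-asymptotic}, while the row entry, whose leading coefficient is $\delta R/(4p)>D/(8p)$, exceeds $k$.

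Several of your worries are unnecessary.
\begin{itemize}
\item The inequality $\mathcal B^{\mathrm D}_\delta\le B_{\mathrm D}$ goes in the favourable direction, so the continuous entry is bounded below by the nondecreasing function $M_{\mathrm D}(t)/(2p)=\frac{D}{8p}t^2-\frac{L_\delta}{2p}t+o(t)$. You therefore need no monotonicity of that entry itself.
\item The lattice count needs only an elementary $O(t)$ error: one unit of floor loss per row plus a monotone sum--integral comparison. The rounding lemma is not required.
\item The rows $n\ge2$ vanish below the second threshold simply because $P_n\le P_1<\delta p$. No dominance of $\pi^2/\delta^2$ is involved, and that dominance need not hold when $p$ is large.
\end{itemize}
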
}

\begin{proof}
{Every $t$ in the set defining \eqref{eq:Dq-sigma} satisfies
$N_S^{\mathrm D}\paren{t^2}<k$, by
\eqref{eq:Dq-U-upper}.  Since the count includes eigenvalues at
equality, this implies $t<\sqrt{\lambda_k\paren{S}}$.
Taking the supremum gives the normalized inequality, and scaling
gives \eqref{eq:Dq-sigma-eigen}.}
{This argument uses the entire set $\{t:U_{\delta,p}(t)<k\}$.
In particular, $U_{\delta,p}$ need not be continuous or monotone:
row floors have jumps, and the retained fractional phases can
make individual phase bounds oscillate. It would be
unjustified to replace the supremum by an arbitrary solution of
$U_{\delta,p}(t)=k$.}

{The all-index bound gives}
{\[
 U_{\delta,p}\paren{t}
 \le{\frac{D}{8p}}\pos{t^2-\frac{4c_\delta}D t},\qquad t>0.
\]}
{For $\delta t<\pi$ this follows from $U=0$; otherwise it follows
from $\mathcal B_\delta^{\mathrm D}\paren{t}\ge c_\delta t$.
To invert this simpler bound directly, put $a=4c_\delta/D$.
Above its zero interval, the equation
$t^2-at-w_k^2=0$ has roots
$a/2\pm\sqrt{w_k^2+a^2/4}$. The negative root is irrelevant.
The right side of the preceding counting bound is strictly below
$k$ for every positive $t$ below}
{\[
 s_k=\sqrt{w_k^2+\frac{4c_\delta^2}{D^2}}+\frac{2c_\delta}D.
\]}
{Hence $\sigma_k\ge s_k$.
Moreover, every sector mode has $n\ge1$ and $jp\ge p\ge1/2$.
The signed comparison gives}
{\[
 \lambda^{\mathrm D}_{jp,n}
 \ge\frac{\pi^2}{\delta^2}+\frac{p^2-1/4}{R^2}.
\]}
{Below the square root of this value the discrete row bound
itself vanishes.  Consequently $U=0$ there and
$\sigma_k$ is at least that square root.  This proves
\eqref{eq:Dq-sigma-explicit}, including its weaker value
$\pi/\delta$.}

{For the upper endpoint, the transformed radial trial
$z(s)=\sqrt{2/\delta}\sin(\pi(s-1)/\delta)$ has unit norm and
kinetic energy $\pi^2/\delta^2$. Since $jp\ge1/2$, its potential
energy is at most $j^2p^2-1/4$. The variational principle gives
$\lambda^{\mathrm D}_{jp,1}\le\pi^2/\delta^2+j^2p^2-1/4$,
because $\paren{j^2p^2-1/4}s^{-2}\le j^2p^2-1/4$.
At the energy in \eqref{eq:Dq-sigma-search-endpoint}, the first
$k$ angular ground modes have therefore been counted.  The actual
count, and hence $U_{\delta,p}$, is at least $k$ at every greater
frequency.  This proves the upper endpoint and also proves
finiteness directly.}

{It remains to identify the high-index correction. This part
uses the continuous asymptotic from
Section~\ref{sec:boundary-optimality}; the pointwise inverse,
its explicit two-sided bounds, and its improvement of the main theorem
have already been proved without that asymptotic.

First observe
that $p\ge1/2$ eliminates every inner-branch row from the discrete
sum.  Thus \eqref{eq:Dq-discrete-rows} is the lattice count}
{\[
 \mathcal R_{\delta,p}\paren{t}
 =\#\set{n,j\ge1:
       \paren{\frac{\pi n}\delta}^2
       +\paren{\frac{pj}R}^2
       \le t^2+\frac1{4R^2}}.
\]}
{Writing $\tau^2=t^2+1/(4R^2)$ makes the area calculation
transparent. The row length before flooring is
$R\sqrt{\tau^2-(\pi n/\delta)^2}/p$, and its integral is
\[
 \frac Rp\int_0^{\delta\tau/\pi}
       \sqrt{\tau^2-\paren{\pi s/\delta}^2}\dd s
 =\frac{\delta R\tau^2}{\pi p}
       \int_0^1\sqrt{1-y^2}\dd y
 =\frac{\delta R}{4p}\tau^2.
\]
The quarter-circle integral is $\pi/4$. This calculation also
gives a finite implementation of the row sum: only
$n\le\lfloor\delta\tau/\pi\rfloor$ need be examined.}
{For fixed $\delta,p$, comparison of each decreasing row with its
integral gives the quadrant-ellipse estimate}
{\begin{equation}\label{eq:Dq-row-asymptotic}
 \mathcal R_{\delta,p}\paren{t}
 ={\frac{\delta R}{4p}}t^2+O_{\delta,p}\paren{t}.
\end{equation}}
{Indeed, replacing each floor by its argument costs at most one per
row, hence $O_{\delta,p}\paren{t}$.  Replacing the sum of the
decreasing row lengths by its integral costs another
$O_{\delta,p}\paren{t}$.  That integral is the quadrant area, and
the added $1/\paren{4R^2}$ changes it by a bounded quantity.}

{The combined continuous bound obeys}
{\[
 \mathcal B_\delta^{\mathrm D}\paren{t}=L_\delta t+o_\delta\paren{t}.
\]}
{Its upper bound is the actual deficit, whose asymptotic is
\eqref{eq:Dq-continuous-asymptotic}{\color{revision}, proved in
Section~\ref{sec:boundary-optimality}}.  Its lower bound is the
included explicit value $F_\delta^{\mathrm D}\paren{t}
=L_\delta t-O_\delta\paren{\sqrt t}$ for $t\ge T_\delta$.
It follows that the second entry of \eqref{eq:Dq-U} is}
{\[
{\frac{D}{8p}}t^2-{\frac{L_\delta}{2p}}t+o_{\delta,p}\paren{t}.
\]}
{The leading coefficient in \eqref{eq:Dq-row-asymptotic} is
strictly larger:}
{\[
 {\frac{\delta R}{4p}}-{\frac{D}{8p}}={\frac{\delta^2}{8p}}>0.
\]}
{Therefore the second entry determines the minimum for all
sufficiently large $t$.  In particular $U_{\delta,p}\paren{t}\to+\infty$,
which proves finiteness of the supremum.  Its positivity follows
already from the zero interval below $\pi/\delta$.}

{Finally, put $a=D/\paren{8p}$ and $b=L_\delta/\paren{2p}$.
For every $\eta>0$, and all sufficiently large $t$,}
{\[
 at^2-\paren{b+\eta}t
 \le U_{\delta,p}\paren{t}
 \le at^2-\paren{b-\eta}t.
\]}
{The roots of the two quadratics at height $k$ show, without a
continuity assumption, that}
{\[
 \sigma_k=w_k+\frac b{2a}+o_{\delta,p}\paren{1}.
\]}
{For the upper bound, the first quadratic is increasing above its
large positive root, so every greater $t$ has $U\ge k$.
For the lower bound, values just below the second quadratic's
large positive root have $U<k$.
Their positive roots are
\[
 \rho_\pm\paren k
 =\frac{b\pm\eta+
        \sqrt{\paren{b\pm\eta}^2+4ak}}{2a}
 =w_k+\frac{b\pm\eta}{2a}+O_{\delta,p,\eta}\paren{w_k^{-1}}.
\]
Thus $\rho_-(k)\le\sigma_k\le\rho_+(k)$ for large $k$,
with the lower inequality understood as a supremum of values
strictly below $\rho_-(k)$. Letting $\eta\downarrow0$ gives
the displayed expansion for $\sigma_k$.
Squaring and using $b/a=4L_\delta/D$ gives
\eqref{eq:Dq-sigma-asymptotic}.}
\end{proof}

{\color{editorial}The number $\sigma_k$ is defined analytically
by the supremum in \eqref{eq:Dq-sigma}. The upper estimate
\eqref{eq:Dq-sigma-search-endpoint} bounds the defining set
from above, while \eqref{eq:Dq-sigma-explicit} gives a lower
bound for its supremum. Its validity as an eigenvalue
lower bound follows from the proof above, independently of
evaluating that supremum. The simpler closed formulas already
give the all-index conclusion of Theorem~\ref{thm:dirichlet}.

For direct use without optimizing all parameters, choose any
admissible $u$ in \eqref{eq:Dq-core-envelope}. The resulting
deficit estimate remains valid; if the resulting counting upper
bound at frequency $t$ is below $k$, then $t^2$ is a lower bound
for $\lambda_k$ in the normalized sector; for physical inner
radius $r$, the corresponding bound is $t^2/r^2$.
Numerical evaluation of the fully optimized bound
is optional. Since $U$ need not be monotone, a single crossing
of $U=k$ does not determine its defining supremum; a finite
sample or an unrestricted bisection is not a substitute for
the analytic definition.

The main theorem supplies a positive correction at every index;
the present inverse retains that guarantee while its leading
high-index correction has the circular-arc coefficient
$4L_\delta/D=2/\delta$ in normalized variables.
Section~\ref{sec:boundary-optimality} explains why this coefficient
is the limit available to the continuous order method, and how
it differs from the larger full-perimeter coefficient of the
sector.}

\section{The Neumann continuous count}\label{sec:neumann}
{\color{editorial}We prove Proposition~\ref{prop:continuous} by combining radial
trials at low frequencies with phase counts at higher frequencies.
The inner initial phase and reflected outer phase give the signs
required by Lemmas~\ref{lem:saw} and~\ref{lem:all-phase-rounding};
the trial bounds ensure overlap for every width.}

\subsection{Radial comparison and real-order phase counting}
{The weighted radial quotient introduced in
Section~\ref{sec:reduction} is}
{\[
 \mathcal R_v\paren{u}
 =\frac{\displaystyle\int_1^R
       \paren{s\abs{u'}^2+v^2s^{-1}\abs{u}^2}\dd s}
       {\displaystyle\int_1^R s\abs{u}^2\dd s},
 \qquad \lambda_{v,0}=\inf_{u\ne0}\mathcal R_v\paren{u}.
\]}
{The form domain is $H^1\paren{1,R}$; a trial function need not
satisfy the Neumann endpoint conditions. For clarity, the usual
conjugation $w\paren{s}=s^{1/2}u\paren{s}$ preserves the norm and gives}
{\[
 \begin{aligned}
 \int_1^R s\abs{u}^2\dd s&=\int_1^R\abs{w}^2\dd s,\\
 \int_1^R\paren{s\abs{u'}^2+v^2s^{-1}\abs{u}^2}\dd s
 &=\int_1^R\paren{\abs{w'}^2+
              \frac{v^2-1/4}{s^2}\abs{w}^2}\dd s
       -\bracket{\frac{\abs{w}^2}{2s}}_1^R.
 \end{aligned}
\]}
{Here the cross term is integrated as
$-\int_1^R\operatorname{Re}\paren{w'\overline w}s^{-1}\dd s
=-\bracket{\abs{w}^2/\paren{2s}}_1^R
-\frac12\int_1^R\abs{w}^2s^{-2}\dd s$.
Thus the differential expression becomes
$-w''+\paren{v^2-1/4}s^{-2}w$, and the Neumann conditions become
$w'\paren{1}=w\paren{1}/2$ and
$w'\paren{R}=w\paren{R}/\paren{2R}$.
Keeping the original weighted quotient avoids carrying these boundary
terms through the trial-function estimates. With $u\equiv1$ it gives}
{\[
 \lambda_{v,0}\le
 \frac{v^2\int_1^R s^{-1}\dd s}{\int_1^R s\dd s}
 =\frac{2v^2\log R}{D}=\frac{v^2}{a_\delta^2},\qquad
 a_\delta=\sqrt{\frac{\delta\paren{2+\delta}}{2\log\paren{1+\delta}}}.
\]}
{This is the real-order form of the ground-mode comparison in
\cite[Section~2.1 and Lemma~2.1]{DaiSunAnnuli}. Its proof uses only
the weighted quotient and therefore applies to every $v\ge0$.
Let $J_v,Y_v$ denote the Bessel functions of real order $v\ge0$,
and define the continuous derivative phase by}
\[
 J_v'\paren{x}+\mathrm iY_v'\paren{x}
 =\mathcal M_v\paren{x} e^{\mathrm i\varphi_v\paren{x}},\qquad
 \mathcal M_v\paren{x}>0,\qquad
 \lim_{x\downarrow0}\varphi_v\paren{x}=\frac\pi2.
\]
Primes denote argument derivatives. The phase decreases on
$\paren{0,v}$ and increases on $\paren{v,+\infty}$, with
$0<\varphi_v\paren{v}<\pi/2$ for $v>0$; at order zero it is
increasing. For $x>v$, put
\[
 E_v\paren{x}=\frac{9x^2-2v^2}{24\paren{x^2-v^2}^{3/2}},
\]
The enclosure of
\cite[Theorem~1.10, equation~\textup{(1.16)}]{Phase} implies
\begin{equation}\label{eq:phase}
 A_v\paren{x}+\frac\pi4<\varphi_v\paren{x}
 <A_v\paren{x}+\frac\pi4+E_v\paren{x}.
\end{equation}
We use the clipped upper envelope from that theorem below; it is
bounded above by the raw approximant in \eqref{eq:phase}.
{\color{revision}The theorem and equation numbers cited here refer to \cite{Phase}; the phase normalization agrees with \cite{DaiSunAnnuli}.}

\begin{lemma}[Real-order phase count]\label{lem:exact}
For every $t>0$ and real $v\ge0$,
\[
 n_v\paren{t^2}
 =\floor{\frac{\varphi_v\paren{Rt}-\varphi_v\paren{t}}{\pi}}+1.
\]
\end{lemma}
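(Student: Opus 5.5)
The proof writes the Neumann radial eigenfunctions through the Bessel cross-product and counts zeros of a phase difference by a Sturm-type monotonicity argument. Every solution of the radial equation $-u''-u'/s+v^2u/s^2=\lambda u$ with $\lambda=t^2>0$ has the form $u(s)=\alpha J_v(ts)+\beta Y_v(ts)$. The Neumann condition at $s=1$ forces $(\alpha,\beta)$ to be proportional to $(Y_v'(t),-J_v'(t))$. With the polar form $J_v'+\mathrm iY_v'=\mathcal M_ve^{\mathrm i\varphi_v}$ this gives
\[
 u'(s)\propto t\,\mathcal M_v(t)\mathcal M_v(ts)\sin\paren{\varphi_v(t)-\varphi_v(ts)} .
\]
Hence $t^2$ is an eigenvalue $\lambda_{v,n}$ with $n\ge1$ exactly when $\sin\paren{\varphi_v(Rt)-\varphi_v(t)}=0$, that is, when $\varphi_v(Rt)-\varphi_v(t)\in\pi\mathbb Z$. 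The value $\lambda_{v,0}$ with $v=0$ is the constant mode at energy zero and is handled separately. Since $\mathcal M_v>0$, the denominators never vanish and the phase is a continuous lift, so this is legitimate.

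Fix $v$ and study $\Phi(\tau)=\paren{\varphi_v(R\tau)-\varphi_v(\tau)}/\pi$ as $\tau$ increases from $0$ to $t$.
\begin{itemize}
\item \emph{Small $\tau$.} Both phases tend to $\pi/2$, so $\Phi(0^+)=0$.
\item \emph{Monotonicity.} The key claim is that $\Phi$ is strictly increasing in $\tau$. Differentiating the cross-product expression shows that the Wronskian-type quantity $\tau\,\partial_\tau\paren{\varphi_v(R\tau)-\varphi_v(\tau)}$ has the sign of a Rayleigh-quotient derivative. Equivalently, one uses the standard Prüfer/Sturm comparison: the number of zeros of $u'$ in $(1,R)$ for the solution with $u'(1)=0$ increases with $\lambda$. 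The phases alone are not monotone on $(0,v)$, so I would not rely on them there.
\item \emph{Zero crossings.} I would prove monotonicity by the standard argument that $\partial_\lambda$ of the Prüfer angle of $u'$ (for the operator $(s u')'$ written in Prüfer form) is positive. Then $\Phi(\tau)$ equals the Prüfer angle difference divided by $\pi$, and it crosses the integers $1,2,\dots$ exactly at the radial eigenvalues $\lambda_{v,1}<\lambda_{v,2}<\cdots$.
\end{itemize}

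Counting follows. The eigenvalues $\le t^2$ are $\lambda_{v,0}$ together with those $\lambda_{v,n}$, $n\ge1$, with $n\le\Phi(t)$. This gives $n_v(t^2)=\floor{\Phi(t)}+1$, with the floor including equality, matching the inclusive count. Two points need care. First, $\lambda_{v,0}>0$ for $v>0$, yet $\lambda_{v,0}$ does not correspond to a crossing. One must check that $\Phi$ stays in $(-1,1)$ near small $\tau$, so that the ground state is encoded by the ``$+1$'' rather than by a crossing. The point is that for $\tau\le\sqrt{\lambda_{v,0}}$ the ground state is not yet counted, whereas the formula gives $1$ whenever $\Phi\ge0$. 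So I must show that $\Phi<0$ for $0<\tau<\sqrt{\lambda_{v,0}}$ and that $\Phi$ increases through $0$ exactly at $\tau=\sqrt{\lambda_{v,0}}$. This is consistent with the description of $\varphi_v$: it decreases on $(0,v)$ from $\pi/2$, so for small $\tau$ we have $\varphi_v(R\tau)<\varphi_v(\tau)$ and $\Phi<0$, while $\sin\Phi\pi=0$ at the first eigenvalue forces $\Phi=0$ there. Second, the case $v=0$ is the degenerate one, with $\Phi\ge0$ and $\lambda_{0,0}=0$; there the formula already counts the constant at every $t>0$.

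The main obstacle is the monotonicity and the initial range of $\Phi$, i.e.\ ruling out that $\Phi$ jumps past an integer or starts below $-1$. I would establish it by identifying $\pi\Phi$ with the difference of Prüfer angles for $u'$ along the Neumann solution and applying the classical Sturm oscillation theorem. That theorem says that the $n$-th Neumann eigenfunction derivative has $n-1$ interior zeros, with angles increasing strictly in $\lambda$. From this, $\Phi(\tau)\in(n-1,n)$ for $\lambda_{v,n-1}<\tau^2<\lambda_{v,n}$, shifted by one for the ground-state convention. The result is the stated formula; the equality convention for the floor matches $\lambda_{v,n}\le t^2$.
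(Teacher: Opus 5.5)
Your reduction is correct: with the Neumann condition at $s=1$ one gets $u'(R)\propto\sin\bigl(\varphi_v(Rt)-\varphi_v(t)\bigr)$, so $t^2$ is a radial eigenvalue exactly when $\Phi(t)\in\mathbb Z$. This includes $\lambda_{v,0}$ for $v>0$: it is the crossing of the level $0$, not a mode ``encoded by the $+1$'' outside the crossings. Your remarks about $n\ge1$ and ``does not correspond to a crossing'' should be dropped.

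The gap is the step that turns this zero set into the floor formula. Your key claim that $\Phi$ is strictly increasing is false for $v>0$, and your own observations refute it: $\Phi(0^+)=0$, yet $\Phi<0$ for small $\tau$ because $\varphi_v$ decreases on $(0,v)$. Nor is $\pi\Phi$ a Pr\"ufer angle. The Pr\"ufer angle $\theta(R;\lambda)$ of $(u,su')$ and $\pi\Phi$ are different functions. They only reach the relevant levels ($\theta\in\pi/2+\pi\mathbb Z$, $\pi\Phi\in\pi\mathbb Z$) at the same values of $\lambda$. A Pr\"ufer angle built from $u'$ degenerates at the turning point $s=v/t$, which is exactly why $\varphi_v$ is not monotone.

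Sturm theory gives $n_v(\lambda)=\lfloor(\theta(R;\lambda)-\pi/2)/\pi\rfloor+1$. The zero-set identity shows that $\Phi$ stays in one strip $(m,m+1)$ between consecutive eigenvalues. Neither fact tells you that $\Phi$ passes upward into the next strip at each eigenvalue, rather than touching an integer and turning back, or moving down. Without that, $\lfloor\Phi\rfloor$ need not match the Pr\"ufer count. For the same reason, your claim that $\Phi$ must equal $0$ rather than $-1$ at $\lambda_{v,0}$ is unproved.

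What you need is transversality at every integer level, not global monotonicity, and the paper proves exactly this. Normalize $U(1,\lambda)=1$, $\partial_sU(1,\lambda)=0$ and put $V=\partial_\lambda U$. Then $[s(UV'-U'V)]'=-sU^2$, and the bracket vanishes at $s=1$. So at an eigenvalue
\[
-RU(R)\,\partial_\lambda\partial_sU(R)=\int_1^R sU^2\dd s .
\]
Now differentiate $\partial_sU(R,t^2)=-\frac{\pi t^2}{2}\mathcal M_v(t)\mathcal M_v(Rt)\sin d_v(t)$ at $d_v=k\pi$, and use $U(R)=(-1)^k\mathcal M_v(t)/(R\mathcal M_v(Rt))$. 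This converts the identity into
\[
\frac{\pi t}{4}\mathcal M_v(t)^2d_v'(t)=\int_1^R sU^2\dd s>0 .
\]
Hence every integer level is crossed exactly once, and upward. Combine this with the initial range ($-\pi/2<d_v<0$ for small $t$ when $v>0$, and $d_0>0$ when $v=0$) and with the unbounded discrete spectrum. That gives the stated floor formula with the inclusive convention.
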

\begin{proof}
{This is \cite[Lemma~2.6]{DaiSunAnnuli} with the integer restriction
removed. The Neumann cross-product vanishes at the integer levels
of $d_v\paren{t}=\varphi_v\paren{Rt}-\varphi_v\paren{t}$.
For real $v>0$, the same phase minimum gives $d_v> -\pi/2$
and $d_v<0$ at small positive $t$; for $v=0$, $d_0$ starts
at zero from above. Normalize the radial solution by
$U\paren{1,\lambda}=1$, $\partial_sU\paren{1,\lambda}=0$,
where $\lambda=t^2$. An explicit solution with this normalization is}
{\[
 U\paren{s,t^2}=\frac{\pi t}{2}
   \paren{Y_v'\paren{t}J_v\paren{st}
              -J_v'\paren{t}Y_v\paren{st}}.
\]}
{The Bessel Wronskian gives $U\paren{1,t^2}=1$, and differentiation
in $s$ gives}
{\[
 \partial_sU\paren{R,t^2}
 =-\frac{\pi t^2}{2}\mathcal M_v\paren{t}
        \mathcal M_v\paren{Rt}\sin\paren{d_v\paren{t}}.
\]}
{At a crossing $d_v\paren{t}=k\pi$, the same Wronskian yields
$U\paren{R,t^2}=\paren{-1}^k\mathcal M_v\paren{t}/
\paren{R\mathcal M_v\paren{Rt}}$.
To check the direction of the crossing, put
$V=\partial_\lambda U$. Differentiating
$\paren{sU'}'+\paren{\lambda s-v^2/s}U=0$ gives}
{\[
 \bracket{s\paren{UV'-U'V}}'=-sU^2.
\]}
{The bracket vanishes at $s=1$. At a Neumann eigenvalue
$U'\paren{R}=0$, integration and differentiation of the sine
formula above therefore give}
\[
 -R U\paren{R,\lambda}\,
   \partial_\lambda\partial_sU\paren{R,\lambda}
 =\int_1^R sU\paren{s,\lambda}^2\dd s
 =\frac{\pi t\mathcal M_v\paren{t}^2}{4}\,d_v'\paren{t}>0.
\]
{Thus every integer crossing is upward and simple. For $v>0$ the
initial value of $d_v$ lies between $-\pi/2$ and zero, so its first
crossing is at zero; for $v=0$ the zero eigenvalue is already counted
before the first positive crossing at $\pi$. The discrete unbounded
radial spectrum then gives precisely the displayed floor formula.
At an eigenvalue the integer level is included, as required by the
definition of $n_v$. The argument in
\cite[Lemma~2.6]{DaiSunAnnuli} uses these same initial values,
Wronskian, and Lagrange identity; none requires an integer order.}
\end{proof}

To obtain a smaller turning-point cap, define
\[
 c\paren{v}=\frac1\pi\inf_{X>v}\paren{A_v\paren{X}+E_v\paren{X}}.
\]
The clipped enclosure and its order-monotone minimum
\cite[Lemma~1.8 and Theorem~1.10]{Phase} give
\[
0\le\frac{\varphi_v\paren{x}-A_v\paren{x}-\pi/4}{\pi}
 <c\paren{v},\qquad x\ge v\ge0,
\]
For $x=v>0$, continuity gives the lower inequality, while
the strict upper inequality follows directly from
$\varphi_v(v)<\varphi_v(X_v^*)<\pi/4+\pi c(v)$.
At $x=v=0$, the chosen phase limit gives the normalized error
$1/4<c(0)$. The order range $v\ge0$ is part of
\cite[Lemma~1.8 and Theorem~1.10]{Phase};
$c$ is nonincreasing. More explicitly, if $X_v^*>v$ is the
unique minimizer in \cite[Lemma~1.8]{Phase}, then the clipped upper
bound is $\pi/4+\pi c\paren{v}$ on $\paren{v,X_v^*}$.
Subtracting $A_v\ge0$ proves the error bound there. On
$\bracket{X_v^*,+\infty}$, the raw error $E_v$ decreases and
$E_v\paren{X_v^*}\le A_v\paren{X_v^*}+E_v\paren{X_v^*}
=\pi c\paren{v}$. The order monotonicity of the minimum gives
that of $c$. The same clipped-envelope argument is used
in~\cite[Lemmas~2.7 and~4.6]{DaiSunAnnuli}; here the full real-order
minimum $c(v)$ is retained.

\subsection{The constant trial function at low frequencies}\label{sec:low}
Set $x=\delta t$. Integrating the first radial mode over its
admissible angular interval gives $M\paren{t}\ge2a_\delta t$.
Consequently,
\[
\delta\paren{M\paren{t}-\frac D4t^2}\ge F_\delta\paren{x},
 \qquad F_\delta\paren{x}=2a_\delta x-\frac{2+\delta}{4}x^2.
\]
For fixed $x>0$, the normalized bound $F_\delta\paren{x}/a_\delta$
decreases with $\delta$. Indeed,
\[
 \frac{a_\delta^2}{\paren{2+\delta}^2}
 =\frac{\tanh\paren{L/2}}{2L},\qquad L=\log\paren{1+\delta},
\]
{and $\tanh z/z$ decreases for $z>0$; hence
$\paren{2+\delta}/a_\delta$ increases.
To verify the sign explicitly,}
{\[
 \frac{\dd}{\dd z}\frac{\tanh z}{z}
 =\frac{z\operatorname{sech}^2z-\tanh z}{z^2}<0.
\]}
{Multiplication of its numerator by $\cosh^2z$ leaves
$z-\sinh z\cosh z$. The function
$\sinh z\cosh z-z$ vanishes at zero and has derivative
$\cosh\paren{2z}-1>0$ for $z>0$. Since $L$ increases with
$\delta$, this proves the claimed width monotonicity of}
{\[
 \frac{F_\delta\paren{x}}{a_\delta}
 =2x-\frac{2+\delta}{4a_\delta}x^2.
\]}

Use the parameters
\[
d=\frac52,\qquad x_0=\frac{15}{4},\qquad
 X_0=\frac{27}{5},\qquad u=\frac9{10},\qquad C_0=\frac14.
\]
The elementary estimate $\log\paren{7/2}<63/50$ gives
$a_d^2>125/28>\paren{211/100}^2$. Thus
{\[
 \begin{aligned}
 F_d\paren{x_0}
 &=\frac{15}{4}\paren{2a_d-\frac{135}{32}}\\
 &>\frac{15}{4}\paren{\frac{211}{50}-\frac{135}{32}}
 =\frac3{640}>0.
 \end{aligned}
\]}
{Since $F_d$ is a concave quadratic vanishing at zero, this proves
positivity for $0<x\le x_0$. Width monotonicity extends the same
conclusion to every $0<\delta\le d$.
{\color{revision}The elementary bounds used here and below are proved in
Lemma~\ref{lem:endpoint-estimates} in Appendix~\ref{sec:elementary-estimates}.}}

{These rational parameters are selected to produce overlapping
estimates. The positive root of $F_d$ is $8a_d/\paren{2+d}$,
and $x_0=15/4$ lies just below it with the explicit gap above.
The cutoff $u=9/10$ leaves only one tenth of the inner order
interval for the turning cap, while
$ux_0/d=27/20>1$ permits the smaller cap $C_0=1/4$ throughout
that interval. The choice $d=5/2$ is large enough to overlap
the power-trial range used later, and the generic phase bound
at $X_0=27/5$ will have the positive margin $1/250$.
These positive margins and the common endpoints provide the
overlap needed for a complete cover.}

\subsection{Directional phase estimates and the thin-width range}\label{sec:high}
For $x\ge x_0$, put $T=ut$, $h=\paren{1-u}t$, and define
on $\bracket{0,T}$
\[
 \eta_t\paren{v}=E_v\paren{t}/\pi,\qquad
 q\paren{v}=f_t\paren{v}-\eta_t\paren{v}.
\]
{The inner action curvature and $\eta_t''$ both increase with $v$.
For the action this follows by differentiation:}
{\[
 \frac{\dd}{\dd v}\paren{-f_t''\paren{v}}
 =\frac v\pi\paren{
       \frac1{\paren{t^2-v^2}^{3/2}}
       -\frac1{\paren{R^2t^2-v^2}^{3/2}}}>0,
 \qquad 0<v<t.
\]}
{For the phase error, put $z=v/t$. Then}
{\[
 \begin{aligned}
 \eta_t\paren{v}
 &=\frac{9-2z^2}{24\pi t\paren{1-z^2}^{3/2}},\\
 \eta_t'\paren{v}
 &=\frac{z\paren{23-2z^2}}
       {24\pi t^2\paren{1-z^2}^{5/2}},\\
 \eta_t''\paren{v}
 &=\frac{23+86z^2-4z^4}
       {24\pi t^3\paren{1-z^2}^{7/2}}.
 \end{aligned}
\]}
{In particular, $\eta_t'\paren{0}=0$ and
$\eta_t''\paren{0}=23/\paren{24\pi t^3}$. The derivative of
the dimensionless factor in the last line is}
{\[
 \frac{\dd}{\dd z}
 \frac{23+86z^2-4z^4}{\paren{1-z^2}^{7/2}}
 =\frac{3z\paren{111+138z^2-4z^4}}
       {\paren{1-z^2}^{9/2}}>0,
 \qquad 0<z<1.
\]}
{Both curvature terms therefore attain their lower bounds at zero.
Consequently,}
\[
 q'\paren{0}=0,\qquad
 -q''\paren{v}\ge\kappa_{\mathrm{in}}
 :=\frac{\delta}{\pi Rt}+\frac{23}{24\pi t^3}.
\]
The second term is the exact contribution
$\eta_t''\paren{0}$~\cite[Proposition~4.7]{DaiSunAnnuli}. The core's initial phase is
\[
a_{\mathrm N}\paren{x,\delta}
 =\set{\frac x\pi-\frac{3\delta}{8\pi x}}.
\]
{Indeed, $f_t\paren{0}=\delta t/\pi=x/\pi$ and
$\eta_t\paren{0}=3/\paren{8\pi t}=3\delta/\paren{8\pi x}$.
We retain this phase in Lemma~\ref{lem:all-phase-rounding}, rather
than replacing $C_+\paren{a_{\mathrm N}}$ by its global maximum.}

Let $v_0=ux_0/d>1$. Evaluating the raw phase upper approximant
at the rational point $X_e=2$ gives
\[
c\paren{v_0}
 \le\frac{A_{v_0}\paren{2}+E_{v_0}\paren{2}}\pi<C_0.
\]
{Its order monotonicity therefore supplies the cap $C_0$ throughout
the turning band $\bracket{T,t}$: here
$v\ge T=ux/\delta\ge ux_0/d=v_0$.}

{We next derive the pointwise estimates, including their rounding
signs. On the core $0\le v\le T<t$, the lower bound at $Rt$
and the upper bound at $t$ in~\eqref{eq:phase} give}
{\[
 \frac{\varphi_v\paren{Rt}-\varphi_v\paren{t}}\pi
 >\frac{A_v\paren{Rt}-A_v\paren{t}}\pi
       -\frac{E_v\paren{t}}\pi=q\paren{v}.
\]}
{The floor function is nondecreasing, and
$\floor{q}+1=q+1/2-\psi\paren{q}$. On $T<v\le t$ the clipped
cap gives the same phase-difference lower bound with
$f_t\paren{v}-C_0$ in place of $q\paren{v}$; using
$\floor{z}+1>z$ then discards the rounding at a loss of only
$C_0$. On the outer interval $t<v<Rt$, the phase at $t$ is
below its small-argument limit $\pi/2$, whereas the lower
bound at $Rt$ is $A_v\paren{Rt}+\pi/4$. Hence}
{\[
 \frac{\varphi_v\paren{Rt}-\varphi_v\paren{t}}\pi
 >f_t\paren{v}-\frac14,
 \qquad
 n_v\paren{t^2}\ge\floor{f_t\paren{v}-\frac14}+1.
\]}
{Writing this last floor as
$f_t+1/4-\psi\paren{f_t-1/4}$ yields}
\begin{equation}\label{eq:bands}
 n_v\paren{t^2}\ge
 \begin{cases}
 f_t\paren{v}-\eta_t\paren{v}+\frac12-\psi\paren{q\paren{v}},
     &0\le v\le T,\\
 f_t\paren{v}-C_0,&T<v\le t,\\
 f_t\paren{v}+\frac14-\psi\paren{f_t\paren{v}-1/4},
     &t<v<Rt.
 \end{cases}
\end{equation}
For the outer band, the last line follows from
$n_v\ge\floor{f_t\paren{v}-1/4}+1$. It retains the favorable
mean $1/4$ in the integer rounding.

Figure~\ref{fig:bands} shows the three order intervals.
\begin{figure}[htbp]
\centering
\begin{tikzpicture}[x=1cm,y=1cm,font=\small]
 \fill[annBlue,opacity=.15] (0,0) rectangle (6.1,.6);
 \fill[annOrange,opacity=.25] (6.1,0) rectangle (7.5,.6);
 \fill[gray,opacity=.15] (7.5,0) rectangle (10.7,.6);
 \draw[thick] (0,0)--(10.7,0);
 \foreach \xx/\lab in {0/0,6.1/{T=ut},7.5/t,10.7/{Rt}}{
  \draw (\xx,-.1)--(\xx,.65);
  \node[below] at (\xx,-.12) {$\lab$};
 }
 \node[annBlue] at (3.05,1.1)
  {\shortstack{Core\\phase and rounding cancellation}};
 \node at (6.8,1.1) {\shortstack{Turning\\band}};
 \node at (9.1,1.1) {\shortstack{Outer\\band}};
 \node at (3.05,.3) {$0\le v\le T$};
 \node at (6.8,-.85) {length $\paren{1-u}t$};
 \node at (9.1,-.85) {length $\delta t$};
\end{tikzpicture}
\caption{The Neumann angular-order decomposition in
\eqref{eq:bands}. Spacings are schematic. The core retains the
positive mean rounding contribution. The middle band uses a clipped
phase cap, and reflection preserves the favorable mean in the outer
band.}
\label{fig:bands}
\end{figure}

Reflect the outer band by setting
\[
 q_{\mathrm{out}}\paren{y}=-f_t\paren{Rt-y}+\frac14,
 \qquad 0\le y\le\delta t.
\]
Here $q_{\mathrm{out}}'\paren{0}=0$, its initial phase is $1/4$, and
\[
 -q_{\mathrm{out}}''\paren{y}
 =\frac1{\pi\sqrt{R^2t^2-\paren{Rt-y}^2}}
 \ge\kappa_{\mathrm{out}}:=\frac1{\pi t\sqrt D}.
\]
{Indeed, on this interval}
{\[
 \begin{aligned}
 f_t\paren{v}&=\frac{A_v\paren{Rt}}\pi,\\
 f_t'\paren{v}&=-\frac1\pi\arccos\paren{\frac{v}{Rt}},
 \qquad f_t''\paren{v}=\frac1{\pi\sqrt{R^2t^2-v^2}}.
 \end{aligned}
\]}
{Since $Rt-y\ge t$, the square root in the denominator is
at most $t\sqrt D$, proving the uniform curvature bound.
The endpoint singularity is integrable, so the rounding lemma
applies directly to the absolutely continuous derivative. Since
$\psi\paren{f_t\paren{Rt-y}-1/4}=-\psi\paren{q_{\mathrm{out}}\paren{y}}$
almost everywhere, its lower bound controls the required outer
rounding error. The core and outer losses are at most}
\[
 \frac{2C_+\paren{a_{\mathrm N}}}{\sqrt{\kappa_{\mathrm{in}}}},
 \qquad
 \frac{\sqrt2}{3\sqrt{\kappa_{\mathrm{out}}}},
\]
respectively.

{The signs here are essential. The core enters the integrated
count as $-2\int_0^T\psi\paren{q}\dd v$, so it requires the
upper rounding bound $C_+$. Reflection gives}
{\[
 \begin{aligned}
 -2\int_t^{Rt}\psi\paren{f_t\paren{v}-1/4}\dd v
 &=2\int_0^{\delta t}\psi\paren{q_{\mathrm{out}}\paren{y}}\dd y\\
 &\ge-\frac{\sqrt2}{3\sqrt{\kappa_{\mathrm{out}}}},
 \end{aligned}
\]}
{because the lower rounding coefficient at initial phase $1/4$
is $C_-\paren{1/4}=\sqrt2/6$. The values of the sawtooth
at its isolated integer crossings do not affect either integral.}

The integrated phase error is
{\[
 \begin{aligned}
 I\paren{u}&=\int_0^T\eta_t\paren{v}\dd v
 =\frac1{24\pi}\int_0^u
       \frac{9-2z^2}{\paren{1-z^2}^{3/2}}\dd z\\
 &=\frac1{24\pi}\bracket{
       \frac{7z}{\sqrt{1-z^2}}+2\arcsin z}_0^u
 =\frac{7u/\sqrt{1-u^2}+2\arcsin u}{24\pi}.
 \end{aligned}
\]}
{The antiderivative follows from the identity}
{\[
 \frac{\mathrm d}{\mathrm d z}
 \paren{\frac{7z}{\sqrt{1-z^2}}+2\arcsin z}
 =\frac7{\paren{1-z^2}^{3/2}}+\frac2{\sqrt{1-z^2}}
 =\frac{9-2z^2}{\paren{1-z^2}^{3/2}}.
\]}
{In particular, the substitution $v=tz$ removes all dependence
on $t$ from this integrated error.}

{We can now track every contribution to the count. The core has
mean rounding contribution $T/2$, the turning interval loses
$C_0h$, and the outer interval has mean contribution $\delta t/4$.
Multiplication by two, followed by~\eqref{eq:action}, gives}
{\[
 \begin{aligned}
 M\paren{t}-\frac D4t^2
 &\ge T+\frac{\delta t}{2}-2C_0h-2I\paren{u}
       -\frac{2C_+\paren{a_{\mathrm N}}}
                    {\sqrt{\kappa_{\mathrm{in}}}}
       -\frac{\sqrt2}{3\sqrt{\kappa_{\mathrm{out}}}}\\
 &=t\paren{1+\frac\delta2}
       -\paren{1+2C_0}\paren{1-u}t-2I\paren{u}
       -\frac{2C_+\paren{a_{\mathrm N}}}
                    {\sqrt{\kappa_{\mathrm{in}}}}
       -\frac{\sqrt2}{3\sqrt{\kappa_{\mathrm{out}}}}.
 \end{aligned}
\]}
{Here the identity $T=t-h$ explains the additional $h$ in the
turning-interval loss. To normalize by $t$, observe that}
{\[
 \begin{aligned}
 t^2\kappa_{\mathrm{in}}
 &=\frac{\delta t}{\pi R}+\frac{23}{24\pi t}
 =\frac{x}{\pi\paren{1+\delta}}
       +\frac{23\delta}{24\pi x},\\
 \frac1{t\sqrt{\kappa_{\mathrm{out}}}}
 &=\sqrt{\frac{\pi\sqrt D}{t}}
 =\sqrt{\frac{\pi\delta\sqrt{\delta\paren{2+\delta}}}{x}},
 \qquad
 \frac{2I\paren{u}}{t}=\frac{2\delta I\paren{u}}{x}.
 \end{aligned}
\]}
Define
\[
 \begin{aligned}
 K_\delta\paren{x}&=\frac{x}{\pi\paren{1+\delta}}
                    +\frac{23\delta}{24\pi x},\\
 G_\delta\paren{x;c}
 &=1+\frac\delta2-\frac{2c}{\sqrt{K_\delta\paren{x}}}
   -\frac{\sqrt2}{3}\sqrt{\frac{\pi\delta\sqrt{\delta\paren{2+\delta}}}{x}}\\
 &\hspace{1em}-\paren{1+2C_0}\paren{1-u}
                 -\frac{2\delta I\paren{u}}{x}.
 \end{aligned}
\]
{Substitution of these identities gives the complete
phase-dependent lower envelope}
\begin{equation}\label{eq:high}
 \frac{M\paren{t}-Dt^2/4}{t}
 \ge G_\delta\paren{x;C_+\paren{a_{\mathrm N}\paren{x,\delta}}}.
\end{equation}
For $x_0\le x\le X_0$ and $0<\delta\le d$, the unwrapped
initial phase belongs to a single period, with
\[
 \frac19<\frac x\pi-\frac{3\delta}{8\pi x}-1<\frac{18}{25}.
\]
{For its lower bound use $x=x_0$, $\delta=d$, and
$\pi<22/7$; the upper bound follows from $x\le X_0$ and
$\pi>157/50$. Explicitly, the unwrapped phase is increasing
in $x$ and decreasing in $\delta$, and its endpoints satisfy}
{\[
 \frac{7}{2\pi}-1>\frac5{44}>\frac19,
 \qquad
 \frac{27}{5\pi}-1<\frac{113}{157}<\frac{18}{25}.
\]}
{In particular, the same integer part, namely one, is retained
throughout this window. The half-period monotonicity of $C_+$ and the
bounds $C_+\paren{1/9},C_+\paren{18/25}<1/10$
\textcolor{editorial}{from Lemma~\ref{lem:endpoint-estimates}}
therefore control the whole phase window.}

{For constant $c$ and $0<\delta\le d$,
$G_\delta\paren{x;c}$ increases with $x\ge x_0$, because $K_\delta$ increases there and the other errors decrease.
The required inequality is $x_0^2>23d\paren{1+d}/24$.
At $x=x_0$, $K_\delta$ decreases with $\delta\le d$, since
$x_0^2>23\paren{1+d}^2/24$. These assertions follow directly from}
{\[
 \partial_xK_\delta\paren{x}
 =\frac1{\pi\paren{1+\delta}}
       -\frac{23\delta}{24\pi x^2},
 \qquad
 \partial_\delta K_\delta\paren{x}
 =-\frac{x}{\pi\paren{1+\delta}^2}
       +\frac{23}{24\pi x}.
\]}
{The two comparisons reduce to
$225/16>805/96$ and $225/16>1127/96$, respectively.
Thus its core error is at most
$\paren{1/5}/\sqrt{K_d\paren{x_0}}<7/25$.
Using $\sqrt D<\delta+1$ and $I\paren{9/10}<111/500$,
the outer and integrated phase errors satisfy}
\[
 s<\frac{54}{125}\paren{\delta+\frac12},\qquad
 \frac{2\delta I\paren{9/10}}{x_0}<\frac{119}{1000}\delta.
\]
{For the first comparison, write}
{\[
 s^2=\frac{2\pi}{9x_0}\delta\sqrt D
 <\frac{2\pi}{9x_0}\delta\paren{\delta+1}
 <\frac{2\pi}{9x_0}\paren{\delta+\frac12}^2,
\]}
{and use $2\pi/\paren{9x_0}<176/945<\paren{54/125}^2$.
The second follows from
$2\paren{111/500}/x_0=148/1250<119/1000$.
Substitution yields}
\[
 G_\delta\paren{x_0;1/10}
 >\frac{177}{500}-\frac{51}{1000}\delta
 \ge\frac{453}{2000}>0.
\]
Frequency monotonicity proves~\eqref{eq:high} throughout the window.

For $x\ge X_0$, use the generic coefficient $c=1/3$.
Frequency monotonicity remains valid. To control the width, put
$A=X_0/\pi$, $b=23/\paren{24\pi X_0}$, $R=1+\delta$, and
$K=A/R+b\delta$. Denote the core and outer errors by $r$ and $s$.
Their width derivatives are
\[
 r'=\frac{A/R^2-b}{3K^{3/2}},\qquad
 s'=\frac{\sqrt2}{3}\sqrt{\frac\pi{X_0}}
       \frac{\delta+3/2}{\delta^{1/4}\paren{\delta+2}^{3/4}}.
\]
{Both decrease on $0<\delta\le d$. For the core error, the
numerator governing $r''$, multiplied by $R^4/A^2$, is
$-1-10z+4z/R+3z^2<0$, where $z=bR^2/A<1$.
For the outer error, $g=\delta^{3/4}\paren{\delta+2}^{1/4}$
satisfies $g''/g=-3/\paren{4\delta^2\paren{\delta+2}^2}<0$.
To see these signs without omitting terms, use}
{\[
 \begin{aligned}
 K'&=b-\frac A{R^2},\qquad K''=\frac{2A}{R^3},\\
 r''&=\frac{-4AK/R^3+3\paren{A/R^2-b}^2}{6K^{5/2}},\\
 \frac{R^4}{A^2}\paren{-\frac{4AK}{R^3}
                   +3\paren{\frac A{R^2}-b}^2}
 &=-1-10z+\frac{4z}{R}+3z^2
 \le-1-6z+3z^2<0.
 \end{aligned}
\]}
{Here $0<z\le23\paren{7/2}^2/\paren{24X_0^2}<1$.
For $g$, logarithmic differentiation gives}
{\[
 \frac{g'}g=\frac3{4\delta}+\frac1{4\paren{\delta+2}},
 \qquad
 \frac{g''}g
 =\paren{\frac3{4\delta}+\frac1{4\paren{\delta+2}}}^2
       -\frac3{4\delta^2}-\frac1{4\paren{\delta+2}^2}
 =-\frac3{4\delta^2\paren{\delta+2}^2}.
\]}
{\color{editorial}At $\delta=d$, Lemma~\ref{lem:endpoint-estimates} gives}
\[
 r'>\frac{11}{200},\qquad s'>\frac{73}{200},\qquad
 \frac{2I\paren{9/10}}{X_0}>\frac{11}{135}.
\]
Their sum exceeds $1/2$, so $G_\delta\paren{X_0;1/3}$ decreases
with width. At its endpoint the three errors are bounded by
$21/25$, $21/20$, and $103/500$, respectively. Hence
\[
 G_\delta\paren{x;1/3}\ge G_d\paren{X_0;1/3}
 >\frac94-\frac{21}{25}-\frac{21}{20}
             -\frac{103}{500}-\frac3{20}=\frac1{250}>0.
\]
{The width conclusion uses
$\partial_\delta G=1/2-r'-s'-2I\paren{9/10}/X_0$.
Since $r'$ and $s'$ decrease, their values at $d$ give lower
bounds at every smaller width. In particular,
$11/200+73/200+11/135=677/1350>1/2$.
}

{\color{editorial}The endpoint computations are given in
Lemma~\ref{lem:endpoint-estimates}. The margin $1/250$ fixes the
tail threshold $X_0$; together with the low-frequency estimate
and phase window, this proves Proposition~\ref{prop:continuous}
for $0<\delta\le5/2$.}

{The role of retaining the initial phase is illustrated in
Figure~\ref{fig:thin-envelopes}. At the endpoint width $d=5/2$, the
generic coefficient $1/3$ does not suffice at $x_0$, whereas the
coefficient $1/10$ gives a positive bound throughout the intervening
window. The constant trial and generic tail meet this window at its
two ends.}
{\begin{figure}[htbp]
\centering
\begin{tikzpicture}[x=2.25cm,y=2.1cm,font=\small]
 \draw[gray!25] (2.5,-.55) rectangle (7,1.5);
 \draw[->] (2.5,0)--(7.12,0) node[right] {$x$};
 \draw[->] (2.5,-.55)--(2.5,1.62) node[above] {normalized lower bounds};
 \foreach \y/\lab in {-.5/{-1/2},.5/{1/2},1/{1},1.5/{3/2}}{
   \draw[gray!25] (2.5,\y)--(7,\y);
   \node[left] at (2.5,\y) {$\lab$};
 }
 \foreach \x/\lab in {2.5/{5/2},3/{3},4.5/{9/2},6/{6},7/{7}}{
   \draw (\x,0)--(\x,-.035);
   \node[below] at (\x,-.57) {$\lab$};
 }
 \draw[gray,densely dotted] (3.75,-.55)--(3.75,1.5);
 \draw[gray,densely dotted] (5.4,-.55)--(5.4,1.5);
 \node[below] at (3.75,-.57) {$x_0=15/4$};
 \node[below] at (5.4,-.57) {$X_0=27/5$};
\draw[annBlue,very thick] plot coordinates {(2.5000000,1.4254595) (2.5250000,1.3973345) (2.5500000,1.3692095) (2.5750000,1.3410845) (2.6000000,1.3129595) (2.6250000,1.2848345) (2.6500000,1.2567095) (2.6750000,1.2285845) (2.7000000,1.2004595) (2.7250000,1.1723345) (2.7500000,1.1442095) (2.7750000,1.1160845) (2.8000000,1.0879595) (2.8250000,1.0598345) (2.8500000,1.0317095) (2.8750000,1.0035845) (2.9000000,0.9754595) (2.9250000,0.9473345) (2.9500000,0.9192095) (2.9750000,0.8910845) (3.0000000,0.8629595) (3.0250000,0.8348345) (3.0500000,0.8067095) (3.0750000,0.7785845) (3.1000000,0.7504595) (3.1250000,0.7223345) (3.1500000,0.6942095) (3.1750000,0.6660845) (3.2000000,0.6379595) (3.2250000,0.6098345) (3.2500000,0.5817095) (3.2750000,0.5535845) (3.3000000,0.5254595) (3.3250000,0.4973345) (3.3500000,0.4692095) (3.3750000,0.4410845) (3.4000000,0.4129595) (3.4250000,0.3848345) (3.4500000,0.3567095) (3.4750000,0.3285845) (3.5000000,0.3004595) (3.5250000,0.2723345) (3.5500000,0.2442095) (3.5750000,0.2160845) (3.6000000,0.1879595) (3.6250000,0.1598345) (3.6500000,0.1317095) (3.6750000,0.1035845) (3.7000000,0.0754595) (3.7250000,0.0473345) (3.7500000,0.0192095)};
\draw[annOrange,very thick] plot coordinates {(3.7500000,0.2843192) (3.7706250,0.2895455) (3.7912500,0.2947289) (3.8118750,0.2998700) (3.8325000,0.3049693) (3.8531250,0.3100275) (3.8737500,0.3150450) (3.8943750,0.3200225) (3.9150000,0.3249605) (3.9356250,0.3298595) (3.9562500,0.3347201) (3.9768750,0.3395426) (3.9975000,0.3443278) (4.0181250,0.3490759) (4.0387500,0.3537876) (4.0593750,0.3584633) (4.0800000,0.3631034) (4.1006250,0.3677084) (4.1212500,0.3722788) (4.1418750,0.3768150) (4.1625000,0.3813174) (4.1831250,0.3857864) (4.2037500,0.3902225) (4.2243750,0.3946261) (4.2450000,0.3989975) (4.2656250,0.4033373) (4.2862500,0.4076456) (4.3068750,0.4119230) (4.3275000,0.4161698) (4.3481250,0.4203864) (4.3687500,0.4245730) (4.3893750,0.4287302) (4.4100000,0.4328582) (4.4306250,0.4369574) (4.4512500,0.4410281) (4.4718750,0.4450706) (4.4925000,0.4490853) (4.5131250,0.4530725) (4.5337500,0.4570324) (4.5543750,0.4609655) (4.5750000,0.4648720) (4.5956250,0.4687522) (4.6162500,0.4726065) (4.6368750,0.4764350) (4.6575000,0.4802381) (4.6781250,0.4840161) (4.6987500,0.4877692) (4.7193750,0.4914977) (4.7400000,0.4952019) (4.7606250,0.4988820) (4.7812500,0.5025384) (4.8018750,0.5061712) (4.8225000,0.5097807) (4.8431250,0.5133672) (4.8637500,0.5169309) (4.8843750,0.5204720) (4.9050000,0.5239908) (4.9256250,0.5274875) (4.9462500,0.5309624) (4.9668750,0.5344156) (4.9875000,0.5378474) (5.0081250,0.5412581) (5.0287500,0.5446477) (5.0493750,0.5480166) (5.0700000,0.5513650) (5.0906250,0.5546930) (5.1112500,0.5580008) (5.1318750,0.5612888) (5.1525000,0.5645570) (5.1731250,0.5678056) (5.1937500,0.5710349) (5.2143750,0.5742451) (5.2350000,0.5774362) (5.2556250,0.5806086) (5.2762500,0.5837624) (5.2968750,0.5868977) (5.3175000,0.5900149) (5.3381250,0.5931139) (5.3587500,0.5961950) (5.3793750,0.5992584) (5.4000000,0.6023043)};
\draw[black,thick,densely dashed] plot coordinates {(3.7500000,-0.3481558) (3.7770833,-0.3407189) (3.8041667,-0.3333453) (3.8312500,-0.3260340) (3.8583333,-0.3187840) (3.8854167,-0.3115945) (3.9125000,-0.3044644) (3.9395833,-0.2973930) (3.9666667,-0.2903793) (3.9937500,-0.2834224) (4.0208333,-0.2765216) (4.0479167,-0.2696760) (4.0750000,-0.2628849) (4.1020833,-0.2561475) (4.1291667,-0.2494630) (4.1562500,-0.2428307) (4.1833333,-0.2362498) (4.2104167,-0.2297197) (4.2375000,-0.2232397) (4.2645833,-0.2168091) (4.2916667,-0.2104273) (4.3187500,-0.2040935) (4.3458333,-0.1978073) (4.3729167,-0.1915679) (4.4000000,-0.1853748) (4.4270833,-0.1792273) (4.4541667,-0.1731250) (4.4812500,-0.1670671) (4.5083333,-0.1610533) (4.5354167,-0.1550829) (4.5625000,-0.1491554) (4.5895833,-0.1432702) (4.6166667,-0.1374270) (4.6437500,-0.1316251) (4.6708333,-0.1258640) (4.6979167,-0.1201434) (4.7250000,-0.1144627) (4.7520833,-0.1088214) (4.7791667,-0.1032191) (4.8062500,-0.0976553) (4.8333333,-0.0921297) (4.8604167,-0.0866417) (4.8875000,-0.0811909) (4.9145833,-0.0757770) (4.9416667,-0.0703995) (4.9687500,-0.0650580) (4.9958333,-0.0597521) (5.0229167,-0.0544814) (5.0500000,-0.0492455) (5.0770833,-0.0440442) (5.1041667,-0.0388769) (5.1312500,-0.0337433) (5.1583333,-0.0286431) (5.1854167,-0.0235759) (5.2125000,-0.0185413) (5.2395833,-0.0135391) (5.2666667,-0.0085688) (5.2937500,-0.0036303) (5.3208333,0.0012770) (5.3479167,0.0061533) (5.3750000,0.0109989) (5.4020833,0.0158141) (5.4291667,0.0205992) (5.4562500,0.0253546) (5.4833333,0.0300805) (5.5104167,0.0347773) (5.5375000,0.0394451) (5.5645833,0.0440844) (5.5916667,0.0486954) (5.6187500,0.0532783) (5.6458333,0.0578334) (5.6729167,0.0623611) (5.7000000,0.0668615) (5.7270833,0.0713350) (5.7541667,0.0757818) (5.7812500,0.0802021) (5.8083333,0.0845962) (5.8354167,0.0889643) (5.8625000,0.0933068) (5.8895833,0.0976237) (5.9166667,0.1019155) (5.9437500,0.1061822) (5.9708333,0.1104242) (5.9979167,0.1146417) (6.0250000,0.1188348) (6.0520833,0.1230039) (6.0791667,0.1271490) (6.1062500,0.1312706) (6.1333333,0.1353687) (6.1604167,0.1394436) (6.1875000,0.1434954) (6.2145833,0.1475245) (6.2416667,0.1515309) (6.2687500,0.1555150) (6.2958333,0.1594768) (6.3229167,0.1634167) (6.3500000,0.1673347) (6.3770833,0.1712311) (6.4041667,0.1751060) (6.4312500,0.1789598) (6.4583333,0.1827924) (6.4854167,0.1866042) (6.5125000,0.1903952) (6.5395833,0.1941658) (6.5666667,0.1979159) (6.5937500,0.2016460) (6.6208333,0.2053560) (6.6479167,0.2090461) (6.6750000,0.2127167) (6.7020833,0.2163677) (6.7291667,0.2199993) (6.7562500,0.2236118) (6.7833333,0.2272053) (6.8104167,0.2307800) (6.8375000,0.2343359) (6.8645833,0.2378733) (6.8916667,0.2413922) (6.9187500,0.2448930) (6.9458333,0.2483756) (6.9729167,0.2518403) (7.0000000,0.2552872)};

 \draw[annBlue,very thick] (5.65,1.38)--(5.90,1.38);
 \node[anchor=west] at (5.95,1.38) {$F_d\left(x\right)/x$};
 \draw[annOrange,very thick] (5.65,1.14)--(5.90,1.14);
 \node[anchor=west] at (5.95,1.14) {$G_d\left(x;1/10\right)$};
 \draw[black,thick,densely dashed] (5.65,.9)--(5.90,.9);
 \node[anchor=west] at (5.95,.9) {$G_d\left(x;1/3\right)$};
\end{tikzpicture}
\caption{Complementary lower bounds at $d=5/2$. The constant
trial controls $x\le x_0$, the initial-phase estimate controls
$x_0\le x\le X_0$, and the generic estimate controls $x\ge X_0$.
The generic curve is also shown below $X_0$: its negative part
explains why replacing the initial phase by its worst-case value
would leave a gap. Both $F_d\left(x\right)/x$ and $G_d$ bound
$\paren{M\left(t\right)-Dt^2/4}/t$, with $x=dt$.}
\label{fig:thin-envelopes}
\end{figure}}

\subsection{A power trial function and unrestricted radial width}
The preceding estimates settle $0<\delta\le d_0:=5/2$.
For larger widths, a trial function adapted to the angular potential
gives a different low-frequency interval. This removes the obstruction
created by radially constant trial functions.

For $v>0$, take $u_v\paren{s}=s^v$. Direct integration gives
{\[
 \begin{aligned}
 \int_1^R\paren{s\abs{u_v'}^2
                      +v^2s^{-1}\abs{u_v}^2}\dd s
 &=2v^2\int_1^R s^{2v-1}\dd s
 =v\paren{R^{2v}-1},\\
 \int_1^R s\abs{u_v}^2\dd s
 &=\frac{R^{2v+2}-1}{2v+2}.
 \end{aligned}
\]}
{Substituting these two integrals into the radial quotient yields}
\begin{equation}\label{eq:power-trial}
 \lambda_{v,0}
 \le2v\paren{v+1}\frac{R^{2v}-1}{R^{2v+2}-1}
 <\frac{2v\paren{v+1}}{R^2}.
\end{equation}
The strict comparison uses
$R^2\paren{R^{2v}-1}<R^{2v+2}-1$. At $v=0$ the constant
function has eigenvalue zero. Therefore the first radial mode alone
gives, for every $t>0$,
\[
M\paren{t}>\sqrt{1+2R^2t^2}-1.
\]
To justify strictness, let $v_*\paren{t}>0$ solve
$2v_*\paren{v_*+1}=R^2t^2$. The trial quotient in
\eqref{eq:power-trial} is strictly below $t^2$ at $v_*$ and is
continuous in $v$, so the admissible order interval extends beyond
$v_*$. Its integrated count is strictly greater than $2v_*$.

Squaring the two nonnegative sides shows
\[
 \sqrt{1+2R^2t^2}-1\ge\frac D4t^2
 \quad\Longleftrightarrow\quad
 t^2\le\frac{8\paren{3R^2+1}}{D^2}.
\]
{More explicitly, adding one before squaring gives}
{\[
 1+2R^2t^2\ge\paren{1+\frac D4t^2}^2
 \quad\Longleftrightarrow\quad
 2R^2-\frac D2\ge\frac{D^2}{16}t^2.
\]}
{Since $D=R^2-1$, the left side is
$\paren{3R^2+1}/2$, which gives the stated threshold.
In the variable $x=\delta t$, the power trial thus settles}
\[
0<x\le p\paren\delta,
 \qquad
 p\paren\delta=\frac{\sqrt{8\paren{3\paren{1+\delta}^2+1}}}{2+\delta}.
\]
{The function $p$ increases with $\delta$ and tends to $\sqrt{24}$:
its logarithmic derivative, with $R=1+\delta$, is
$\paren{3R-1}/\paren{\paren{3R^2+1}\paren{R+1}}>0$.
Indeed,}
{\[
 \frac{p'}p=\frac{3R}{3R^2+1}-\frac1{R+1}
 =\frac{3R-1}{\paren{3R^2+1}\paren{R+1}},
 \qquad
 p^2=24-\frac{48}{\delta+2}+\frac{32}{\paren{\delta+2}^2}.
\]}
{Already $p\paren{5/2}^2=1208/81>\paren{15/4}^2$.
Thus the low-frequency intervals for the constant and power
trials overlap at the width where the proof changes estimates.}

We also need the turning cap for small real orders. The clipped
phase argument used above does not require $v\ge1$. Its raw-envelope
minimum decreases with real order and is at most the order-zero
minimum $\pi/4+\sqrt{3/2}$ \cite[Lemma~1.8]{Phase}. Below that
minimum the clipped envelope is constant; above it $E_v$ decreases.
Consequently,
\[
0\le\frac{\varphi_v\paren{x}-A_v\paren{x}-\pi/4}{\pi}
 <\frac{\sqrt{3/2}}\pi<\frac25,
 \qquad x\ge v\ge0.
\]
At a turning argument use continuity; at $v=x=0$ use the phase
limit, whose error is $\pi/4<\sqrt{3/2}$. This extension permits
cutoffs with $ut<1$ and introduces no new Bessel estimate.

For $0<u<1$, repeat the three-band calculation with this global cap.
Define
\[
 \begin{aligned}
 \mathcal H_{\delta,u}\paren{x;c}
 &=1+\frac\delta2-\frac{2c}{\sqrt{K_\delta\paren{x}}}
   -\frac{\sqrt2}{3}\sqrt{\frac{\pi\delta\sqrt{\delta\paren{2+\delta}}}{x}}\\
 &\hspace{1em}-\frac95\paren{1-u}-\frac{2\delta I\paren{u}}{x}.
 \end{aligned}
\]
Here $K_\delta$ and $I$ are the explicit functions already defined.
The same exact phase count, inner curvature, and reflected outer
rounding give
\[
\frac{M\paren{t}-Dt^2/4}{t}
 \ge\mathcal H_{\delta,u}
 \paren{x;C_+\paren{a_{\mathrm N}\paren{x,\delta}}}.
\]

For every $\delta\ge d_0$ and $0\le c\le1/3$, this envelope
increases with $x>0$. This fact does not require $K_\delta$ to
increase. Denote its core and outer rounding errors by $r$ and $s$.
If $K_\delta'\ge0$, both errors decrease. Otherwise
$r'\le r/\paren{2x}$, since
$-K_\delta'/K_\delta\le1/x$, while $s'=-s/\paren{2x}$.
Furthermore,
\[
 s^2K_\delta\paren{x}
 \ge\frac{2\delta\sqrt{\delta\paren{2+\delta}}}{9\paren{1+\delta}}
 >\frac49\ge4c^2.
\]
{At $d_0=5/2$, the ratio $\delta\sqrt D/\paren{1+\delta}$
 equals $15\sqrt5/14>2$. The strict inequality persists because
$\delta\sqrt{\delta\paren{2+\delta}}/\paren{1+\delta}$ increases.
Thus $s>r$, and the outer decrease dominates any possible increase
of the core error. The phase error also decreases with $x$.
Here primes refer to $x$ derivatives. To make the comparison
explicit, write $K=\alpha x+\beta/x$ with
$\alpha=1/\paren{\pi R}$ and $\beta=23\delta/\paren{24\pi}$.
Then}
{\[
 -\frac{K'}K=\frac{\beta-\alpha x^2}
                    {x\paren{\alpha x^2+\beta}}\le\frac1x,
 \qquad r'=-\frac{rK'}{2K}.
\]}
{The estimate $s>r$ follows by retaining the first summand in $K$:}
{\[
 s^2K=\frac{2\pi\delta\sqrt D}{9x}K
 \ge\frac{2\delta\sqrt D}{9R}>\frac49
 \ge4c^2=r^2K.
\]}
{If $Q\paren\delta=\delta\sqrt D/R$, then}
{\[
 Q'=\frac{\sqrt D}{R}+\frac\delta{R^2\sqrt D}>0,
 \qquad Q\paren{5/2}=\frac{15\sqrt5}{14}>2.
\]}
{Consequently, even in the case $K'<0$,}
{\[
 \partial_x\mathcal H_{\delta,u}\paren{x;c}
 =-r'+\frac{s}{2x}+\frac{2\delta I\paren{u}}{x^2}
 \ge\frac{s-r}{2x}+\frac{2\delta I\paren{u}}{x^2}>0.
\]}

{\color{revision}For fixed $\delta\ge5/2$, $0<u<1$, and $0\le c\le1/3$,
the preceding argument proves strict increase on all $x>0$.
The endpoint limits also follow directly from the defining formula:
\[
 \lim_{x\downarrow0}\mathcal H_{\delta,u}\paren{x;c}=-\infty,
 \qquad
 \lim_{x\to+\infty}\mathcal H_{\delta,u}\paren{x;c}
 =1+\frac\delta2-\frac95\paren{1-u}>0.
\]
Thus each fixed-parameter envelope has exactly one zero.
Figure~\ref{fig:Neumann-H} illustrates this shape and the improvement
from retaining the initial phase.  This monotonicity concerns a
fixed $c$; it makes no assertion about an envelope in which the
fractional initial phase varies with $x$.}

{\color{revision}\begin{figure}[htbp]
\centering
\begin{tikzpicture}
\begin{axis}[width=.46\textwidth,height=5.6cm,xlabel={$x$},ylabel={$\mathcal H_{\delta,9/10}\paren{x;1/3}/(1+\delta/2)$},xmin=1,xmax=15,ymin=-1,ymax=.85,legend pos=south east,legend style={draw=none,font=\scriptsize},tick label style={font=\scriptsize},label style={font=\scriptsize}]
\addplot[annBlue,thick,solid] coordinates {(1,-0.968030384) (1.03833333,-0.934567944) (1.07666667,-0.90328395) (1.115,-0.873958394) (1.15333333,-0.846399568) (1.19166667,-0.820439653) (1.23,-0.795931127) (1.26833333,-0.772743792) (1.30666667,-0.750762323) (1.345,-0.729884215) (1.38333333,-0.71001807) (1.42166667,-0.691082145) (1.46,-0.673003126) (1.49833333,-0.655715087) (1.53666667,-0.639158598) (1.575,-0.623279953) (1.61333333,-0.608030519) (1.65166667,-0.593366158) (1.69,-0.579246733) (1.72833333,-0.565635677) (1.76666667,-0.552499614) (1.805,-0.53980803) (1.84333333,-0.527532979) (1.88166667,-0.515648828) (1.92,-0.50413203) (1.95833333,-0.492960923) (1.99666667,-0.482115551) (2.035,-0.471577504) (2.07333333,-0.461329778) (2.11166667,-0.451356646) (2.15,-0.441643545) (2.18833333,-0.432176975) (2.22666667,-0.422944403) (2.265,-0.413934187) (2.30333333,-0.405135494) (2.34166667,-0.396538239) (2.38,-0.388133017) (2.41833333,-0.379911054) (2.45666667,-0.371864155) (2.495,-0.363984653) (2.53333333,-0.356265375) (2.57166667,-0.348699601) (2.61,-0.341281027) (2.64833333,-0.334003736) (2.68666667,-0.32686217) (2.725,-0.319851101) (2.76333333,-0.312965608) (2.80166667,-0.306201055) (2.84,-0.29955307) (2.87833333,-0.293017528) (2.91666667,-0.28659053) (2.955,-0.280268391) (2.99333333,-0.274047624) (3.03166667,-0.267924926) (3.07,-0.261897166) (3.10833333,-0.255961375) (3.14666667,-0.250114731) (3.185,-0.244354556) (3.22333333,-0.238678301) (3.26166667,-0.23308354) (3.3,-0.227567961) (3.33833333,-0.222129363) (3.37666667,-0.216765643) (3.415,-0.211474794) (3.45333333,-0.2062549) (3.49166667,-0.201104126) (3.53,-0.196020717) (3.56833333,-0.191002994) (3.60666667,-0.186049346) (3.645,-0.181158229) (3.68333333,-0.17632816) (3.72166667,-0.171557715) (3.76,-0.166845528) (3.79833333,-0.162190283) (3.83666667,-0.157590713) (3.875,-0.1530456) (3.91333333,-0.14855377) (3.95166667,-0.144114091) (3.99,-0.13972547) (4.02833333,-0.135386854) (4.06666667,-0.131097224) (4.105,-0.126855597) (4.14333333,-0.122661021) (4.18166667,-0.118512576) (4.22,-0.114409372) (4.25833333,-0.110350547) (4.29666667,-0.106335264) (4.335,-0.102362713) (4.37333333,-0.0984321097) (4.41166667,-0.0945426912) (4.45,-0.0906937176) (4.48833333,-0.0868844706) (4.52666667,-0.0831142519) (4.565,-0.079382383) (4.60333333,-0.0756882043) (4.64166667,-0.0720310739) (4.68,-0.0684103672) (4.71833333,-0.0648254763) (4.75666667,-0.0612758091) (4.795,-0.0577607887) (4.83333333,-0.0542798532) (4.87166667,-0.0508324543) (4.91,-0.0474180578) (4.94833333,-0.0440361424) (4.98666667,-0.0406861994) (5.025,-0.0373677322) (5.06333333,-0.0340802561) (5.10166667,-0.0308232977) (5.14,-0.0275963943) (5.17833333,-0.0243990941) (5.21666667,-0.0212309552) (5.255,-0.0180915457) (5.29333333,-0.0149804433) (5.33166667,-0.0118972348) (5.37,-0.00884151589) (5.40833333,-0.0058128911) (5.44666667,-0.00281097322) (5.485,0.000164616828) (5.52333333,0.00311425024) (5.56166667,0.00603829058) (5.6,0.00893709399) (5.6591195,0.0133590984) (5.71823899,0.0177231519) (5.77735849,0.0220304644) (5.83647799,0.0262822099) (5.89559748,0.0304795273) (5.95471698,0.0346235227) (6.01383648,0.0387152702) (6.07295597,0.0427558133) (6.13207547,0.0467461662) (6.19119497,0.0506873148) (6.25031447,0.0545802181) (6.30943396,0.0584258087) (6.36855346,0.0622249943) (6.42767296,0.0659786583) (6.48679245,0.0696876607) (6.54591195,0.0733528391) (6.60503145,0.0769750094) (6.66415094,0.0805549665) (6.72327044,0.084093485) (6.78238994,0.0875913202) (6.84150943,0.0910492083) (6.90062893,0.0944678673) (6.95974843,0.0978479976) (7.01886792,0.101190282) (7.07798742,0.104495388) (7.13710692,0.107763966) (7.19622642,0.110996651) (7.25534591,0.114194063) (7.31446541,0.117356808) (7.37358491,0.120485478) (7.4327044,0.123580649) (7.4918239,0.126642887) (7.5509434,0.129672744) (7.61006289,0.132670757) (7.66918239,0.135637455) (7.72830189,0.138573352) (7.78742138,0.141478952) (7.84654088,0.144354748) (7.90566038,0.147201221) (7.96477987,0.150018842) (8.02389937,0.152808073) (8.08301887,0.155569365) (8.14213836,0.158303159) (8.20125786,0.161009887) (8.26037736,0.163689972) (8.31949686,0.166343827) (8.37861635,0.168971859) (8.43773585,0.171574463) (8.49685535,0.174152029) (8.55597484,0.176704937) (8.61509434,0.17923356) (8.67421384,0.181738263) (8.73333333,0.184219404) (8.79245283,0.186677334) (8.85157233,0.189112396) (8.91069182,0.191524927) (8.96981132,0.193915257) (9.02893082,0.19628371) (9.08805031,0.198630602) (9.14716981,0.200956246) (9.20628931,0.203260945) (9.26540881,0.205544998) (9.3245283,0.207808701) (9.3836478,0.210052338) (9.4427673,0.212276195) (9.50188679,0.214480546) (9.56100629,0.216665663) (9.62012579,0.218831814) (9.67924528,0.22097926) (9.73836478,0.223108257) (9.79748428,0.225219058) (9.85660377,0.227311909) (9.91572327,0.229387054) (9.97484277,0.231444732) (10.0339623,0.233485175) (10.0930818,0.235508614) (10.1522013,0.237515274) (10.2113208,0.239505377) (10.2704403,0.241479142) (10.3295597,0.243436781) (10.3886792,0.245378504) (10.4477987,0.247304519) (10.5069182,0.249215028) (10.5660377,0.251110229) (10.6251572,0.252990319) (10.6842767,0.25485549) (10.7433962,0.256705932) (10.8025157,0.258541829) (10.8616352,0.260363365) (10.9207547,0.262170719) (10.9798742,0.263964067) (11.0389937,0.265743584) (11.0981132,0.26750944) (11.1572327,0.269261802) (11.2163522,0.271000836) (11.2754717,0.272726704) (11.3345912,0.274439565) (11.3937107,0.276139577) (11.4528302,0.277826893) (11.5119497,0.279501666) (11.5710692,0.281164046) (11.6301887,0.282814178) (11.6893082,0.284452209) (11.7484277,0.286078279) (11.8075472,0.28769253) (11.8666667,0.289295099) (11.9257862,0.290886122) (11.9849057,0.292465732) (12.0440252,0.29403406) (12.1031447,0.295591237) (12.1622642,0.29713739) (12.2213836,0.298672643) (12.2805031,0.300197121) (12.3396226,0.301710944) (12.3987421,0.303214233) (12.4578616,0.304707106) (12.5169811,0.306189678) (12.5761006,0.307662064) (12.6352201,0.309124377) (12.6943396,0.310576727) (12.7534591,0.312019223) (12.8125786,0.313451974) (12.8716981,0.314875085) (12.9308176,0.316288661) (12.9899371,0.317692805) (13.0490566,0.319087618) (13.1081761,0.320473199) (13.1672956,0.321849648) (13.2264151,0.323217062) (13.2855346,0.324575535) (13.3446541,0.325925163) (13.4037736,0.327266039) (13.4628931,0.328598253) (13.5220126,0.329921896) (13.5811321,0.331237057) (13.6402516,0.332543824) (13.6993711,0.333842284) (13.7584906,0.335132521) (13.8176101,0.336414619) (13.8767296,0.337688662) (13.9358491,0.338954732) (13.9949686,0.340212909) (14.0540881,0.341463272) (14.1132075,0.3427059) (14.172327,0.34394087) (14.2314465,0.345168259) (14.290566,0.346388141) (14.3496855,0.347600592) (14.408805,0.348805684) (14.4679245,0.350003489) (14.527044,0.351194079) (14.5861635,0.352377525) (14.645283,0.353553895) (14.7044025,0.354723258) (14.763522,0.355885682) (14.8226415,0.357041232) (14.881761,0.358189976) (14.9408805,0.359331978) (15,0.360467302)};
\addlegendentry{$\delta=2.5$}
\addplot[annBlue,thick,dashed] coordinates {(1,-1.1339829) (1.03833333,-1.08911401) (1.07666667,-1.04714483) (1.115,-1.0077935) (1.15333333,-0.970814011) (1.19166667,-0.935990603) (1.23,-0.903133223) (1.26833333,-0.872073763) (1.30666667,-0.842662951) (1.345,-0.814767758) (1.38333333,-0.788269226) (1.42166667,-0.763060639) (1.46,-0.73904597) (1.49833333,-0.716138564) (1.53666667,-0.694260011) (1.575,-0.673339178) (1.61333333,-0.65331138) (1.65166667,-0.634117654) (1.69,-0.615704138) (1.72833333,-0.598021527) (1.76666667,-0.581024594) (1.805,-0.564671778) (1.84333333,-0.548924813) (1.88166667,-0.533748409) (1.92,-0.519109965) (1.95833333,-0.504979316) (1.99666667,-0.491328509) (2.035,-0.478131603) (2.07333333,-0.465364489) (2.11166667,-0.453004733) (2.15,-0.441031431) (2.18833333,-0.42942508) (2.22666667,-0.418167463) (2.265,-0.407241546) (2.30333333,-0.39663138) (2.34166667,-0.386322021) (2.38,-0.376299446) (2.41833333,-0.36655049) (2.45666667,-0.357062776) (2.495,-0.347824659) (2.53333333,-0.338825174) (2.57166667,-0.330053988) (2.61,-0.32150135) (2.64833333,-0.313158059) (2.68666667,-0.305015418) (2.725,-0.297065208) (2.76333333,-0.289299649) (2.80166667,-0.281711374) (2.84,-0.274293406) (2.87833333,-0.267039125) (2.91666667,-0.259942254) (2.955,-0.25299683) (2.99333333,-0.246197192) (3.03166667,-0.239537956) (3.07,-0.233014002) (3.10833333,-0.226620457) (3.14666667,-0.220352684) (3.185,-0.214206261) (3.22333333,-0.208176976) (3.26166667,-0.20226081) (3.3,-0.196453931) (3.33833333,-0.190752677) (3.37666667,-0.185153555) (3.415,-0.179653223) (3.45333333,-0.17424849) (3.49166667,-0.1689363) (3.53,-0.163713732) (3.56833333,-0.158577989) (3.60666667,-0.153526392) (3.645,-0.148556377) (3.68333333,-0.143665482) (3.72166667,-0.138851351) (3.76,-0.13411172) (3.79833333,-0.129444422) (3.83666667,-0.124847371) (3.875,-0.120318567) (3.91333333,-0.11585609) (3.95166667,-0.111458092) (3.99,-0.107122798) (4.02833333,-0.102848502) (4.06666667,-0.0986335618) (4.105,-0.0944763968) (4.14333333,-0.0903754864) (4.18166667,-0.0863293662) (4.22,-0.0823366256) (4.25833333,-0.0783959054) (4.29666667,-0.0745058957) (4.335,-0.0706653336) (4.37333333,-0.066873001) (4.41166667,-0.0631277228) (4.45,-0.0594283652) (4.48833333,-0.0557738336) (4.52666667,-0.0521630713) (4.565,-0.0485950573) (4.60333333,-0.0450688056) (4.64166667,-0.0415833632) (4.68,-0.0381378087) (4.71833333,-0.0347312513) (4.75666667,-0.0313628295) (4.795,-0.0280317096) (4.83333333,-0.0247370851) (4.87166667,-0.021478175) (4.91,-0.0182542234) (4.94833333,-0.0150644979) (4.98666667,-0.0119082892) (5.025,-0.00878491001) (5.06333333,-0.00569369399) (5.10166667,-0.00263399525) (5.14,0.000394812591) (5.17833333,0.00339333711) (5.21666667,0.00636216781) (5.255,0.00930187677) (5.29333333,0.0122130193) (5.33166667,0.0150961347) (5.37,0.0179517465) (5.40833333,0.0207803634) (5.44666667,0.0235824797) (5.485,0.0263585757) (5.52333333,0.0291091185) (5.56166667,0.0318345619) (5.6,0.0345353476) (5.6591195,0.0386532802) (5.71823899,0.0427151042) (5.77735849,0.0467222797) (5.83647799,0.0506762075) (5.89559748,0.0545782329) (5.95471698,0.0584296482) (6.01383648,0.0622316954) (6.07295597,0.065985569) (6.13207547,0.069692418) (6.19119497,0.0733533487) (6.25031447,0.0769694262) (6.30943396,0.080541677) (6.36855346,0.0840710904) (6.42767296,0.0875586206) (6.48679245,0.0910051881) (6.54591195,0.0944116817) (6.60503145,0.0977789595) (6.66415094,0.101107851) (6.72327044,0.104399156) (6.78238994,0.107653652) (6.84150943,0.110872086) (6.90062893,0.114055186) (6.95974843,0.117203652) (7.01886792,0.120318165) (7.07798742,0.123399385) (7.13710692,0.12644795) (7.19622642,0.12946448) (7.25534591,0.132449575) (7.31446541,0.135403819) (7.37358491,0.138327778) (7.4327044,0.141222002) (7.4918239,0.144087025) (7.5509434,0.146923365) (7.61006289,0.149731527) (7.66918239,0.152512002) (7.72830189,0.155265266) (7.78742138,0.157991785) (7.84654088,0.160692009) (7.90566038,0.16336638) (7.96477987,0.166015325) (8.02389937,0.168639262) (8.08301887,0.171238598) (8.14213836,0.173813729) (8.20125786,0.176365043) (8.26037736,0.178892916) (8.31949686,0.181397716) (8.37861635,0.183879802) (8.43773585,0.186339525) (8.49685535,0.188777227) (8.55597484,0.191193243) (8.61509434,0.193587897) (8.67421384,0.195961511) (8.73333333,0.198314394) (8.79245283,0.200646852) (8.85157233,0.202959183) (8.91069182,0.205251677) (8.96981132,0.207524621) (9.02893082,0.209778292) (9.08805031,0.212012964) (9.14716981,0.214228903) (9.20628931,0.216426371) (9.26540881,0.218605624) (9.3245283,0.220766912) (9.3836478,0.222910481) (9.4427673,0.225036571) (9.50188679,0.227145417) (9.56100629,0.229237252) (9.62012579,0.231312301) (9.67924528,0.233370786) (9.73836478,0.235412925) (9.79748428,0.237438931) (9.85660377,0.239449013) (9.91572327,0.241443377) (9.97484277,0.243422224) (10.0339623,0.245385752) (10.0930818,0.247334154) (10.1522013,0.249267622) (10.2113208,0.251186342) (10.2704403,0.253090497) (10.3295597,0.254980268) (10.3886792,0.256855832) (10.4477987,0.258717363) (10.5069182,0.260565031) (10.5660377,0.262399004) (10.6251572,0.264219447) (10.6842767,0.266026522) (10.7433962,0.267820389) (10.8025157,0.269601203) (10.8616352,0.271369119) (10.9207547,0.273124288) (10.9798742,0.274866858) (11.0389937,0.276596976) (11.0981132,0.278314786) (11.1572327,0.280020429) (11.2163522,0.281714045) (11.2754717,0.283395769) (11.3345912,0.285065738) (11.3937107,0.286724083) (11.4528302,0.288370934) (11.5119497,0.290006421) (11.5710692,0.29163067) (11.6301887,0.293243804) (11.6893082,0.294845947) (11.7484277,0.296437219) (11.8075472,0.298017738) (11.8666667,0.299587621) (11.9257862,0.301146984) (11.9849057,0.302695939) (12.0440252,0.304234599) (12.1031447,0.305763073) (12.1622642,0.307281469) (12.2213836,0.308789895) (12.2805031,0.310288455) (12.3396226,0.311777253) (12.3987421,0.313256391) (12.4578616,0.314725969) (12.5169811,0.316186087) (12.5761006,0.317636842) (12.6352201,0.31907833) (12.6943396,0.320510647) (12.7534591,0.321933885) (12.8125786,0.323348138) (12.8716981,0.324753496) (12.9308176,0.326150049) (12.9899371,0.327537884) (13.0490566,0.32891709) (13.1081761,0.330287752) (13.1672956,0.331649955) (13.2264151,0.333003783) (13.2855346,0.334349318) (13.3446541,0.335686641) (13.4037736,0.337015833) (13.4628931,0.338336972) (13.5220126,0.339650138) (13.5811321,0.340955406) (13.6402516,0.342252853) (13.6993711,0.343542555) (13.7584906,0.344824584) (13.8176101,0.346099015) (13.8767296,0.347365919) (13.9358491,0.348625367) (13.9949686,0.349877429) (14.0540881,0.351122176) (14.1132075,0.352359675) (14.172327,0.353589993) (14.2314465,0.354813199) (14.290566,0.356029356) (14.3496855,0.357238532) (14.408805,0.358440788) (14.4679245,0.35963619) (14.527044,0.360824799) (14.5861635,0.362006677) (14.645283,0.363181886) (14.7044025,0.364350485) (14.763522,0.365512535) (14.8226415,0.366668093) (14.881761,0.367817218) (14.9408805,0.368959967) (15,0.370096398)};
\addlegendentry{$\delta=5$}
\addplot[annBlue,thick,dotted] coordinates {(1,-1.28881225) (1.03833333,-1.23558852) (1.07666667,-1.18574467) (1.115,-1.13895484) (1.15333333,-1.09493457) (1.19166667,-1.05343435) (1.23,-1.01423436) (1.26833333,-0.977140118) (1.30666667,-0.941978905) (1.345,-0.90859677) (1.38333333,-0.87685601) (1.42166667,-0.846633065) (1.46,-0.81781672) (1.49833333,-0.790306588) (1.53666667,-0.764011805) (1.575,-0.738849912) (1.61333333,-0.714745899) (1.65166667,-0.691631365) (1.69,-0.669443801) (1.72833333,-0.648125963) (1.76666667,-0.627625317) (1.805,-0.607893561) (1.84333333,-0.588886203) (1.88166667,-0.570562187) (1.92,-0.55288356) (1.95833333,-0.535815187) (1.99666667,-0.519324486) (2.035,-0.503381197) (2.07333333,-0.487957181) (2.11166667,-0.473026229) (2.15,-0.458563901) (2.18833333,-0.444547376) (2.22666667,-0.430955318) (2.265,-0.417767755) (2.30333333,-0.404965975) (2.34166667,-0.392532419) (2.38,-0.380450599) (2.41833333,-0.368705014) (2.45666667,-0.357281074) (2.495,-0.346165038) (2.53333333,-0.335343949) (2.57166667,-0.324805579) (2.61,-0.314538377) (2.64833333,-0.304531424) (2.68666667,-0.294774388) (2.725,-0.285257485) (2.76333333,-0.275971444) (2.80166667,-0.266907471) (2.84,-0.25805722) (2.87833333,-0.249412765) (2.91666667,-0.240966571) (2.955,-0.232711473) (2.99333333,-0.224640651) (3.03166667,-0.216747611) (3.07,-0.209026163) (3.10833333,-0.201470407) (3.14666667,-0.194074711) (3.185,-0.1868337) (3.22333333,-0.179742241) (3.26166667,-0.172795427) (3.3,-0.165988565) (3.33833333,-0.159317167) (3.37666667,-0.152776935) (3.415,-0.146363755) (3.45333333,-0.140073683) (3.49166667,-0.13390294) (3.53,-0.1278479) (3.56833333,-0.121905085) (3.60666667,-0.116071156) (3.645,-0.110342907) (3.68333333,-0.104717256) (3.72166667,-0.0991912428) (3.76,-0.0937620198) (3.79833333,-0.088426848) (3.83666667,-0.0831830919) (3.875,-0.078028214) (3.91333333,-0.0729597709) (3.95166667,-0.0679754084) (3.99,-0.0630728579) (4.02833333,-0.0582499322) (4.06666667,-0.0535045217) (4.105,-0.0488345917) (4.14333333,-0.0442381782) (4.18166667,-0.0397133854) (4.22,-0.0352583826) (4.25833333,-0.0308714012) (4.29666667,-0.0265507325) (4.335,-0.0222947248) (4.37333333,-0.0181017812) (4.41166667,-0.0139703574) (4.45,-0.00989895938) (4.48833333,-0.00588614131) (4.52666667,-0.0019305039) (4.565,0.00196930769) (4.60333333,0.00581460553) (4.64166667,0.00960666062) (4.68,0.0133467045) (4.71833333,0.0170359307) (4.75666667,0.0206754963) (4.795,0.0242665232) (4.83333333,0.0278100997) (4.87166667,0.0313072813) (4.91,0.0347590923) (4.94833333,0.0381665269) (4.98666667,0.0415305502) (5.025,0.0448520992) (5.06333333,0.048132084) (5.10166667,0.0513713885) (5.14,0.0545708718) (5.17833333,0.0577313684) (5.21666667,0.0608536897) (5.255,0.0639386244) (5.29333333,0.0669869393) (5.33166667,0.0699993804) (5.37,0.0729766731) (5.40833333,0.0759195234) (5.44666667,0.078828618) (5.485,0.0817046257) (5.52333333,0.084548197) (5.56166667,0.0873599656) (5.6,0.0901405484) (5.6591195,0.094369146) (5.71823899,0.0985271248) (5.77735849,0.102616534) (5.83647799,0.106639341) (5.89559748,0.11059744) (5.95471698,0.114492652) (6.01383648,0.118326729) (6.07295597,0.122101355) (6.13207547,0.125818156) (6.19119497,0.129478695) (6.25031447,0.133084477) (6.30943396,0.136636956) (6.36855346,0.140137531) (6.42767296,0.143587552) (6.48679245,0.146988321) (6.54591195,0.150341095) (6.60503145,0.153647088) (6.66415094,0.15690747) (6.72327044,0.160123374) (6.78238994,0.163295891) (6.84150943,0.166426079) (6.90062893,0.169514959) (6.95974843,0.172563517) (7.01886792,0.175572708) (7.07798742,0.178543457) (7.13710692,0.181476658) (7.19622642,0.184373174) (7.25534591,0.187233845) (7.31446541,0.190059481) (7.37358491,0.192850868) (7.4327044,0.195608766) (7.4918239,0.198333915) (7.5509434,0.201027029) (7.61006289,0.203688801) (7.66918239,0.206319904) (7.72830189,0.20892099) (7.78742138,0.211492691) (7.84654088,0.214035622) (7.90566038,0.216550378) (7.96477987,0.219037537) (8.02389937,0.221497663) (8.08301887,0.2239313) (8.14213836,0.226338978) (8.20125786,0.228721213) (8.26037736,0.231078504) (8.31949686,0.233411339) (8.37861635,0.23572019) (8.43773585,0.238005517) (8.49685535,0.240267767) (8.55597484,0.242507375) (8.61509434,0.244724765) (8.67421384,0.246920349) (8.73333333,0.249094527) (8.79245283,0.25124769) (8.85157233,0.253380218) (8.91069182,0.255492481) (8.96981132,0.257584839) (9.02893082,0.259657644) (9.08805031,0.261711238) (9.14716981,0.263745954) (9.20628931,0.265762117) (9.26540881,0.267760044) (9.3245283,0.269740043) (9.3836478,0.271702417) (9.4427673,0.27364746) (9.50188679,0.275575456) (9.56100629,0.277486687) (9.62012579,0.279381425) (9.67924528,0.281259937) (9.73836478,0.283122482) (9.79748428,0.284969315) (9.85660377,0.286800683) (9.91572327,0.288616828) (9.97484277,0.290417987) (10.0339623,0.29220439) (10.0930818,0.293976264) (10.1522013,0.295733828) (10.2113208,0.297477299) (10.2704403,0.299206887) (10.3295597,0.300922797) (10.3886792,0.302625232) (10.4477987,0.304314388) (10.5069182,0.305990458) (10.5660377,0.307653631) (10.6251572,0.30930409) (10.6842767,0.310942015) (10.7433962,0.312567585) (10.8025157,0.31418097) (10.8616352,0.315782341) (10.9207547,0.317371863) (10.9798742,0.318949697) (11.0389937,0.320516004) (11.0981132,0.322070937) (11.1572327,0.32361465) (11.2163522,0.325147292) (11.2754717,0.326669009) (11.3345912,0.328179943) (11.3937107,0.329680237) (11.4528302,0.331170026) (11.5119497,0.332649446) (11.5710692,0.33411863) (11.6301887,0.335577707) (11.6893082,0.337026804) (11.7484277,0.338466045) (11.8075472,0.339895554) (11.8666667,0.34131545) (11.9257862,0.342725852) (11.9849057,0.344126874) (12.0440252,0.345518629) (12.1031447,0.346901231) (12.1622642,0.348274786) (12.2213836,0.349639404) (12.2805031,0.350995189) (12.3396226,0.352342244) (12.3987421,0.353680672) (12.4578616,0.355010572) (12.5169811,0.356332041) (12.5761006,0.357645176) (12.6352201,0.358950073) (12.6943396,0.360246823) (12.7534591,0.361535517) (12.8125786,0.362816247) (12.8716981,0.364089099) (12.9308176,0.365354161) (12.9899371,0.366611518) (13.0490566,0.367861253) (13.1081761,0.369103449) (13.1672956,0.370338188) (13.2264151,0.371565548) (13.2855346,0.372785609) (13.3446541,0.373998447) (13.4037736,0.375204138) (13.4628931,0.376402757) (13.5220126,0.377594377) (13.5811321,0.378779071) (13.6402516,0.37995691) (13.6993711,0.381127963) (13.7584906,0.382292301) (13.8176101,0.383449989) (13.8767296,0.384601097) (13.9358491,0.385745688) (13.9949686,0.386883828) (14.0540881,0.388015581) (14.1132075,0.389141009) (14.172327,0.390260175) (14.2314465,0.39137314) (14.290566,0.392479962) (14.3496855,0.393580703) (14.408805,0.394675419) (14.4679245,0.395764168) (14.527044,0.396847008) (14.5861635,0.397923993) (14.645283,0.398995179) (14.7044025,0.400060619) (14.763522,0.401120368) (14.8226415,0.402174478) (14.881761,0.403223001) (14.9408805,0.404265989) (15,0.40530349)};
\addlegendentry{$\delta=10$}
\addplot[annBlue,thin] coordinates {(1,0) (15,0)};\addplot[annBlue,thin,densely dotted] coordinates {(5.6,-1) (5.6,.85)};
\end{axis}
\begin{axis}[at={(.51\textwidth,0)},anchor=south west,width=.46\textwidth,height=5.6cm,xlabel={$x$},ylabel={$\mathcal H_{5/2,9/10}\paren{x;c}$},xmin=3.8618,xmax=5.6,ymin=-.55,ymax=.65,xtick={4,4.8,5.6},legend pos=north west,legend style={draw=none,font=\scriptsize},tick label style={font=\scriptsize},label style={font=\scriptsize}]
\addplot[annBlue,thick,solid] coordinates {(3.86181049,0.255145239) (3.87422613,0.258172469) (3.88664177,0.261185272) (3.89905741,0.264183765) (3.91147305,0.267168062) (3.92388869,0.270138279) (3.93630432,0.273094528) (3.94871996,0.276036921) (3.9611356,0.278965569) (3.97355124,0.28188058) (3.98596688,0.284782062) (3.99838252,0.287670122) (4.01079816,0.290544865) (4.0232138,0.293406396) (4.03562944,0.296254817) (4.04804508,0.29909023) (4.06046072,0.301912736) (4.07287636,0.304722435) (4.085292,0.307519424) (4.09770764,0.310303801) (4.11012328,0.313075663) (4.12253891,0.315835104) (4.13495455,0.31858222) (4.14737019,0.321317102) (4.15978583,0.324039842) (4.17220147,0.326750533) (4.18461711,0.329449264) (4.19703275,0.332136125) (4.20944839,0.334811202) (4.22186403,0.337474584) (4.23427967,0.340126357) (4.24669531,0.342766606) (4.25911095,0.345395416) (4.27152659,0.34801287) (4.28394223,0.350619051) (4.29635787,0.353214041) (4.30877351,0.35579792) (4.32118914,0.358370769) (4.33360478,0.360932666) (4.34602042,0.363483691) (4.35843606,0.366023921) (4.3708517,0.368553433) (4.38326734,0.371072302) (4.39568298,0.373580605) (4.40809862,0.376078415) (4.42051426,0.378565806) (4.4329299,0.381042852) (4.44534554,0.383509624) (4.45776118,0.385966194) (4.47017682,0.388412633) (4.48259246,0.39084901) (4.4950081,0.393275396) (4.50742374,0.395691859) (4.51983937,0.398098466) (4.53225501,0.400495286) (4.54467065,0.402882384) (4.55708629,0.405259828) (4.56950193,0.407627682) (4.58191757,0.409986011) (4.59433321,0.412334879) (4.60674885,0.414674349) (4.61916449,0.417004485) (4.63158013,0.419325349) (4.64399577,0.421637002) (4.65641141,0.423939506) (4.66882705,0.42623292) (4.68124269,0.428517306) (4.69365833,0.430792722) (4.70607397,0.433059226) (4.7184896,0.435316878) (4.73090524,0.437565735) (4.74332088,0.439805854) (4.75573652,0.442037291) (4.76815216,0.444260104) (4.7805678,0.446474346) (4.79298344,0.448680074) (4.80539908,0.450877342) (4.81781472,0.453066204) (4.83023036,0.455246713) (4.842646,0.457418923) (4.85506164,0.459582886) (4.86747728,0.461738654) (4.87989292,0.463886279) (4.89230856,0.466025812) (4.9047242,0.468157304) (4.91713983,0.470280804) (4.92955547,0.472396363) (4.94197111,0.47450403) (4.95438675,0.476603855) (4.96680239,0.478695884) (4.97921803,0.480780167) (4.99163367,0.482856751) (5.00404931,0.484925684) (5.01646495,0.486987011) (5.02888059,0.48904078) (5.04129623,0.491087036) (5.05371187,0.493125825) (5.06612751,0.495157192) (5.07854315,0.497181182) (5.09095879,0.49919784) (5.10337443,0.501207209) (5.11579006,0.503209333) (5.1282057,0.505204255) (5.14062134,0.507192018) (5.15303698,0.509172665) (5.16545262,0.511146237) (5.17786826,0.513112778) (5.1902839,0.515072327) (5.20269954,0.517024927) (5.21511518,0.518970617) (5.22753082,0.520909439) (5.23994646,0.522841433) (5.2523621,0.524766638) (5.26477774,0.526685093) (5.27719338,0.528596839) (5.28960902,0.530501912) (5.30202466,0.532400353) (5.31444029,0.534292199) (5.32685593,0.536177488) (5.33927157,0.538056258) (5.35168721,0.539928545) (5.36410285,0.541794387) (5.37651849,0.54365382) (5.38893413,0.54550688) (5.40134977,0.547353604) (5.41376541,0.549194028) (5.42618105,0.551028186) (5.43859669,0.552856114) (5.45101233,0.554677847) (5.46342797,0.55649342) (5.47584361,0.558302866) (5.48825925,0.56010622) (5.50067489,0.561903516) (5.51309052,0.563694787) (5.52550616,0.565480066) (5.5379218,0.567259386) (5.55033744,0.569032781) (5.56275308,0.570800283) (5.57516872,0.572561923) (5.58758436,0.574317734) (5.6,0.576067748)};
\addlegendentry{$c=11/100$}
\addplot[annBlue,thick,dashed] coordinates {(3.86181049,-0.347857617) (3.87422613,-0.344557859) (3.88664177,-0.341270682) (3.89905741,-0.337995998) (3.91147305,-0.334733721) (3.92388869,-0.331483765) (3.93630432,-0.328246046) (3.94871996,-0.325020479) (3.9611356,-0.321806983) (3.97355124,-0.318605475) (3.98596688,-0.315415875) (3.99838252,-0.312238102) (4.01079816,-0.309072078) (4.0232138,-0.305917725) (4.03562944,-0.302774965) (4.04804508,-0.299643722) (4.06046072,-0.29652392) (4.07287636,-0.293415484) (4.085292,-0.290318342) (4.09770764,-0.287232418) (4.11012328,-0.284157642) (4.12253891,-0.281093941) (4.13495455,-0.278041246) (4.14737019,-0.274999485) (4.15978583,-0.271968589) (4.17220147,-0.26894849) (4.18461711,-0.26593912) (4.19703275,-0.262940412) (4.20944839,-0.259952299) (4.22186403,-0.256974716) (4.23427967,-0.254007597) (4.24669531,-0.251050878) (4.25911095,-0.248104495) (4.27152659,-0.245168385) (4.28394223,-0.242242485) (4.29635787,-0.239326734) (4.30877351,-0.236421071) (4.32118914,-0.233525434) (4.33360478,-0.230639763) (4.34602042,-0.227764) (4.35843606,-0.224898084) (4.3708517,-0.222041959) (4.38326734,-0.219195565) (4.39568298,-0.216358846) (4.40809862,-0.213531745) (4.42051426,-0.210714206) (4.4329299,-0.207906173) (4.44534554,-0.205107592) (4.45776118,-0.202318407) (4.47017682,-0.199538565) (4.48259246,-0.196768011) (4.4950081,-0.194006694) (4.50742374,-0.191254559) (4.51983937,-0.188511557) (4.53225501,-0.185777633) (4.54467065,-0.183052738) (4.55708629,-0.180336821) (4.56950193,-0.177629831) (4.58191757,-0.174931719) (4.59433321,-0.172242435) (4.60674885,-0.169561931) (4.61916449,-0.166890158) (4.63158013,-0.164227067) (4.64399577,-0.161572612) (4.65641141,-0.158926744) (4.66882705,-0.156289418) (4.68124269,-0.153660587) (4.69365833,-0.151040204) (4.70607397,-0.148428224) (4.7184896,-0.145824603) (4.73090524,-0.143229294) (4.74332088,-0.140642254) (4.75573652,-0.138063438) (4.76815216,-0.135492803) (4.7805678,-0.132930305) (4.79298344,-0.130375902) (4.80539908,-0.12782955) (4.81781472,-0.125291208) (4.83023036,-0.122760833) (4.842646,-0.120238384) (4.85506164,-0.117723819) (4.86747728,-0.115217098) (4.87989292,-0.11271818) (4.89230856,-0.110227025) (4.9047242,-0.107743592) (4.91713983,-0.105267843) (4.92955547,-0.102799737) (4.94197111,-0.100339235) (4.95438675,-0.0978863001) (4.96680239,-0.0954408923) (4.97921803,-0.093002974) (4.99163367,-0.0905725072) (5.00404931,-0.0881494545) (5.01646495,-0.0857337787) (5.02888059,-0.0833254427) (5.04129623,-0.0809244099) (5.05371187,-0.078530644) (5.06612751,-0.0761441087) (5.07854315,-0.0737647682) (5.09095879,-0.0713925871) (5.10337443,-0.0690275299) (5.11579006,-0.0666695616) (5.1282057,-0.0643186474) (5.14062134,-0.0619747529) (5.15303698,-0.0596378437) (5.16545262,-0.0573078858) (5.17786826,-0.0549848455) (5.1902839,-0.0526686892) (5.20269954,-0.0503593837) (5.21511518,-0.0480568959) (5.22753082,-0.045761193) (5.23994646,-0.0434722425) (5.2523621,-0.041190012) (5.26477774,-0.0389144695) (5.27719338,-0.036645583) (5.28960902,-0.034383321) (5.30202466,-0.032127652) (5.31444029,-0.0298785448) (5.32685593,-0.0276359685) (5.33927157,-0.0253998922) (5.35168721,-0.0231702856) (5.36410285,-0.0209471181) (5.37651849,-0.0187303598) (5.38893413,-0.0165199807) (5.40134977,-0.014315951) (5.41376541,-0.0121182414) (5.42618105,-0.00992682245) (5.43859669,-0.00774166511) (5.45101233,-0.00556274047) (5.46342797,-0.00339001983) (5.47584361,-0.00122347469) (5.48825925,0.000936923286) (5.50067489,0.00309120221) (5.51309052,0.00523939002) (5.52550616,0.00738151446) (5.5379218,0.00951760311) (5.55033744,0.0116476833) (5.56275308,0.0137717824) (5.57516872,0.0158899272) (5.58758436,0.0180021447) (5.6,0.0201084615)};
\addlegendentry{$c=1/3$}
\addplot[annBlue,thin] coordinates {(3.8618,0) (5.6,0)};
\end{axis}
\end{tikzpicture}
\caption{\textcolor{revision}{The explicit Neumann envelope. Left: three widths at fixed $u=9/10$ and $c=1/3$, divided by $1+\delta/2$ to compare their shapes; the vertical line is $x=28/5$. Right: at $\delta=5/2$, retaining the phase coefficient $c=11/100$ raises the envelope above its generic $c=1/3$ version on $[p(5/2),28/5]$. Validity of the smaller coefficient on the bridge and all positive margins are proved in the following subsections. These sampled curves supply no numerical step in the proof.}}
\label{fig:Neumann-H}
\end{figure}}
\subsection{\revblue{The high-frequency tail for larger widths}}
Put $d_0=5/2$, $X=28/5$, $u=9/10$, and use the generic
rounding coefficient $c=1/3$. We prove directly that
\[
 \mathcal H_{\delta,9/10}\paren{X;1/3}>\frac7{400},
 \qquad \delta\ge\frac52.
\]
{Together with frequency monotonicity, this settles every $x\ge X$.
The larger threshold $X=28/5$ allows the turning cap to increase
from $1/4$ to $2/5$ while preserving a uniform positive margin
across all widths $\delta\ge5/2$.
Only the elementary bounds}
\[
 \frac{333}{106}<\pi<\frac{22}{7},\qquad
 I\paren{9/10}<\frac{111}{500}
\]
are needed.

Write $A=X/\pi$, $b=23/\paren{24\pi X}$,
$K=A/\paren{1+\delta}+b\delta$, and set
\[
 r\paren\delta=\frac2{3\sqrt K},\qquad
 s\paren\delta=\frac{\sqrt2}{3}
 \sqrt{\frac{\pi\delta\sqrt{\delta\paren{2+\delta}}}{X}},
 \qquad
 E\paren\delta=r\paren\delta+s\paren\delta
                +\frac{2\delta I\paren{9/10}}{X}.
\]
Then $\mathcal H_{\delta,9/10}\paren{X;1/3}
=1+\delta/2-9/50-E\paren\delta$.
At $\delta=5/2$,
\[
 K=\frac{13627}{6720\pi},\qquad
 s^2=\frac{25\pi\sqrt5}{168}.
\]
The rational bounds $\sqrt5<161/72$ and $\pi<22/7$ give
\[
 r\paren{5/2}<\frac{83}{100},\qquad
 s\paren{5/2}<\frac{10227}{10000}.
\]
These follow by squaring positive quantities; explicitly,
\[
 \frac{4\paren{22/7}\,6720}{9\cdot13627}
 <\paren{\frac{83}{100}}^2,\qquad
 \frac{25\paren{22/7}\paren{161/72}}{168}
 <\paren{\frac{10227}{10000}}^2.
\]
Consequently,
\[
 \begin{aligned}
 \mathcal H_{5/2,9/10}\paren{X;1/3}
 &>\frac94-\frac{83}{100}-\frac{10227}{10000}
       -\frac9{50}-\frac{111}{560}\\
 &=\frac{19}{1000}+\frac3{35000}>\frac{19}{1000}.
 \end{aligned}
\]

Differentiation yields
\[
 r'=\frac{A/\paren{1+\delta}^2-b}{3K^{3/2}},\qquad
 s'=\frac{\sqrt2}{3}\sqrt{\frac\pi X}
 \frac{\delta+3/2}{\delta^{1/4}\paren{\delta+2}^{3/4}}.
\]
{The derivative $r'$ decreases while it is positive and is
nonpositive after its single zero. Indeed, putting
$R=1+\delta$ and $z=bR^2/A$, the numerator governing $r''$,
after multiplication by the positive factor $R^4/A^2$, is
$-1-10z+4z/R+3z^2<0$ for $0<z<1$.
The derivative $s'$ decreases for all $\delta>0$, since
$g=\delta^{3/4}\paren{\delta+2}^{1/4}$ satisfies
$g''/g=-3/\paren{4\delta^2\paren{\delta+2}^2}<0$.
The calculation of $r''$ is the one displayed above, with $X$
in place of $X_0$. The derivative $r'$ changes sign at
$R=\sqrt{A/b}=X\sqrt{24/23}$. Before this point $z<1$,
so the displayed numerator of $r''$ is negative; after it the
numerator $A/R^2-b$ of $r'$ is nonpositive. Therefore, for
any $\delta_1>0$ and every $\delta\ge\delta_1$,}
{\[
 r'\paren\delta\le\max\set{r'\paren{\delta_1},0},
 \qquad s'\paren\delta\le s'\paren{\delta_1}.
\]}
{These are exactly the bounds needed to control the derivative
of the total error $E$ beyond the two chosen widths.}

For completeness, the required derivative bounds can all be
checked by the same rational inequalities, without evaluating a
transcendental function numerically. At the two widths,
\[
 \begin{array}{ccc}
 \delta&5/2&3\\ \hline
 K&13627/\paren{6720\pi}&2143/\paren{1120\pi}\\
 \sqrt K&>803/1000&>39/50\\
 r'&<59/1000&<1/25\\
 \paren{s'}^2&160\pi/\paren{1701\sqrt5}
                    &9\pi/\paren{56\sqrt{15}}\\
 s'&<364/1000&<362/1000
 \end{array}
\]
{To verify the third row, bound the numerator of $r'$ with
$\pi>333/106$ and its denominator with $\pi<22/7$ and the
preceding square-root bounds. For the last row, use
$\sqrt5>559/250$ and $\sqrt{15}>387/100$, and square.
In terms of $K=k/\pi$ and
$n=X/R^2-23/\paren{24X}$, the two core comparisons have
$r'=\sqrt\pi\,n/\paren{3k^{3/2}}$, where}
{\[
 \begin{array}{c|cc}
 \delta&5/2&3\\ \hline
 n&112/245-115/672&7/20-115/672.
 \end{array}
\]}
{Equivalently, each stated upper bound $r'<\rho$ follows from
$\paren{22/7}n^2<9\rho^2k^3$; all quantities on its two
sides are rational. The same squared comparison applies to
the exact formulas for $s'$ in the table.
Thus}
\[
 E'\paren{5/2}<\frac{59+364}{1000}+\frac{111}{1400}
              <\frac{503}{1000},\qquad
 E'\paren{3}<\frac1{25}+\frac{362}{1000}+\frac{111}{1400}
             <\frac12.
\]
On $\bracket{5/2,3}$, the high envelope can therefore decrease
by less than $\paren{3-5/2}\,3/1000=3/2000$.
For $\delta\ge3$ it increases, because the bounds $r'\paren\delta\le\max\set{r'\paren{3},0}$ and
$s'\paren\delta\le s'\paren{3}$ give $E'\paren\delta<1/2$.
Hence its value is everywhere larger than
$19/1000-3/2000=7/400$, as claimed.

\subsection{The intermediate-width bridge}
\begin{lemma}[The intermediate-width bridge]\label{lem:analytic-bridge}
For $5/2\le\delta\le10$ and
$p\paren\delta\le x\le28/5$,
\[
 C_+\paren{a_{\mathrm N}\paren{x,\delta}}<\frac{11}{100},
 \qquad
 \mathcal H_{\delta,9/10}\paren{x;11/100}>\frac{163}{2000}.
\]
\end{lemma}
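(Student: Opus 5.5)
The proof has two parts. First, locate the unwrapped initial phase $x/\pi-3\delta/(8\pi x)$ over the region $5/2\le\delta\le10$, $p(\delta)\le x\le 28/5$, and bound $C_+$ there by its values at the two ends of the range. Second, use monotonicity of $\mathcal H$ in $x$ and a width-derivative argument in $\delta$ to reduce the envelope bound to finitely many rational checks.

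\emph{Phase window.} The unwrapped phase increases in $x$ and decreases in $\delta$, so its extremes occur at corners of the region. Recall that $p$ increases from $p(5/2)=\sqrt{1208}/9$ towards $\sqrt{24}$. The upper end is therefore $x=28/5$, $\delta=5/2$, giving the value $28/(5\pi)-75/(448\pi)$, with $\pi>333/106$ as a rational bound. For the lower end I cannot simply take $x=p(5/2)$ and $\delta=10$, because these belong to different widths. Instead I bound $x/\pi-3\delta/(8\pi x)$ from below along the curve $x=p(\delta)$. Since $p$ increases and $\delta/x\le 10/p(5/2)$, the crude bound $p(5/2)/\pi-30/(8\pi p(5/2))$ is admissible. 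I would check, with $\pi<22/7$ and $\sqrt{1208}>347/10$, that this lies above $1+a_0$ for an explicit $a_0$ with $C_+(a_0)<11/100$.

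The upper endpoint should give a fractional part near $0.73$, where $C_+=\tfrac{2\sqrt2}{3}(a-1/2)^{3/2}\approx0.1$. This is tight, so I expect to need a sharp rational value there, say $a_1<0.73$, and to compute $(2\sqrt2/3)^2(a_1-1/2)^3<(11/100)^2$ exactly. At the lower end the phase fractional part is near $0.05$–$0.1$. The rationalized formula \eqref{eq:rounding-Cplus-rationalized} gives a monotone rational upper bound there, using $C_+(a_0)\le\tfrac{\sqrt2}{3}(1/2-a_0)^2(2\sqrt{1/2+a_0}+\sqrt{2a_0})/(\sqrt{1/2+a_0}+\sqrt{2a_0})^2$ with rational square-root bounds. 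The endpoint control remark after Lemma~\ref{lem:all-phase-rounding} then covers the whole window, because the integer part stays equal to one throughout.

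\emph{Envelope.} For fixed $c=11/100\le1/3$ and $\delta\ge5/2$, the preceding subsection shows that $\mathcal H_{\delta,9/10}(x;c)$ is strictly increasing in $x$. It therefore suffices to bound $\mathcal H_{\delta,9/10}(p(\delta);11/100)$ from below for $\delta\in[5/2,10]$.

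The error terms have the form $r=(11/50)/\sqrt{K}$, $s$, and $2\delta I(9/10)/x$, evaluated at $x=p(\delta)$, which varies with $\delta$. Since $p(\delta)\ge p(5/2)$ and the $s$ and phase errors decrease in $x$, I would replace $x$ by $p(5/2)$ in those two terms. For $r$, I would use $K_\delta(x)\ge$ its value at $x=p(5/2)$, valid when $K$ is increasing in $x$ there. This requires $x^2\ge 23\delta(1+\delta)/24$, which fails for large $\delta$. In that regime I would instead use $K\ge 2\sqrt{\alpha\beta}$ from AM–GM.

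This gives an explicit function of $\delta$ alone: $1+\delta/2-9/50$ minus the three errors. I would bound it on a few subintervals, e.g.\ $[5/2,3]$, $[3,5]$, $[5,10]$, using its endpoint values and derivative bounds in the same rational style as the $E'(\delta)$ table, together with $s'$ decreasing and $r'\le\max\{r'(\delta_1),0\}$. The margin $163/2000$ is presumably attained near $\delta=5/2$; for larger $\delta$ the linear gain $\delta/2$ should dominate $s\sim\delta^{3/4}$.

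The main obstacle is the tight phase endpoint near $a\approx0.73$, together with keeping the $\delta=5/2$, $x=p(5/2)$ envelope value above $163/2000$ using only crude rational bounds. If the crude replacement $x\mapsto p(5/2)$ loses too much, I would instead track $p(\delta)$ exactly via $p^2=24-48/(\delta+2)+32/(\delta+2)^2$ on each subinterval.
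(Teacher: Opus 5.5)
\textbf{Gap in the phase window.} Your envelope strategy follows the paper's route: frequency monotonicity for fixed $c=11/100$, reduction to $x=p(\delta)$, AM--GM for $K_\delta$, and lower bounds for $p$ on width subintervals. The phase-window step, however, fails as written.

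Your ``crude'' lower bound $p(5/2)/\pi-30/(8\pi p(5/2))$ is exactly the evaluation at the decoupled corner $x=p(5/2)$, $\delta=10$ that you said you could not use. It is a valid bound, but it equals about $1.229-0.309\approx0.920<1$. Consequences:
\begin{itemize}
\item the check that it exceeds $1+a_0$ cannot succeed;
\item constancy of the integer part is not established;
\item the certified range contains phases just below $1$, where $C_+$ is close to $C_+(1)=1/3$.
\end{itemize}

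The true minimum along $x=p(\delta)$ is about $1.15$: roughly $1.152$ at $\delta=5/2$ and $1.166$ at $\delta=10$. So you must keep the coupling between the width and the lower frequency limit $p(\delta)$. The paper sets $g(\delta)=p(\delta)-3\delta/(8p(\delta))$ and proves $g''<p''+3p'/(4p^2)<0$. This reduces to $p^2>y(3y-4)/(8(y-2))$ for $y=\delta+2\in[9/2,12]$. Concavity then leaves only $g(5/2)/\pi$ and $g(10)/\pi$ to check, and both exceed $111/100$.

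A subdivision also works: use $[5/2,7/2]$, $[7/2,11/2]$, $[11/2,8]$, $[8,10]$, taking $p$ at the left end and $\delta$ at the right end of each piece. Separately, your guess that the lower-end fractional part is near $0.05$ would itself be fatal if true, since $C_+(1/20)\approx0.15>11/100$.

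\textbf{The envelope needs the same fix.} Since $\delta^{3/4}(\delta+2)^{1/4}\approx\delta+1/2$, the outer error $s$ grows linearly in $\delta$, not like $\delta^{3/4}$. At $x=p(5/2)$ its slope is $\sqrt{2\pi/(9x)}\approx0.425$. Adding the slope $2I(9/10)/x\approx0.115$ of the integrated phase error gives more than the gain $1/2$. Your function of $\delta$ therefore falls below $163/2000$ near $\delta\approx6.5$ and is about $-0.06$ at $\delta=10$.

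So your fallback of tracking $p(\delta)$ on subintervals is required, not optional. The paper uses $p(5/2)>77/20$ on $[5/2,5]$ and $p(5)>21/5$ on $[5,10]$. Together with $\sqrt{\delta(\delta+2)}<\delta+1$, both resulting bounds are linear in $\delta$: $593/2000-43\delta/1000$ and $153/500-7\delta/500$. No derivative table is needed, and the margin $163/2000$ is attained at $\delta=5$, not near $\delta=5/2$.
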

\begin{proof}
We use the elementary rational bounds
$333/106<\pi<22/7$ and
\[
 p\paren{5/2}>\frac{77}{20},\qquad
 p\paren{5}>\frac{21}{5},\qquad
 p\paren{10}>\frac{22}{5}.
\]
The three latter inequalities follow by squaring, using
$p\paren{5/2}^2=1208/81$, $p\paren{5}^2=872/49$, and
$p\paren{10}^2=182/9$.

First we control the initial phase throughout the whole bridge.
Set $y=\delta+2$ and
$g\paren\delta=p\paren\delta-3\delta/\paren{8p\paren\delta}$.
Since
\[
 p^2=24-\frac{48}{y}+\frac{32}{y^2},\qquad
 p'=\frac{8\paren{3y-4}}{y^3p},\qquad
 p''=-\frac{48\paren{y-2}}{y^4p}-\frac{\paren{p'}^2}{p},
\]
we have
\[
 g''
 =p''\paren{1+\frac{3\delta}{8p^2}}
  +\frac{3p'}{4p^2}-\frac{3\delta\paren{p'}^2}{4p^3}
 <p''+\frac{3p'}{4p^2}<0.
\]
{For clarity, differentiating once first gives}
{\[
 g'=p'\paren{1+\frac{3\delta}{8p^2}}-\frac3{8p},
\]}
{and differentiating again gives the displayed expression for
$g''$. The extra terms containing $\delta$ are negative because
$p''<0$, $p'>0$, and $\delta>0$.
Indeed, on $9/2\le y\le12$,}
\[
 \frac{y\paren{3y-4}}{8\paren{y-2}}
 =\frac{3y+2+4/\paren{y-2}}8
 \le\frac{99}{20}<9<p^2,
\]
{which makes the last upper bound strictly negative.
Indeed,}
{\[
 p''+\frac{3p'}{4p^2}
 <-\frac{48\paren{y-2}}{y^4p}
       +\frac{6\paren{3y-4}}{y^3p^3}<0
\]}
{is precisely the comparison
$p^2>y\paren{3y-4}/\paren{8\paren{y-2}}$ proved above.
The concavity of $g$ therefore reduces its lower bound to its two
endpoints. The function $z-3\delta/\paren{8z}$ increases with $z>0$;
hence}
\[
 \frac{g\paren{5/2}}\pi
 >\frac{19439}{16940}>\frac{111}{100},\qquad
 \frac{g\paren{10}}\pi
 >\frac{10927}{9680}>\frac{111}{100}.
\]
The unwrapped initial phase increases with $x$. Its upper bound is
attained at $x=28/5$ and $\delta=5/2$, so
\[
 \frac{x-3\delta/\paren{8x}}\pi
 \le\frac{12169}{2240\pi}
 <\frac{12169\cdot106}{2240\cdot333}<\frac{173}{100}.
\]
Consequently its fractional part stays in $\paren{11/100,73/100}$.
The half-period monotonicity of $C_+$ reduces its bound to these
two endpoints. Its explicit formulas give
\[
 C_+\paren{11/100}
 <\frac12\paren{\frac{39}{100}}^2
       \frac{21/10}{\paren{121/100}^2}
 =\frac{31941}{292820}<\frac{11}{100},
\]
where we used
$\sqrt2<3/2$, $3/4<\sqrt{61/100}<4/5$, and
$23/50<\sqrt{22/100}<1/2$. Also,
\[
 C_+\paren{73/100}
 =\frac{2\sqrt2}{3}\paren{\frac{23}{100}}^{3/2}
 <\frac{20}{21}\frac{23}{100}\frac{12}{25}
 =\frac{92}{875}<\frac{11}{100}.
\]

It remains to show positivity of the envelope. Denote its core and
outer rounding errors by $r=\paren{11/50}/\sqrt{K_\delta\paren{p}}$
and $s$. The arithmetic--geometric mean inequality implies
\[
 K_\delta\paren{p}
 \ge2\sqrt{\frac{23\delta}{24\pi^2\paren{1+\delta}}}
 >2\sqrt{\frac{5635}{81312}}>\frac{13}{25},
 \qquad r<\frac{31}{100}.
\]
{Here $\delta/\paren{1+\delta}\ge5/7$ and $\pi<22/7$
give the middle lower bound. The last two comparisons reduce to}
{\[
 \frac{4\cdot5635}{81312}>\paren{\frac{13}{25}}^2,
 \qquad
 \frac{\paren{11/50}^2}{13/25}
 =\frac{121}{1300}<\paren{\frac{31}{100}}^2.
\]}
Since $\sqrt{\delta\paren{\delta+2}}<\delta+1$,
\[
 s<\sqrt{\frac{2\pi}{9p}}\paren{\delta+\frac12}.
\]
{\color{editorial}Using $I\paren{9/10}<111/500$ from
Lemma~\ref{lem:endpoint-estimates}, on $5/2\le\delta\le5$ we obtain}
\[
 s<\frac{427}{1000}\paren{\delta+\frac12},\qquad
 \frac{2\delta I\paren{9/10}}p<\frac{116}{1000}\delta,
\]
and hence
\[
 \mathcal H_{\delta,9/10}\paren{p;11/100}
 >\frac{593}{2000}-\frac{43}{1000}\delta
 \ge\frac{163}{2000}.
\]
On $5\le\delta\le10$, the sharper lower bound $p>21/5$ gives
\[
 s<\frac{408}{1000}\paren{\delta+\frac12},\qquad
 \frac{2\delta I\paren{9/10}}p<\frac{106}{1000}\delta,
\]
and therefore
\[
 \mathcal H_{\delta,9/10}\paren{p;11/100}
 >\frac{153}{500}-\frac7{500}\delta
 \ge\frac{83}{500}>\frac{163}{2000}.
\]
Each constant comparison just used is a comparison of positive
rational numbers after squaring; for example the two outer
coefficients follow from
$880/4851<\paren{427/1000}^2$ and
$220/1323<\paren{408/1000}^2$.
Frequency monotonicity of the envelope extends these lower bounds
from $x=p\paren\delta$ to the full bridge. Its proof applies already
at $\delta=5/2$, because
$\delta\sqrt{\delta\paren{\delta+2}}/\paren{1+\delta}>2$
there and this expression increases with $\delta$.
\end{proof}

\subsection{Unbounded widths and completion of the proof}
For widths $\delta\ge10$, take $u=3/4$ and $x_B=22/5$.
The power threshold satisfies $p\paren{10}^2=182/9>x_B^2$.
\textcolor{editorial}{By Lemma~\ref{lem:endpoint-estimates}, at
$\delta=10$ the core, outer, and integrated phase errors in}
$\mathcal H_{10,3/4}\paren{x_B;1/3}$ are bounded by
$737/1000$, $417/100$, and $587/1000$, respectively. Thus
\[
 \mathcal H_{10,3/4}\paren{x_B;1/3}
 >6-\frac{737}{1000}-\frac{417}{100}
           -\frac{587}{1000}-\frac9{20}
 =\frac7{125}>\frac1{20}.
\]
{\color{editorial}At this fixed frequency,
\[
 \partial_\delta K_\delta\paren{x_B}
 =\frac1\pi\paren{\frac{23}{24x_B}
                         -\frac{x_B}{\paren{1+\delta}^2}}>0
 \qquad(\delta\ge10),
\]
because $x_B^2<23\cdot11^2/24$, so the core error decreases with
width. Lemma~\ref{lem:endpoint-estimates} gives
$\partial_\delta s_{10}\paren{x_B}<2/5$
and $2I\paren{3/4}/x_B<3/50$. Since the outer-error derivative
decreases with width, these bounds imply
\[
 \partial_\delta\mathcal H_{\delta,3/4}\paren{x_B;1/3}
 >\frac12-\frac25-\frac3{50}=\frac1{25}.
\]
Frequency monotonicity then extends the positive bound to
$x\ge x_B$. The power trial covers $x\le p\paren\delta$,
and $x_B<p\paren{10}\le p\paren\delta$ ensures overlap.}

{Figure~\ref{fig:frequency-cover} collects the overlaps in the
frequency variable $x=\delta t$. Every junction is a closed
endpoint of one of the two adjacent estimates, and the
large-width range has a nonempty overlap because
$x_B<p\paren{10}\le p\paren\delta$.}
{\begin{figure}[htbp]
\centering
\begin{tikzpicture}[x=1.25cm,y=1cm,font=\small]
 \node[left] at (-.15,2.18) {$0<\delta\le5/2$};
 \node[left] at (-.15,.93) {$5/2\le\delta\le10$};
 \node[left] at (-.15,-.32) {$\delta\ge10$};
 \fill[annBlue,opacity=.2] (0,2) rectangle (3.75,2.36);
 \fill[annOrange,opacity=.25] (3.75,2) rectangle (5.4,2.36);
 \fill[gray,opacity=.2] (5.4,2) rectangle (6.35,2.36);
 \node at (1.85,2.18) {Constant trial};
 \node at (4.57,2.18) {Phase};
 \node at (5.86,2.18) {Tail};
 \draw (3.75,1.92)--(3.75,2.42)
       node[above] {$x_0=15/4$};
 \draw (5.4,1.92)--(5.4,2.42)
       node[above] {$X_0=27/5$};
 \fill[annBlue,opacity=.2] (0,.75) rectangle (4.15,1.11);
 \fill[annOrange,opacity=.25] (4.15,.75) rectangle (5.6,1.11);
 \fill[gray,opacity=.2] (5.6,.75) rectangle (6.35,1.11);
 \node at (1.85,.93) {Power trial};
 \node at (4.88,.93) {Bridge};
 \node at (5.97,.93) {Tail};
 \draw (4.15,.67)--(4.15,1.17)
       node[above] {$p\paren\delta$};
 \draw (5.6,.67)--(5.6,1.17)
       node[above] {$X=28/5$};
 \fill[annBlue,opacity=.2] (0,-.5) rectangle (4.7,-.14);
 \fill[gray,opacity=.2] (4.4,-.5) rectangle (6.35,-.14);
 \node at (1.85,-.32) {Power trial};
 \node at (5.46,-.32) {Phase};
 \draw (4.4,-.58)--(4.4,-.08)
       node[above] {$x_B=22/5$};
 \draw (4.7,-.58)--(4.7,-.08)
       node[below,yshift=-.45cm] {$p\paren\delta>x_B$};
 \foreach \yy in {2,.75,-.5}{
   \draw[->,thick] (0,\yy)--(6.65,\yy);
   \node[right] at (6.65,\yy) {$x$};
 }
\end{tikzpicture}
\caption{Frequency coverage for the three width ranges.
The positions of the variable thresholds $p\paren\delta$ are
schematic. The estimates overlap at each junction, and the last
row has an interval covered by both the power trial and the
generic phase bound.}
\label{fig:frequency-cover}
\end{figure}}

{For $5/2\le\delta\le10$, the power trial covers the low frequencies,
Lemma~\ref{lem:analytic-bridge} covers the interval up to $28/5$,
and the tail estimate covers the rest. Together with the already
proved range $0<\delta\le5/2$, these overlapping intervals cover
every $\delta>0$ and $t>0$. This completes the
proof of Proposition~\ref{prop:continuous}.}

\section{\revblue{Positive Neumann gaps and optimal cutoff selection}}
{\label{sec:neumann-quantitative}}
{The qualitative argument in Section~\ref{sec:neumann} proves
$M(t)>Dt^2/4$, which already implies the strict Neumann P\'olya
inequality through Proposition~\ref{prop:angular-transfer}. We now
retain the positive difference from the area term. There are two
levels of conclusion. A single explicit positive lower bound for
this difference divided by $t$, uniform in $t>0$, proves
Theorem~\ref{thm:main}; retaining
its frequency dependence gives a stronger bound at each individual
index. Thus the optimizations below refine the same proof and are
not additional hypotheses needed for the qualitative inequality.}

{The improvement comes from three specific pieces of information.
The trial family contains both earlier radial trials; the turning
cap is integrated in the order variable instead of frozen at its
largest value on a band; and the cutoff is chosen to minimize the
remaining loss. Each change can only improve a lower bound for the
count. The exact action identity \eqref{eq:action}, and hence the
area coefficient, remains the same. The limit of the resulting
positive envelope describes its high-frequency strength, whereas
the overlapping estimates of Section~\ref{sec:neumann} are what
guarantee positivity at every frequency.}

{Continue to write}
{\[
 R=1+\delta>1,\qquad D=R^2-1,\qquad x=\delta t,
 \qquad p=\pi/\beta,\qquad b=\frac{R+1}{2}.
\]}
{Here $Dt^2/4$ is the area term, and $bt$ is the circular-arc
boundary term in the continuous count $M$. Indeed, multiplication
by the angular-transfer factor $1/(2p)$ gives
$bt/(2p)=\beta(R+1)t/(4\pi)$, corresponding to the two arcs of
total length $\beta(R+1)$. The radial sides enter the discrete
two-term law in Section~\ref{sec:boundary-optimality}. Put}
{\[
 K_\delta\left(x\right)=\frac{x}{\pi R}+\frac{23\delta}{24\pi x},\qquad
 s_\delta\left(t\right)=\frac{\sqrt2}{3}\sqrt{\frac{\pi\sqrt D}{t}},\qquad
 a\left(t\right)=\left\{\frac{\delta t}{\pi}-\frac{3}{8\pi t}\right\}.
\]}
{The function $C_+$ is defined in Lemma~\ref{lem:all-phase-rounding}. Its endpoint values agree, so $C_+\left(a\left(t\right)\right)$ is continuous across integer phase crossings; $0\le C_+\le1/3$.}

\subsection{\revblue{Optimized power trial functions}}
{For $\alpha\ge0$, define}
{\[
 \rho_R\left(\alpha\right)=
 \begin{cases}
 \displaystyle\frac{\alpha+1}{\alpha}
       \frac{R^{2\alpha}-1}{R^{2\alpha+2}-1},&\alpha>0,\\[5pt]
 \displaystyle\frac{2\log R}{D},&\alpha=0.
 \end{cases}
\]}
{This is the continuous positive function
$\int_1^R s^{2\alpha-1}\dd s/\int_1^R s^{2\alpha+1}\dd s$;
in particular $R^{-2}<\rho_R\left(\alpha\right)<1$. Its value at
zero follows by taking the limit in the integrals. Its monotonicity
also has a direct explanation: with probability density
$s^{2\alpha+1}/\int_1^R s^{2\alpha+1}\dd s$, differentiation gives
\[
 \rho_R'(\alpha)=2\operatorname{Cov}_\alpha(s^{-2},\log s)<0.
\]
The covariance is negative because its double-integral formula
has the factor $(s^{-2}-z^{-2})(\log s-\log z)<0$ for $s\ne z$.
Increasing $\alpha$ therefore puts more mass near the outer circle,
where the angular cost $s^{-2}$ is smaller. The radial derivative
has the competing cost $\alpha^2$. More explicitly, the trial
function $s^\alpha$ at angular order $v$ gives}
{\[
 \lambda_{v,0}\le\left(\alpha^2+v^2\right)\rho_R\left(\alpha\right).
\]}
{Consequently the strongest ground-mode order interval provided by
these trial functions is}
{\begin{equation}\label{eq:Nq-trial-opt}
 V_R\left(t\right)=\max_{0\le\alpha\le Rt}
       \sqrt{\left(t^2/\rho_R\left(\alpha\right)-\alpha^2\right)_+},
 \qquad M\left(t\right)>2V_R\left(t\right).
\end{equation}}
{The maximum exists. Values $\alpha>Rt$ have nonpositive radicand
and cannot improve it, while $\alpha=0$ already gives $V_R\left(t\right)>0$.
For numerical evaluation, it is useful to maximize the smooth
radicand $H_t(\alpha)=t^2/\rho_R(\alpha)-\alpha^2$ before taking
its positive square root. Since
\[
 H_t'(\alpha)=-\frac{t^2\rho_R'(\alpha)}{\rho_R(\alpha)^2}
                    -2\alpha,
 \qquad H_t'(0)>0,\qquad H_t(Rt)<0<H_t(0),
\]
every global maximizer is an interior point of $(0,Rt)$ with
positive radicand and satisfies
\[
 2\alpha=-\frac{t^2\rho_R'(\alpha)}{\rho_R(\alpha)^2}.
\]
This equation is a stationarity condition; a uniqueness claim for
this trial optimization is not required. All its maximizing
points give the same certified interval $[0,V_R(t)]$.
At a maximizing $\alpha$, the trial quotient at $v=V_R\left(t\right)$ is
$t^2$.  Its value strictly exceeds the first radial eigenvalue:
for $\alpha>0$ the trial function fails the Neumann endpoint
conditions; for $\alpha=0$ and $v>0$ a constant function fails
the radial eigenvalue equation.  Continuity in $v$ therefore
extends the admissible interval strictly beyond $V_R\left(t\right)$ and
proves the strict inequality in \eqref{eq:Nq-trial-opt}.
The choices $\alpha=0$ and $\alpha=v$ include the constant and
power trials used in the preceding proof.}

{We record a closed improvement of the latter trial, which will
ensure a positive quantitative remainder at its endpoint.  Define}
{\[
 v_*\left(t\right)=\frac{\sqrt{1+2R^2t^2}-1}{2},\qquad
 \vartheta\left(t\right)=\frac{1-R^{-2v_*\left(t\right)}}{1-R^{-2v_*\left(t\right)-2}},\qquad
 \widehat V_R\left(t\right)=\sqrt{v_*\left(t\right)^2+t^2\left(1-\vartheta\left(t\right)\right)}.
\]}
{The definition of $v_*$ is equivalent to
$2v_*(v_*+1)=R^2t^2$. Substituting $\alpha=v=v_*$ in the trial
quotient gives
\[
 2v_*^2\rho_R(v_*)
 =\frac{2v_*(v_*+1)}{R^2}
       \frac{1-R^{-2v_*}}{1-R^{-2v_*-2}}
 =t^2\vartheta<t^2.
\]
Thus the older power threshold leaves an explicitly measurable
surplus. Also
\[
 1-\vartheta
 =\frac{(1-R^{-2})R^{-2v_*}}{1-R^{-2v_*-2}}>0.
\]
The order comparison
$\lambda_{v,0}-\lambda_{v_*,0}\le v^2-v_*^2$ shows that
$M\left(t\right)>2\widehat V_R\left(t\right)$.  The optimized trial interval is at
least as large: the quotient for the fixed trial $s^{v_*}$ at
$v=\widehat V_R\left(t\right)$ is}
{\[
 t^2\vartheta+
 \left(\widehat V_R\left(t\right)^2-v_*^2\right)\rho_R\left(v_*\right)\le t^2,
\]}
{so $V_R\left(t\right)\ge\widehat V_R\left(t\right)>v_*\left(t\right)$.}

\subsection{\revblue{An elementary cap and the unique optimal cutoff}}
{Keep the order-dependent clipped phase cap}
{\[
 c\left(v\right)=\frac1\pi\inf_{X>v}\left(A_v\left(X\right)+E_v\left(X\right)\right),\qquad v\ge0.
\]}
{The real-order clipped enclosure of
\cite[Lemma~1.8 and Theorem~1.10]{Phase}, already used in
Section~\ref{sec:neumann}, holds for all nonnegative orders and gives}
{\[
 0\le\frac{\varphi_v\left(X\right)-A_v\left(X\right)-\pi/4}{\pi}<c\left(v\right),
 \qquad X\ge v\ge0.
\]}
{The function $c$ is continuous and nonincreasing.  It has the
following elementary evaluation.  If $w_v$ is the unique positive
root of}
{\begin{equation}\label{eq:Nq-cap-cubic}
 8w^3-3w^2-10v^2w-7v^4=0,
\end{equation}}
{then}
{\[
c\left(v\right)=\frac1\pi\left[
  \sqrt{w_v}-v\arctan\frac{\sqrt{w_v}}v
  +\frac{7v^2}{24w_v^{3/2}}+\frac{3}{8\sqrt{w_v}}\right],
\]}
{where the product involving $v$ is zero at $v=0$. At the
turning endpoint $X=v>0$, the lower phase bound follows by
continuity and the strict upper bound follows from
$\varphi_v(v)<\varphi_v(X_v^*)<\pi/4+\pi c(v)$.
At $X=v=0$ the chosen phase limit gives the normalized error
$1/4<c(0)$. In particular
$w_0=3/8$ and $c\left(0\right)=\sqrt{3/2}/\pi<2/5$.
To verify the evaluation, put $w=X^2-v^2$. In this variable the
quantity being minimized is
\[
 F_v(w)=\sqrt w-v\arctan(\sqrt w/v)
               +\frac{7v^2}{24w^{3/2}}+\frac3{8\sqrt w},
 \qquad
 F_v'(w)=\frac{\sqrt w}{2(w+v^2)}
                   -\frac{3w+7v^2}{16w^{5/2}}.
\]
Its critical-point equation is
$8w^3=\left(w+v^2\right)\left(3w+7v^2\right)$, which is
\eqref{eq:Nq-cap-cubic}. Dividing by $w^3$ gives the particularly
simple uniqueness test
\[
 8=\frac3w+\frac{10v^2}{w^2}+\frac{7v^4}{w^3},\qquad
 \frac{\dd}{\dd w}\left(\frac3w+\frac{10v^2}{w^2}
                         +\frac{7v^4}{w^3}\right)
 =-\frac3{w^2}-\frac{20v^2}{w^3}-\frac{21v^4}{w^4}<0.
\]
The right-hand side decreases continuously from infinity to zero
on $(0,+\infty)$, so it crosses $8$ exactly once. This includes
$v=0$, when the root is $3/8$. Since $F_v$ diverges at both
endpoints, its unique critical point is its unique global minimum.
The nonzero derivative in the divided equation also implies
continuous dependence of $w_v$, and hence of $c(v)$, on $v\ge0$.
There is also a direct check of the order monotonicity.  At the
minimizer put $r_v=\sqrt{w_v}/v$ for $v>0$.  Differentiating
the minimum with respect to $v$ and using the critical-point
equation gives}
{\[
 \pi c'\left(v\right)=-\arctan r_v+
 \frac{r_v\left(23r_v^2+21\right)}{3\left(1+r_v^2\right)\left(3r_v^2+7\right)}<0.
\]}
{Indeed, the difference between the arctangent and the rational
term vanishes at zero and has derivative
$48r^4\left(6r^2+7\right)/\left(9r^4+30r^2+21\right)^2>0$ for $r>0$.}

{One further derivative identity will explain the motion of the
optimal cutoff. For $v>0$, put $e(r)=7/(24r^3)+3/(8r)$.
The substitution $w=v^2r^2$ gives
\[
 \pi v c(v)=\min_{r>0}\{v^2(r-\arctan r)+e(r)\}.
\]
Differentiating at its unique minimizer yields
\[
 c(v)+vc'(v)=\frac{2v}{\pi}(r_v-\arctan r_v)>0,
 \qquad v>0.
\]
Thus $c$ decreases, but $vc(v)$ increases. These are different
monotonicity statements and both will be used below.}

{Write}
{\[
 I\left(u\right)=\frac{7u/\sqrt{1-u^2}+2\arcsin u}{24\pi},
 \quad 0\le u<1,\qquad
 q_0=1+\frac{2\sqrt{3/2}}\pi,\qquad
 t_c=\frac3{4\pi q_0}.
\]}
{The order-dependent cap may be integrated on the turning interval
instead of replaced by its value at the left endpoint.  The
resulting cutoff loss is}
{\begin{equation}\label{eq:Nq-cutoff-loss}
 J\left(t,u\right)=t\left(1-u\right)+2\int_{ut}^{t}c\left(v\right)\dd v+2I\left(u\right),
 \qquad j\left(t\right)=\min_{0\le u<1}J\left(t,u\right).
\end{equation}}
{The three summands have separate roles. The length $t(1-u)$
is the loss of the favorable core mean on the turning band;
$2\int_{ut}^tc(v)\dd v$ is its phase-cap loss, including the
factor two in $M$; and $2I(u)$ is the integrated raw phase error
on the core. Since $c$ decreases,
\[
 2\int_{ut}^tc(v)\dd v\le2t(1-u)c(ut).
\]
Consequently this loss improves the corresponding frozen-cap
loss at each fixed cutoff, before any optimization is performed.}

{\begin{lemma}\label{lem:Nq-cutoff-opt}
For every $t>0$ the minimum in \eqref{eq:Nq-cutoff-loss} is
attained at a unique cutoff $u_*\left(t\right)$.  The minimizing cutoff is $u_*\left(t\right)=0$ for
$0<t\le t_c$.  For $t>t_c$, $u_*\left(t\right)$ is the unique root in $\left(0,1\right)$ of
\[
\frac{9-2u^2}{12\pi\left(1-u^2\right)^{3/2}}
       =t\left(1+2c\left(ut\right)\right).
\]
Both $u_*$ and $j$ are continuous. On $(t_c,+\infty)$ the cutoff
$u_*(t)$ is strictly increasing, and $u_*(t)\to1$ as $t\to+\infty$.
\end{lemma}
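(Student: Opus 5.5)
The plan is to study $J(t,\cdot)$ through its $u$-derivative and to show that this derivative is strictly increasing in $u$, so that $J(t,\cdot)$ is strictly unimodal on $[0,1)$. By the fundamental theorem of calculus and the formula for $I(u)$,
\[
 \partial_uJ(t,u)=2I'(u)-t\bigl(1+2c(ut)\bigr)=:g_t(u),\qquad
 I'(u)=\frac{9-2u^2}{24\pi(1-u^2)^{3/2}},
\]
since the integrand of $I$ was identified in Section~\ref{sec:neumann}. Hence $g_t(u)=0$ is exactly the displayed equation of the lemma.

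First I show that $g_t$ is strictly increasing in $u$. The derivative of $(9-2z^2)(1-z^2)^{-3/2}$ equals $z(23-2z^2)(1-z^2)^{-5/2}$, which was already computed for $\eta_t'$ and is positive on $(0,1)$. So $2I'$ increases strictly, from $3/(4\pi)$ at $u=0$ to $+\infty$ as $u\uparrow1$. Since $c$ is nonincreasing (indeed $c'<0$ by the displayed formula for $\pi c'$), the term $t(1+2c(ut))$ is nonincreasing in $u$. Therefore $g_t$ is continuous, strictly increasing, and $g_t(u)\to+\infty$ as $u\uparrow1$. At $u=0$ I use $c(0)=\sqrt{3/2}/\pi$, which gives
\[
 g_t(0)=\frac3{4\pi}-tq_0 .
\]
This is nonnegative exactly when $t\le t_c$. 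In that case $J(t,\cdot)$ is strictly increasing on $[0,1)$, so the unique minimizer is $u_*=0$. For $t>t_c$ we have $g_t(0)<0$, so $g_t$ has a unique zero $u_*(t)\in(0,1)$. $J$ decreases strictly before it and increases strictly after it, which gives attainment and uniqueness. Since $J\to+\infty$ as $u\uparrow1$ (because $I(u)\to+\infty$), no minimizing sequence escapes to $u=1$.

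For monotonicity in $t$, fix $u\in(0,1)$ and differentiate $t(1+2c(ut))$ in $t$. Writing $v=ut$, the derivative is
\[
 1+2\bigl(c(v)+vc'(v)\bigr)>1 .
\]
This uses the identity $c(v)+vc'(v)=2v(r_v-\arctan r_v)/\pi>0$ established just before the lemma (at $u=0$ the derivative is $1+2c(0)>0$). Thus $g_t(u)$ is strictly decreasing in $t$ for each fixed $u$. If $t_c<t_1<t_2$, then $g_{t_2}(u_*(t_1))<g_{t_1}(u_*(t_1))=0$, so the increasing function $g_{t_2}$ has its zero to the right, and $u_*(t_2)>u_*(t_1)$.

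For the limit, fix $u<1$. Since $c\ge0$, we have $g_t(u)\le2I'(u)-t<0$ once $t>2I'(u)$, hence $u_*(t)>u$ for all such $t$. Together with monotonicity this gives $u_*(t)\to1$.

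Continuity is the step needing most care, and I expect it to be the main obstacle, mainly because of the differentiability of $c$. I will rely on the continuity of $c$ and on the differentiability already implied by the unique nondegenerate root $w_v$ of \eqref{eq:Nq-cap-cubic}. Given that, $(t,u)\mapsto g_t(u)$ is jointly continuous and strictly increasing in $u$. Continuity of $u_*$ on $(t_c,\infty)$ then follows by a standard sign argument. For small $\varepsilon>0$, $g_t(u_*(t)\pm\varepsilon)$ has strict signs, and these signs persist for nearby $t$, which traps $u_*(t')$ within $\varepsilon$ of $u_*(t)$. At $t=t_c$ the same argument applies from the right, since $g_{t_c}(u)>0$ for $u>0$ forces $u_*(t)\to0$. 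On $(0,t_c]$ the minimizer is constant. Finally, $j(t)=J(t,u_*(t))$ is a composition of continuous maps, because $J$ is jointly continuous on $(0,\infty)\times[0,1)$.
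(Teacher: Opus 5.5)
Your proposal is correct and follows essentially the same route as the paper: strict increase of $\partial_uJ(t,\cdot)$ from the growth of $I'$ and the monotonicity of $c$, the sign of $\partial_uJ(t,0)=3/(4\pi)-q_0t$, the identity $c+vc'>0$ for the monotonicity in $t$, and the blow-up of $I'$ for $u_*(t)\to1$. The differences are minor. You compare the zeros of $g_{t_1}$ and $g_{t_2}$ directly instead of implicitly differentiating the root equation, which spares you from justifying differentiability of $u_*$. You also prove continuity by a sign-trapping argument that needs only continuity of $c$, rather than the paper's compactness argument.
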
}
\begin{proof}
{The loss tends to infinity as $u\uparrow1$, and}
{\[
 \begin{aligned}
\partial_uJ\left(t,u\right)&=2I'\left(u\right)-t\left(1+2c\left(ut\right)\right),\\
 I'\left(u\right)&=\frac{9-2u^2}{24\pi\left(1-u^2\right)^{3/2}},\\
 I''\left(u\right)&=\frac{u\left(23-2u^2\right)}{24\pi\left(1-u^2\right)^{5/2}}.
\end{aligned}
\]}
{For $u>0$ the second derivative is
\[
 \partial_u^2J(t,u)=2I''(u)-2t^2c'(ut)>0.
\]
Equivalently, $I'$ is strictly increasing on $[0,1)$ and
$c(ut)$ is nonincreasing, so $\partial_uJ$ is strictly increasing
on the whole interval, including its endpoint. Its initial value is
\[
 \partial_uJ(t,0)=\frac3{4\pi}-q_0t.
\]
If $t\le t_c$, this value is nonnegative and $J$ increases to
the right of zero, giving the boundary minimum $u_*=0$.
If $t>t_c$, this value is negative, whereas
$\partial_uJ(t,u)\to+\infty$ as $u\uparrow1$. There is then
exactly one zero, and the derivative changes from negative to
positive there. This proves existence and uniqueness in both
branches, without a numerical minimization argument.

For continuity near a fixed positive $t$, the blow-up of $I(u)$
keeps all minimizing cutoffs in a common compact subinterval of
$[0,1)$. The strict root characterization gives continuity away
from $t_c$. As $t\downarrow t_c$, any positive limit of the root
would contradict $\partial_uJ(t_c,u)>0$ for $u>0$, so the root
tends to zero. Substitution in $J$ proves continuity of $j$ as well.

Finally put $q(v)=1+2c(v)$ and
$F(t,u)=2I'(u)-tq(ut)$. At an interior root,
\[
 F_u=2I''(u)-2t^2c'(ut)>0,\qquad
 F_t=-\{1+2c(ut)+2ut c'(ut)\}<0.
\]
The last sign follows from the identity for $c+vc'$ above.
Implicit differentiation therefore gives $u_*'(t)=-F_t/F_u>0$.
If $u_*$ stayed bounded away from $1$ along an unbounded sequence
of $t$, the left side $2I'(u_*)$ of its root equation would stay
bounded, while the right side would be at least $t$. This proves
$u_*(t)\to1$.}
\end{proof}

\begin{figure}[htbp]
\centering
\begin{tikzpicture}
\begin{axis}[width=.46\textwidth,height=5.0cm,xmin=0,xmax=20,
 ymin=.18,ymax=.41,xlabel={$v$},ylabel={$c(v)$},
 tick label style={font=\scriptsize},label style={font=\small}]
\addplot[annBlue,very thick] coordinates {(0.00000000,0.38984840) (0.02500000,0.37792428) (0.05000000,0.36713238) (0.07500000,0.35741781) (0.10000000,0.34869980) (0.15000000,0.33386953) (0.20000000,0.32187516) (0.30000000,0.30389239) (0.40000000,0.29113000) (0.50000000,0.28159583) (0.70000000,0.26822191) (1.00000000,0.25568064) (1.50000000,0.24361782) (2.00000000,0.23642393) (3.00000000,0.22801591) (4.00000000,0.22312838) (5.00000000,0.21987141) (7.00000000,0.21572346) (10.00000000,0.21217486) (15.00000000,0.20900032) (20.00000000,0.20719763)};
\addplot[black,dashed,domain=0:20] {sqrt(14)/(6*pi)};
\node[anchor=north east,font=\scriptsize] at (axis cs:19.6,.388)
 {$c(0)=\sqrt{3/2}/\pi$};
\node[anchor=south east,font=\scriptsize] at (axis cs:19.6,.201)
 {$c(+\infty)=\sqrt{14}/(6\pi)$};
\end{axis}
\begin{semilogxaxis}[at={(.51\textwidth,0)},anchor=south west,
 width=.46\textwidth,height=5.0cm,xmin=.03,xmax=20,ymin=-.03,ymax=1.03,
 xlabel={$t$},ylabel={$u_*(t)$},
 xtick={.03,0.134142183,1,16},xticklabels={$0.03$,$t_c$,$1$,$16$},
 tick label style={font=\scriptsize},label style={font=\small}]
\addplot[annOrange,very thick] coordinates {(0.030000000,0.00000000) (0.032920226,0.00000000) (0.036124709,0.00000000) (0.039641120,0.00000000) (0.043499821,0.00000000) (0.047734131,0.00000000) (0.052380613,0.00000000) (0.057479387,0.00000000) (0.063074481,0.00000000) (0.069214205,0.00000000) (0.075951576,0.00000000) (0.083344768,0.00000000) (0.091457620,0.00000000) (0.100360184,0.00000000) (0.110129331,0.00000000) (0.120849416,0.00000000) (0.132613003,0.00000000) (0.134142183,0.00000000) (0.145521668,0.22111712) (0.159686873,0.32877770) (0.175230932,0.40516293) (0.192288063,0.46545432) (0.211005549,0.51540528) (0.231545012,0.55797674) (0.254083805,0.59494609) (0.278816542,0.62748572) (0.305956786,0.65641962) (0.335738885,0.68235394) (0.368420000,0.70575004) (0.404282322,0.72696858) (0.443635513,0.74629758) (0.486819379,0.76397117) (0.534206799,0.78018253) (0.586206952,0.79509318) (0.643268845,0.80883969) (0.705885193,0.82153874) (0.774596669,0.83329091) (0.849996581,0.84418364) (0.932735985,0.85429348) (1.023529314,0.86368792) (1.123160545,0.87242679) (1.232489966,0.88056348) (1.352461608,0.88814578) (1.484111394,0.89521673) (1.628576084,0.90181520) (1.787103092,0.90797645) (1.961061256,0.91373254) (2.151952659,0.91911273) (2.361425597,0.92414378) (2.591288812,0.92885023) (2.843527112,0.93325460) (3.120318506,0.93737766) (3.424053015,0.94123852) (3.757353304,0.94485487) (4.123097332,0.94824303) (4.524443202,0.95141816) (4.964856426,0.95439426) (5.448139855,0.95718437) (5.978466512,0.95980057) (6.560415626,0.96225413) (7.199012171,0.96455549) (7.899770257,0.96671442) (8.668740744,0.96873999) (9.512563484,0.97064067) (10.438524661,0.97242436) (11.454619702,0.97409845) (12.569622315,0.97566981) (13.793160250,0.97714489) (15.135798428,0.97852970) (16.609130170,0.97982988) (18.225877301,0.98105067) (20.000000000,0.98219701)};
\addplot[black,densely dotted] coordinates {(0.134142183,0) (0.134142183,1)};
\end{semilogxaxis}
\end{tikzpicture}
\caption{Two monotonicities used in cutoff selection. The clipped
cap decreases from its order-zero value toward its positive limit
(left). The minimizing cutoff stays at zero until $t_c\approx
0.13414$, then increases toward $1$ (right, logarithmic frequency
axis). Values are sampled from the unique cubic root and the
unique cutoff root. The derivative calculations in the text prove
the signs and uniqueness; the plots illustrate them.}
\label{fig:Nq-cap-and-cutoff}
\end{figure}

{For comparison, using only the rational global cap $2/5$ gives
the simpler loss}
{\[
 J_{9/5}\left(t,u\right)=\frac95t\left(1-u\right)+2I\left(u\right).
\]}
{Its unique optimizer is zero for $t\le5/\left(12\pi\right)$; otherwise it
solves $I'\left(u\right)=9t/10$.  Replacing $9/5$ by $q_0$ improves this
constant-cap optimizer.  The loss $j\left(t\right)$ in
\eqref{eq:Nq-cutoff-loss} is no larger than either optimized
constant-cap loss.}

{\begin{figure}[htbp]
\centering
\begin{tikzpicture}[x=9cm,y=2.5cm,font=\small,line join=round]
 \foreach \yy in {0.5,1,1.5,2}{
  \draw[gray!18] (0,\yy)--(1,\yy);
  \draw (0,\yy)--(-.008,\yy) node[left] {$\yy$};
 }
 \draw[->] (0,0)--(1.06,0) node[right] {$u$};
 \draw[->] (0,0)--(0,2.15) node[above] {$J\left(t,u\right)/t$};
 \foreach \xx in {0,0.25,0.5,0.75,1}{
  \draw (\xx,0)--(\xx,-.035) node[below] {$\xx$};
 }
 \draw[annBlue,very thick] plot coordinates {
  (0.000000,1.586196) (0.015000,1.563192) (0.030000,1.540403) (0.045000,1.517818) (0.060000,1.495428) (0.075000,1.473223)
  (0.090000,1.451194) (0.105000,1.429333) (0.120000,1.407632) (0.135000,1.386084) (0.150000,1.364683) (0.165000,1.343424)
  (0.180000,1.322301) (0.195000,1.301310) (0.210000,1.280448) (0.225000,1.259712) (0.240000,1.239099) (0.255000,1.218608)
  (0.270000,1.198236) (0.285000,1.177984) (0.300000,1.157850) (0.315000,1.137836) (0.330000,1.117941) (0.345000,1.098167)
  (0.360000,1.078515) (0.375000,1.058987) (0.390000,1.039587) (0.405000,1.020317) (0.420000,1.001182) (0.435000,0.982185)
  (0.450000,0.963332) (0.465000,0.944628) (0.480000,0.926081) (0.495000,0.907697) (0.510000,0.889486) (0.525000,0.871457)
  (0.540000,0.853621) (0.555000,0.835990) (0.570000,0.818578) (0.585000,0.801401) (0.600000,0.784477) (0.615000,0.767827)
  (0.630000,0.751473) (0.645000,0.735444) (0.660000,0.719769) (0.675000,0.704485) (0.690000,0.689634) (0.705000,0.675266)
  (0.720000,0.661438) (0.735000,0.648221) (0.750000,0.635700) (0.765000,0.623975) (0.780000,0.613175) (0.795000,0.603455)
  (0.810000,0.595016) (0.825000,0.588115) (0.840000,0.583091) (0.855000,0.580402) (0.870000,0.580683) (0.885000,0.584848)
  (0.900000,0.594268) (0.901583,0.595647) (0.903167,0.597112) (0.904750,0.598664) (0.906333,0.600309) (0.907917,0.602049)
  (0.909500,0.603890) (0.911083,0.605834) (0.912667,0.607887) (0.914250,0.610054) (0.915833,0.612340) (0.917417,0.614751)
  (0.919000,0.617292) (0.920583,0.619970) (0.922167,0.622792) (0.923750,0.625765) (0.925333,0.628897) (0.926917,0.632196)
  (0.928500,0.635672) (0.930083,0.639334) (0.931667,0.643193) (0.933250,0.647260) (0.934833,0.651549) (0.936417,0.656071)
  (0.938000,0.660843) (0.939583,0.665879) (0.941167,0.671198) (0.942750,0.676819) (0.944333,0.682763) (0.945917,0.689053)
  (0.947500,0.695714) (0.949083,0.702776) (0.950667,0.710269) (0.952250,0.718230) (0.953833,0.726696) (0.955417,0.735713)
  (0.957000,0.745330) (0.958583,0.755604) (0.960167,0.766598) (0.961750,0.778387) (0.963333,0.791054) (0.964917,0.804696)
  (0.966500,0.819427) (0.968083,0.835380) (0.969667,0.852710) (0.971250,0.871605) (0.972833,0.892287) (0.974417,0.915028)
  (0.976000,0.940159) (0.977583,0.968096) (0.979167,0.999357) (0.980750,1.034612) (0.982333,1.074733) (0.983917,1.120886)
  (0.985500,1.174671) (0.987083,1.238355) (0.988667,1.315276) (0.990250,1.410602) (0.991833,1.532859) (0.993417,1.697381)
  (0.995000,1.935421)
 };
 \draw[annBlue,densely dotted] (0.861400,0)--(0.861400,0.580114);
 \fill[annBlue] (0.861400,0.580114) circle (2pt);
 \draw[annOrange,very thick] plot coordinates {
  (0.000000,1.497334) (0.015000,1.471951) (0.030000,1.447277) (0.045000,1.423153) (0.060000,1.399455) (0.075000,1.376095)
  (0.090000,1.353007) (0.105000,1.330145) (0.120000,1.307472) (0.135000,1.284962) (0.150000,1.262594) (0.165000,1.240350)
  (0.180000,1.218217) (0.195000,1.196184) (0.210000,1.174242) (0.225000,1.152384) (0.240000,1.130602) (0.255000,1.108893)
  (0.270000,1.087251) (0.285000,1.065672) (0.300000,1.044154) (0.315000,1.022695) (0.330000,1.001291) (0.345000,0.979941)
  (0.360000,0.958644) (0.375000,0.937398) (0.390000,0.916204) (0.405000,0.895060) (0.420000,0.873966) (0.435000,0.852922)
  (0.450000,0.831929) (0.465000,0.810987) (0.480000,0.790098) (0.495000,0.769261) (0.510000,0.748479) (0.525000,0.727754)
  (0.540000,0.707088) (0.555000,0.686483) (0.570000,0.665942) (0.585000,0.645469) (0.600000,0.625068) (0.615000,0.604744)
  (0.630000,0.584502) (0.645000,0.564348) (0.660000,0.544290) (0.675000,0.524337) (0.690000,0.504499) (0.705000,0.484788)
  (0.720000,0.465219) (0.735000,0.445808) (0.750000,0.426577) (0.765000,0.407550) (0.780000,0.388760) (0.795000,0.370246)
  (0.810000,0.352056) (0.825000,0.334256) (0.840000,0.316930) (0.855000,0.300192) (0.870000,0.284201) (0.885000,0.269185)
  (0.900000,0.255488) (0.901583,0.254138) (0.903167,0.252810) (0.904750,0.251504) (0.906333,0.250221) (0.907917,0.248962)
  (0.909500,0.247728) (0.911083,0.246521) (0.912667,0.245340) (0.914250,0.244188) (0.915833,0.243066) (0.917417,0.241974)
  (0.919000,0.240916) (0.920583,0.239892) (0.922167,0.238904) (0.923750,0.237953) (0.925333,0.237043) (0.926917,0.236174)
  (0.928500,0.235349) (0.930083,0.234571) (0.931667,0.233843) (0.933250,0.233166) (0.934833,0.232545) (0.936417,0.231982)
  (0.938000,0.231482) (0.939583,0.231048) (0.941167,0.230685) (0.942750,0.230397) (0.944333,0.230190) (0.945917,0.230069)
  (0.947500,0.230042) (0.949083,0.230114) (0.950667,0.230295) (0.952250,0.230592) (0.953833,0.231016) (0.955417,0.231577)
  (0.957000,0.232289) (0.958583,0.233164) (0.960167,0.234220) (0.961750,0.235475) (0.963333,0.236949) (0.964917,0.238667)
  (0.966500,0.240657) (0.968083,0.242953) (0.969667,0.245593) (0.971250,0.248624) (0.972833,0.252103) (0.974417,0.256095)
  (0.976000,0.260686) (0.977583,0.265978) (0.979167,0.272101) (0.980750,0.279223) (0.982333,0.287561) (0.983917,0.297407)
  (0.985500,0.309161) (0.987083,0.323390) (0.988667,0.340929) (0.990250,0.363068) (0.991833,0.391941) (0.993417,0.431379)
  (0.995000,0.489198)
 };
 \draw[annOrange,densely dotted] (0.947162,0)--(0.947162,0.230039);
 \fill[annOrange] (0.947162,0.230039) circle (2pt);
 \draw[black,thick,dashed] plot coordinates {
  (0.000000,1.444184) (0.015000,1.419041) (0.030000,1.395365) (0.045000,1.372353) (0.060000,1.349725) (0.075000,1.327351)
  (0.090000,1.305160) (0.105000,1.283108) (0.120000,1.261166) (0.135000,1.239315) (0.150000,1.217540) (0.165000,1.195829)
  (0.180000,1.174173) (0.195000,1.152567) (0.210000,1.131004) (0.225000,1.109480) (0.240000,1.087992) (0.255000,1.066536)
  (0.270000,1.045110) (0.285000,1.023711) (0.300000,1.002339) (0.315000,0.980990) (0.330000,0.959664) (0.345000,0.938361)
  (0.360000,0.917077) (0.375000,0.895814) (0.390000,0.874570) (0.405000,0.853345) (0.420000,0.832139) (0.435000,0.810950)
  (0.450000,0.789779) (0.465000,0.768625) (0.480000,0.747489) (0.495000,0.726371) (0.510000,0.705270) (0.525000,0.684188)
  (0.540000,0.663123) (0.555000,0.642078) (0.570000,0.621052) (0.585000,0.600046) (0.600000,0.579061) (0.615000,0.558098)
  (0.630000,0.537158) (0.645000,0.516244) (0.660000,0.495355) (0.675000,0.474496) (0.690000,0.453667) (0.705000,0.432873)
  (0.720000,0.412116) (0.735000,0.391401) (0.750000,0.370733) (0.765000,0.350118) (0.780000,0.329564) (0.795000,0.309081)
  (0.810000,0.288681) (0.825000,0.268380) (0.840000,0.248199) (0.855000,0.228167) (0.870000,0.208323) (0.885000,0.188725)
  (0.900000,0.169457) (0.901583,0.167448) (0.903167,0.165443) (0.904750,0.163445) (0.906333,0.161452) (0.907917,0.159465)
  (0.909500,0.157484) (0.911083,0.155510) (0.912667,0.153543) (0.914250,0.151583) (0.915833,0.149630) (0.917417,0.147685)
  (0.919000,0.145749) (0.920583,0.143820) (0.922167,0.141901) (0.923750,0.139992) (0.925333,0.138092) (0.926917,0.136203)
  (0.928500,0.134325) (0.930083,0.132458) (0.931667,0.130604) (0.933250,0.128763) (0.934833,0.126936) (0.936417,0.125123)
  (0.938000,0.123327) (0.939583,0.121546) (0.941167,0.119783) (0.942750,0.118040) (0.944333,0.116316) (0.945917,0.114614)
  (0.947500,0.112935) (0.949083,0.111282) (0.950667,0.109655) (0.952250,0.108058) (0.953833,0.106492) (0.955417,0.104961)
  (0.957000,0.103467) (0.958583,0.102014) (0.960167,0.100606) (0.961750,0.099248) (0.963333,0.097945) (0.964917,0.096703)
  (0.966500,0.095529) (0.968083,0.094431) (0.969667,0.093420) (0.971250,0.092506) (0.972833,0.091704) (0.974417,0.091031)
  (0.976000,0.090507) (0.977583,0.090158) (0.979167,0.090018) (0.980750,0.090126) (0.982333,0.090539) (0.983917,0.091330)
  (0.985500,0.092597) (0.987083,0.094482) (0.988667,0.097196) (0.990250,0.101059) (0.991833,0.106606) (0.993417,0.114794)
  (0.995000,0.127577)
 };
 \draw[black,densely dotted] (0.979317,0)--(0.979317,0.090016);
 \fill[black] (0.979317,0.090016) circle (2pt);
 \draw[annBlue,very thick] (.08,2.02)--(.14,2.02)
    node[right,black] {$t=1$};
 \draw[annOrange,very thick] (.36,2.02)--(.42,2.02)
    node[right,black] {$t=4$};
 \draw[black,thick,dashed] (.65,2.02)--(.71,2.02)
    node[right] {$t=16$};
\end{tikzpicture}
\caption{The normalized cutoff loss at three frequencies.
The unique minimizing cutoffs are approximately $0.8614$, $0.9472$,
and $0.9793$ for $t=1,4,16$, respectively.  Increasing frequency
moves the optimum toward the turning point.  The plotted values
use the elementary cubic cap and one-dimensional quadrature;
convexity and uniqueness follow from Lemma~\ref{lem:Nq-cutoff-opt}.}
\label{fig:Nq-cutoff-opt}
\end{figure}}

{The cutoff keeps a longer core as the frequency increases. Put $q_{+\infty}=1+\sqrt{14}/\left(3\pi\right)$.  If
$h=1-u$, its turning loss has leading size $q_{+\infty} th$, whereas
the integrated phase error has leading size
$7h^{-1/2}/\left(12\pi\sqrt2\right)$.  Equating their marginal costs gives}
{\[
 h\sim\left(\frac7{24\pi\sqrt2\,q_{+\infty} t}\right)^{2/3}.
\]}
{In particular, there are constants $0<c_1\le c_2$ and $t_0>0$,
independent of $t$, such that
$c_1t^{1/3}\le j(t)\le c_2t^{1/3}$ for $t\ge t_0$.
This is a two-sided comparison in size; the sharper expansion
below specifies the leading constant. The positive spectral
correction is obtained by intersecting the
optimized lower-count envelope with the horizontal level $2pk$;
the first intersection lies strictly before the Weyl frequency.}

\subsection{\revblue{A positive envelope for all widths and frequencies}}
Define
{\begin{align}
 B_{\rm tr}\left(t\right)&=\frac{2V_R\left(t\right)}t-\frac D4t,\notag\\
 B_{\rm ph}\left(t\right)&=b-\frac{2C_+\left(a\left(t\right)\right)}{\sqrt{K_\delta\left(\delta t\right)}}
                    -s_\delta\left(t\right)-\frac{j\left(t\right)}t,\label{eq:Nq-Bphase}\\
 B_0\left(t\right)&=b-s_\delta\left(t\right)-\frac{J\left(t,0\right)}t,\notag\\
 B_\delta\left(t\right)&=\max\left\{B_{\rm tr}\left(t\right),B_{\rm ph}\left(t\right),B_0\left(t\right)\right\}.
\notag\end{align}}
{The term $B_0$ removes the unnecessary core rounding loss when
the core has length zero.  The positive-core estimate retains
the sharp initial-phase coefficient; its loss is independent of
$u$ except for $J$, which is why Lemma~\ref{lem:Nq-cutoff-opt}
optimizes that whole estimate. Thus $B_{\rm tr}$ measures the
ground-mode surplus over the area term, while $B_{\rm ph}$ measures
the arc coefficient left after paying the three phase and rounding
losses. Taking their maximum allows each estimate to be used where
it is most effective. In particular, the sign of any one branch
outside its useful frequency range is immaterial.}

{\begin{proposition}\label{prop:Nq-count}
For every $\delta>0$ and $t>0$,
\begin{equation}\label{eq:Nq-count}
 M\left(t\right)>\frac D4t^2+tB_\delta\left(t\right),\qquad B_\delta\left(t\right)>0.
\end{equation}
The envelope is continuous and satisfies
$B_\delta\left(t\right)\to\left(R+1\right)/2$ as $t\to+\infty$.
\end{proposition}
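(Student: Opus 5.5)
The plan has four steps: check that each of the three branches of $B_\delta$ is separately a strict lower bound for $\paren{M\paren t-Dt^2/4}/t$, show that at every $\paren{\delta,t}$ some branch is positive by comparison with Section~\ref{sec:neumann}, and then verify continuity and the limit branch by branch.

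\emph{Step 1: each branch is a strict lower bound.}
\begin{itemize}
\item For $B_{\rm tr}$ this is exactly the strict inequality $M\paren t>2V_R\paren t$ in \eqref{eq:Nq-trial-opt}.
\item For $B_{\rm ph}$, I will repeat the three-band calculation behind \eqref{eq:bands}, with two changes. On the turning band $ut<v\le t$, the frozen cap $C_0$ is replaced by the order-dependent clipped cap $c\paren v$. On the core, the rounding loss is kept as $2C_+\paren{a\paren t}/\sqrt{\kappa_{\mathrm{in}}}$ from Lemma~\ref{lem:all-phase-rounding}. Integrating against \eqref{eq:action}, the linear terms add up to $ut+\delta t/2$ minus the band losses. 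Using $t^2\kappa_{\mathrm{in}}=K_\delta\paren{\delta t}$, this gives, for every fixed $u\in[0,1)$,
\[
 M\paren t-\frac D4t^2\ge bt-\frac{2C_+\paren{a\paren t}t}{\sqrt{K_\delta\paren{\delta t}}}-s_\delta\paren t\,t-J\paren{t,u},
\]
with $b=1+\delta/2$. The inequality is strict, because the phase enclosures \eqref{eq:phase} and the cap bound are strict on sets of positive order measure. Minimizing over $u$ by Lemma~\ref{lem:Nq-cutoff-opt} replaces $J\paren{t,u}$ by $j\paren t$.
\item For $B_0$, I take $u=0$. The core is then empty, so no core rounding term appears.
\end{itemize}

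\emph{Step 2: positivity of the maximum.} I will reuse the frequency cover of Figure~\ref{fig:frequency-cover}, showing in each region that one branch dominates the envelope already proved positive there.
\begin{itemize}
\item On the low-frequency pieces I use $B_{\rm tr}$. In the constant-trial range $x\le x_0$, $0<\delta\le5/2$, we have $V_R\ge a_\delta t$, so $\delta t\,B_{\rm tr}\ge F_\delta\paren x>0$. In the power-trial range $x\le p\paren\delta$, we have $V_R\ge\widehat V_R>v_*$ and $2v_*\ge Dt^2/4$, which gives $B_{\rm tr}>0$.
\item On the phase, bridge and tail ranges I use $B_{\rm ph}$. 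Since $c$ is nonincreasing, with $c\le C_0$ for $v\ge v_0$ and $c\le2/5$ globally, we get $2\int_{ut}^tc\le2t\paren{1-u}c\paren{ut}$. Hence $j\paren t\le J\paren{t,u}$ is at most the frozen-cap loss at the fixed $u$ used there ($9/10$ or $3/4$). Also $C_+\paren{a\paren t}\le C_+\paren{a_{\mathrm N}}$, since the phases coincide. So $B_{\rm ph}\ge G_\delta$, respectively $B_{\rm ph}\ge\mathcal H_{\delta,u}$, and the explicit margins ($1/250$, $7/400$, $163/2000$, $1/20$) make these positive.
\end{itemize}

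\emph{Step 3: continuity.}
\begin{itemize}
\item $V_R$ is a maximum of a jointly continuous function over a compact set depending continuously on $t$, so $B_{\rm tr}$ is continuous.
\item $C_+\circ a$ is continuous because $C_+$ has a continuous periodic extension (Lemma~\ref{lem:all-phase-rounding}).
\item $j$ is continuous by Lemma~\ref{lem:Nq-cutoff-opt}, and $J\paren{\cdot,0}$ is an integral of the continuous cap.
\end{itemize}
A maximum of continuous functions is continuous.

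\emph{Step 4: the limit.}
\begin{itemize}
\item $B_{\rm tr}\to-\infty$, because $V_R\le Rt$.
\item $B_0\to b-1-2c\paren{+\infty}<b$, since $J\paren{t,0}/t\to1+2c\paren{+\infty}$ (the fixed term $2I\paren 0=0$).
\item $B_{\rm ph}\to b$: indeed $s_\delta\to0$, $K_\delta\paren{\delta t}\to\infty$ while $C_+\le1/3$, and $j\paren t/t=O\paren{t^{-2/3}}$ by the two-sided bound $j\asymp t^{1/3}$.
\end{itemize}
Hence the maximum tends to $b=\paren{R+1}/2$.

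\emph{Main obstacle.} I expect the hardest step to be Step 2 at the junctions and on the low branch. Section~\ref{sec:neumann} proved strict inequalities for $M$, but here each chosen branch must itself be strictly positive. For $B_{\rm tr}$ at the power threshold this depends on the strict surplus $\widehat V_R>v_*$. I will also have to check carefully that the phase windows used for $C_+<1/10$ and $C_+<11/100$ are the same windows as for $a\paren t$.
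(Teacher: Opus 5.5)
Your proposal is correct and follows essentially the paper's own proof. It uses the same three-band bound with the integrated cap $c(v)$, made strict by the turning interval, and the same domination $B_{\rm ph}\ge G_\delta$ or $\mathcal H_{\delta,u}$ via $a(t)=a_{\mathrm N}(\delta t,\delta)$ and $j(t)\le J(t,u)\le$ frozen-cap loss; it also uses the same trial bounds ($F_\delta$, and $\widehat V_R>v_*$ at the power threshold) on the Section~\ref{sec:neumann} cover, and the same branch-by-branch continuity and limit arguments, with only the thin-window margin $453/2000$ missing from your list (it is handled by the identical comparison with $G_\delta(x;1/10)$).
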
}
{In the width-scaled frequency one may equivalently write
$\mathcal B_\delta\left(x\right)=B_\delta\left(x/\delta\right)>0$; then
$M\left(t\right)>Dt^2/4+t\mathcal B_\delta\left(\delta t\right)$.}
\begin{proof}
{On the core $\left[0,ut\right]$ use exactly the phase and directional
rounding estimates proved above.  On the turning interval use
$n_v\left(t^2\right)>f_t\left(v\right)-c\left(v\right)$ pointwise.  Its positive length gives
strictness after integration.  The reflected outer rounding
estimate is unchanged.  The result is}
{\[
 M\left(t\right)>\frac D4t^2+bt
  -\frac{2C_+\left(a\left(t\right)\right)}{\sqrt{\kappa_{\rm in}}}
  -\frac{\sqrt2}{3\sqrt{\kappa_{\rm out}}}-J\left(t,u\right),
\]}
{where $t^2\kappa_{\rm in}=K_\delta\left(\delta t\right)$ and
$t^{-1}/\sqrt{\kappa_{\rm out}}=
\sqrt{\pi\sqrt D/t}$. At $u=0$, apply the turning and outer
estimates directly: the core is empty, while the turning interval
still has positive length and gives strictness. This gives the
sharper bound $B_0$, and hence also the displayed estimate with
its extra nonnegative core loss. Evaluate the positive-core bound at the attained
optimizer, and combine it with \eqref{eq:Nq-trial-opt}. To see the
linear coefficient explicitly, integrating the favorable means
in the core and outer bands gives
\[
 2\left(\frac{ut}{2}+\frac{(R-1)t}{4}\right)
 =\left(u+\frac{R-1}{2}\right)t
 =bt-(1-u)t.
\]
Also $\int_0^{ut}E_v(t)\dd v/\pi=I(u)$ by $v=tz$.
The discarded turning mean and both integrated phase errors are
therefore exactly the terms collected in $J(t,u)$. This accounts
for every summand in \eqref{eq:Nq-Bphase}.}

{We spell out why the envelope is quantitatively positive at
every junction of the earlier cover.  For $0<\delta\le5/2$,
the constant trial gives a positive remainder on
$0<x\le15/4$, the phase window has margin $453/2000$ on
$15/4\le x\le27/5$, and the thin tail has margin $1/250$
above $27/5$.  Our integrated cap is at least as good as the
cap $1/4$ there, because $v\ge\left(9/10\right)t\ge27/20$ on the
turning interval.  For $5/2\le\delta\le10$, the power trial
has positive remainder up to and including $x=p\left(\delta\right)$
because $\widehat V_R\left(t\right)>v_*\left(t\right)$.  The intermediate bridge
has margin $163/2000$, and the larger-width tail has margin
$7/400$.  For $\delta\ge10$, the power interval overlaps the
generic $u=3/4$ range with margin $1/20$.  Each old $G$ or
$\mathcal H$ estimate is bounded above by
\eqref{eq:Nq-Bphase}, since the integrated cap is smaller and
the cutoff has been optimized.  These intervals cover every
positive $\delta,t$, proving positivity.}

{For reference, the cover used here is summarized below. The
listed margins are lower bounds for the remainder divided by
$t$, namely for $(M(t)-Dt^2/4)/t$; the trial entries are supplied
by $B_{\rm tr}$. The function $p(\delta)$ is the radial power-trial
threshold from Section~\ref{sec:neumann}, whereas $p=\pi/\beta$
without an argument is the angular spacing.
\begin{center}
\small
\begin{tabular}{cclc}
\hline
Width & Frequency $x=\delta t$ & Estimate retained & Margin\\
\hline
$0<\delta\le5/2$ & $0<x\le15/4$ & Constant trial &
$2a_\delta-15(\delta+2)/16$\\
 & $15/4\le x\le27/5$ & Thin phase window & $453/2000$\\
 & $x\ge27/5$ & Thin phase tail & $1/250$\\
$5/2\le\delta\le10$ & $0<x\le p(\delta)$ & Improved power trial & $e_\delta$\\
 & $p(\delta)\le x\le28/5$ & Intermediate bridge & $163/2000$\\
 & $x\ge28/5$ & Larger-width tail & $7/400$\\
$\delta\ge10$ & $0<x\le p(\delta)$ & Improved power trial & $e_\delta$\\
 & $x\ge22/5$ & Wide-width phase tail & $1/20$\\
\hline
\end{tabular}
\end{center}
Here $e_\delta>0$ is evaluated immediately below. The last two
intervals overlap because $p(\delta)\ge p(10)>22/5$.
The table records previously proved analytic margins; it does
not infer positivity from plotted samples.}

{Continuity of the trial maximum follows from its continuous
formula on a compact interval, or by reparametrizing
$\alpha=zRt$, $0\le z\le1$.  Continuity of the other terms
has already been established.  Finally $V_R\left(t\right)\le Rt$, so
$B_{\rm tr}\left(t\right)\to-\infty$.  The core and outer rounding
losses in $B_{\rm ph}$ tend to zero.  Choosing
$1-u=t^{-2/3}$ for $t>1$ shows $j\left(t\right)=O\left(t^{1/3}\right)$.
Hence $B_{\rm ph}\left(t\right)\to b$; all its losses are nonnegative,
and $B_0\left(t\right)\le b$, proving the asserted limit.}
\end{proof}

\subsection{\revblue{A closed positive subtraction at every eigenvalue index}}
{Here is a simpler formula which requires no optimization.  Put}
{\[
 a_\delta=\sqrt{\frac{D}{2\log R}},\qquad
 v_p=\frac{3R^2+1}{D},\qquad
 e_\delta=\frac{2\left(1-R^{-2}\right)R^{-2v_p}}{\sqrt2R+1},
\]}
{and define the explicit positive constant}
{\begin{equation}\label{eq:Nq-uniform-b}
 b_\delta^{\rm c}=
 \begin{cases}
 \displaystyle\min\left\{2a_\delta-
            \frac{15\left(\delta+2\right)}{16},\frac1{250}\right\},
                   &0<\delta\le5/2,\\[5pt]
 \displaystyle\min\left\{e_\delta,\frac7{400}\right\},
                   &\delta>5/2.
 \end{cases}
\end{equation}}
{The first entry is positive by the constant-trial endpoint
estimate and its width monotonicity: on $x\le15/4$,
\[
 B_{\rm tr}(t)\ge2a_\delta-\frac{\delta+2}{4}x
       \ge2a_\delta-\frac{15(\delta+2)}{16}>0.
\]
At $\delta=5/2$ the final expression equals
$F_{5/2}(15/4)/(15/4)>0$; for smaller widths its quotient by
$a_\delta$ is no smaller, as proved in Section~\ref{sec:low}.
For the second entry, write
\[
 t_p=\frac{p(\delta)}\delta
       =\frac{\sqrt{8(3R^2+1)}}D.
\]
The function $v_*(t)$ increases in $t$, and
\[
 1+2R^2t_p^2=\frac{(7R^2+1)^2}{D^2},\qquad
 v_*(t_p)=\frac{3R^2+1}{D}=v_p.
\]
Thus $t\le t_p$ implies $v_*(t)\le v_p$. Using the explicit
formula for $1-\vartheta$ above and its denominator being at most
one gives}
{\[
 1-\vartheta\left(t\right)\ge\left(1-R^{-2}\right)R^{-2v_p},\qquad
 \widehat V_R\left(t\right)+v_*\left(t\right)\le\left(\sqrt2R+1\right)t.
\]}
{For the second inequality, use
$v_*\le Rt/\sqrt2$ and
$\widehat V_R\le v_*+t$. Therefore}
{\[
 \frac{2\left(\widehat V_R\left(t\right)-v_*\left(t\right)\right)}t
 =\frac{2t\left(1-\vartheta\left(t\right)\right)}{\widehat V_R\left(t\right)+v_*\left(t\right)}
 \ge e_\delta.
\]}
{The coarse power comparison is nonnegative throughout that
interval, because
\[
 \frac{2v_*(t)}t-\frac D4t\ge0
 \quad\Longleftrightarrow\quad t\le t_p.
\]
Consequently $B_{\rm tr}(t)\ge e_\delta$ all the way up to the
endpoint, where the coarse comparison alone would vanish.
Together with the bridge and tail margins just
listed, these estimates prove the uniform bound}
{\begin{equation}\label{eq:Nq-closed-count}
 B_\delta\left(t\right)\ge b_\delta^{\rm c}>0,
 \qquad M\left(t\right)>\frac D4t^2+b_\delta^{\rm c}t,
 \qquad t>0.
\end{equation}}

{For physical radii $r>0$ and $rR$, set}
{\[
 w_k=\sqrt{\frac{8pk}{D}},\qquad
 A_\delta=\frac{2b_\delta^{\rm c}}D,\qquad
 \tau_k^{\rm c}=\sqrt{w_k^2+A_\delta^2}-A_\delta.
\]}
{Here $w_k^2/r^2=W_k\left(S\right)=4\pi k/\left|S\right|$.}

{This is the precise link with the main theorem. The uniform
choice $b_\delta^{\rm c}$ replaces the whole positive function
$B_\delta(t)$ by one explicit number. After scaling and angular
transfer it gives
\[
 \frac{|S|}{4\pi}\,a_{\rm N}
 =\frac{rb_\delta^{\rm c}}{2p},\qquad
 a_{\rm N}=\frac{4b_\delta^{\rm c}}{rD}
           =\frac{4r b_\delta^{\rm c}}{R_{\rm phys}^2-r^2},
 \quad R_{\rm phys}=rR.
\]
The strict P\'olya inequality needs only a positive remainder;
Theorem~\ref{thm:main} records this uniform quantitative choice.
The next inverse retains the larger, frequency-dependent remainder.}

{\color{revision}We now state the eigenvalue form of Theorem~\ref{thm:main},
proved immediately below.  For every annular sector, including the
retained slit at $\beta=2\pi$, and every integer $k\ge1$,}
{\begin{equation}\label{eq:Nq-all-gap}
 \mu_k\left(S\right)<W_k\left(S\right)-\Gamma_k^{\rm c}\left(S\right),\qquad
 \Gamma_k^{\rm c}\left(S\right)=
 \frac{2A_\delta}{r^2}
       \left(\sqrt{w_k^2+A_\delta^2}-A_\delta\right)>0.
\end{equation}}

\begin{proof}[\revblue{Proof of Theorem~\ref{thm:main}}]
{The proof uses exactly the continuous angular integral in
Proposition~\ref{prop:angular-transfer}, illustrated by
Figure~\ref{fig:sampling}(a). Apply that proposition at the
normalized frequency $t=r\sqrt E$ and then use
\eqref{eq:Nq-closed-count} to obtain}
{\[
 \begin{aligned}
 N_S^<\left(E\right)
 &\ge\frac1p\int_0^{+\infty}n_v\left(r^2E\right)\dd v
   =\frac{M\left(r\sqrt E\right)}{2p}\\
 &>\underbrace{\frac{\left|S\right|}{4\pi}E}_{\text{area term}}
   +\underbrace{\frac{rb_\delta^{\rm c}}{2p}\sqrt E}
               _{\text{positive correction}}.
 \end{aligned}
\]}
{The first comparison comes from the left-endpoint rectangles
and continuity in energy, as proved in
Proposition~\ref{prop:angular-transfer}. The strict inequality
comes from the positive continuous surplus in
\eqref{eq:Nq-closed-count}. Thus the integral is not bypassed:
Section~\ref{sec:neumann-quantitative} strengthens its lower
bound, and the same angular transfer converts that improvement
into the quantitative sector inequality.}
{At $E=\left(\tau_k^{\rm c}/r\right)^2$ the last expression is $k$,
because $\left(\tau_k^{\rm c}\right)^2+2A_\delta\tau_k^{\rm c}=w_k^2$.
Indeed, multiplying that expression by $8p/D$ gives the quadratic
$t^2+(4b_\delta^{\rm c}/D)t=w_k^2$; its positive root is precisely
$\sqrt{w_k^2+A_\delta^2}-A_\delta$.
The integer count is therefore at least $k+1$, including the
zero mode.  Thus $\mu_k\left(S\right)<E$, and subtraction from $W_k$
gives
\[
 W_k-E=\frac{w_k^2-(\tau_k^{\rm c})^2}{r^2}
       =\frac{2A_\delta\tau_k^{\rm c}}{r^2}>0,
\]
which is \eqref{eq:Nq-all-gap}. Since $a_{\rm N}=2A_\delta/r$,
the displayed eigenvalue formula in Theorem~\ref{thm:main} follows
by completing the same square.}
\end{proof}

{The fully optimized envelope gives a stronger, still explicit
one-variable version.  Define}
{\[
 \Phi_\delta\left(0\right)=0,\qquad
 \Phi_\delta\left(t\right)=\frac D4t^2+tB_\delta\left(t\right)\quad\left(t>0\right),
 \qquad
 \tau_k=\min\left\{t\in\left(0,w_k\right):\Phi_\delta\left(t\right)=2pk\right\}.
\]}
{The extension at zero is continuous: the trial contribution to
$\Phi_\delta$ is $2V_R(t)\le2Rt$, while the other two contributions
tend to zero, using $j(t)\le J(t,0)\le q_0t$ and
$ts_\delta(t)=O(\sqrt t)$. Since
$\Phi_\delta(0)=0<2pk$ and $\Phi_\delta(w_k)>2pk$, the intermediate
value theorem gives a crossing strictly inside $(0,w_k)$.
Continuity at both endpoints keeps its full set of crossings a
positive distance from $0$ and $w_k$, so this closed set has a
smallest element. No monotonicity of the fractional initial phase
is asserted or needed. Then}
{\begin{equation}\label{eq:Nq-envelope-inverse}
 \mu_k\left(S\right)<W_k\left(S\right)-\Gamma_k\left(S\right),\qquad
 \Gamma_k\left(S\right)=\frac{w_k^2-\tau_k^2}{r^2}
             \ge\Gamma_k^{\rm c}\left(S\right)>0.
\end{equation}}
{Indeed, the count is strictly larger than $k$ at $\tau_k$.
The inequality between the two gaps follows from
\eqref{eq:Nq-closed-count}: at $\tau_k^{\rm c}$,
\[
 \Phi_\delta(\tau_k^{\rm c})
 \ge\frac D4(\tau_k^{\rm c})^2+b_\delta^{\rm c}\tau_k^{\rm c}
 =2pk.
\]
There is therefore a crossing at or before $\tau_k^{\rm c}$,
so $\tau_k\le\tau_k^{\rm c}$ and $\Gamma_k\ge\Gamma_k^{\rm c}$.
Equivalently, one may invert the continuous nondecreasing
function $\max_{0\le s\le t}\Phi_\delta\left(s\right)$.}

\subsection{\revblue{Full arc coefficient and explicit high-frequency errors}}
{The optimized formula recovers the complete circular-arc
coefficient, with an explicit error rather than an unspecified
frequency threshold.  Set}
{\[
 L_\delta^{\mathrm N}=\frac23\sqrt{\frac{\pi R}{\delta}}
                   +\frac{\sqrt2}{3}\sqrt{\pi\sqrt D},\qquad
 C_0^{\rm h}=3q_0^{1/3}
       \left(\frac7{24\pi\sqrt{3/2}}\right)^{2/3},\qquad
 T_0^{\rm h}=\frac7{6\pi\sqrt3\,q_0}.
\]}
{For $t\ge T_0^{\rm h}$, choose}
{\[
 h=1-u=\left(\frac7{24\pi q_0\sqrt{3/2}\,t}\right)^{2/3}
 \le\frac12.
\]}
{This choice minimizes the elementary upper model
$q_0th+Ah^{-1/2}$, where
$A=7/(12\pi\sqrt{3/2})$: its derivative is
$q_0t-(A/2)h^{-3/2}$ and its second derivative is
$(3A/4)h^{-5/2}>0$. The condition that this minimizer lies in
$(0,1/2]$ is exactly $t\ge T_0^{\rm h}$.}
{Since $1-u^2=h\left(2-h\right)\ge3h/2$, $u\le1$, and
$2\arcsin u\le\pi$, this gives}
{\[
 j\left(t\right)\le q_0th+\frac7{12\pi\sqrt{3/2}}h^{-1/2}
                     +\frac1{12}
       =C_0^{\rm h}t^{1/3}+\frac1{12}.
\]}
{Here $t(1-u)+2\int_{ut}^tc(v)\dd v\le q_0th$ uses
$c(v)\le c(0)$, and the equality after substitution follows
from the critical-point identity
$Ah^{-1/2}=2q_0th$. Thus the explicit constants are obtained by
minimizing a proved upper bound for $j$, rather than by assuming
that its exact optimizer has already reached an asymptotic regime.}
{The core curvature gives
$2C_+\left(a\left(t\right)\right)/\sqrt{\kappa_{\rm in}}
 \le\left(2/3\right)\sqrt{\pi Rt/\delta}$.  Hence}
{\[
M\left(t\right)>\frac D4t^2+bt-L_\delta^{\mathrm N}\sqrt t
                        -C_0^{\rm h}t^{1/3}-\frac1{12},
 \qquad t\ge T_0^{\rm h}.
\]}
{In particular the completely explicit threshold}
{\[
 T_\delta^{\rm h}=\max\left\{T_0^{\rm h},
       \left(\frac{6L_\delta^{\mathrm N}}b\right)^2,
       \left(\frac{6C_0^{\rm h}}b\right)^{3/2},
       \frac1{2b}\right\}
\]}
{ensures $M\left(t\right)>Dt^2/4+\left(b/2\right)t$ for $t\ge T_\delta^{\rm h}$.
Indeed, the three errors are then bounded respectively by
$(b/6)t$, $(b/6)t$, and $(b/6)t$; their sum is at most
$(b/2)t$. The resulting quadratic lower-count expression is
strictly increasing for $t>0$.
Thus one can also obtain a closed tail subtraction by replacing
$b_\delta^{\rm c}$ by $b/2$ in \eqref{eq:Nq-all-gap}, whenever
$2pk\ge D\left(T_\delta^{\rm h}\right)^2/4+\left(b/2\right)T_\delta^{\rm h}$.}

{There is a sharper asymptotic evaluation of the optimal cutoff.
Equation~\eqref{eq:Nq-cap-cubic} with $w=v^{4/3}z$ gives
\[
 8z^3=7+10v^{-2/3}z+3v^{-4/3}z^2.
\]
Its unique positive root stays bounded above and away from zero,
so $z=(7/8)^{1/3}+O(v^{-2/3})$. Consequently
$r_v=\sqrt{w_v}/v=O(v^{-1/3})$. Substitution in
$r-\arctan r=r^3/3+O(r^5)$ gives
\[
 A_v(X_v^*)=\frac{\sqrt{7/8}}3+O(v^{-2/3}),\qquad
 E_v(X_v^*)=\frac7{24\sqrt{7/8}}+O(v^{-2/3}).
\]
The two displayed constants are equal, and their sum is
$\sqrt{14}/6$. Therefore}
{\[
 c\left(v\right)=\frac{\sqrt{14}}{6\pi}+O\left(v^{-2/3}\right),\qquad
 q_{+\infty}=1+\frac{\sqrt{14}}{3\pi}.
\]}
{For $h=1-u\downarrow0$,}
{\[
 2I\left(1-h\right)=\frac7{12\pi\sqrt2}h^{-1/2}
                       +\frac1{12}+O\left(h^{1/2}\right).
\]}
{Since $c\left(v\right)$ decreases to its limit, the optimal loss satisfies}
{\[
\begin{aligned}
 1-u_*\left(t\right)&\sim
      \left(\frac7{24\pi\sqrt2\,q_{+\infty} t}\right)^{2/3},\\
 j\left(t\right)&=3q_{+\infty}^{1/3}
       \left(\frac7{24\pi\sqrt2}\right)^{2/3}t^{1/3}
                       +\frac1{12}+O\left(t^{-1/3}\right).
 \end{aligned}
\]}
{For the lower comparison use
$J\left(t,u\right)\ge q_{+\infty} t\left(1-u\right)+2I\left(u\right)$.  For the matching upper
comparison choose the displayed scale of $1-u$; on its turning
interval $c\left(v\right)-c\left(+\infty\right)=O\left(t^{-2/3}\right)$, whose integrated
contribution is $O\left(t^{-1/3}\right)$. To make the balance explicit,
put $a=7/(24\pi\sqrt2)$. The singular part of the model loss is
$q_{+\infty}th+2ah^{-1/2}$; it is minimized at
$h=(a/(q_{+\infty}t))^{2/3}$ and has minimum
$3q_{+\infty}^{1/3}a^{2/3}t^{1/3}$.
For the exact minimizer, $u_*(t)\to1$ and its root equation becomes
\[
 a h^{-3/2}\{1+O(h)\}
   =t\{q_{+\infty}+O(t^{-2/3})\},
\]
which gives the first asymptotic line and also
$h=O(t^{-2/3})$. The remainder $O(h^{1/2})$ in the expansion of
$2I(1-h)$ is thus $O(t^{-1/3})$, as is the integrated cap error.
This proves the stated expansion of $j$. The optimization is
optimal within
the present phase-error and clipped-cap family.  The independent
rounding error remains of order $\sqrt t$.}

{For fixed geometry the stronger gap in
\eqref{eq:Nq-envelope-inverse} consequently has the leading size}
{\begin{equation}\label{eq:Nq-gap-asymptotic}
 \Gamma_k\left(S\right)\sim\frac{2}{r\left(R-1\right)}\sqrt{W_k\left(S\right)}
               =\frac{2}{R_{\rm phys}-r}\sqrt{W_k\left(S\right)}.
\end{equation}}
{\color{revision}Indeed, continuity makes $\Phi_\delta$ bounded on
every finite interval, whereas its crossing level $2pk$ tends to
infinity. Hence $\tau_k\to+\infty$. Since $B_\delta(t)\to b$, the
crossing equation implies
\[
 \frac{w_k^2}{\tau_k^2}
    =1+\frac{4B_\delta(\tau_k)}{D\tau_k}\longrightarrow1,
\]
so $\tau_k/w_k\to1$.
The equation $\Phi_\delta\paren{\tau_k}=Dw_k^2/4$ gives
\[
 w_k^2-\tau_k^2=\frac4D\tau_kB_\delta\paren{\tau_k}.
\]
Dividing this identity by $r^2\sqrt{W_k}=rw_k$ gives the limit
$4b/(rD)=2/(r(R-1))$ in \eqref{eq:Nq-gap-asymptotic}.
After angular sampling, the associated count coefficient is
\[
 \frac{\beta\paren{R_{\rm phys}+r}}{4\pi},
\]
the circular-arc part of the boundary coefficient.
Section~\ref{sec:boundary-optimality} identifies the additional
radial-side contribution and explains the scope of these bounds.}

{The three conclusions therefore have distinct roles: the sign
$B_\delta(t)>0$ proves strict P\'olya, the explicit lower bound
$B_\delta(t)\ge b_\delta^{\rm c}$ proves the uniform main theorem,
and the full envelope retains the information needed for the sharp
arc-scale correction. The last statement concerns the leading
high-frequency coefficient and does not replace the all-frequency
overlap argument.}

\section{\revblue{Boundary coefficients and the scope of optimality}}
{\label{sec:boundary-optimality}}
{\color{editorial}The main theorems give explicit positive corrections at
every index, uniformly in the aperture.  The optimized estimates in
Sections~\ref{sec:dirichlet-quantitative} and
\ref{sec:neumann-quantitative} then raise a different question: which
part of the classical Weyl boundary term do these corrections capture?
We first determine the actual second term of the continuous order
integrals; this justifies calling their coefficient the full
\emph{circular-arc coefficient}.  We next recover the contribution of
the two radial sides when the angular orders are sampled.  Finally, a
low-index example explains why the larger full-perimeter coefficient
cannot replace the all-index constants in the main theorems.  Thus the
asymptotic results identify the scale and scope of the improvements;
they do not replace the finite-frequency estimates used to prove the
main theorems.}

{Throughout the normalized argument, the radii are $1$ and $R>1$,
$\delta=R-1$, $D=R^2-1$, and $p=\pi/\beta>0$ is fixed.  Write
$n_v^{\mathrm N}=n_v$ and $M_{\mathrm N}=M$ for the Neumann counts
already defined, and retain $n_v^{\mathrm D}$ and $M_{\mathrm D}$
for their Dirichlet counterparts.  All radial counts in the proof
are inclusive.  The strict sector counts will be recovered below.}

\subsection{\revblue{Phase shifts and the actual continuous counts}}
{Put $g_R\left(z\right)=f_t\left(tz\right)/t$.  Homogeneity of the action gives
the explicit expression}
\[
 g_R(z)=\frac1\pi\begin{cases}
 \sqrt{R^2-z^2}-z\arccos(z/R)-\sqrt{1-z^2}+z\arccos z,
       &0\le z\le1,\\[2pt]
 \sqrt{R^2-z^2}-z\arccos(z/R),&1\le z\le R.
 \end{cases}
\]
{In particular $g_R$ is independent of $t$ and continuous at the
transition $z=1$.  Differentiation is particularly simple because}
\[
 \frac{\dd}{\dd z}\left(\sqrt{a^2-z^2}-z\arccos(z/a)\right)
       =-\arccos(z/a),\qquad 0<z<a.
\]
{Its derivatives on the two open order regions are therefore}
{\begin{equation}\label{eq:Bq-action-curvature}
 \begin{aligned}
 g_R'\left(z\right)&=\frac{\arccos z-\arccos\left(z/R\right)}\pi,
 &g_R''\left(z\right)&=\frac1\pi\left(
       \frac1{\sqrt{R^2-z^2}}-\frac1{\sqrt{1-z^2}}\right),
 &&0<z<1,\\
 g_R'\left(z\right)&=-\frac1\pi\arccos\left(z/R\right),
 &g_R''\left(z\right)&=\frac1{\pi\sqrt{R^2-z^2}},
 &&1<z<R.
 \end{aligned}
\end{equation}}
{Since $z/R<z$ on $0<z<1$ and $\arccos$ decreases, both formulas
for $g_R'$ are negative.  Thus $g_R$ decreases strictly from
$g_R(0)=\delta/\pi$ to $g_R(R)=0$.  Moreover, $g_R''<0$ before
$z=1$ and $g_R''>0$ after $z=1$: the slope decreases from zero to
its unique minimum $-\arccos(1/R)/\pi$ and then increases to zero.
Consequently $g_R'$ is nonzero on each compact subinterval of the
two regions, and $|g_R''|$ is bounded above and below by positive
constants there.  These are precisely the properties needed for
the continuous change of variables and the discrete cancellation
argument below.}

\begin{figure}[htbp]
\centering
\begin{tikzpicture}
\begin{axis}[width=.74\linewidth,height=5cm,
 xmin=0,xmax=2,ymin=-.38,ymax=.035,
 xlabel={$z=v/t$},ylabel={$g_2'(z)$},
 xtick={0,1,2},ytick={-.333333,0},
 yticklabels={$-1/3$,$0$},grid=major,
 tick label style={font=\small},label style={font=\small}]
 \addplot[annBlue,very thick,domain=0:1,samples=81]
       {(acos(x)-acos(x/2))/180};
 \addplot[annOrange,very thick,domain=1:2,samples=81]
       {-acos(x/2)/180};
 \addplot[black,densely dashed] coordinates {(1,0) (1,-.333333)};
 \addplot[only marks,mark=*,black] coordinates {(1,-.333333)};
 \node[font=\small,anchor=south] at (axis cs:.46,-.19)
       {$g_2''<0$};
 \node[font=\small,anchor=south] at (axis cs:1.55,-.19)
       {$g_2''>0$};
\end{axis}
\end{tikzpicture}
\caption{The action slope for $R=2$.  It is negative throughout the
open order interval and has a unique minimum at the transition
$z=1$.  For every $R>1$ the same pattern gives both the strict
decrease of $g_R$ and the bounded variation of its slope.}
\label{fig:Bq-action-slope}
\end{figure}

{\color{revision}The Dirichlet phase-count formula of
\cite[Lemma~4.1]{DirichletAnnuli} and the Neumann formula of
\cite[Lemma~2.6]{DaiSunAnnuli}, extended from integer to real order
in Lemma~\ref{lem:exact} above, take the form}
{\begin{equation}\label{eq:Bq-phase-counts}
 \begin{aligned}
 n_v^{\mathrm D}\left(t^2\right)&=\left\lfloor\gamma_t^{\mathrm D}\left(v\right)\right\rfloor,
 &\gamma_t^{\mathrm D}\left(v\right)&=
       \frac{\theta_v\left(Rt\right)-\theta_v\left(t\right)}\pi,\\
 n_v^{\mathrm N}\left(t^2\right)&=\left\lfloor\gamma_t^{\mathrm N}\left(v\right)\right\rfloor+1,
 &\gamma_t^{\mathrm N}\left(v\right)&=
       \frac{\varphi_v\left(Rt\right)-\varphi_v\left(t\right)}\pi.
 \end{aligned}
\end{equation}}
{Their phase normalizations are those used in the preceding sections.
For $\sigma\in\left\{\mathrm D,\mathrm N\right\}$, write}
{\[
 \gamma_t^\sigma\left(tz\right)=t g_R\left(z\right)+r_t^\sigma\left(z\right).
\]}
{The ordinary and derivative phase enclosures of
\cite[Theorems~1.4 and~1.10]{Phase} give locally uniform convergence
to zero for both shifts on $\left(0,1\right)$.  On $\left(1,R\right)$ the argument $Rt$
remains oscillatory, while $t$ is in the forbidden region.  The
uniform expansions for positive real order in
\cite[\S10.20(i), equations~10.20.4--10.20.8]{DLMF} give locally
uniformly}
{\[
 \frac{J_{tz}\left(t\right)}{Y_{tz}\left(t\right)}\longrightarrow0,
 \qquad
 \frac{J'_{tz}\left(t\right)}{Y'_{tz}\left(t\right)}\longrightarrow0.
\]}
{For $v=tz>t$, the chosen branches satisfy
$-\pi/2<\theta_v(t)\le-\pi/4$ and $0<\varphi_v(t)<\pi/2$.
These ranges select the phase values from the ratios above;
the corresponding normalized phases tend to $-\pi/2$ and
$\pi/2$, respectively. Combining these limits with the regular
phase enclosures at $Rt$ yields}
{\begin{equation}\label{eq:Bq-phase-shifts}
 \begin{array}{c|cc}
       &0<z<1&1<z<R\\ \hline
 r_t^{\mathrm D}\left(z\right)&0&1/4\\
 r_t^{\mathrm N}\left(z\right)&0&-1/4
 \end{array}
 \qquad\text{in the limit }t\longrightarrow+\infty,
\end{equation}}
{with convergence uniform on compact subintervals.}

{The signs in this table are worth making explicit.  In the
oscillatory region the ordinary phase equals
$A_v(y)-\pi/4+o(1)$, whereas the derivative phase equals
$A_v(y)+\pi/4+o(1)$.  For $z<1$ both arguments are oscillatory,
so their constant shifts cancel.  For $z>1$ only the outer
argument is oscillatory; the inner limits just stated give
$(-1/4)-(-1/2)=1/4$ in the Dirichlet case and
$(1/4)-(1/2)=-1/4$ in the Neumann case.  The change at $z=1$
will account for the two different mean corrections below.}

{We also need bounds that do not deteriorate at the endpoints or at
$z=1$.  The Neumann clipped cap from the preceding section gives}
{\begin{equation}\label{eq:Bq-uniform-N}
 \left|n_v^{\mathrm N}\left(t^2\right)-f_t\left(v\right)\right|\le\frac54+c\left(0\right),
 \qquad 0\le v\le Rt.
\end{equation}}
{Indeed, on $v\le t$ the difference of the two regular phase errors
is at most $\pi c\left(0\right)$ in absolute value.  On $t<v<Rt$ one has
$0<\varphi_v\left(t\right)<\pi/2$, so the normalized shift lies between
$-1/4$ and $1/4+c\left(0\right)$.  Formula~\eqref{eq:Bq-phase-counts} then
gives the bound.}

{For Dirichlet conditions define, for $y>v$,}
{\[
 e_v\left(y\right)=A_v\left(y\right)-\frac\pi4-\theta_v\left(y\right),\qquad
 \mathcal E_v\left(y\right)=\frac{3y^2+2v^2}{24\left(y^2-v^2\right)^{3/2}}.
\]}
{The clipped ordinary phase enclosure gives
$0<e_v\left(y\right)<\min\left\{\mathcal E_v\left(y\right),A_v\left(y\right)+\pi/4\right\}$.
Moreover $A_v\left(y\right)\mathcal E_v\left(y\right)\le1/8$.  For $v>0$, setting
$w=\sqrt{y^2-v^2}/v$ reduces the latter assertion to
$\arctan w\ge5w/\left(3w^2+5\right)$.  Indeed,}
\[
 \frac{\dd}{\dd w}\left(\arctan w-\frac{5w}{3w^2+5}\right)
 =\frac{4w^2(6w^2+5)}{(1+w^2)(3w^2+5)^2}\ge0,
\]
{and the difference vanishes at zero.  At $v=0$ the product equals
$1/8$ directly.  Writing $a=A_v(y)>0$ now gives}
\[
 e_v(y)\le\min\{a+\pi/4,\,1/(8a)\}.
\]
{The first expression increases with $a$ and the second decreases.
Their intersection therefore gives a uniform upper bound, namely}
{\[
 e_v\left(y\right)\le c_*:=
 \frac{\pi/4+\sqrt{\pi^2/16+1/2}}2.
\]}
{For $v\le t$, the error in the phase difference is a difference
of two such bounded errors.  For $t<v<Rt$, the phase monotonicity
and its turning value give $-\pi/2<\theta_v\left(t\right)\le-\pi/4$.
Thus}
{\begin{equation}\label{eq:Bq-uniform-D}
 \left|n_v^{\mathrm D}\left(t^2\right)-f_t\left(v\right)\right|\le1+\frac{c_*}\pi,
 \qquad0\le v\le Rt.
\end{equation}}
{Turning arguments follow by continuity.  Both radial counts vanish
for $v\ge Rt$: their Rayleigh quotients are at least $v^2/R^2$,
and equality at a positive order cannot be attained.}

\begin{samepage}
{\begin{proposition}[The sharp continuous boundary coefficient]
\label{prop:Nq-actual-coefficient}
\label{prop:Dq-continuous-asymptotic}
For each fixed $R>1$,
\[
M_{\mathrm N}\left(t\right)=\frac D4t^2+\frac{R+1}2t+o_R\left(t\right),
 \qquad t\longrightarrow+\infty,
\]
and
\begin{equation}\label{eq:Dq-continuous-asymptotic}
 M_{\mathrm D}\left(t\right)=\frac D4t^2-\frac{R+1}2t+o_R\left(t\right).
\end{equation}
Hence $\left(R+1\right)/2$ is the largest possible leading coefficient in a
lower bound, with an $o_R\left(t\right)$ remainder, for either the actual
Neumann surplus $M_{\mathrm N}\left(t\right)-Dt^2/4$ or the actual Dirichlet
deficit $Dt^2/4-M_{\mathrm D}\left(t\right)$.
\end{proposition}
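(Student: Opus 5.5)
We write $M_\sigma(t)=2\int_0^{Rt}n_v^\sigma(t^2)\dd v$ and substitute $v=tz$, so that $M_\sigma(t)=2t\int_0^R n^\sigma_{tz}(t^2)\dd z$. By the action identity \eqref{eq:action}, $2t\int_0^R t g_R(z)\dd z=Dt^2/4$. Therefore it suffices to prove
\[
 \int_0^R\bigl(n^\sigma_{tz}(t^2)-tg_R(z)\bigr)\dd z\longrightarrow
 \begin{cases}\tfrac{R+1}{4},&\sigma=\mathrm N,\\[2pt] -\tfrac{R+1}{4},&\sigma=\mathrm D.\end{cases}
\]
The integrand is uniformly bounded by \eqref{eq:Bq-uniform-N} and \eqref{eq:Bq-uniform-D}. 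Hence, for any $\varepsilon>0$, the integral over $[0,\varepsilon]\cup[1-\varepsilon,1+\varepsilon]\cup[R-\varepsilon,R]$ contributes $O(\varepsilon)$ uniformly in $t$, and we only need the limit on compact subintervals $[a,b]$ of $(0,1)$ and of $(1,R)$.

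On such a subinterval write $n^\sigma=\lfloor tg_R+r^\sigma_t\rfloor+\epsilon_\sigma$, with $\epsilon_{\mathrm N}=1$ and $\epsilon_{\mathrm D}=0$. Using $\lfloor y\rfloor=y-\tfrac12-\psi(y)$, the integrand becomes $r_t^\sigma(z)-\tfrac12+\epsilon_\sigma-\psi(tg_R(z)+r^\sigma_t(z))$. By \eqref{eq:Bq-phase-shifts}, $r_t^\sigma$ converges uniformly there to the tabulated constants. The main step is to show that $\int_a^b\psi(tg_R+r^\sigma_t)\dd z\to0$. Since $g_R'$ is bounded away from zero on $[a,b]$, with bounded $g_R''$, the van der Corput (first-derivative) argument applies. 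Concretely, we change variables $y=tg_R(z)$ and integrate the sawtooth against the weight $1/(t|g_R'|)$, which is of bounded variation. Each period of $\psi$ has zero mean, so the integral over the full periods is $O(1/t)$ times the variation of the weight, plus $O(1/t)$ from the incomplete end periods.

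The phase shift $r^\sigma_t$ is only known to converge uniformly, not smoothly. We therefore handle it by a sandwich instead. Given $\eta>0$, for large $t$ we have $\lfloor tg_R+c-\eta\rfloor\le\lfloor tg_R+r_t\rfloor\le\lfloor tg_R+c+\eta\rfloor$, where $c$ is the limit constant. Applying the equidistribution step to the two constant shifts $c\pm\eta$ yields limits $c\pm\eta-\tfrac12+\epsilon_\sigma$ times $(b-a)$, and then we let $\eta\downarrow0$. We expect this sandwich, together with the uniform control near $z=1$ where $g_R''$ is singular, to be the main obstacle; the uniform bounds already stated are what make the excision of those neighbourhoods legitimate.

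Collecting the limits, for Neumann the inner region contributes $\tfrac12\cdot1$ and the outer region $(-\tfrac14+\tfrac12)(R-1)$, for a total of $\tfrac12+\tfrac{R-1}{4}=\tfrac{R+1}{4}$. For Dirichlet the inner region gives $-\tfrac12$ and the outer region $(\tfrac14-\tfrac12)(R-1)$, for a total of $-\tfrac{R+1}{4}$. Multiplying by $2t$ gives both expansions. The optimality statement then follows at once: if the surplus (or deficit) were bounded below by $Lt+o(t)$, comparison with the exact asymptotic forces $L\le(R+1)/2$.
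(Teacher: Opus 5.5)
Your proposal is correct and follows the paper's proof closely. Both arguments use the substitution $v=tz$ with the action identity, excise neighbourhoods of $z=0,1,R$ using \eqref{eq:Bq-uniform-N}--\eqref{eq:Bq-uniform-D}, bound the sawtooth integral by $O(t^{-1})$ on compact subintervals after the change of variable $y=g_R(z)$, and read off the means $\pm\tfrac12$ and $\pm\tfrac14$ from \eqref{eq:Bq-phase-shifts}. The one cosmetic difference is that you absorb $r_t^\sigma-c$ by the monotone sandwich $\lfloor tg_R+c-\eta\rfloor\le\lfloor tg_R+r_t^\sigma\rfloor\le\lfloor tg_R+c+\eta\rfloor$, whereas the paper bounds the measure of the near-integer set by $O_I(\eta+t^{-1})$; both give the same limit as $t\to\infty$ and then $\eta\downarrow0$.
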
}
\end{samepage}
\begin{proof}
{On a compact interval $I$ contained in $\left(0,1\right)$ or $\left(1,R\right)$, the
change of variable $y=g_R\left(z\right)$ gives the smooth weight
$w(y)=1/|g_R'(g_R^{-1}(y))|$ on the interval $J=g_R(I)$;
both $w$ and $w'$ are bounded there.  If $H$ is a bounded
Riemann integrable function of period one and mean zero, then
$K(y)=\int_0^yH(s)\dd s$ is a bounded periodic primitive.
Integration by parts gives}
\[
 \begin{aligned}
 \int_J H(ty+c)w(y)\dd y
 &=\frac1t[K(ty+c)w(y)]_{\partial J}
       -\frac1t\int_J K(ty+c)w'(y)\dd y\\
 &=O_I(t^{-1}),
 \end{aligned}
\]
{and hence}
{\[
 \int_I H\left(tg_R\left(z\right)+c\right)\,\dd z=O_I\left(t^{-1}\right).
\]}
{Apply this to the centered sawtooth
$H(y)=1/2-\{y\}$, or to an interval indicator modulo one
after subtracting its mean.
In particular,}
{\[
 \int_I\left(\left\lfloor tg_R\left(z\right)+c\right\rfloor-tg_R\left(z\right)\right)\dd z
       \longrightarrow\left(c-1/2\right)\left|I\right|.
\]}
{The uniform convergence of $r_t^\sigma$ to the constant $c$ in
\eqref{eq:Bq-phase-shifts} preserves this limit.  For a phase
perturbation of size at most $\eta<1/2$, the floor can change
only within distance $\eta$ of an integer.  The change of variable
just used bounds the measure of this set by $O_I\left(\eta+t^{-1}\right)$.
Let $t\to+\infty$ and then $\eta\downarrow0$.}

{The mean corrections $n_{tz}^\sigma\left(t^2\right)-f_t\left(tz\right)$ are therefore
$1/2$ and $1/4$ for Neumann conditions on the inner and outer order
regions, and $-1/2$ and $-1/4$ for Dirichlet conditions.  More
explicitly, if $\{y\}=y-\lfloor y\rfloor$ then the periodic
function $-\{y\}$ has mean $-1/2$, and}
\[
 \begin{array}{c|cc}
 &0<z<1&1<z<R\\ \hline
 \text{Dirichlet mean: }c-1/2&-1/2&1/4-1/2=-1/4\\
 \text{Neumann mean: }c+1/2&1/2&-1/4+1/2=1/4.
 \end{array}
\]
{The extra $1$ in the Neumann phase-count formula is essential
for this sign.  The
uniform bounds \eqref{eq:Bq-uniform-N}--\eqref{eq:Bq-uniform-D}
allow intervals about $z=0,1,R$ to be removed and then shrunk.
It follows that}
{\[
 \int_0^R\left(n_{tz}^{\mathrm N}\left(t^2\right)-f_t\left(tz\right)\right)\dd z
       \longrightarrow\frac12+\frac\delta4=\frac{R+1}4,
\]}
{and the Dirichlet limit is its negative.  Multiplication by $2t$
and the exact action identity \eqref{eq:action} give, for example,}
\[
 \begin{aligned}
 M_{\mathrm N}(t)
 &=2\int_0^{Rt}f_t(v)\dd v
      +2t\int_0^R\bigl(n_{tz}^{\mathrm N}(t^2)-f_t(tz)\bigr)\dd z\\
 &=\frac D4t^2+2t\left(\frac12+\frac\delta4+o_R(1)\right)
  =\frac D4t^2+\frac{R+1}{2}t+o_R(t).
 \end{aligned}
\]
{The Dirichlet calculation changes the sign of the second integral.
The final assertion follows by dividing the corresponding surplus
or deficit by $t$.}
\end{proof}

{For physical radii $r<R_{\rm phys}$, the continuous contribution to
the sector count is $M_\sigma(r\sqrt E)/(2p)$, with normalized
$R=R_{\rm phys}/r$.  The proposition gives}
\[
 \begin{gathered}
 \frac{M_\sigma(r\sqrt E)}{2p}
   =\frac A{4\pi}E
       \mathbin{\pm}\frac{\beta(R_{\rm phys}+r)}{4\pi}\sqrt E
       +o_{r,R_{\rm phys},\beta}(\sqrt E),\\
 A=\frac\beta2(R_{\rm phys}^2-r^2).
 \end{gathered}
\]
{where the plus sign is Neumann.  Thus the second coefficient is
exactly the total circular-arc length $\beta(R_{\rm phys}+r)$
divided by $4\pi$.  After counting inversion its coefficient is
$\beta(R_{\rm phys}+r)/A=2/(R_{\rm phys}-r)$, as in
\eqref{eq:intro-arc-coefficient}.  This is the precise optimality
statement proved here: the continuous surplus or deficit cannot
have a larger leading coefficient.  It does not assert optimality
of each all-index constant, nor does it yet include the radial sides.}

\subsection{\revblue{Discrete phase averaging at fixed angular spacing}}
{The continuous averaging argument does not estimate the angular
rounding sum.  We prove the needed discrete cancellation directly.}

{\begin{lemma}[Discrete averaging of the action]
\label{lem:Bq-discrete-averaging}
Fix $R>1$, $p>0$, and a compact interval $I$ contained in $\left(0,1\right)$
or $\left(1,R\right)$.  For every nonzero integer $h$,
\begin{equation}\label{eq:Bq-exponential-sum}
 \sum_{pj/t\in I}\exp\left(2\pi\mathrm i h f_t\left(pj\right)\right)
       =O_{R,p,I,h}\left(\sqrt t\right).
\end{equation}
Consequently, for every bounded Riemann integrable function $H$
of period one and every constant $c$,
\begin{equation}\label{eq:Bq-discrete-mean}
 \frac1t\sum_{pj/t\in I}H\left(f_t\left(pj\right)+c\right)
       \longrightarrow\frac{\left|I\right|}p\int_0^1H\left(y\right)\dd y.
\end{equation}
\end{lemma}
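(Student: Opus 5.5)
The exponential sum \eqref{eq:Bq-exponential-sum} is a van der Corput second-derivative estimate, and the equidistribution statement \eqref{eq:Bq-discrete-mean} follows from it by the Weyl criterion. Write $F(j)=hf_t(pj)=ht\,g_R(pj/t)$ as a function of the real variable $j$ on the interval $\{j:pj/t\in I\}$. That interval has length $|I|t/p$. By \eqref{eq:Bq-action-curvature},
\[
 F''(j)=\frac{hp^2}{t}\,g_R''(pj/t).
\]
Since $I$ is a compact subinterval of $(0,1)$ or of $(1,R)$, the factor $|g_R''|$ is bounded above and below by positive constants $m_I\le|g_R''|\le M_I$ there. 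It also has a constant sign, so $F'$ is monotone. The van der Corput second-derivative test gives
\[
 \sum_{a<j\le b}e^{2\pi\mathrm iF(j)}\ll (b-a)\lambda^{1/2}+\lambda^{-1/2},
 \qquad \lambda\asymp \frac{|h|p^2m_I}{t},\quad \lambda\le|F''|\le\frac{M_I}{m_I}\lambda .
\]
With $b-a\asymp t$ both terms are $O_{R,p,I,h}(\sqrt t)$, which is \eqref{eq:Bq-exponential-sum}. If one prefers a self-contained argument, the classical route is Poisson/Kusmin--Landau: split the range where $\|F'\|$ is small, and apply the first-derivative test with the monotone $F'$ on the rest. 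I would cite the standard inequality rather than reprove it.

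For \eqref{eq:Bq-discrete-mean}, the number of lattice points satisfies $\#\{j:pj/t\in I\}=|I|t/p+O(1)$. Shifting by $c$ multiplies each exponential sum by $e^{2\pi\mathrm ihc}$, so $(1/t)\sum e^{2\pi\mathrm ih(f_t(pj)+c)}=O(t^{-1/2})\to0$ for every $h\ne0$. Hence the normalized counting measures $(1/t)\sum\delta_{\{f_t(pj)+c\}}$ have total mass tending to $|I|/p$, and their Fourier coefficients converge to those of $(|I|/p)\,\dd y$ on $\mathbb R/\mathbb Z$. The weighted Weyl criterion then gives convergence for trigonometric polynomials, and by Weierstrass density for continuous periodic $H$. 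For a bounded Riemann integrable $H$, approximate from above and below by continuous periodic $H_\pm$ with $\int(H_+-H_-)<\varepsilon$ and pass to the limit. This is the usual sandwich argument, and it covers the centered sawtooth and interval indicators used afterwards.

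The main obstacle is the van der Corput step, specifically its uniformity in $t$. The curvature parameter $\lambda$ scales like $1/t$ while the length scales like $t$, so the two contributions balance exactly at $\sqrt t$. The compactness of $I$, which stays away from $z=0$, $1$ and $R$, is essential here. Near $z=1$ we have $|g_R''|\to\infty$, and near $0$ and $R$ the monotonicity or curvature bounds degenerate, so one should check that the implied constant depends only on $m_I$, $M_I$, $p$, $h$, $|I|$. Those endpoint neighborhoods are handled later by the uniform bounds \eqref{eq:Bq-uniform-N}--\eqref{eq:Bq-uniform-D}, not in this lemma.
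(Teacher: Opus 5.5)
Your proof is correct and is essentially the paper's: the paper proves \eqref{eq:Bq-exponential-sum} by a self-contained version of the van der Corput second-derivative bound you cite. It discards the $O(\varepsilon t+1)$ indices where $F_t'$ lies within $\varepsilon$ of an integer, applies Kusmin--Landau summation by parts with $a_j=(e^{2\pi\mathrm i d_j}-1)^{-1}$ on the remaining monotone blocks, and takes $\varepsilon=t^{-1/2}$; it then passes to Riemann integrable $H$ through Fej\'er means and continuous sandwiching, just as in your Weyl-criterion step.

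One small correction to your closing remark, which does not affect the proof: $g_R''$ is finite and nonzero at $z=0$ and stays bounded as $z\downarrow1$; it blows up only as $z\uparrow1$ and $z\uparrow R$. What really separates the two admissible regions at $z=1$ is therefore the sign change of $g_R''$, i.e.\ the loss of monotonicity of $F'$.
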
}
\begin{proof}
{Set $F_t\left(x\right)=h t g_R\left(px/t\right)$ on the real interval $\left(t/p\right)I$.
By \eqref{eq:Bq-action-curvature}, $F_t'$ is monotone with a
range contained in a fixed bounded interval, and}
{\[
 \frac{a}{t}\le\left|F_t''\left(x\right)\right|\le\frac{b}{t}
\]}
{for positive constants $a,b$ depending only on the fixed data.
For $0<\varepsilon<1/4$, remove the set}
{\[
 \left\{x:\operatorname{dist}\left(F_t'\left(x\right),\mathbb Z\right)<\varepsilon\right\}.
\]}
{There are only a fixed number of its component intervals, and
their total length is $O\left(\varepsilon t\right)$, by the lower derivative
bound.  Discard every integer index $j$ whose edge $\left[j,j+1\right]$
meets that set, together with the endpoint indices whose edges
are not wholly contained in $\left(t/p\right)I$.  This discards
$O\left(\varepsilon t+1\right)$ terms.}

{The retained indices form a fixed number of consecutive blocks.
On each block the differences
$d_j=F_t\left(j+1\right)-F_t\left(j\right)$ are monotone and belong to a single interval
$\left[m+\varepsilon,m+1-\varepsilon\right]$ for an integer $m$.
Put}
{\[
 u_j=e^{2\pi\mathrm iF_t\left(j\right)},\qquad
 a_j=\frac1{e^{2\pi\mathrm i d_j}-1}
       =-\frac12-\frac{\mathrm i}2\cot\left(\pi d_j\right).
\]}
{Then $u_j=a_j\left(u_{j+1}-u_j\right)$.  Summation by parts on a block
$A\le j\le B$ gives}
{\[
 \sum_{j=A}^B u_j
 =a_Bu_{B+1}-a_Au_A+
       \sum_{j=A+1}^B\left(a_{j-1}-a_j\right)u_j.
\]}
{The cotangent is monotone between successive integers.  Hence
$\sup\left|a_j\right|+\sum\left|a_{j-1}-a_j\right|=O\left(\varepsilon^{-1}\right)$ on the
block.  Restoring the discarded terms proves}
{\[
 \left|\sum_{pj/t\in I}e^{2\pi\mathrm i h f_t\left(pj\right)}\right|
       =O\left(\varepsilon t+\varepsilon^{-1}+1\right).
\]}
{Choose $\varepsilon=t^{-1/2}$ for sufficiently large $t$ to obtain
\eqref{eq:Bq-exponential-sum}.}

{The number of indices is $t\left|I\right|/p+O\left(1\right)$.  The exponential-sum
estimate first proves \eqref{eq:Bq-discrete-mean} for any fixed
trigonometric polynomial: its constant Fourier mode contributes
the claimed mean, and each of its finitely many nonzero modes is
$O(t^{-1/2})$ after division by $t$.  To pass to a continuous
one-periodic function $H$, define its Fourier coefficients and
the Fej\'er means of its Fourier series by}
\[
 \begin{aligned}
 \widehat H(h)&=\int_0^1H(y)e^{-2\pi\mathrm i h y}\dd y,\\
 \sigma_NH(y)&=\sum_{|h|\le N}
       \left(1-\frac{|h|}{N+1}\right)
          \widehat H(h)e^{2\pi\mathrm i h y}.
 \end{aligned}
\]
{These are the Ces\`aro averages of the Fourier partial sums.
For continuous periodic $H$ they converge uniformly to $H$ and
preserve its mean: $\int_0^1\sigma_NH=\widehat H(0)=\int_0^1H$.
For each fixed $N$ the finite-mode argument applies to
$\sigma_NH$, while}
\[
 \left|\frac1t\sum_{pj/t\in I}
       \bigl(H-\sigma_NH\bigr)(f_t(pj)+c)\right|
 \le\left(\frac{|I|}{p}+O(t^{-1})\right)
           \|H-\sigma_NH\|_\infty.
\]
{Letting $t\to+\infty$ first, and only then $N\to+\infty$,
proves the assertion for continuous $H$.  No rate uniform in $N$
has been used, so this passage gives a limit, not an
$O(\sqrt t)$ remainder for the unnormalized sum for general $H$.}

{A discontinuous sawtooth or interval indicator cannot be
approximated uniformly by continuous functions.  Instead, bound
an interval indicator above and below by continuous periodic
functions whose integrals differ by an arbitrarily small amount;
this proves \eqref{eq:Bq-discrete-mean} for indicators and hence for
step functions.  The same bounds show that a single phase value
has limiting frequency zero, so the choice of endpoint values
does not affect the argument.  Finally, upper and lower step
approximations, whose integrals approach that of a bounded
Riemann integrable $H$, give the stated general case. For complex
$H$, apply the argument to its real and imaginary parts. This also
supplies a direct proof of the required equidistribution.}
\end{proof}

{Apply the lemma to the sawtooth and use the locally uniform shifts
\eqref{eq:Bq-phase-shifts}.  As in the continuous argument, a
uniformly small perturbation changes a floor only at phases near
integers.  Formula~\eqref{eq:Bq-discrete-mean} shows that a fixed
$\eta$-neighborhood contains $2\eta t\left|I\right|/p+o\left(t\right)$ sampled phases.
First letting $t\to+\infty$ and then $\eta\downarrow0$ therefore
shows that the perturbation changes only $o\left(t\right)$ floors.  We obtain}
{\begin{equation}\label{eq:Bq-sampled-corrections}
 \begin{aligned}
 \sum_{j\ge0}\left(n_{jp}^{\mathrm N}\left(t^2\right)-f_t\left(jp\right)\right)
       &=\frac{R+1}{4p}t+o_{R,p}\left(t\right),\\
 \sum_{j\ge1}\left(n_{jp}^{\mathrm D}\left(t^2\right)-f_t\left(jp\right)\right)
       &=-\frac{R+1}{4p}t+o_{R,p}\left(t\right).
 \end{aligned}
\end{equation}}
{\color{revision}The omitted neighborhoods of $0$, $1$, and $R$ contain
$O\paren{\eta t+1}$ indices.  By the uniform radial bounds, their
total contribution is $O_{R,p}\paren{\eta t+1}$.  This justifies
their removal before taking $\eta\downarrow0$.}

\subsection{\revblue{The straight sides and the full two-term sector law}}
{The action sum retains an endpoint contribution that is absent
from the order integral.  Its trapezoidal estimate has a bounded
error.  Indeed, extend $f_t$ by zero beyond $Rt$.  Its derivative
is continuous on $\left[0,+\infty\right)$, decreases from zero to
$-\arccos\left(1/R\right)/\pi$ at $v=t$, and then increases to zero.
Consequently}
{\[
 \operatorname{Var}_{\left[0,+\infty\right)}\left(f_t'\right)=
       \frac2\pi\arccos\left(1/R\right).
\]}
{On a mesh interval $\left[a,a+p\right]$, the difference between the
trapezoidal integral and the actual integral is the integral of
$\left(v-a\right)\left(a+p-v\right)/2$ against $\dd f_t'\left(v\right)$.  This kernel has absolute
value at most $p^2/8$.  In detail, two integrations by parts,
valid also for the finite signed measure $\dd f_t'$, give}
\[
 \frac p2\bigl(f_t(a)+f_t(a+p)\bigr)
       -\int_a^{a+p}f_t(v)\dd v
 =\frac12\int_a^{a+p}(v-a)(a+p-v)\dd f_t'(v).
\]
{The vanishing at $Rt$ removes the far endpoint after the
intervals are summed.  Summing and dividing by $p$
therefore gives}
{\[
 \left|\frac{f_t\left(0\right)}2+\sum_{j\ge1}f_t\left(jp\right)
       -\frac1p{\color{revision}\int_0^{+\infty}} f_t\left(v\right)\dd v\right|
       \le\frac p8\operatorname{Var}\left(f_t'\right)
       =\frac p{4\pi}\arccos\left(1/R\right).
\]}
{Since $f_t\left(0\right)=\delta t/\pi$, the exact action identity yields}
{\begin{equation}\label{eq:Bq-action-sums}
 \begin{aligned}
 \sum_{j\ge0}f_t\left(jp\right)
       &=\frac D{8p}t^2+\frac\delta{2\pi}t+O_{R,p}\left(1\right),\\
 \sum_{j\ge1}f_t\left(jp\right)
       &=\frac D{8p}t^2-\frac\delta{2\pi}t+O_{R,p}\left(1\right).
 \end{aligned}
\end{equation}}
{The different signs reflect the Neumann angular row at $j=0$
and the first Dirichlet angular row at $j=1$.  In particular,
the linear term is exactly $\pm f_t(0)/2$, and is independent of
the angular spacing $p$ before physical scaling.  This endpoint
term is the contribution that was lost when a monotone angular
sum was replaced by its integral in the all-frequency transfer.}

{\color{revision}The formula in the next theorem is the classical
two-term Weyl law \eqref{eq:intro-two-term-weyl}, specialized to this
geometry; it is not a new form of the Weyl formula.  Its general
coefficient and signs are recorded, for example, in
\cite[Section~1.1, formula~(1.1.2)]{IvriiSurvey}.  Ivrii's
general smooth-boundary theorem~\cite{Ivrii} assumes that the
periodic billiard trajectories have measure zero; see also
\cite[formula~(1.1.4)]{IvriiSurvey}.  We do not apply that theorem
directly to the corners or the slit.  Instead the separated phase
counts and the discrete cancellation proved above establish the
law here for every fixed aperture, including the retained slit,
and identify the two sources of its boundary coefficient.  This
is the role of the following proof within the present argument.
Its $o(\sqrt E)$ remainder gives no explicit frequency threshold
and is not asserted to be uniform as the aperture varies.}
{\begin{theorem}[Two-term Weyl law for a fixed annular sector]
\label{thm:Bq-full-perimeter}
Let $S=S_{r,R_{\rm phys},\beta}$, with $0<r<R_{\rm phys}$ and
$0<\beta\le2\pi$, retaining the slit at the endpoint.  Put
\[
 A=\left|S\right|=\frac\beta2\left(R_{\rm phys}^2-r^2\right),\qquad
 P=\beta\left(R_{\rm phys}+r\right)+2\left(R_{\rm phys}-r\right).
\]
At the slit, $P$ counts the two distinct boundary faces, each of
length $R_{\rm phys}-r$, as prescribed by the sector form.
For either the strict or the inclusive counting convention,
\begin{equation}\label{eq:Bq-full-perimeter}
 \begin{aligned}
 N_S^{\mathrm N,<}\left(E\right),\ N_S^{\mathrm N,\le}\left(E\right)
       &=\frac A{4\pi}E+\frac P{4\pi}\sqrt E+o_S\left(\sqrt E\right),\\
 N_S^{\mathrm D,<}\left(E\right),\ N_S^{\mathrm D,\le}\left(E\right)
       &=\frac A{4\pi}E-\frac P{4\pi}\sqrt E+o_S\left(\sqrt E\right),
 \qquad E\longrightarrow+\infty.
 \end{aligned}
\end{equation}
\end{theorem}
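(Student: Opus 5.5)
By scaling it suffices to treat normalized radii $1<R$, with $t=r\sqrt E$. We prove the inclusive statements first and recover the strict ones at the end.

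Separation of variables gives
\[
 N_S^{\mathrm N,\le}(t^2)=\sum_{j\ge0}n^{\mathrm N}_{jp}(t^2),
 \qquad
 N_S^{\mathrm D,\le}(t^2)=\sum_{j\ge1}n^{\mathrm D}_{jp}(t^2).
\]
We split each sum as $\sum f_t(jp)+\sum\bigl(n_{jp}-f_t(jp)\bigr)$. The first part is handled by the trapezoidal identity \eqref{eq:Bq-action-sums}, which gives $\frac D{8p}t^2\pm\frac{\delta}{2\pi}t+O(1)$; the plus sign is Neumann, coming from the row $j=0$. The second part is handled by the sampled corrections \eqref{eq:Bq-sampled-corrections}, which give $\pm\frac{R+1}{4p}t+o(t)$. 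Adding the two,
\[
 N^{\le}_S(t^2)=\frac D{8p}t^2\pm\Bigl(\frac{R+1}{4p}+\frac{\delta}{2\pi}\Bigr)t+o_{R,p}(t).
\]
The only analytic input still needed is the justification of \eqref{eq:Bq-sampled-corrections}. It is assembled from Lemma~\ref{lem:Bq-discrete-averaging} applied on compact subintervals of $(0,1)$ and $(1,R)$, together with the uniform bounds \eqref{eq:Bq-uniform-N}--\eqref{eq:Bq-uniform-D}, which control the excised $O(\eta t+1)$ indices near $z=0,1,R$. This part was sketched just before the theorem, and I would write it out carefully.

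\emph{Converting to physical variables.} With $p=\pi/\beta$ and $t=r\sqrt E$, the area term is
\[
 \frac{D r^2E\beta}{8\pi}=\frac{A}{4\pi}E .
\]
The arc term becomes
\[
 \frac{(R+1)\beta r\sqrt E}{4\pi}=\frac{\beta(R_{\rm phys}+r)}{4\pi}\sqrt E .
\]
The side term becomes
\[
 \frac{\delta r\sqrt E}{2\pi}=\frac{2(R_{\rm phys}-r)}{4\pi}\sqrt E .
\]
Their sum has coefficient $P/(4\pi)$, as claimed. The slit endpoint $\beta=2\pi$ needs no separate treatment: the angular spectrum is still $jp$ with $p=1/2$, and every lemma used holds for any fixed $p>0$.

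\emph{Strict counts.} Write
\[
 N^<(E)=\lim_{E'\uparrow E}N^{\le}(E'),\qquad N^{\le}(E)\ge N^<(E)\ge N^{\le}\bigl(E-\sqrt E\,\varepsilon\bigr)\ \text{for small }\varepsilon>0 .
\]
More simply, $N^<(E)$ is sandwiched between $N^{\le}(E_-)$ and $N^{\le}(E)$ for any $E_-<E$. Choose $E_-=(\sqrt E-\eta)^2$. The inclusive asymptotic then shows that the two counts differ by $O(\eta\sqrt E)+o(\sqrt E)$, and letting $\eta\downarrow0$ gives the same expansion for the strict count.

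The main obstacle is the rigorous form of \eqref{eq:Bq-sampled-corrections}. Two points need care. First, the floor perturbation from the non-constant phase shifts $r_t^\sigma$ must be shown to affect only $o(t)$ sampled rows; for this I would use \eqref{eq:Bq-discrete-mean} with indicators of $\eta$-neighbourhoods of the integers. Second, the locally uniform convergence in \eqref{eq:Bq-phase-shifts} on the forbidden region $(1,R)$, which relies on the DLMF uniform expansions, must be confirmed to hold uniformly along the sampled orders $v=jp$.
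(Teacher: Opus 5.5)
Your proof is correct. The inclusive part is the paper's own argument: separation of variables, the trapezoidal action sums \eqref{eq:Bq-action-sums}, the sampled corrections \eqref{eq:Bq-sampled-corrections}, and the same rescaling $R=R_{\rm phys}/r$, $t=r\sqrt E$. Your second worry needs no extra work. The shifts in \eqref{eq:Bq-phase-shifts} converge uniformly in $z$ on each compact $I$, so they converge uniformly at the sampled points $z=jp/t\in I$, and nothing lattice-specific is required.

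Where you genuinely differ is the passage to strict counts. The paper argues spectrally. For a fixed angular order the radial eigenvalues are simple, so a strict and an inclusive row count differ by at most one, and only when $\gamma_t^\sigma(jp)$ is an integer. Local uniformity of the shifts then forces $f_t(jp)+c$ into shrinking neighbourhoods of $\mathbb Z$. Lemma~\ref{lem:Bq-discrete-averaging} shows that only $o(t)$ indices can do this, and the excised endpoint intervals add $O(\eta t+1)$ more.

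Your sandwich $N^{\le}\bigl((\sqrt E-\eta)^2\bigr)\le N^{<}(E)\le N^{\le}(E)$ uses only monotonicity and the inclusive law already proved. It gives $0\le N^{\le}(E)-N^{<}(E)\le \frac{A\eta}{2\pi}\sqrt E+o_\eta(\sqrt E)$. Taking $\limsup_{E\to+\infty}$ of the ratio with $\sqrt E$ first, and only then $\eta\downarrow0$, yields $o(\sqrt E)$; state this order of limits explicitly when you write it up.

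Your route is shorter and needs no further spectral or equidistribution input. It shows generally that a monotone count whose two-term main part is continuous has jumps of size $o(\sqrt E)$. The paper's route gives the same $o(t)$ but locates where the coincidences can occur, and it reuses the averaging lemma already in place.
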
}
\begin{proof}
{For normalized radii, separation of variables gives the inclusive
Neumann sum over $j\ge0$ and the Dirichlet sum over $j\ge1$.
Combining \eqref{eq:Bq-sampled-corrections} and
\eqref{eq:Bq-action-sums} gives}
{\[
 N_S^{\sigma,\le}\left(t^2\right)=\frac D{8p}t^2
       \mathbin{\pm}\left(\frac{R+1}{4p}
                     +\frac\delta{2\pi}\right)t+o_{R,p}\left(t\right),
\]}
{with the plus sign for Neumann and the minus sign for Dirichlet.}
{The linear coefficient has thus been obtained as a sum:
$(R+1)/(4p)$ comes from the mean radial phase corrections in
\eqref{eq:Bq-sampled-corrections}, and $\delta/(2\pi)$ comes from
the angular endpoint in \eqref{eq:Bq-action-sums}.  Neither term
is absorbed into an unspecified error.}

{For a fixed angular order every radial eigenvalue is simple.
Thus the strict and inclusive radial counts differ by at most one,
and a difference requires $\gamma_t^\sigma\left(jp\right)$ to be an integer.
On each compact order interval, local uniformity of the phase
shift forces $f_t\left(jp\right)+c$ into a shrinking neighborhood of the
integers.  Lemma~\ref{lem:Bq-discrete-averaging} shows that there
are $o\left(t\right)$ such indices.  The endpoint intervals contribute at
most $O\left(\eta t+1\right)$ further coincidences.  Shrinking them proves
that the total difference between the two sector conventions is
$o\left(t\right)$.}

{Finally take $R=R_{\rm phys}/r$ and $t=r\sqrt E$.
The quadratic coefficient becomes $A/\left(4\pi\right)$, the coefficient
$\left(R+1\right)/\left(4p\right)$ becomes the circular-arc term
$\beta\left(R_{\rm phys}+r\right)/\left(4\pi\right)$, and $\delta/\left(2\pi\right)$ becomes
the term $2\left(R_{\rm phys}-r\right)/\left(4\pi\right)$ for the two radial sides.
This proves \eqref{eq:Bq-full-perimeter}, including the retained
slit, whose two faces both belong to the boundary.}
\end{proof}

{To see the inversion explicitly, put $\alpha=A/(4\pi)$ and
$\gamma=P/(4\pi)$.  At a Dirichlet eigenvalue $E_k=\lambda_k$,
$N^{\mathrm D,<}(E_k)\le k-1<k\le N^{\mathrm D,\le}(E_k)$;
at a Neumann eigenvalue $E_k=\mu_k$,
$N^{\mathrm N,<}(E_k)\le k<k+1\le N^{\mathrm N,\le}(E_k)$.
The laws for both conventions therefore give}
\[
 k=\alpha E_k\mathbin{\pm}\gamma\sqrt{E_k}
            +o_S(\sqrt{E_k}),
\]
{with the plus sign for Neumann.  The leading term first implies
$E_k/W_k\to1$, where $W_k=4\pi k/A$.  Subtracting $\alpha E_k$
and then replacing $\sqrt{E_k}$ by
$\sqrt{W_k}+o(\sqrt{W_k})$ gives}
{\[
\begin{aligned}
 W_k\left(S\right)-\mu_k\left(S\right)&=\frac PA\sqrt{W_k\left(S\right)}
                         +o_S\left(\sqrt{W_k\left(S\right)}\right),\\
 \lambda_k\left(S\right)-W_k\left(S\right)&=\frac PA\sqrt{W_k\left(S\right)}
                         +o_S\left(\sqrt{W_k\left(S\right)}\right).
 \end{aligned}
\]}
{In particular,}
{\begin{equation}\label{eq:Bq-arc-versus-perimeter}
 \frac PA=\frac2{R_{\rm phys}-r}
              +\frac4{\beta\left(R_{\rm phys}+r\right)}.
\end{equation}}
{\color{editorial}The first summand is the circular-arc coefficient attained by
\eqref{eq:Dq-sigma-asymptotic} and \eqref{eq:Nq-gap-asymptotic};
the second comes from the angular endpoint contribution after
discrete averaging.  The main counting theorems use a one-sided
sum--integral comparison at every frequency, which does not retain
this additional term.  Monotonicity alone cannot supply it.  For
example, for the decreasing step $q(v)=\mathbf1_{[0,p-\varepsilon]}(v)$,
$0<\varepsilon<p$, the inclusive left sum satisfies}
\[
 p\sum_{j\ge0}q(jp)-\int_0^\infty q(v)\dd v
      =p-(p-\varepsilon)=\varepsilon\longrightarrow0.
\]
{Thus a positive fixed endpoint gain cannot be deduced for every
decreasing count merely from left-endpoint sampling.  For the
actual sector spectrum the high-frequency gain follows instead
from the phase averaging and the smooth action sum.}

\subsection{\revblue{A low-index obstruction to the full coefficient}}
{The full coefficient $P/A$ cannot in general be inserted into a
remainder-free square-root gap at every index.  To test such an
upgrade it is enough to examine the first eigenvalues in a family
with a simple geometric limit.  Consider the sectors}
{\[
 S_L=S_{L,L+1,1/L},\qquad L\longrightarrow+\infty.
\]}
{The map from the unit square $Q=\left(0,1\right)^2$,}
{\[
 \left(x,y\right)\longmapsto
 \left(\left(L+x\right)\cos\left(y/L\right),\left(L+x\right)\sin\left(y/L\right)\right),
\]}
{has pullback metric $\dd x^2+\left(1+x/L\right)^2\dd y^2$.
Writing $w_L=1+x/L$, its energy and norm are}
{\[
 q_L\left[u\right]=\int_Q\left(w_L\left|u_x\right|^2+w_L^{-1}\left|u_y\right|^2\right)\dd x\dd y,
 \qquad \left\|u\right\|_L^2=\int_Qw_L\left|u\right|^2\dd x\dd y.
\]}
{Let $b_L=1+1/L$, so $1\le w_L\le b_L$ and
$b_L^{-1}\le w_L^{-1}\le1$.  If $q_Q[u]=\int_Q|\nabla u|^2$
and $\|u\|_Q$ is the ordinary square norm, then}
\[
 b_L^{-1}q_Q[u]\le q_L[u]\le b_Lq_Q[u],\qquad
 \|u\|_Q^2\le\|u\|_L^2\le b_L\|u\|_Q^2.
\]
{Consequently every nonzero trial function satisfies}
\[
 b_L^{-2}\frac{q_Q[u]}{\|u\|_Q^2}
 \le\frac{q_L[u]}{\|u\|_L^2}
 \le b_L\frac{q_Q[u]}{\|u\|_Q^2}.
\]
{The map is a diffeomorphism of the polar rectangle and preserves
the relevant form domains.  Apply min--max on $H^1(Q)$ for
Neumann conditions and on $H_0^1(Q)$ for Dirichlet conditions.
Since $b_L\to1$, every fixed-index eigenvalue converges to that
of $Q$.  In particular, the first nonzero Neumann square modes
are $\cos(\pi x)$ and $\cos(\pi y)$, and the first Dirichlet mode
is $\sin(\pi x)\sin(\pi y)$, giving}
{\[
 \mu_1\left(S_L\right)\longrightarrow\pi^2,\qquad
 \lambda_1\left(S_L\right)\longrightarrow2\pi^2.
\]}
{At the same time}
{\[
 A_L=1+\frac1{2L}\longrightarrow1,
 \qquad P_L=4+\frac1L\longrightarrow4,
 \qquad W_1\left(S_L\right)\longrightarrow4\pi.
\]}
{If an all-index gap were to have the form
$W_k-\mu_k\ge c\sqrt{W_k}$ or
$\lambda_k-W_k\ge c\sqrt{W_k}$, the first eigenvalue along this
family would require, respectively,}
\[
 c\le\frac{W_1(S_L)-\mu_1(S_L)}{\sqrt{W_1(S_L)}},\qquad
 c\le\frac{\lambda_1(S_L)-W_1(S_L)}{\sqrt{W_1(S_L)}}.
\]
{Their limiting ceilings are}
{\[
c_{\mathrm N}\le\frac{4\pi-\pi^2}{\sqrt{4\pi}}<4,
 \qquad
 c_{\mathrm D}\le\frac{2\pi^2-4\pi}{\sqrt{4\pi}}<4.
\]}
{Both are strictly below $P_L/A_L\to4$.  Hence choosing the
geometry-dependent coefficient $c=P_L/A_L$ fails already at
$k=1$ for all sufficiently large $L$, for each boundary condition.
The same obstruction
holds in the quadratic inversion normalization: the inequalities
$W_k-\mu_k\ge a\sqrt{\mu_k}$ and
$\lambda_k-W_k\ge a\sqrt{\lambda_k}$ instead have limiting
first-index ceilings}
{\[
 a_{\mathrm N}\le4-\pi<4,
 \qquad a_{\mathrm D}\le\sqrt2\left(\pi-2\right)<4.
\]}
{For example, the Neumann ceiling here is obtained by dividing
$4\pi-\pi^2$ by $\pi$, and the Dirichlet ceiling by dividing
$2\pi^2-4\pi$ by $\sqrt2\pi$.  These are the exact
normalizations arising when a counting correction
$\pm(Aa/(4\pi))\sqrt E$ is inverted.  Thus the obstruction
applies to directly replacing $a_{\mathrm D}$ or $a_{\mathrm N}$
in the main theorems by $P/A$, as well as to the
$\sqrt{W_k}$ formulation.}

{\color{editorial}The logical distinction is now explicit.
Theorems~\ref{thm:dirichlet} and~\ref{thm:main} give computable,
strict improvements throughout the spectrum.  The optimized
bounds of Sections~\ref{sec:dirichlet-quantitative} and
\ref{sec:neumann-quantitative} retain that finite-frequency
control and recover the full circular-arc coefficient at high
frequency.  Proposition~\ref{prop:Nq-actual-coefficient} proves
that this coefficient is sharp for the continuous count.
Theorem~\ref{thm:Bq-full-perimeter} identifies the additional
radial-side term in the actual sector asymptotic, while the
present example prevents a remainder-free all-index upgrade to
the full coefficient.  The optimizations claimed earlier are
within the stated trial and phase families.}

\section{Products and further questions}\label{sec:products}
The linear two-dimensional Weyl count makes an annular sector a
useful factor in higher dimensions. The argument below is a strict
Neumann analogue of Laptev's Dirichlet product
argument~\cite[Theorem~2.8]{Laptev}; its averaged input is the
Neumann trace estimate~\cite[Theorem~3.1]{Laptev}, proved again
below for the present Laplacian setting. The same strict-count product argument is used
in~\cite[Corollary~1.2 and its proof]{DaiSunAnnuli}.

{\begin{corollary}\label{cor:product}
Let $S=S_{r,R,\beta}$ satisfy the hypotheses of
Theorem~\ref{thm:main}. Let $\Omega\subset\R^d$, $d\ge1$, be a
bounded connected Lipschitz domain. With Neumann conditions on
the whole product boundary,
\[
 \mu_k\paren{S\times\Omega}
 <4\pi^2\paren{\frac{k}{\omega_{d+2}\abs{S}\abs\Omega}}^{2/\paren{d+2}},
 \qquad k\ge1.
\]
At $\beta=2\pi$ the slit is retained in the product form.
\end{corollary}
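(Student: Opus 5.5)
The argument will be Laptev's product method in its Neumann form: separate variables on $S\times\Omega$ and insert the strict sector count from Theorem~\ref{thm:main}. Neumann eigenfunctions of the product are products, so
\[
 N^<_{S\times\Omega}(E)=\sum_{m\ge0}N_S^<\bigl(E-\mu_m(\Omega)\bigr),
\]
where the sum runs over $m$ with $\mu_m(\Omega)<E$; the zero mode $\mu_0(\Omega)=0$ always contributes. Strict cutoffs are kept throughout. At the slit endpoint the sector form is the one on the polar rectangle, so the product form separates in the same way. Every term with $\mu_m(\Omega)<E$ has a positive argument, so Theorem~\ref{thm:main} applies to it, and discarding its $\sqrt{\cdot}$ correction gives
\[
 N_S^<\bigl(E-\mu_m\bigr)>\frac{|S|}{4\pi}\bigl(E-\mu_m\bigr).
\]
Hence
\[
 N^<_{S\times\Omega}(E)>\frac{|S|}{4\pi}\sum_m\bigl(E-\mu_m(\Omega)\bigr)_+ ,
\]
with strict inequality because at least the $m=0$ term is present.

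The second step is the averaged Neumann bound on $\Omega$, namely Laptev's trace inequality~\cite[Theorem~3.1]{Laptev}:
\[
 \sum_m\bigl(E-\mu_m(\Omega)\bigr)_+\ge L_{1,d}\,|\Omega|\,E^{1+d/2},\qquad L_{1,d}=\frac{\omega_d}{(2\pi)^d}\frac{2}{d+2}.
\]
The paper says this is "proved again below", so I would reprove it for the Lipschitz Neumann Laplacian. The route is Kröger's argument. Extend plane waves $e^{\mathrm i\xi\cdot x}$ restricted to $\Omega$ as trial functions, then use the completeness of the Neumann eigenbasis in $L^2(\Omega)$ (Lipschitz gives a compact embedding and hence discrete spectrum). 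Integrating $\sum_m(E-\mu_m)|\langle e_\xi,\phi_m\rangle|^2$ over $|\xi|^2<E$ and applying Parseval in $\xi$ yields the convex-duality bound. This is the step I expect to require the most care: the bookkeeping of the Legendre-type inequality $(E-\mu)_+\ge\int(E-|\xi|^2)\ldots$, and checking that no smoothness of $\partial\Omega$ is needed.

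The third step combines the two bounds. They give
\[
 N^<_{S\times\Omega}(E)>\frac{|S|}{4\pi}L_{1,d}|\Omega|E^{(d+2)/2}.
\]
The constant should equal the Weyl constant $\omega_{d+2}/(2\pi)^{d+2}$ in dimension $d+2$. This is the identity $\frac{1}{4\pi}\frac{\omega_d}{(2\pi)^d}\frac{2}{d+2}=\frac{\omega_{d+2}}{(2\pi)^{d+2}}$, which follows from $\omega_{d+2}=\frac{2\pi}{d+2}\omega_d$. Finally, take $E$ equal to the right-hand side of the corollary. The strict count then exceeds $k$, so $k+1$ eigenvalues including $\mu_0$ lie strictly below $E$. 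This gives $\mu_k(S\times\Omega)<E$, exactly as in the inversion in Proposition~\ref{prop:angular-transfer}.
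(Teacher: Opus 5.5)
Your proposal is correct and follows the paper's proof essentially step for step. You use the separated count $N^<_{S\times\Omega}(E)=\sum_\ell N_S^<(E-\nu_\ell)$ with Theorem~\ref{thm:main} applied termwise, strictness coming from the zero mode, then the plane-wave/Plancherel lower bound for the Riesz mean $\sum_\ell(E-\nu_\ell)_+$ on $\Omega$, and finally the identity $\omega_{d+2}=2\pi\omega_d/(d+2)$ followed by the same strict-count inversion used for Theorem~\ref{thm:main}.
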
}

\begin{proof}
{Write the eigenvalues of $\Omega$ as $\nu_\ell$, $\ell\ge0$,
with $\nu_0=0$, and put $N_S^{<}\paren{z}=0$ for $z\le0$.
The tensor products of the two factor eigenbases are a complete
orthonormal eigenbasis for the product form, with eigenvalues
$\mu_j\paren{S}+\nu_\ell$. The compactness of the form embeddings
on the factors gives a discrete product spectrum; for a slit sector
the same statement holds on its polar rectangle. Product separation
and Theorem~\ref{thm:main} therefore give, for $E>0$,}
{\[
\begin{aligned}
 N_{S\times\Omega}^{<}\paren{E}
 &=\sum_{\ell\ge0}N_S^{<}\paren{E-\nu_\ell}\\
 &>\frac{\abs{S}}{4\pi}
             \sum_{\ell\ge0}\pos{E-\nu_\ell}.
\end{aligned}
\]}
{Only finitely many summands are nonzero. The inequality remains
strict because the summand $\ell=0$ has positive argument $E$.}

{For clarity, we give the sharp lower bound on the Riesz mean of
the other factor; it is the plane-wave argument underlying the
Neumann spectral-sum estimates~\cite{Kroger,Laptev}. Let
$\phi_\ell$ be an orthonormal Neumann eigenbasis of $L^2\paren\Omega$,
and define $e_\xi\paren{y}=\paren{2\pi}^{-d/2}e^{i\xi\cdot y}$
on $\Omega$. This function belongs to the Neumann form domain.
Its spectral expansion satisfies}
{\[
\begin{aligned}
 \sum_{\ell\ge0}\abs{\left\langle e_\xi,\phi_\ell\right\rangle}^2
 &=\paren{2\pi}^{-d}\abs\Omega,\\
 \sum_{\ell\ge0}\nu_\ell
          \abs{\left\langle e_\xi,\phi_\ell\right\rangle}^2
 &=\int_\Omega\abs{\nabla e_\xi}^2\dd y
 =\paren{2\pi}^{-d}\abs\Omega\abs\xi^2.
\end{aligned}
\]}
{Replacing $E-\nu_\ell$ by its positive part increases the sum, so}
{\[
 \sum_{\ell\ge0}\pos{E-\nu_\ell}
       \abs{\left\langle e_\xi,\phi_\ell\right\rangle}^2
 \ge\paren{2\pi}^{-d}\abs\Omega\paren{E-\abs\xi^2}.
\]}
{Integrate over $\abs\xi<\sqrt E$. Plancherel's theorem applied
to the zero extension of $\phi_\ell$ gives}
{\[
 \int_{\abs\xi<\sqrt E}
       \abs{\left\langle e_\xi,\phi_\ell\right\rangle}^2\dd\xi
 \le\int_{\R^d}\abs{\left\langle e_\xi,\phi_\ell\right\rangle}^2\dd\xi=1.
\]}
It follows that
{\[
\begin{aligned}
 \sum_{\ell\ge0}\pos{E-\nu_\ell}
 &\ge\frac{\abs\Omega}{\paren{2\pi}^d}
            \int_{\abs\xi<\sqrt E}\paren{E-\abs\xi^2}\dd\xi\\
 &=\frac{\abs\Omega}{\paren{2\pi}^d}
        d\omega_d\int_0^{\sqrt E}\paren{E-\rho^2}\rho^{d-1}\dd\rho\\
 &=\frac{2\omega_d\abs\Omega}{\paren{d+2}\paren{2\pi}^d}
                                   E^{\paren{d+2}/2}.
\end{aligned}
\]}
{Since $\omega_{d+2}=2\pi\omega_d/\paren{d+2}$, we conclude that}
{\[
 N_{S\times\Omega}^{<}\paren{E}
 >\frac{\omega_{d+2}\abs{S}\abs\Omega}{\paren{2\pi}^{d+2}}
                                  E^{\paren{d+2}/2}.
\]}
{Taking the right-hand side equal to $k$ proves the eigenvalue bound
with the same indexing argument as in Theorem~\ref{thm:main}.}
\end{proof}

{\color{editorial}For an interval factor, Corollary~\ref{cor:product} applies to
annular-sector cylinders; no individual P\'olya assumption on the
other factor is required. The Dirichlet product inequality follows
from~\cite{TangWangDirichlet,Laptev}. Extensions to curved strips
or mixed boundary conditions would require analogues of the exact
action identity and the directional rounding estimates.}

\appendix
\section{\texorpdfstring{\textcolor{revision}{Elementary endpoint estimates}}{Elementary endpoint estimates}}\label{sec:elementary-estimates}
{
We collect the constant estimates used in the frequency and width cover.
All square-root comparisons below have positive sides and are verified
by squaring; this convention will be used throughout the proof.}
{\begin{lemma}[\textcolor{revision}{Constants for the frequency and width cover}]\label{lem:endpoint-estimates}
Set
\[
 d=\frac52,\qquad x_0=\frac{15}{4},\qquad
 X_0=\frac{27}{5},\qquad x_B=\frac{22}{5},
\]
and write the three frequency-dependent errors as
\[
 r_\delta\paren{x;c}=\frac{2c}{\sqrt{K_\delta\paren{x}}},\qquad
 s_\delta\paren{x}=\sqrt{\frac{2\pi}{9x}}
              \delta^{3/4}\paren{\delta+2}^{1/4},\qquad
 \ell_{\delta,u}\paren{x}=\frac{2\delta I\paren{u}}{x}.
\]
Then
\[
 \frac{333}{106}<\pi<\frac{355}{113}<\frac{22}{7},\qquad
 \frac{11}{50}<I\paren{9/10}<\frac{111}{500},\qquad
 I\paren{3/4}<\frac{129}{1000}.
\]
The low-frequency endpoint satisfies
\[
 a_d>\frac{211}{100},\qquad
 2a_dx_0-\frac{2+d}{4}x_0^2>\frac3{640}.
\]
On the thin-width phase window its fractional initial phase lies
between $1/9$ and $18/25$, and
\[
 C_+\paren{1/9}<\frac1{10},\qquad
 C_+\paren{18/25}<\frac1{10},\qquad
 c\paren{27/20}<\frac14.
\]
The window constants obey
\[
 r_d\paren{x_0;1/10}<\frac7{25},\qquad
 \sqrt{\frac{2\pi}{9x_0}}<\frac{54}{125}.
\]
At $\paren{\delta,x}=\paren{d,X_0}$, the generic
tail errors and their width derivatives satisfy
\[
 \begin{aligned}
 r_d\paren{X_0;1/3}&<\frac{21}{25},&
 s_d\paren{X_0}&<\frac{21}{20},&
 \ell_{d,9/10}\paren{X_0}&<\frac{103}{500},\\
 \partial_\delta r_d\paren{X_0;1/3}&>\frac{11}{200},&
 \partial_\delta s_d\paren{X_0}&>\frac{73}{200},&
 \partial_\delta\ell_{d,9/10}\paren{X_0}&>\frac{11}{135}.
 \end{aligned}
\]
Here and below $\partial_\delta r_d$ means the derivative evaluated
at $\delta=d$; the frequency and the coefficient are held fixed.
For the wide-width argument,
\[
 \begin{aligned}
 r_{10}\paren{x_B;1/3}&<\frac{737}{1000},&
 s_{10}\paren{x_B}&<\frac{417}{100},&
 \ell_{10,3/4}\paren{x_B}&<\frac{587}{1000},\\
 \partial_\delta s_{10}\paren{x_B}&<\frac25,&
 \partial_\delta\ell_{10,3/4}\paren{x_B}&<\frac3{50},&
 p\paren{10}&>x_B.
 \end{aligned}
\]
\end{lemma}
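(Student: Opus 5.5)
Every assertion in Lemma~\ref{lem:endpoint-estimates} is a comparison of an explicit elementary expression with a rational number. The plan is therefore to reduce each one to finitely many inequalities between rationals, using monotonicity and squaring of positive quantities. Throughout we fix the enclosure $333/106<\pi<355/113<22/7$, which follows from the classical bound $\abs{\pi-355/113}<3\cdot10^{-7}$ or from a short alternating-series estimate for $\arctan$ (Machin). We also use rational enclosures of $\sqrt5$, $\sqrt{15}$, $\log(7/2)$ and $\arcsin$ values, each obtained by squaring or by a truncated Taylor series with an explicit remainder. Every later estimate substitutes the upper or lower enclosure of $\pi$ according to the sign of its dependence on $\pi$, so only rational arithmetic remains.

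We take the items in the order of the statement.
\begin{enumerate}
\item For $I(9/10)$ and $I(3/4)$ we use the closed formula $I(u)=(7u/\sqrt{1-u^2}+2\arcsin u)/(24\pi)$. We enclose $\sqrt{19}$ and $\sqrt7$ rationally. For the arcsine we use $\arcsin(9/10)=\pi/2-\arcsin\sqrt{19}/10$ and bound the small arcsine by its series, and we bound $\arcsin(3/4)$ directly by its Taylor series.
\item For $a_d$ we use $a_d^2=D/(2\log(7/2))=45/(8\log(7/2))$. Hence $\log(7/2)<63/50$ gives $a_d^2>125/28>(211/100)^2$. The bound $\log(7/2)<63/50$ itself follows from a truncated series, for instance $\log(7/2)=2\operatorname{artanh}(5/9)$ with a geometric tail bound. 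The value of $F_d(x_0)$ is then already computed in Section~\ref{sec:low}.
\item For the phase window we repeat the endpoint computation given after \eqref{eq:high}. For $C_+(1/9)$ we use the rationalized formula \eqref{eq:rounding-Cplus-rationalized}, bounding each square root by rationals in the unfavourable direction. For $C_+(18/25)$ we use $(2\sqrt2/3)(11/50)^{3/2}$.
\item For $c(27/20)$ we use the definition of $c$ as an infimum. Evaluating $(A_v(2)+E_v(2))/\pi$ at $v=27/20$ therefore suffices. This needs $\sqrt{4-v^2}$ and $\arccos(v/2)$; we bound the latter by writing it as $\arctan(\sqrt{4-v^2}/v)$ and using an alternating series.
\end{enumerate}

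The error terms $r,s,\ell$ and their $\delta$-derivatives at $(d,X_0)$ and $(10,x_B)$ are explicit algebraic expressions in $\pi$, $K_\delta(x)$, square and fourth roots, and $I(u)$. The derivative formulas for $r'$ and $s'$ are the ones displayed in Section~\ref{sec:neumann}, and $\partial_\delta\ell=2I(u)/x$. For each term we proceed in three steps.
\begin{enumerate}
\item Substitute the rational data, leaving $\pi$ symbolic.
\item Decide the direction in which $\pi$ enters, so that we know which enclosure of $\pi$ to use.
\item Square, or raise to the fourth power, to clear the radicals, and compare rationals.
\end{enumerate}
The lower bounds on derivatives, such as $\partial_\delta s_d>73/200$, are handled the same way, using lower enclosures. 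Finally $p(10)^2=182/9>(22/5)^2$ is immediate.

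The main obstacle is bookkeeping rather than ideas. Several margins are tight, for example $a_d^2$ against $(211/100)^2$ and $C_+(1/9)$ against $1/10$. These force sufficiently sharp enclosures of $\log(7/2)$, $\pi$ and the arcsines, together with a sign check on each monotone dependence. I would first compute every quantity to about four digits to locate the tight cases, and then choose series truncations with explicit remainder bounds that are just strong enough for those cases.
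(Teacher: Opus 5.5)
Your plan follows the paper's route. Every item is reduced to rational comparisons using the same ingredients:
\begin{itemize}
\item the closed form of $I(u)$;
\item $\log(7/2)=2\operatorname{artanh}(5/9)$ with a geometric tail;
\item the rationalized formula for $C_+(1/9)$ and $C_+(18/25)^2=\frac89\bigl(\frac{11}{50}\bigr)^3$;
\item the bound $\pi c(27/20)\le A_{27/20}(2)+E_{27/20}(2)$;
\item the derivative formulas of Section~\ref{sec:neumann}.
\end{itemize}

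One step fails as you state it. For $c(27/20)<1/4$ you need a \emph{lower} bound for $\arccos(27/40)$. You propose to write it as $\arctan\bigl(\sqrt{4-v^2}/v\bigr)$ and use the alternating series. But $\sqrt{4-v^2}/v=\sqrt{871}/27\approx1.093>1$, so the Maclaurin series of $\arctan$ diverges there. Its partial sums remain one-sided bounds for every $z\ge0$, since the remainder has derivative $\pm z^{2n+2}/(1+z^2)$. However, the terms $z^{2n+1}/(2n+1)$ never drop below about $0.24$, so the best lower bound you can obtain is about $0.71$.

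Since $A_{27/20}(2)+E_{27/20}(2)=\sqrt{871}/20-\frac{27}{20}\arccos(27/40)+10785/(871\sqrt{871})$, the inequality $A_{27/20}(2)+E_{27/20}(2)<\pi/4$ requires $\arccos(27/40)>0.8221$. The true value is $0.8298\ldots$, so the available margin is below $0.008$. You must therefore reduce the argument first. Two options:
\begin{itemize}
\item The paper writes $\arctan(w/27)=\pi/4+\arctan z$ with $w=\sqrt{871}$ and $z=(w-27)/(w+27)>5/113$, and then uses $\arctan z>z-z^3/3$.
\item Alternatively, $\arccos(27/40)=\arcsin(\sqrt{871}/40)$. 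A few terms of the positive arcsine series then give a lower bound in exactly the needed direction.
\end{itemize}

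Two smaller points. First, the estimate $\abs{\pi-355/113}<3\cdot10^{-7}$ gives $\pi>333/106$, but not the strict inequality $\pi<355/113$ that the lemma asserts. That sign needs a genuine upper estimate, such as the Machin upper truncation used in the paper. So only your second alternative proves this item.

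Second, the tight comparisons are mostly not the ones you flag. The items $s_{10}(x_B)<417/100$ and $\partial_\delta s_{10}(x_B)<2/5$ reduce to
\[
 \frac{50\pi\sqrt{120}}{99}<\Bigl(\frac{417}{100}\Bigr)^2,
 \qquad
 \frac{2645\pi}{4752\sqrt{120}}<\frac4{25}.
\]
These need a sharp two-sided enclosure of $\sqrt{120}$; the $\sqrt{15}$ you list does not occur in this lemma. With these adjustments, your substitute--orient--square recipe handles the remaining items.
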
}
\begin{proof}
{For $0<z<1$, the alternating expansion}
{\[
 \arctan z=z-\frac{z^3}{3}+\frac{z^5}{5}-\cdots
\]}
{places $\arctan z$ between consecutive truncations. Machin's identity
$\pi=16\arctan\paren{1/5}-4\arctan\paren{1/239}$ follows from
$\tan\paren{4\arctan\paren{1/5}}=120/119$ and the tangent
subtraction formula, which gives
$\tan\paren{4\arctan\paren{1/5}-\arctan\paren{1/239}}=1$.
The angle is in $\paren{0,\pi/2}$, so it equals $\pi/4$.
The truncation bounds now give}
\[
 \begin{aligned}
 \pi&>
 16\paren{\frac15-\frac1{3\cdot5^3}
       +\frac1{5\cdot5^5}-\frac1{7\cdot5^7}}-\frac4{239}
 >\frac{333}{106},\\
 \pi&<
 16\paren{\frac15-\frac1{3\cdot5^3}
       +\frac1{5\cdot5^5}-\frac1{7\cdot5^7}
       +\frac1{9\cdot5^9}}
 -4\paren{\frac1{239}-\frac1{3\cdot239^3}}
 <\frac{355}{113}.
 \end{aligned}
\]
{The first display uses a four-term lower bound for $\arctan\paren{1/5}$
and the one-term upper bound for $\arctan\paren{1/239}$;
the second uses a five-term upper bound and a two-term lower bound.
Thus the inequality directions account for both signs in Machin's identity.}

For $w=\sqrt{19}$, we have $87/20<w<109/25$ and
\[
 I\paren{9/10}
 =\frac1{24}+\frac{63/w-2\arctan\paren{w/9}}{24\pi}.
\]
The inequalities $z-z^3/3<\arctan z<z$ for $0<z<1$ give
\[
 \begin{aligned}
 \frac{63}{w}-2\arctan\paren{w/9}
 &>\frac{1575}{109}-\frac{218}{225}
   >\frac{107}{25}\frac{22}{7},\\
 \frac{63}{w}-2\arctan\paren{w/9}
 &<\frac{420}{29}-\frac{29}{30}
      +\frac23\paren{\frac{109}{225}}^3
   <\frac{541}{125}\frac{333}{106}.
 \end{aligned}
\]
They prove the asserted two-sided bound for $I\paren{9/10}$.
For $w=\sqrt7>66/25$, put $z=\paren{3-w}/\paren{3+w}<3/47$.
Since $\arctan\paren{w/3}=\pi/4-\arctan z$,
\[
 I\paren{3/4}
 =\frac1{48}+\frac{21/w+2\arctan z}{24\pi}
 <\frac1{48}+\frac{175/22+6/47}{24\paren{333/106}}
 <\frac{129}{1000}.
\]

For the logarithm, the positive power series for
$\log\paren{\paren{1+z}/\paren{1-z}}$ at $z=5/9$ gives
\[
 \log\paren{7/2}
 <2\paren{\frac59+\frac13\paren{\frac59}^3}
   +\frac{2\paren{5/9}^5}{5\paren{1-\paren{5/9}^2}}
 =\frac{76915}{61236}<\frac{63}{50}.
\]
{The tail estimate above follows by replacing the denominators
$5,7,9,\ldots$ by $5$ in the remaining positive terms and summing
the resulting geometric series. Since $D=d\paren{d+2}=45/4$,}
{\[
 a_d^2=\frac{45/4}{2\log\paren{7/2}}
 >\frac{45/4}{2\paren{63/50}}=\frac{125}{28}
 >\paren{\frac{211}{100}}^2.
\]}
{Substitution of $x_0=15/4$ then gives}
{\[
 2a_dx_0-\frac{2+d}{4}x_0^2
 >2\frac{211}{100}\frac{15}{4}
    -\frac98\paren{\frac{15}{4}}^2=\frac3{640}.
\]}

{The endpoint bounds for the shifted initial phase on the thin-width window are}
\[
 \frac7{2\pi}-1>\frac5{44}>\frac19,
 \qquad
 \frac{27}{5\pi}-1<\frac{113}{157}<\frac{18}{25}.
\]
{The unwrapped initial phase $x/\pi-3\delta/\paren{8\pi x}$
increases with $x$ and decreases with $\delta$. Its extrema on
$x_0\le x\le X_0$, $0<\delta\le d$ are therefore bounded
by the two displayed endpoints. For the left phase, use}
{\[
 C_+\paren{a}=\frac{\sqrt2}{3}\paren{\frac12-a}^2
 \frac{2\sqrt{1/2+a}+\sqrt{2a}}
       {\paren{\sqrt{1/2+a}+\sqrt{2a}}^2},
 \qquad 0\le a\le\frac12,
\]}
{together with}
\[
 \frac{39}{50}<\sqrt{\frac{11}{18}}<\frac{79}{100},\qquad
 \frac{47}{100}<\sqrt{\frac29}<\frac{12}{25},\qquad
 \sqrt2<\frac{177}{125}.
\]
It follows that
\[
 C_+\paren{1/9}
 <\frac{59}{125}\frac{49}{324}\frac{103}{50}\frac{16}{25}
 =\frac{595546}{6328125}<\frac1{10}.
\]
For the right phase,
\[
 C_+\paren{18/25}^2
 =\frac89\paren{\frac{11}{50}}^3<\frac1{100}.
\]
The half-period monotonicity of $C_+$ now controls every phase
in the window.

For the turning cap, let $w=\sqrt{871}$, so
$59/2<w<148/5$, and define
\[
 z=\frac{w-27}{w+27}>\frac5{113}.
\]
Then
\[
 \arctan\paren{w/27}
 =\frac\pi4+\arctan z
 >\frac{157}{200}+\frac5{113}
          -\frac13\paren{\frac5{113}}^3
 >\frac{829}{1000}.
\]
At $v=27/20$ and the evaluation argument $2$, direct substitution
in $A_v+E_v$ gives
\[
 \begin{aligned}
 A_{27/20}\paren{2}+E_{27/20}\paren{2}
 &=\frac w{20}-\frac{27}{20}\arctan\paren{w/27}
                 +\frac{10785}{871w}\\
 &<\frac{148}{100}-\frac{27}{20}\frac{829}{1000}
                 +\frac{21570}{51389}
 <\frac{157}{200}<\frac\pi4.
 \end{aligned}
\]
The infimum defining $c$ is therefore less than $1/4$.

Write $K_d\paren{x}=k\paren{x}/\pi$. At $x=x_0$,
$k=431/252>17/10$, so
\[
 r_d\paren{x_0;1/10}^2
 <\frac{22/7}{25\paren{17/10}}<\paren{\frac7{25}}^2,
 \qquad
 \frac{2\pi}{9x_0}<\frac{176}{945}
                  <\paren{\frac{54}{125}}^2.
\]
At $x=X_0$, put
\[
 k=\frac{54}{35}+\frac{575}{1296},\qquad
 n=\frac{108}{245}-\frac{115}{648}.
\]
{These are exactly}
{\[
 k=\frac{90109}{45360},\qquad n=\frac{41809}{158760},
 \qquad\frac{99}{50}<k<\frac{199}{100},\qquad
 n>\frac{263}{1000}.
\]}
{To identify the derivative, write
$K_\delta\paren{x}=k_\delta\paren{x}/\pi$ with
$k_\delta\paren{x}=x/\paren{1+\delta}+23\delta/\paren{24x}$.
Differentiating $r_\delta\paren{x;1/3}
=2\sqrt\pi/\paren{3\sqrt{k_\delta\paren{x}}}$ gives}
{\[
 \partial_\delta r_\delta\paren{x;1/3}
 =\frac{\sqrt\pi}{3k_\delta\paren{x}^{3/2}}
    \paren{\frac{x}{\paren{1+\delta}^2}-\frac{23}{24x}}.
\]}
{At $\paren{\delta,x}=\paren{d,X_0}$ the last bracket is $n$.
Since $\sqrt\pi>177/100$ and $k^{3/2}<281/100$, we obtain}
\[
 \begin{aligned}
 r_d\paren{X_0;1/3}^2
 &<\frac{4\paren{22/7}}{9\paren{99/50}}
  <\paren{\frac{21}{25}}^2,\\
 \partial_\delta r_d\paren{X_0;1/3}
 &=\frac{\sqrt\pi n}{3k^{3/2}}
  >\frac{177}{100}\frac{263}{1000}\frac{100}{843}
  =\frac{15517}{281000}>\frac{11}{200}.
 \end{aligned}
\]
{The required inequality $k^{3/2}<281/100$ follows simply by
cubing $k<199/100$ and comparing
$\paren{199/100}^3<\paren{281/100}^2$.
The derivative of the outer error has the explicit form}
{\[
 \partial_\delta s_\delta\paren{x}
 =\sqrt{\frac{2\pi}{9x}}
   \frac{\delta+3/2}{\delta^{1/4}\paren{\delta+2}^{3/4}}>0.
\]}
{Indeed, this follows from logarithmic differentiation of
$\delta^{3/4}\paren{\delta+2}^{1/4}$. Substituting $d=5/2$
and $X_0=27/5$, and using $\sqrt5<9/4$, yields}
\[
 \begin{aligned}
 s_d\paren{X_0}^2
 &=\frac{25\pi\sqrt5}{162}
  <\frac{275}{252}<\paren{\frac{21}{20}}^2,\\
 \paren{\partial_\delta s_d\paren{X_0}}^2
 &=\frac{640\pi}{6561\sqrt5}
  >\frac{640\paren{157/50}}{6561\paren{9/4}}
  >\paren{\frac{73}{200}}^2.
 \end{aligned}
\]
The bounds for $I\paren{9/10}$ give
\[
 \ell_{d,9/10}\paren{X_0}
 <\frac{111}{540}<\frac{103}{500},\qquad
 \partial_\delta\ell_{d,9/10}\paren{X_0}
 >\frac{11}{135}.
\]
{In particular, their sum exceeds $1/2$ by the explicit amount}
{\[
 \frac{11}{200}+\frac{73}{200}+\frac{11}{135}
 =\frac12+\frac1{675}.
\]}

{At the wide-width endpoint, direct substitution gives}
{\[
 k_{10}\paren{x_B}=\frac25+\frac{575}{264}=\frac{3403}{1320},
 \qquad K_{10}\paren{x_B}=\frac{3403}{1320\pi}.
\]}
{The inequalities $219/20<\sqrt{120}<2191/200$ and
$\pi<22/7$ then give}
\[
 \begin{aligned}
 r_{10}\paren{x_B;1/3}^2
 &<\frac{38720}{71463}<\paren{\frac{737}{1000}}^2,\\
 s_{10}\paren{x_B}^2
 &=\frac{50\pi\sqrt{120}}{99}
  <\frac{313}{18}<\paren{\frac{417}{100}}^2,\\
 \paren{\partial_\delta s_{10}\paren{x_B}}^2
 &=\frac{2645\pi}{4752\sqrt{120}}
  <\frac{13225}{82782}<\frac4{25}.
 \end{aligned}
\]
{For the outer error and derivative, the intermediate rational
bounds in this display come from}
{\[
 \frac{50\paren{22/7}\paren{2191/200}}{99}=\frac{313}{18},
 \qquad
 \frac{2645\paren{22/7}}{4752\paren{219/20}}
 =\frac{13225}{82782}.
\]}
{The strict gaps are}
{\[
 \paren{\frac{417}{100}}^2-\frac{313}{18}=\frac1{90000},
 \qquad\frac4{25}-\frac{13225}{82782}
 =\frac{503}{2069550}.
\]}
{The phase bounds and the trial overlap are}
\[
 \ell_{10,3/4}\paren{x_B}
 <\frac{129}{220}<\frac{587}{1000},\qquad
 \partial_\delta\ell_{10,3/4}\paren{x_B}
 <\frac{129}{2200}<\frac3{50},\qquad
 p\paren{10}^2=\frac{182}{9}>\paren{\frac{22}{5}}^2.
\]
{Thus the sum of the two positive derivative bounds is less than
$2/5+3/50=23/50<1/2$. The high-envelope margin is}
{\[
 6-\frac{737}{1000}-\frac{417}{100}
       -\frac{587}{1000}-\frac9{20}=\frac7{125}>\frac1{20}.
\]}
\end{proof}

\noindent\textbf{Conflict of interest.}
The authors declare no conflict of interest.

\medskip
\noindent\textbf{Data availability.}
No datasets were generated or analysed during this study.

\medskip
\noindent\textbf{Declaration of AI-assisted writing.}
During the preparation of this work, the authors used ChatGPT and DeepSeek
for language polishing, grammar checking, verification of some computations,
and refining the exposition of certain technical arguments. After using
these tools, the authors carefully reviewed and edited the content as
needed and take full responsibility for the final version of this manuscript.

\end{document}